\documentclass[11pt]{amsart}
\usepackage[utf8]{inputenc}
\usepackage[T2A,T1]{fontenc}
\usepackage[russian,english]{babel}

\DeclareRobustCommand{\ru}[1]{\foreignlanguage{russian}{#1}}
\usepackage{amsmath,amssymb,amsfonts, amsthm}
\usepackage{mathtools}
\usepackage{mathrsfs}
\usepackage{dsfont}
\usepackage{enumitem}
\usepackage{multicol}
\usepackage{graphicx}
\usepackage{rotating}
\usepackage{lscape}
\usepackage[table]{xcolor}
\definecolor{lightgray}{gray}{0.95}
\usepackage{geometry}
\theoremstyle{plain}
\newtheorem{theorem}{Theorem}[section]

\newtheorem{proposition}[theorem]{Proposition}
\newtheorem{corollary}[theorem]{Corollary}
\newtheorem{definition}[theorem]{Definition}
\newtheorem{example}[theorem]{Example}
\newtheorem{remark}[theorem]{Remark}

\setlist{itemsep=2pt, topsep=7pt, parsep=2pt}
\DeclareMathOperator{\Arg}{Arg}
\usepackage[hidelinks,hypertexnames=false]{hyperref}
\usepackage{tikz}
\usetikzlibrary{calc,patterns, fadings,decorations.markings}
\tikzfading[name=stripEnds,
  top color=transparent!97,
  bottom color=transparent!97,
  middle color=transparent!10
]
\newtheorem{notation}[theorem]{Notation}

\usepackage{float}
\usepackage{algorithm}
\usepackage{algpseudocode}
\usepackage[most]{tcolorbox}
\numberwithin{equation}{section}

\definecolor{algogray}{gray}{0.96}

\tcbset{
  algobox/.style={
    colback=algogray,
    colframe=gray!50,
    boxrule=0.7pt,
    arc=2pt,
    left=7pt,
    right=7pt,
    top=7pt,
    bottom=7pt,
    enhanced
  }
}

\title[Classical Orthogonal Polynomials]{Half-Step-Invariant Sets, Admissible Maps, and Classical Orthogonal Polynomials}

\author{K. Castillo}
\address{CMUC, Department of Mathematics, University of Coimbra, 3000-143 Coimbra,
Portugal}
\email{kenier@mat.uc.pt}
\subjclass[2020]{Primary 33C45, 33D45; Secondary 39A13, 46A13, 47B39}

\keywords{Classical orthogonal polynomials, half-step-invariant sets, admissible maps, \(q\)-exponential maps, quadratic maps, alternating maps}

\hypersetup{
  pdftitle={Half-Step-Invariant Sets, Admissible Maps, and Classical Orthogonal Polynomials},
  pdfauthor={K. Castillo},
  pdfsubject={Classical orthogonal polynomials on half-step-invariant sets},
  pdfkeywords={classical orthogonal polynomials, half-step-invariant sets, admissible maps, q-exponential maps, quadratic maps, alternating maps}
}

\date{\today}

\begin{document}

\begin{abstract}
This paper revisits the notion of classical orthogonal polynomials from a broader functional-analytic point of view. It is intended neither as a survey of known results nor as a review of the literature, but rather as a conceptual reappraisal of the subject from a perspective in which certain persistent distortions become plainly visible. The theory is developed on subsets of the complex plane that are stable under half-step translations, and both classicality and orthogonality are understood in the continuous dual of a suitable locally convex space of polynomials. The repeated reappearance of ostensibly new families of classical orthogonal polynomials arising from exotic maps, algebraically equivalent families artificially separated, unnecessary parameter restrictions inherited from positive-definite models, apparently distinct phenomena associated with root-of-unity values of \(q\) in the \(q\)-exponential case, naive \(q\to -1\) limits in that same setting, finite truncations of otherwise infinite orthogonal polynomial sequences, and geometric recastings of the local half-step relation in terms of plane conic curves all make the need for a broader structural framework increasingly clear. The present paper seeks to articulate such a framework in a way that allows the reader to distinguish the genuinely new from the merely artificial, without resorting to an exhaustive case-by-case examination of prior work. In the ordinary quadratic and non-torsion \(q\)-exponential regimes, the infinite regularity criteria retain their full force as if-and-only-if statements; finite resonances, by contrast, are governed by the exact moment system and the non-vanishing of its Hankel determinants.

\end{abstract}

\maketitle
\setcounter{tocdepth}{2}
\tableofcontents
\bigskip

\section{Introduction}
Hoffman records in \emph{The Man Who Loved Only Numbers} \cite[p.~49]{H} an anecdote about Erd\H{o}s, recounted by Purdy, who had Erd\H{o}s number \(1\). Whether or not one wishes to regard the story as literally true in every detail, it deserves to be taken seriously in spirit: during a visit to Texas A\&M in the mid-1970s, Erd\H{o}s is said to have encountered, over coffee in the mathematics lounge, a problem in functional analysis seemingly far removed from his natural territory. Two analysts had already produced a lengthy thirty-page argument, of which they were understandably rather proud. After asking for the meaning of a few symbols, Erd\H{o}s reportedly returned with a proof in two lines. Apocryphal or not in its exact literary form, the story points to something mathematically substantial: genuine understanding lies not in passive familiarity with a formalism, but in the ability to see through it to the structure that governs it. That distinction between formal appearance and structural reality is precisely what is at issue in the present paper, in the setting of classical orthogonal polynomials, whatever meaning the reader may presently attach to that expression.

In \cite{CG26a}, we argued that the inherited conception of classical orthogonal polynomials reflected in the \emph{NIST Handbook of Mathematical Functions} (2010), in the version of the \emph{NIST Digital Library of Mathematical Functions} consulted at the time of writing, and in much of the surrounding literature, has been distorted by an interpretation that is increasingly difficult to sustain, especially since the features by which classicality is usually recognised are algebraic in nature. In that sense, \cite{CG26a} may be read as exhibiting a phenomenon that becomes progressively more dramatic once one moves beyond the ordinary differential setting treated in the introduction of that work, and even beyond the better-known discrete classical situations examined there in detail, into the more general framework described by Nikiforov and Uvarov in the early 1980s \cite{NU83}. As \cite{CG26a} makes clear, once one leaves behind the tacit identification of orthogonality with the existence of a representing positive measure, the algebraic structure of the problem comes properly into view. From that standpoint, many distinctions that had seemed canonical reveal themselves to be contingent artefacts of a narrow realisation: algebraically equivalent families are no longer artificially separated, parameter restrictions inherited from positive-definite models lose their conceptual privilege, and the continuous and discrete theories can be understood within a common dual-topological framework.

The issue, however, does not lie solely in the way orthogonality is interpreted. A recent example from the literature, discussed in detail in \cite{CG25} and closely aligned with the concerns of the present work, illustrates this point quite vividly. Let \(N\) be a fixed positive odd integer, and consider the map
\[
W:\mathbb N_N=\{0,1,\dots,N\}\longrightarrow\mathbb R,
\quad
W(s)=s+\frac12(\gamma-1)\bigl(1-(-1)^s\bigr),
\]
where \(0<\gamma<2\). In \cite{VZ12}, in the course of an analysis of \(XX\) spin chains with perfect state transfer, a physical setting whose details are immaterial here, the authors define a certain positive discrete weight \(\omega\) on the image of \(W\). The associated positive-definite linear functional \(\mathbf w\) on the space of complex polynomials may then be written in the form
\[
\mathbf w(p)=\sum_{s\in\mathbb N_N}p\bigl(W(s)\bigr)\,\omega\bigl(W(s)\bigr).
\]
The resulting finite orthogonal polynomial sequence was presented as new and introduced under the name para--Krawtchouk polynomials, the terminology being motivated by an apparent connection with the Krawtchouk polynomials. The suggestion, explicitly advanced by the authors of \cite{VZ12}, that these polynomials share features commonly regarded, within the Nikiforov--Uvarov setting, as characteristic of the classical orthogonal polynomial sequences might at first suggest the presence of some structural fissure in that framework \cite{NU83,NU84,NUS85,NUS86,NSU91,ARS95}, since the map \(W\) is not among the admissible forms allowed there. Yet this is not in fact the case, nor is the intuition underlying the claim made in \cite{VZ12} simply mistaken.

The apparent tension is therefore the following: how can these two facts, seemingly irreconcilable, both be true at once? To answer this, no extensive familiarity with the subject is required. Following the line of thought suggested by the Erd\H{o}s anecdote, the natural question is in fact a simple one: what forms may the map have if the sequence is genuinely to belong to the Nikiforov--Uvarov framework? Once one knows the answer, and in particular that affine maps arise among the admissible possibilities, the matter becomes almost immediate, since the image of \(W\) admits a reparametrisation by a map of a type already allowed in that framework. Indeed, define
\[
X:\mathbb S_N\longrightarrow\mathbb R,
\quad
X(s)=2s,
\]
where
\[
\mathbb S_N=
\left\{
0,\,
\frac\gamma2,\,
1,\,
\frac\gamma2+1,\,
\dots,\,
\frac{N-1}{2},\,
\frac\gamma2+\frac{N-1}{2}
\right\}.
\]
Then
\begin{align*}
W(\mathbb N_N)
=
\{0,\gamma,2,\gamma+2,\dots,N-1,\gamma+N-1\}=X(\mathbb S_N).
\end{align*}

With the map \(X\) in place, and orthogonality understood in a substantially broader framework, there is no longer any compelling reason to keep \(s\) tied to the integers, nor to preserve the original positivity-driven restriction \(0<\gamma<2\), apart from the non-degeneracy conditions required by the chosen functional representation. What initially appeared to be a new construction then begins to reveal its genuine structural meaning. Accordingly, \(\mathbf w\) can be written as
\begin{align}\label{eq:para-k-functional}
\mathbf w(p)=\sum_{s\in\mathbb S_N}p\bigl(X(s)\bigr)\,\omega\bigl(X(s)\bigr).
\end{align}
Therefore, even the slightest suspicion that the orthogonal polynomial sequence with respect to \(\mathbf w\) satisfies some property rendering it classical is enough to cast serious doubt on its actual novelty, whether or not one is as yet able to recognise it as such. In the positive finite framework of \cite{VZ12}, this identification is in fact rather simple, precisely because the relevant facts are well documented \cite{KLS10}: if the connection with the Krawtchouk polynomials is not especially transparent, the link with the Hahn polynomials is not difficult to discern. In a broader setting, we identified in \cite{CG26a} a much wider class, of which this one is an explicit instance, and showed, through Examples~3.6, 4.4, 5.1, and 8.6, step by step, how naturally and completely it fits into the Nikiforov--Uvarov framework, while at the same time illustrating how the corresponding functional representation may be recovered in a particularly simple manner.

It remains to understand what led the authors of \cite{VZ12} to formulate the matter in precisely those terms. The answer, at least in part, lies in the fact that, once one starts from a pre-specified set---in \cite{VZ12}, \(\mathbb{Z}\)---that set acquires an artificial autonomy. It thereby comes to appear as the primary object, whereas the parametrising map is treated as secondary, if not altogether incidental. From a structural point of view, however, the opposite is the case. The initial set is more naturally regarded as
\[
\mathbb S=\mathbb Z\cup\left(\frac\gamma2+\mathbb Z\right),
\]
that is, in the non-degenerate case, as the union of two distinct translates of the same arithmetic progression. The finite set \(\mathbb S_N\) is only the portion on which the non-zero weights sit; the ambient half-step-invariant set is \(\mathbb S\). Thus the finiteness belongs to the weight, not to the underlying domain. At this structural level, the essential feature is a proper description of the underlying set, one that allows a parametrising map compatible with the theory. This distinction is easy to underestimate, especially because, for many readers, the set \(\mathbb S\) appears to stand in tension with the traditional way of thinking about the matter. Yet the theory is not without internal latitude: it admits a genuine margin of structural freedom, but not to the point of compromising its overall coherence. In that sense, what is at stake in \cite{VZ12} is not a matter of calculation but one of perspective: the formulas are correct, the orthogonality is genuine, but the structural level at which the sequence ought to be interpreted has been misidentified. Analogous questions of bi-lattice structure and reparametrisation arise in \cite{LVZ16,LVZ18}.

This is also the appropriate place to clarify the status of the conics frequently invoked in the literature in this context. Such objects naturally give rise to an algebraic relation involving neighbouring values of a map, and this relation may be encoded by a curve in the plane. In the family presented in \cite{VZ12}, for instance, this phenomenon already appears as soon as the support is reparametrised by the affine map \(X\): the neighbouring half-step values become algebraically governed by \(X\), and one is thereby led, in the general formalism, to a conic relation of the kind underlying what may be called the Magnus conic; in the affine case this relation is degenerate \cite{Mag88,Mag95}. The point is not that such a conic fails to appear; on the contrary, it arises naturally. However, its role must be interpreted correctly. The conic is not the source of the structure, but one of its algebraic consequences. By itself, it neither determines the domain on which the parameter moves, nor reveals the decomposition of that domain into two translates of the same arithmetic progression, nor decides whether two apparently different supports arise from the same underlying mechanism after reparametrisation. In short, the essential point in the situation under discussion is that the support arises from a set that decomposes into two such arithmetic progressions under an affine map. Once this has been recognised, the conic is seen for what it is: one further trace of the same underlying mechanism.

Another source of confusion appears in the \(q\)-setting, where the map is of the form
\[
X:U\subseteq\mathbb C \longrightarrow\mathbb C,
\quad
X(s)=a\,q^{-s}+b\,q^s+c,
\]
with \(q\in\mathbb C^\times\), with a fixed determination of
\(q^s=e^{s\log q}\), and with \(a,b,c\in\mathbb C\) and
\((a,b)\neq(0,0)\). The difficulty becomes especially acute when \(q\) is a root of unity. This situation has sometimes been treated as though it gave rise, from within the theory itself, to a genuinely different kind of object; see, for instance, \cite{SZ96,SZ97}. Here, however, some care is needed. The values \(q=1\) and \(q=-1\) are not merely special cases of the general torsion phenomenon: the corresponding quadratic and alternating regimes are not obtained by direct substitution in the \(q\)-exponential formulas, and any scaled limiting procedure requires a separate analysis. The case \(q=1\) leads to the quadratic regime, where the \(q\)-geometric mechanism has collapsed into an additive, second-degree structure. The case \(q=-1\), by contrast, leads to an alternating regime, in which the even and odd parts of the parametrisation interact in a genuinely different way. As we shall see, the latter case has given rise to a substantial and partly self-contained literature, which, from our perspective, is often interpreted at a structurally inappropriate level through its association with certain Dunkl-type operators; see, for instance, \cite{TVZ12, VZ12b, GVZ13, TVZ13, GVZ14}. By contrast, when \(q\) is a root of unity of order at least \(3\), one remains within the same general pattern, now seen in a finite-order, or torsion, regime. Indeed, as long as \(q\) is not a root of unity, the values of \(X\) along a non-degenerate full-step progression typically lie on an infinite \(q\)-geometric configuration. When \(q\) has finite order, that configuration closes up in the relevant parametrisations, the image may become finite, and truncation phenomena may occur. If attention is confined to the visible support, it is easy to conclude that one has entered a genuinely different regime. Yet that impression is often deceptive. What may have changed is not the nature of the parametrising mechanism, but only the arithmetic regime in which one observes it. For this reason, finite families arising at roots of unity must be handled with particular care: the finiteness of the support and the presence of truncation are real phenomena, but neither of them, taken by itself, signals the emergence of a genuinely new situation. One may simply be seeing the finite shadow of an already existing mechanism. Seen in this way, the constructions studied in \cite{SZ96,SZ97} are best understood as explicit manifestations of the same general framework in a torsion regime.

The purpose of the present paper is not merely to reorganise known formulas at the level of presentation, nor to restore to what we take to be their proper place all familiar families that reappear under the mask of a specific problem, a particular notation, or a local point of view. It is, rather, to relocate the study of classical orthogonal polynomials to the level at which their underlying structure can be properly understood, and thereby to encourage a thorough re-examination of the subject. We do so by working not with image sets in isolation but with domains stable under half-step translation and with maps defined on them. This shift in viewpoint may seem modest at first, but its consequences are substantial. It explains why supports that look unfamiliar may nevertheless arise from very classical underlying configurations, why local algebraic relations among neighbouring values should be read as consequences rather than as points of departure, and why torsion phenomena in the \(q\)-regime belong to the same general picture rather than to a separate universe. At the same time, this relocation of the problem is inseparable from the way orthogonality itself is understood here. Throughout the paper, orthogonality is not tied in advance to positivity, to the existence of a measure, or to a preferred real support. It is formulated, rather, in the continuous dual of a suitable locally convex space (LCS) of polynomials. This is not an artificial generalisation introduced merely for the sake of abstraction; it is the natural level at which the algebraic content of the theory becomes fully visible. Once orthogonality is placed in that dual-topological setting, one can separate what is genuinely structural from what belongs only to a particular realisation: positivity becomes a special circumstance rather than a defining principle, finite and infinite situations can be treated within the same framework, and distinctions inherited from a narrow measure-theoretic presentation cease to obscure algebraic equivalences. In that sense, the topological point of view adopted here is not ornamental. It is part of the mechanism by which the correct structural picture comes into focus.

The examples discussed above should not be read as isolated curiosities, but rather as representative instances of a much broader phenomenon, one that recurs throughout the literature on the subject: known objects, presented under a different parametrisation or under a different visible support, come to be regarded as new because the level at which the comparison ought to be made has shifted unnoticed. We shall return to these examples later, once the necessary framework has been developed to place them where they properly belong. What matters here is that the accumulation of such cases makes it unmistakably clear that the issue is structural rather than accidental. In the theory of orthogonal polynomials, one may easily be misled by the surface form of a parametrisation, by the visible shape of a support, or by a particularly striking local identity, and thereby mistake a disguised reformulation for a genuinely new phenomenon. One should not begin by asking whether a visible support, or a particular weight arising in a positive orthogonality setting, appears to be new. One should begin by asking from what sets it arises, what class of map gives rise to it, and in what sense the orthogonality is to be understood. Only at that level can one distinguish what is intrinsic from what is merely presentational. Much of what follows is guided by this principle.

Section~2 introduces the class of half-step-invariant sets, for a fixed non-zero complex parameter \(h\), which provide the natural setting for the present theory, and shows that such sets decompose canonically into cosets of the additive subgroup \(\tfrac h2\mathbb Z\). Section~3 studies maps defined on these sets and isolates the basic notion of parametrising map, while Section~4 introduces the stronger notion of admissibility, whose role is to ensure that the symmetric divided-difference calculus closes polynomially and thus to make possible an intrinsic treatment of the geometry encoded by neighbouring values. Section~5 contains the corresponding classification theorem: every admissible map is shown to be necessarily of quadratic, alternating, or \(q\)-exponential type, with no other behaviour compatible with admissibility. Section~6 turns to the functional-analytic core of the paper. Orthogonality is formulated in the continuous dual of a suitable LCS of polynomials, and classicality is introduced by duality, through the transposed divided-difference and averaging operators. On that basis, we develop in Section~7 the regularity theory attached to the admissible map types and obtain explicit formulas for the recurrence coefficients of the corresponding classical orthogonal polynomial sequences in the ordinary quadratic and \(q\)-exponential regimes, including the torsion case of finite order; the alternating case is analysed separately in Section~8, since there the mechanism is governed instead by quadratic substitution and a first-order structural constraint. Section~9 then turns to the associated Nikiforov--Uvarov operator, establishes the precise relation between the notion of classicality in that framework and the one proposed here, and concludes with forward first-divided-difference stability in the ordinary quadratic and non-torsion \(q\)-exponential regimes. No complete Sonine--Hahn converse is asserted; indeed, without further hypotheses, we do not believe such a converse to be valid at this level of generality.

\section{Half-step-invariant sets}

We begin with a small piece of notation, followed by the simple invariance
condition on sets that will be used throughout the paper.

\begin{notation}\label{not:punctured-set}
Whenever \(E\subseteq\mathbb C\) is a set containing \(0\), we write
\[
E^\times=E\setminus\{0\}.
\]
\end{notation}

Fix \(h\in\mathbb C^\times\). In this section, \(U\subseteq\mathbb C\) denotes a non-empty set satisfying
\[
U\pm \frac{h}{2}\subseteq U.
\]
No further assumptions are imposed on \(U\); in particular, \(U\) need not be discrete, need not be contained in \(\mathbb R\), and need not carry any presupposed order.

We begin with the basic notion underlying the section.

\begin{definition}[Half-step-invariant set]\label{def:h2Z-invariant}
Fix \(h\in\mathbb C^\times\). A non-empty subset \(U\subseteq\mathbb C\) is called \emph{half-step-invariant} if
\[
U\pm \frac{h}{2}\subseteq U.
\]
The dependence on the fixed parameter \(h\) will remain implicit throughout.
\end{definition}

The following proposition justifies the terminology introduced in
Definition~\ref{def:h2Z-invariant}.

\begin{proposition}\label{lem:U-classification}
Fix \(h\in\mathbb C^\times\), and let \(U\subseteq\mathbb C\) be a half-step-invariant set. Then
\[
U+\frac{h}{2}\mathbb Z = U.
\]
In particular, \(U\) is a disjoint union of cosets of the additive subgroup
\(\tfrac{h}{2}\mathbb Z\); that is,
\begin{equation}\label{eq:U-decomposition}
U=\bigsqcup_{v\in V}\left(v+\frac{h}{2}\mathbb Z\right),
\end{equation}
where \(V\subseteq U\) is a set of representatives of the distinct cosets
of \(\tfrac{h}{2}\mathbb Z\) that meet \(U\).
\end{proposition}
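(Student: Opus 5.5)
The plan is to prove the two inclusions of \(U+\frac h2\mathbb Z=U\) separately and then read off the coset decomposition from the fact that cosets of a subgroup partition \(\mathbb C\).

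\textbf{The identity.} The inclusion \(U\subseteq U+\frac h2\mathbb Z\) is immediate, taking \(0\in\mathbb Z\). For the reverse inclusion, I show by induction on \(n\ge 0\) that \(U+n\frac h2\subseteq U\) and \(U-n\frac h2\subseteq U\).
\begin{itemize}
\item The case \(n=0\) is trivial, and \(n=1\) is exactly the hypothesis \(U\pm\frac h2\subseteq U\).
\item If \(u+n\frac h2\in U\), then applying the hypothesis to this element gives \(u+(n+1)\frac h2\in U\). The same argument works with minus signs.
\end{itemize}
Hence \(u+k\frac h2\in U\) for every \(u\in U\) and \(k\in\mathbb Z\), i.e.\ \(U+\frac h2\mathbb Z\subseteq U\).

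\textbf{The decomposition.} Let \(G=\frac h2\mathbb Z\), an additive subgroup of \(\mathbb C\).
\begin{itemize}
\item The cosets \(z+G\) partition \(\mathbb C\), and two cosets are either equal or disjoint.
\item By the identity just proved, if a coset \(v+G\) meets \(U\), say \(u\in U\cap(v+G)\), then \(v+G=u+G\subseteq U+G=U\). So \(U\) is a union of whole cosets, namely exactly those cosets that meet \(U\).
\item Choosing one representative \(v\in U\) in each such coset (by the axiom of choice) gives the set \(V\subseteq U\). Each \(u\in U\) lies in \(v+G\) for exactly one \(v\in V\), so the union \eqref{eq:U-decomposition} is disjoint.
\end{itemize}

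\textbf{Main obstacle.} There is no real obstacle here. The only points needing care are that the induction must cover negative integers as well, which uses the \(-\frac h2\) part of the hypothesis, and that choosing \(V\) may require choice, since \(U\) can contain uncountably many cosets (for example \(U=\mathbb C\)).
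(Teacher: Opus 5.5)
Your proof is correct and follows essentially the same route as the paper. Both arguments prove the two inclusions (the paper simply asserts the iteration over all \(k\in\mathbb Z\) that you spell out by induction in both directions), then observe that every coset meeting \(U\) lies entirely in \(U\), and finally choose one representative per coset; the paper phrases the partition through the equivalence relation \(u\sim v \iff u-v\in\tfrac h2\mathbb Z\) on \(U\), whereas you use the partition of \(\mathbb C\) into cosets, which amounts to the same thing.
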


\begin{proof}
By half-step invariance, for every \(k\in\mathbb Z\) we have \(U+\frac{h}{2} k\subseteq U\), hence
\[
U+\frac{h}{2}\mathbb Z\subseteq U.
\]
The reverse inclusion is immediate since \(0\in\mathbb Z\), so \(U\subseteq U+\tfrac{h}{2}\mathbb Z\). Thus \(U+\tfrac{h}{2}\mathbb Z=U\). Define a relation \(\sim\) on \(U\) by declaring that \(u\sim v\) if and only if
\[
u-v\in\frac{h}{2}\mathbb Z.
\]
Then \(\sim\) is an equivalence relation. The equivalence class of \(u\in U\) is initially
\[
\left(u+\frac h2\mathbb Z\right)\cap U.
\]
Since \(U+\tfrac h2\mathbb Z=U\), the full coset \(u+\tfrac h2\mathbb Z\) is contained in \(U\), and hence the class is precisely \(u+\tfrac h2\mathbb Z\). These classes partition \(U\). Choosing one representative from each class yields \eqref{eq:U-decomposition}.
\end{proof}

Proposition~\ref{lem:U-classification} shows that every half-step-invariant set \(U\) is a disjoint union of cosets of \(\tfrac{h}{2}\mathbb Z\). Equivalently, \(U\) may be viewed as a union of bi-infinite half-step arithmetic progressions, each lying in an affine line in \(\mathbb C\) parallel to \(h\).

The following examples illustrate some possible forms of half-step-invariant sets.

\begin{example}[Half-step-invariant sets]\label{ex:U-examples}
We retain the notation of Proposition~\ref{lem:U-classification}. We emphasise that the choice of representatives is not canonical: it depends on selecting one element from each coset contained in \(U\subseteq\mathbb C\).

\begin{enumerate}
\item
Let
\[
U=\frac{h}{2}\mathbb Z.
\]
Then \(U\) consists of a single coset, namely \(\tfrac{h}{2}\mathbb Z\) itself. Hence the decomposition \eqref{eq:U-decomposition} holds for any choice of a singleton set
\[
V=\{v\}\subseteq\frac{h}{2}\mathbb Z.
\]

\item
Let \(h=2\), and fix \(a,b\in\mathbb C\) such that \(a-b\notin\mathbb Z\).
Define
\[
U=\left\{\ldots,\,
a-\frac{3}{2},\,b-\frac{3}{2},\,
a-\frac{1}{2},\,b-\frac{1}{2},\,
a+\frac{1}{2},\,b+\frac{1}{2},\,
a+\frac{3}{2},\,b+\frac{3}{2},\,
\ldots\right\}.
\]
This displayed enumeration is purely suggestive; \(U\) is a set, and no ordering of its elements is intended. Equivalently,
\[
U=\left(a-\frac{1}{2}+\mathbb Z\right)\cup\left(b-\frac{1}{2}+\mathbb Z\right).
\]
Since \(a-b\notin\mathbb Z\), \(U\) is the disjoint union of two distinct cosets. Figure~\ref{fig:two-Z-orbits-h2} is schematic and serves only to indicate the decomposition of \(U\) into two half-step arithmetic progressions. Taking
\[
a=\frac12, \quad b=\frac{\gamma}{2}+\frac{1}{2},
\]
or vice versa, one recovers the set \(\mathbb S\) considered in the introduction. More general configurations of the same kind may readily be envisaged, as the following case shows.

\begin{figure}[!htbp]
\centering
\begin{tikzpicture}[scale=1.35,>=latex]
  \pgfmathsetmacro{\aRe}{0.30}
  \pgfmathsetmacro{\aIm}{0.55}
  \pgfmathsetmacro{\bRe}{1.35}
  \pgfmathsetmacro{\bIm}{-0.20}
  \pgfmathsetmacro{\ARe}{\aRe-0.5}
  \pgfmathsetmacro{\AIm}{\aIm}
  \pgfmathsetmacro{\BRe}{\bRe-0.5}
  \pgfmathsetmacro{\BIm}{\bIm}
  \pgfmathsetmacro{\xmin}{-3.30}
  \pgfmathsetmacro{\xmax}{ 3.30}
  \pgfmathsetmacro{\trackH}{0.20}
  \pgfmathsetmacro{\yTop}{max(\AIm,\BIm)}
  \pgfmathsetmacro{\yBot}{min(\AIm,\BIm)}
  \pgfmathsetmacro{\ymin}{\yBot-1.10}
  \pgfmathsetmacro{\ymax}{\yTop+1.10}
  \pgfmathsetmacro{\labelAboveY}{\yTop+0.38}
  \pgfmathsetmacro{\labelBelowY}{\yBot-0.38}
  \pgfmathsetmacro{\UlabelY}{\yTop+0.20}
  \coordinate (A0) at (\ARe,\AIm);
  \coordinate (B0) at (\BRe,\BIm);
   \path[shade, shading=axis,
        left color=white, middle color=gray!12, right color=white,
        shading angle=0, draw=none, rounded corners=1.2pt]
    (\xmin,{\AIm-\trackH}) rectangle (\xmax,{\AIm+\trackH});
  \path[shade, shading=axis,
        left color=white, middle color=gray!12, right color=white,
        shading angle=0, draw=none, rounded corners=1.2pt]
    (\xmin,{\BIm-\trackH}) rectangle (\xmax,{\BIm+\trackH});
  \draw[gray!55,dashed] (\xmin,\AIm) -- (\xmax,\AIm);
  \draw[gray!55,dashed] (\xmin,\BIm) -- (\xmax,\BIm);
\coordinate (Odir) at (-2.75,-0.95);
\pgfmathsetmacro{\Ldir}{0.55}
\draw[->,black,thick] (Odir) -- ++(\Ldir,0)
  node[below right,xshift=-1pt,yshift=1pt] {$h$};
\draw[->,black,thick] (Odir) -- ++(0,\Ldir)
  node[above left,xshift=1pt,yshift=-1pt] {$ih$};
  \foreach \n in {-2,-1,0,1,2}{
    \fill[black] ($(A0)+(\n,0)$) circle (1.15pt);
    \filldraw[black]
      ($($(B0)+(\n,0)$)+(-0.032,-0.032)$) rectangle ($($(B0)+(\n,0)$)+(0.032,0.032)$);
  }
  \pgfmathsetmacro{\tick}{0.06}
  \pgfmathsetmacro{\dymeas}{0.18}
  \coordinate (mA) at ($(B0)+(-2,-\dymeas)$);
  \coordinate (mB) at ($(B0)+(-1,-\dymeas)$);
  \draw[black] (mA) -- (mB);
  \draw[black] ($(mA)+(0,\tick)$) -- ($(mA)-(0,\tick)$);
  \draw[black] ($(mB)+(0,\tick)$) -- ($(mB)-(0,\tick)$);
  \node[black] at ($ (mA)!0.5!(mB) + (0,-0.16) $) {$1$};
  \ifdim \AIm pt>\BIm pt
    \node[black] at (2,\labelAboveY) {$a-\tfrac12+\mathbb Z$};
    \node[black] at (3,\labelBelowY) {$b-\tfrac12+\mathbb Z$};
  \else
    \node[black] at (0,\labelAboveY) {$b-\tfrac12+\mathbb Z$};
    \node[black] at (0,\labelBelowY) {$a-\tfrac12+\mathbb Z$};
  \fi
  \node[black] at (\xmin+1.2,\UlabelY) {\(U\)};
\end{tikzpicture}
\caption[Two cosets of \(\mathbb Z\)]{Black discs denote points of the coset \(a-\tfrac12+\mathbb Z\), and black squares denote points of the coset \(b-\tfrac12+\mathbb Z\).}
\label{fig:two-Z-orbits-h2}
\end{figure}

\item
Fix \(h\in\mathbb C^\times\) and consider the strip
\[
S=\left\{(x+iy)h:\ 0\le x<\frac12,\ y\in\mathbb R\right\}.
\]
Let \(E\subseteq S\) be non-empty and define
\[
U=\bigcup_{v\in E}\left(v+\frac{h}{2}\mathbb Z\right).
\]
Then \(U\) is half-step-invariant. Moreover, the union is disjoint. Indeed, each coset occurring in the union meets \(S\) in exactly one point, namely its chosen representative \(v\in E\). Existence is immediate from the definition, since
\[
v\in \left(v+\frac h2\mathbb Z\right)\cap S
\]
for every \(v\in E\). For uniqueness, let \(v,w\in S\) and assume that
\(
v+\frac h2\mathbb Z=w+\frac h2\mathbb Z.
\)
Then \(v-w\in\tfrac h2\mathbb Z\). On the other hand, \(\Re\left(\frac vh\right), \Re\left(\frac wh\right)\in\left[0,\frac12\right)\), and therefore
\(
\Re(\frac{v-w}{h})\in(-\frac12,\frac12).
\)
Since
\[
\frac{v-w}{h}\in\frac12\mathbb Z\subset\mathbb R,
\]
it follows that
\[
\frac{v-w}{h}\in \frac12\mathbb Z\cap\left(-\frac12,\frac12\right)=\{0\},
\]
hence \(v=w\). Therefore distinct \(v\in E\) lie in distinct cosets, and the sets
\[
\left\{\,v+\frac h2\mathbb Z:\ v\in E\,\right\}
\]
are pairwise disjoint. Conversely, if \(z/h=x+iy\), with \(x,y\in\mathbb R\),
then, on taking \(k=\lfloor2x\rfloor\), the point \(v=z-kh/2\) belongs to
\(S\) and to the coset \(z+\tfrac h2\mathbb Z\). The strip \(S\) therefore acts as a transversal for the cosets of
\(\tfrac h2\mathbb Z\): each coset has exactly one representative in \(S\).
This is illustrated schematically in Figure~\ref{fig:strip-discrete-E}.
The choice of \(E\) determines how \(U\) sits inside \(\mathbb C\). For example, if \(E\) is finite, then \(U\) is a finite disjoint union of discrete arithmetic progressions; in Figure~\ref{fig:strip-discrete-E} one has \(E=\{v_1,v_2,v_3\}\). Conversely, if \(E\) is dense in \(S\), then \(U\) is dense in \(\mathbb C\). Indeed, for every \(z\in\mathbb C\), there are unique \(v\in S\) and \(k\in\mathbb Z\) such that
\[
z=v+\frac h2 k.
\]
If \(E\) is dense in \(S\), then one may choose a sequence \(v_n\in E\) with \(v_n\to v\). Hence \(v_n+\frac h2 k\in U\) and
\[
v_n+\frac h2 k\to z,
\]
so \(U\) is dense in \(\mathbb C\).

A concrete dense example is
\[
E=\left\{(r+si)h:\ r,s\in\mathbb Q,\ 0\le r<\frac12\right\}.
\]
Then \(E\) is dense in \(S\), and since \(\mathbb{Q}=\bigl(\mathbb{Q}\cap[0,\tfrac12)\bigr)+\tfrac12\,\mathbb{Z}\) it follows that
\(
U=\left\{(p+qi)h:\ p,q\in\mathbb Q\right\},
\)
which is dense in \(\mathbb C\). The preceding examples should not, however, be taken to suggest that half-step-invariant sets must essentially resemble a discretisation of \(\mathbb C\). As the following case shows, they may also arise in a rather different geometric form, in which the underlying coset structure is much less immediately apparent.

\begin{figure}[!htbp]
\centering
\begin{tikzpicture}[scale=1.5,>=latex]

  \pgfmathsetmacro{\hx}{1.10}
  \pgfmathsetmacro{\hy}{0.65}
  \coordinate (O)  at (0,0);
  \coordinate (h)  at (\hx,\hy);
  \coordinate (ih) at (-\hy,\hx);          
  \coordinate (hh) at ({\hx/2},{\hy/2});   

  \pgfmathsetmacro{\Bstrip}{1.55}  
  \pgfmathsetmacro{\HalfLen}{2.55} 


  \path[fill=gray!9,draw=none,rounded corners=1.2pt]
    ($(O)-\Bstrip*(ih)$) -- ($(O)+\Bstrip*(ih)$) --
    ($(hh)+\Bstrip*(ih)$) -- ($(hh)-\Bstrip*(ih)$) -- cycle;

  \draw[gray!70,thick] ($(O)-\Bstrip*(ih)$) -- ($(O)+\Bstrip*(ih)$)
    node[pos=0.965,above left,xshift=-5pt] {$x=0$};
  \draw[gray!70,thick,dashed] ($(hh)-\Bstrip*(ih)$) -- ($(hh)+\Bstrip*(ih)$)
    node[pos=0.965,above left,yshift=5pt] {$x=\tfrac12$};

  \coordinate (SbotO) at ($(O)-\Bstrip*(ih)$);
  \coordinate (SbotH) at ($(hh)-\Bstrip*(ih)$);
  \node[black] at ($($(SbotO)!0.70!(SbotH)$) + 0.18*(ih)$) {$S$};

  \coordinate (Odir) at ($(O)-1.15*(h)+1.20*(ih)$);
  \draw[->,black,thick] (Odir) -- ($(Odir)+(h)$)
    node[below right,xshift=-2pt] {$h$};
  \draw[->,black,thick] (Odir) -- ($(Odir)+(ih)$)
    node[above left,yshift=-2pt] {$ih$};

  \coordinate (v1) at ($(O) + 0.14*(h) + 0.95*(ih)$);
  \coordinate (v2) at ($(O) + 0.36*(h) - 0.40*(ih)$);
  \coordinate (v3) at ($(O) + 0.06*(h) + 0.10*(ih)$);

  \draw[black!25,dashed] ($(v1)-\HalfLen*(h)$) -- ($(v1)+\HalfLen*(h)$);
  \draw[black!25,dashed] ($(v2)-\HalfLen*(h)$) -- ($(v2)+\HalfLen*(h)$);
  \draw[black!25,dashed] ($(v3)-\HalfLen*(h)$) -- ($(v3)+\HalfLen*(h)$);

  \foreach \n in {-4,-3,...,5}{
    \fill[black] ($(v1)+\n*(hh)$) circle (0.85pt);
    \fill[black] ($(v2)+\n*(hh)$) circle (0.85pt);
    \fill[black] ($(v3)+\n*(hh)$) circle (0.85pt);
  }

  \node[black] at ($(O)+0.95*(h)+0.65*(ih)$) {\(U\)};

  \coordinate (mA) at ($(v3)+2*(hh)$);
  \coordinate (mB) at ($(v3)+3*(hh)$);

  \coordinate (mAoff) at ($(mA)-0.06*(ih)$);
  \coordinate (mBoff) at ($(mB)-0.06*(ih)$);

  \draw[black] (mAoff) -- (mBoff);
  \draw[black] ($(mAoff)+0.025*(ih)$) -- ($(mAoff)-0.025*(ih)$);
  \draw[black] ($(mBoff)+0.025*(ih)$) -- ($(mBoff)-0.025*(ih)$);

  \node[black] at ($ (mAoff)!0.5!(mBoff) - 0.135*(ih) $) {$\tfrac h2$};

  \filldraw[black] ($(v1)+(-0.024,-0.024)$) rectangle ($(v1)+(0.024,0.024)$);
  \filldraw[black] ($(v2)+(-0.024,-0.024)$) rectangle ($(v2)+(0.024,0.024)$);
  \filldraw[black] ($(v3)+(-0.024,-0.024)$) rectangle ($(v3)+(0.024,0.024)$);

  \node[above left,yshift=2pt]  at (v1) {$v_1$};
  \node[below right,yshift=-2pt] at (v2) {$v_2$};
  \node[above right,yshift=2pt]  at (v3) {$v_3$};

\end{tikzpicture}

\caption[Discrete arithmetic progressions and a transversal strip]{The strip \(S\) provides one representative for each coset of \(\tfrac h2\mathbb Z\); the figure shows three selected representatives. Black squares denote representatives, black discs denote points in the corresponding arithmetic progressions, and the pale dashed lines denote the affine lines parallel to \(h\) passing through these representatives.}\label{fig:strip-discrete-E}
\end{figure}

\item
Let \(E\subseteq\mathbb C^\times\) be non-empty, and define
\[
U=\left\{z\in\mathbb C:\exp\!\left(\frac{4\pi i}{h}z\right)\in E\right\}.
\]
Equivalently, \(U\) is the preimage of \(E\) under the map
\[
z\longmapsto \exp\!\left(\frac{4\pi i}{h}z\right).
\]
Because the complex exponential maps \(\mathbb C\) onto \(\mathbb C^\times\), the set \(U\) is non-empty whenever \(E\neq\varnothing\). Moreover,
\[
\exp\!\left(\frac{4\pi i}{h}\left(z\pm\frac h2\right)\right)
=
\exp\!\left(\frac{4\pi i}{h}z\right),
\]
so \(U\) is half-step-invariant. By Proposition~\ref{lem:U-classification}, \(U\) is therefore a disjoint union of cosets, even though this coset decomposition is not immediately evident from the defining condition. Now specialise to \(h=1\) and take \(E=A\), where
\[
A=
\left\{
w\in\mathbb C :
\frac12 < |w| < 2,
\,
-\frac{\pi}{4} < \Arg(w) < \frac{\pi}{4}
\right\},
\]
where \(\Arg\) denotes the principal argument. Writing \(z=x+iy\), the condition \(\exp(4\pi i z)\in A\) becomes
\[
-\frac{\ln 2}{4\pi}<y<\frac{\ln 2}{4\pi},
\quad
x\in \bigcup_{n\in\mathbb Z}\left(\frac{n}{2}-\frac{1}{16},\,\frac{n}{2}+\frac{1}{16}\right).
\]
Thus, in this case,
\[
U
=
\bigsqcup_{n\in\mathbb Z}
\left(
\left(
\frac{n}{2}-\frac{1}{16},
\frac{n}{2}+\frac{1}{16}
\right)
\times
\left(
-\frac{\ln 2}{4\pi},
\frac{\ln 2}{4\pi}
\right)
\right).
\]
The set \(U\) is therefore a periodic array of translates of a single rectangle. This configuration is illustrated schematically in Figure~\ref{fig:exp-preimage-sector}: the left panel shows the annular sector \(A\), while the right panel shows its preimage \(U\) under \(z\mapsto \exp(4\pi i z)\).

\begin{figure}[t]
\centering

\begin{minipage}[c]{0.40\textwidth}
\centering
\resizebox{\linewidth}{!}{%
\begin{tikzpicture}[>=latex]
  \def\rIn{0.5}
  \def\rOut{2.0}
  \def\aUp{45}
  \def\aDn{-45}

  \begin{scope}
    \clip
      (\aUp:\rOut) arc[start angle=45,end angle=-45,radius=\rOut]
      -- (\aDn:\rIn) arc[start angle=-45,end angle=45,radius=\rIn] -- cycle;

    \path[shade, shading=axis,
          left color=gray!6, right color=gray!6, middle color=gray!14,
          shading angle=0, draw=none]
      (-3.0,-3.0) rectangle (3.0,3.0);
  \end{scope}

  \draw[->,gray!60] (-2.35,0) -- (2.45,0) node[right] {$\Re w$};
  \draw[->,gray!60] (0,-2.35) -- (0,2.45) node[above] {$\Im w$};

  \draw[gray!55,dashed] (\aUp:\rOut) arc[start angle=45,end angle=315,radius=\rOut];
  \draw[gray!55,dashed] (\aUp:\rIn)  arc[start angle=45,end angle=315,radius=\rIn];

  \draw[gray!55,dashed] (\aUp:\rOut) arc[start angle=45,end angle=-45,radius=\rOut];
  \draw[gray!55,dashed] (\aUp:\rIn)  arc[start angle=45,end angle=-45,radius=\rIn];
  \draw[gray!55,dashed] (\aUp:\rIn) -- (\aUp:\rOut);
  \draw[gray!55,dashed] (\aDn:\rIn) -- (\aDn:\rOut);

  \draw[gray!55,dashed] (0,0) -- (\aUp:\rIn);
  \draw[gray!55,dashed] (0,0) -- (\aDn:\rIn);

  \draw[gray!45] (145:\rOut) -- ++(-0.34,0.16)
    node[left,gray!70] {$|w|=2$};
  \draw[gray!45] (205:\rIn) -- ++(-0.42,-0.12)
    node[left,gray!70] {$|w|=\tfrac12$};

  \node[black] at (10:1.20) {$A$};
\end{tikzpicture}%
}
\end{minipage}
\begin{minipage}[c]{0.16\textwidth}
\centering
\[
\xleftarrow{\;w=e^{4\pi iz}\;}
\]
\end{minipage}
\begin{minipage}[c]{0.40\textwidth}
\centering
\resizebox{\linewidth}{!}{%
\begin{tikzpicture}[>=latex]
  \pgfmathsetmacro{\dx}{1/16}
  \pgfmathsetmacro{\dy}{ln(2)/(4*pi)}

  \begin{scope}[x=3.0cm,y=3.0cm]

    \foreach \c in {-0.5,0.0,0.5,1.0}{
      \path[shade, shading=axis,
            left color=gray!6, right color=gray!6, middle color=gray!14,
            shading angle=0, draw=none]
        (\c-\dx,-\dy) rectangle (\c+\dx,\dy);
    }

    \draw[->,gray!60] (-1.05,0) -- (1.40,0) node[right] {$x$};
    \draw[->,gray!60] (0,-0.15) -- (0,0.15) node[above] {$y$};
    \foreach \c in {-0.5,0.0,0.5,1.0}{
      \draw[gray!55,dashed] (\c-\dx,-\dy) rectangle (\c+\dx,\dy);
      \draw[gray!60] (\c,0.006) -- (\c,-0.006);
    }

    \node[black] at (0.5,{\dy+0.02}) {\(U\)};

    \coordinate (Odir) at (1.07,-0.07);

    \pgfmathsetmacro{\Ldir}{0.14}
    \pgfmathsetmacro{\LdirV}{\Ldir}

    \draw[->,black,thick] (Odir) -- ++(\Ldir,0)
      node[below right,xshift=-1pt,yshift=1pt] {$h$};

    \draw[->,black,thick] (Odir) -- ++(0,\LdirV)
      node[above left,xshift=1pt,yshift=-1pt] {$ih$};

    \pgfmathsetmacro{\TickS}{0.006}
    \pgfmathsetmacro{\TickL}{0.018}
    \pgfmathsetmacro{\LabDn}{0.022}
    \pgfmathsetmacro{\LabUp}{0.012}

    \coordinate (pA) at (0,{-\dy-0.030});
    \coordinate (pB) at (0.5,{-\dy-0.030});
    \draw[black] (pA) -- (pB);
    \draw[black] ($(pA)+(0,\TickS)$) -- ($(pA)-(0,\TickS)$);
    \draw[black] ($(pB)+(0,\TickS)$) -- ($(pB)-(0,\TickS)$);
    \node[black] at ($ (pA)!0.5!(pB) + (0,-\LabDn) $) {$\tfrac12$};

    \coordinate (wA) at (-\dx,{\dy+0.022});
    \coordinate (wB) at ( \dx,{\dy+0.022});
    \draw[black] (wA) -- (wB);
    \draw[black] ($(wA)+(0,\TickS)$) -- ($(wA)-(0,\TickS)$);
    \draw[black] ($(wB)+(0,\TickS)$) -- ($(wB)-(0,\TickS)$);
    \node[black,anchor=west] at ($ (wA)!0.5!(wB) + (0.060,\LabUp) $) {$\tfrac18$};

    \pgfmathsetmacro{\xh}{-0.78}
    \coordinate (hA) at (\xh,-\dy);
    \coordinate (hB) at (\xh,\dy);
    \draw[black] (hA) -- (hB);
    \draw[black] ($(hA)+(\TickL,0)$) -- ($(hA)-(\TickL,0)$);
    \draw[black] ($(hB)+(\TickL,0)$) -- ($(hB)-(\TickL,0)$);

    \node[black,anchor=east] at (\xh-0.03,0) {$\tfrac{\ln 2}{2\pi}$};

  \end{scope}
\end{tikzpicture}%
}
\end{minipage}

\caption[Preimage of an annular sector]{For \(h=1\), every displayed rectangle, of width \(\tfrac18\) and height \(\ln2/(2\pi)\), is carried bijectively onto \(A\) by \(w=e^{4\pi iz}\); successive rectangles differ by the period \(\tfrac12\).}
\label{fig:exp-preimage-sector}
\end{figure}

\end{enumerate}
\end{example}

\section{Parametrising maps}

Fix \(h\in\mathbb C^\times\), let \(U\subseteq\mathbb C\) be a half-step-invariant set, for instance any of those presented in Example~\ref{ex:U-examples}, and let \(X:U\to\mathbb C\) be a map.

\begin{notation}
With \(h\), \(U\), and \(X\) as above, observe first that, since \(U\) is half-step-invariant, it is invariant under translations by \(\pm h/2\) and hence also by \(\pm h\). Therefore, for every \(s\in U\), the points \(s\pm h/2\) and \(s\pm h\) still belong to \(U\). We may consequently define
\begin{align*}
Y(s)&= X\!\left(s+\frac{h}{2}\right), \quad Z(s)= X\!\left(s-\frac{h}{2}\right),\\[7pt]
Y_1(s)&= X(s+h), \quad \,\,\,\, Z_1(s)= X(s-h).
\end{align*}
Thus \(Y\) and \(Z\) denote the half-step neighbours of \(X\), whereas
\(Y_1\) and \(Z_1\) denote its full-step neighbours.
All identities involving the functions \(X\), \(Y\), \(Z\), \(Y_1\), and \(Z_1\) are understood to hold on \(U\).
\end{notation}

We now single out the class of maps on \(U\) for which the two half-step neighbours never coincide.

\begin{definition}[Parametrising map]\label{def:lattice}
Fix \(h\in\mathbb C^\times\), and let \(U\subseteq\mathbb C\) be a half-step-invariant set. A \emph{parametrising map on \(U\)} is a map
\(
X:U\to\mathbb C
\)
such that \(Y-Z\) is nowhere-vanishing on \(U\). This notion is understood relative to the same fixed parameter \(h\) that governs the half-step-invariance of \(U\) and the definitions of \(Y\) and \(Z\). That dependence will remain implicit throughout.
\end{definition}

No condition is imposed in Definition~\ref{def:lattice} on the full-step difference \(Y_1-Z_1\). Without some additional restriction, however, the class of parametrising maps is far too broad.

\begin{example}[Parametrising maps on the preceding half-step-invariant sets]\label{ex:lattices-on-U}
The following examples illustrate parametrising maps on the half-step-invariant sets described in Example~\ref{ex:U-examples}.
\begin{enumerate}
\item
Let \(h=2\) and \(U=\mathbb Z\). Fix \(\alpha,\beta\in\mathbb C\) with \(\alpha\neq \tfrac12\), and define
\begin{align}\label{eq:bi-lattice-map}
X(s)=\left(\alpha-\frac12\right)s+\left(\beta-\frac12\right)\bigl(1-(-1)^s\bigr)
\end{align}
for every \(s\in U\). Then
\[
Y-Z=2\left(\alpha-\frac12\right)
\]
throughout \(U\), and hence \(X\) is a parametrising map on \(U\). Moreover, setting
\[
c=2\alpha-1,\quad d=\alpha+2\beta-\frac32,
\]
we have
\begin{align}\label{eq:bi-lattice-image}
X(U)=c\,\mathbb Z \cup \bigl(d+c\,\mathbb Z\bigr).
\end{align}
In particular, if one takes
\[
\alpha=\frac32, \quad \beta=\frac{\gamma}{2},
\]
one recovers
\[
X(s)=s+\frac12(\gamma-1)\bigl(1-(-1)^s\bigr),
\]
namely, the map discussed in the introduction.

\item
Let \(U\) be as in Example~\ref{ex:U-examples}(2). Fix \(c\in\mathbb C^\times\) and \(d\in\mathbb C\), and assume that
\[
b-a\in\left(\frac{d}{c}+\mathbb Z\right)\cup\left(-\frac{d}{c}+\mathbb Z\right).
\]
Then there exists an affine map \(X(s)=cs+e\) such that
\[
X(U)=c\,\mathbb Z\ \cup\ \bigl(d+c\,\mathbb Z\bigr).
\]
Indeed, if \(b-a-\frac{d}{c}\in\mathbb Z\), one may take
\[
X(s)=cs-c\left(a-\frac12\right),
\]
whereas if \(b-a+\frac{d}{c}\in\mathbb Z\), one may take
\[
X(s)=cs-c\left(a-\frac12\right)+d.
\]
Then
\[
Y-Z=2c
\]
throughout \(U\), and hence \(X\) is a parametrising map on \(U\). In either case, the image \(X(U)\) is equal to a set of the form displayed in item~(1). In particular, taking
\[
a=\frac12,\quad b=\frac{\gamma+1}{2},\quad c=2,\quad d=\gamma,
\]
we have
\[
b-a=\frac{\gamma}{2}=\frac{d}{c},
\]
and therefore one may choose
\[
X(s)=2s,
\]
which is precisely the affine map presented in the introduction as an alternative to the map of item~(1).

\item
Let \(U\) be as in Example~\ref{ex:U-examples}(3). Assume that \(0<q<1\)\footnote{The restriction \(0<q<1\) is imposed here only for simplicity of exposition. It allows one to write \(q^s=e^{s\ln q}\) with the ordinary real logarithm, thereby avoiding any discussion of branches.}, and define
\[
X(s)=q^{\,s},\quad q^{\,s}=e^{s\ln q},
\]
for every \(s\in U\). Then
\[
(Y-Z)(s)
= q^{\,s-\frac h2}\bigl(q^{\,h}-1\bigr).
\]
Since \(q^{\,s-\frac h2}\neq0\) for every \(s\in\mathbb C\), it follows that \(Y-Z\) is nowhere vanishing on \(U\) if and only if \(q^{\,h}\neq1\). Thus \(X\) is a parametrising map on \(U\) precisely under this condition. Moreover,
\[
X(U)
=\bigcup_{v\in E} \left\{ q^s : s \in v + \frac{h}{2}\mathbb Z \right\}.
\]
Figure~\ref{fig:XU-qexp-three-spirals} is schematic: for each \(v\in E\), the coset \(v+\tfrac h2\mathbb Z\) is mapped by \(X(s)=q^s\) onto the geometric progression
\[
q^{\,v}\left(q^{\frac h2}\right)^{\mathbb Z},
\]
and \(X(U)\) is obtained as the union of these images.

\begin{figure}[t]
\centering
\begin{tikzpicture}[scale=1,>=latex]

  \pgfmathsetmacro{\Den}{20}
  \pgfmathsetmacro{\lnq}{-1/\Den}

  \pgfmathsetmacro{\hx}{1.0}
  \pgfmathsetmacro{\hy}{10.0}
  \pgfmathsetmacro{\hhx}{\hx/2}
  \pgfmathsetmacro{\hhy}{\hy/2}

  \pgfmathsetmacro{\rOne}{0.18} \pgfmathsetmacro{\sOne}{ 0.04}
  \pgfmathsetmacro{\rTwo}{0.38} \pgfmathsetmacro{\sTwo}{-0.03}
  \pgfmathsetmacro{\rThr}{0.08} \pgfmathsetmacro{\sThr}{ 0.09}

  \pgfmathsetmacro{\voneX}{\rOne*\hx + \sOne*(-\hy)}
  \pgfmathsetmacro{\voneY}{\rOne*\hy + \sOne*( \hx)}
  \pgfmathsetmacro{\vtwoX}{\rTwo*\hx + \sTwo*(-\hy)}
  \pgfmathsetmacro{\vtwoY}{\rTwo*\hy + \sTwo*( \hx)}
  \pgfmathsetmacro{\vthrX}{\rThr*\hx + \sThr*(-\hy)}
  \pgfmathsetmacro{\vthrY}{\rThr*\hy + \sThr*( \hx)}

  \pgfmathsetmacro{\Rmax}{4.10}   
  \pgfmathsetmacro{\AxExt}{0.35}  
  \pgfmathsetmacro{\Tmin}{-160}
  \pgfmathsetmacro{\Tmax}{  40}
  \pgfmathsetmacro{\Nmin}{-150}
  \pgfmathsetmacro{\Nmax}{  35}

  \pgfmathsetmacro{\rcirc}{1.35}  
  \pgfmathsetmacro{\msq}{0.032}   
  \pgfmathsetmacro{\mtr}{0.040}   

  \tikzset{guide/.style={
    gray!55,
    line width=0.65pt,
    dash pattern=on 2.1pt off 1.7pt,
    line cap=round
  }}

  \begin{scope}[x=0.90cm,y=0.90cm]

    \draw[->,gray!60] (-\Rmax,0) -- (\Rmax+\AxExt,0)
      node[right,scale=0.75] {$\Re\!\bigl(X(s)\bigr)$};
    \draw[->,gray!60] (0,-\Rmax) -- (0,\Rmax+\AxExt)
      node[above,scale=0.75] {$\Im\!\bigl(X(s)\bigr)$};

    \begin{scope}
      \clip (0,0) circle (\Rmax);

      \draw[guide,domain=\Tmin:\Tmax,samples=1400,smooth,variable=\t]
        plot ({exp((\voneX+\t*\hhx)*\lnq)*cos(deg((\voneY+\t*\hhy)*\lnq))},
              {exp((\voneX+\t*\hhx)*\lnq)*sin(deg((\voneY+\t*\hhy)*\lnq))});

      \draw[guide,domain=\Tmin:\Tmax,samples=1400,smooth,variable=\t]
        plot ({exp((\vtwoX+\t*\hhx)*\lnq)*cos(deg((\vtwoY+\t*\hhy)*\lnq))},
              {exp((\vtwoX+\t*\hhx)*\lnq)*sin(deg((\vtwoY+\t*\hhy)*\lnq))});

      \draw[guide,domain=\Tmin:\Tmax,samples=1400,smooth,variable=\t]
        plot ({exp((\vthrX+\t*\hhx)*\lnq)*cos(deg((\vthrY+\t*\hhy)*\lnq))},
              {exp((\vthrX+\t*\hhx)*\lnq)*sin(deg((\vthrY+\t*\hhy)*\lnq))});

      \foreach \n in {\Nmin,...,\Nmax}{

        \pgfmathsetmacro{\sx}{\voneX + \n*\hhx}
        \pgfmathsetmacro{\sy}{\voneY + \n*\hhy}
        \pgfmathsetmacro{\rad}{exp(\sx*\lnq)}
        \pgfmathsetmacro{\ang}{\sy*\lnq}
        \fill[black] ({\rad*cos(deg(\ang))},{\rad*sin(deg(\ang))}) circle (\rcirc pt);

        \pgfmathsetmacro{\sx}{\vtwoX + \n*\hhx}
        \pgfmathsetmacro{\sy}{\vtwoY + \n*\hhy}
        \pgfmathsetmacro{\rad}{exp(\sx*\lnq)}
        \pgfmathsetmacro{\ang}{\sy*\lnq}
        \fill[black] ({\rad*cos(deg(\ang))},{\rad*sin(deg(\ang))}) circle (\rcirc pt);

        \pgfmathsetmacro{\sx}{\vthrX + \n*\hhx}
        \pgfmathsetmacro{\sy}{\vthrY + \n*\hhy}
        \pgfmathsetmacro{\rad}{exp(\sx*\lnq)}
        \pgfmathsetmacro{\ang}{\sy*\lnq}
        \fill[black] ({\rad*cos(deg(\ang))},{\rad*sin(deg(\ang))}) circle (\rcirc pt);
      }
    \end{scope}

    \node[black, scale=0.7] at (2.10,2.35) {$X(U)$};

  \end{scope}

\end{tikzpicture}
\caption[Discrete spirals for \(X(s)=q^s\)]
{The set \(X(U)\) for \(X(s)=q^s\) with \(q=e^{-1/20}\) and \(h=1+10i\), where \(U\) is the union of the three cosets \(v_j+\tfrac h2\mathbb Z\) determined by the representatives \(v_1=(0.18+0.04\,i)h\), \(v_2=(0.38-0.03\,i)h\), and \(v_3=(0.08+0.09\,i)h\). Each arithmetic progression is mapped onto a discrete subset of a logarithmic spiral; the black points represent the plotted elements of \(X(U)\), and the dashed curves are included to indicate the underlying spirals.}
\label{fig:XU-qexp-three-spirals}
\end{figure}

\item
Let \(U\) be as in Example~\ref{ex:U-examples}(4). Define
\[
X(s)=s^2+\frac14\,s,
\]
for every \(s\in U\). A direct computation gives
\[
(Y-Z)(s)
= h\left(2s+\frac14\right),
\]
so, since \(h\neq0\), the function \(Y-Z\) has the unique zero \(s=-\tfrac18\). Hence \(X\) is a parametrising map on \(U\) if and only if \(-\tfrac18\notin U\). In the particular case
\[
U
=
\bigsqcup_{n\in\mathbb Z}
\left(
\left(
\frac{n}{2}-\frac{1}{16},
\frac{n}{2}+\frac{1}{16}
\right)
\times
\left(
-\frac{\ln 2}{4\pi},
\frac{\ln 2}{4\pi}
\right)
\right),
\]
one has \(-\tfrac18\notin U\). Indeed, if \(-\tfrac18\in U\), then for some \(n\in\mathbb Z\) one would have
\[
\left|-\frac18-\frac n2\right|<\frac1{16}.
\]
However,
\[
\left|-\frac18-\frac n2\right|
=
\frac{|4n+1|}{8}
\ge
\frac18
>
\frac1{16},
\]
since \(4n+1\) is a non-zero integer. This contradiction shows that \(-\tfrac18\notin U\). Consequently, \(X\) is a parametrising map on \(U\). Figure~\ref{fig:quadratic-image-sector} shows the image, under \(X\), of the four rectangles displayed in Figure~\ref{fig:exp-preimage-sector}. In contrast with the original periodic rectangular decomposition, these images are bounded by quadratic arcs, thereby reflecting the non-affine nature of the parametrisation.
\end{enumerate}

\begin{figure}[t]
\centering
\resizebox{0.80\linewidth}{!}{%
\begin{tikzpicture}[>=latex,x=5.4cm,y=5.4cm]

  \pgfmathsetmacro{\dx}{1/16}
  \pgfmathsetmacro{\dy}{ln(2)/(4*pi)}

  \tikzset{
    imagepiece/.style={
      fill=gray!12,
      draw=gray!72,
      line width=0.35pt,
      dash pattern=on 1.8pt off 1.4pt,
      line cap=round,
      line join=round
    }
  }

  \newcommand{\DrawImagePiece}[1]{%
    \pgfmathsetmacro{\c}{#1}
    \pgfmathsetmacro{\xl}{\c-\dx}
    \pgfmathsetmacro{\xr}{\c+\dx}

    \path[imagepiece]
      plot[domain=-\dy:\dy,samples=120,variable=\t]
        ({\xl*\xl-\t*\t+\xl/4},{\t*(2*\xl+1/4)})
      --
      plot[domain=\xl:\xr,samples=120,variable=\x]
        ({\x*\x-\dy*\dy+\x/4},{\dy*(2*\x+1/4)})
      --
      plot[domain=\dy:-\dy,samples=120,variable=\t]
        ({\xr*\xr-\t*\t+\xr/4},{\t*(2*\xr+1/4)})
      --
      plot[domain=\xr:\xl,samples=120,variable=\x]
        ({\x*\x-\dy*\dy+\x/4},{-\dy*(2*\x+1/4)})
      -- cycle;
  }

  \foreach \c in {-0.5,0,0.5,1}{
    \DrawImagePiece{\c}
  }

  \draw[->,gray!68,line width=0.35pt] (-0.06,0) -- (1.48,0)
    node[right,scale=0.72] {$\Re\!\bigl(X(s)\bigr)$};
  \draw[->,gray!68,line width=0.35pt] (0,-0.155) -- (0,0.155)
    node[above,scale=0.72] {$\Im\!\bigl(X(s)\bigr)$};

  \foreach \u in {0,0.125,0.375,1.25}{
    \draw[gray!65,line width=0.30pt] (\u,0.005) -- (\u,-0.005);
  }

\end{tikzpicture}%
}
\caption[Quadratic image of the four displayed rectangles]
{The four rectangles appearing in the right-hand panel of
Figure~\ref{fig:exp-preimage-sector}, mapped by
\(X(s)=s^2+\tfrac14s\). The two coordinate directions are drawn to the
same scale.}
\label{fig:quadratic-image-sector}
\end{figure}

\end{example}
In Examples~\ref{ex:lattices-on-U}(1) and~\ref{ex:lattices-on-U}(2), the parametrising maps are genuinely different, even though their image sets coincide. This may at first suggest that both choices are equally compatible with the theory under consideration here. The admissibility condition introduced in the following section is intended precisely to determine when each choice is compatible with the theory. This conclusion should not be read as saying that a visible image determines a unique polynomial calculus: in general, it does not.

\section{Admissible maps}

The following definition introduces the sole restriction placed here on parametrising maps. It constrains simultaneously the map itself and the half-step-invariant set on which it is defined.

\begin{definition}[Admissible map\footnote{The term \emph{admissible map} is used here in place of \emph{lattice}, as found in the English-language literature on orthogonal polynomials, since the latter may suggest an algebraic or group-theoretic lattice structure that is not intended here. For that reason, we prefer not to retain the traditional terminology. In the original Russian literature, including \cite{NU83}, one finds instead the word \ru{сетка} (plural \ru{сетки}), meaning ``grid'' or ``mesh''. Within the Nikiforov and Uvarov framework, this is arguably the more accurate term.}]\label{def:admissible-lattice}
Fix \(h\in\mathbb C^\times\), let \(U\subseteq\mathbb C\) be a half-step-invariant set, and let \(X:U\to\mathbb C\) be a parametrising map on \(U\). The map \(X\) is said to be \emph{admissible on \(U\)} if \(X(U)\) is infinite\footnote{This guarantees that the polynomial \(Q\) in \eqref{Q} is uniquely determined.} and, for every polynomial \(p\) of degree at least \(1\), there exists a polynomial \(Q\) of degree at most \(\deg p-1\) such that
\begin{equation}\label{Q}
\frac{p(Y)-p(Z)}{Y-Z}=Q(X)
\end{equation}
throughout \(U\).
\end{definition}

\begin{example}[Admissibility for the preceding parametrising maps]\label{ex:admissibility-checks}
The following examples determine which of the parametrising maps introduced in Example~\ref{ex:lattices-on-U} are admissible.
\begin{enumerate}
\item
In Example~\ref{ex:U-examples}(1), take \(h=2\), so that \(U=\mathbb Z\). Let \(X:U\to\mathbb C\) be given by
\[
X(s)=\left(\alpha-\frac12\right)s+\left(\beta-\frac12\right)\bigl(1-(-1)^s\bigr),
\]
for every \(s\in U\), where \(\alpha,\beta\in\mathbb C\) and \(\alpha\neq \tfrac12\). Then
\[
Y(s)=X(s+1),\quad Z(s)=X(s-1).
\]
Specialising \(p\) in \eqref{Q} to \(p(x)=x^2\), the divided difference reduces to \(Y+Z\), so there must exist an affine polynomial \(Q\) such that
\(
Q\bigl(X\bigr)=Y+Z.
\)
A direct computation shows that
\[
(Y+Z)(s)=2X(s)+(-1)^s(4\beta-2),
\]
for every \(s\in\mathbb Z\). Since \(\alpha\neq \frac12\), both \(X(2\mathbb Z)\) and \(X(2\mathbb Z+1)\) are infinite. Hence the affine polynomial \(Q\) satisfies
\[
Q(x)=2x\pm(4\beta-2)
\]
for infinitely many values of \(x\), with the sign \(+\) on \(X(2\mathbb Z)\) and the sign \(-\) on \(X(2\mathbb Z+1)\). Since both expressions define the same affine polynomial \(Q\), they must therefore agree identically. Thus
\(
4\beta-2=-(4\beta-2),
\)
and therefore \(\beta=\frac12\). It follows that admissibility forces \(\beta=\tfrac12\).
In that case,
\[
X(s)=\left(\alpha-\frac12\right)s,
\]
which is a non-constant affine map, and hence is admissible by the argument given in the next item.

\item
Let \(U\) be as in Example~\ref{ex:U-examples}(2). Let \(X:U\to\mathbb C\) be given by
\[
X(s)=cs+e,
\]
for every \(s\in U\), where \(c\in\mathbb C^\times\) and \(e\in\mathbb C\), as in Example~\ref{ex:lattices-on-U}(2). In this case \(X(U)\) is infinite, and \(X\) is admissible on \(U\).
Indeed,
\[
Y=X+c, \quad Z=X-c,
\]
throughout \(U\), so for every polynomial \(p\) of degree at least \(1\),
\[
\frac{p(X+c)-p(X-c)}{2c}=Q(X).
\]
To see that \(Q\) is a polynomial of degree at most \(\deg p-1\), it suffices by linearity to treat monomials. If \(p(x)=x^n\) with \(n\in\mathbb N^\times\), then
\[
\frac{(X+c)^n-(X-c)^n}{2c}
=
\sum_{\substack{j=0\\ j\ \mathrm{odd}}}^{n}
\binom{n}{j}X^{\,n-j}c^{\,j-1},
\]
which is a polynomial in \(X\) of degree \(n-1\). Hence \eqref{Q} holds throughout \(U\). Since \(c\neq0\), the image \(X(U)\) contains the infinite arithmetic progression \(X(v+\tfrac h2\mathbb Z)\) for any \(v\in U\), and is therefore infinite. The same argument, with \(c\) replaced by \(c\,h/2\), shows that every non-constant affine map is admissible on any half-step-invariant domain \(U\). In particular, in the situation discussed in the introduction,
\[
\mathbb S=\mathbb Z\cup\left(\frac{\gamma}{2}+\mathbb Z\right)
\]
is a half-step-invariant set. Therefore \(X(s)=2s\) is admissible on the whole of \(\mathbb S\), that is, simultaneously on both arithmetic progressions that make up the domain. From this point of view, the sum in \eqref{eq:para-k-functional} is simply the sum written over the finite portions of those two arithmetic progressions, whose ambient progressions are represented schematically by the dashed tracks in Figure~\ref{fig:two-Z-orbits-h2}. It is here that the structural issue must be stated with some care. Should the aim be merely to recover the set displayed in \eqref{eq:bi-lattice-image}, one cannot---within the constraints of admissibility, and hence within any theory that requires it---begin with \(U=\mathbb Z\) and introduce \eqref{eq:bi-lattice-map} under the guise of a ``bi-lattice''. The correct way to formulate the situation is to start from a half-step-invariant set \(U\) already decomposed into distinct cosets, as in Figure~\ref{fig:strip-discrete-E}, and then map those cosets under the affine transformation. Accordingly, as already observed in the introduction, the relevant decomposition takes place at the level of the half-step-invariant set \(U\), which splits into a disjoint union of cosets, and not at the level of the parametrising map, whose form may suggest an interlacing that is not intrinsic to the admissible map structure, as we saw in the preceding example.

\item
Let \(U\) be as in Example~\ref{ex:U-examples}(3), assume that \(0<q<1\) and \(q^{\,h}\neq1\), and let \(X:U\to\mathbb C\) be given by
\[
X(s)=q^{\,s},
\]
for every \(s\in U\), as in Example~\ref{ex:lattices-on-U}(3). Set
\(
a=q^{h/2}.
\)
Then
\[
Y=aX,\quad Z=a^{-1}X,
\]
throughout \(U\). Since \(Y-Z=(a-a^{-1})X\), it follows that
\[
\frac{p(aX)-p(a^{-1}X)}{(a-a^{-1})X}
=Q(X).
\]
As in the preceding item, to see that the resulting expression is in fact a polynomial in \(X\) of degree at most \(\deg p-1\), it suffices by linearity to treat monomials. If \(p(x)=x^n\) with \(n\in\mathbb N^\times\), then pointwise on \(U\) one has
\[
\frac{p(aX)-p(a^{-1}X)}{(a-a^{-1})X}
=
\frac{a^n-a^{-n}}{a-a^{-1}}\,X^{n-1}.
\]
Thus, although the quotient is first written with a denominator, it coincides on
\(U\) with a genuine polynomial in the indeterminate \(X\), of degree at most
\(n-1\); its degree is exactly \(n-1\) unless \(a^{2n}=1\). Moreover, since
\(a^2=q^{\,h}\neq1\), we have \(a\neq a^{-1}\), and since
\(X(s)=q^{\,s}\neq0\) for every \(s\in U\), the denominator in the original pointwise expression never vanishes. Hence \eqref{Q} holds throughout \(U\). Consequently, \(X\) is admissible on \(U\) exactly when the remaining requirement in Definition~\ref{def:admissible-lattice}, namely the infinitude of \(X(U)\), is also satisfied. Since
\[
U=\bigsqcup_{v\in E}\left(v+\frac h2\mathbb Z\right),
\]
it follows that
\[
X\!\left(v+\frac h2\mathbb Z\right)=q^{\,v}a^{\mathbb Z},
\]
and therefore
\[
X(U)=\bigcup_{v\in E} q^{\,v}a^{\mathbb Z}.
\]
Thus the infinitude of \(X(U)\) depends on the combined effect of the collection of cosets contained in \(U\), and not merely on the behaviour along a single coset. In particular, if \(a\) is not a root of unity, then each set \(q^{\,v}a^{\mathbb Z}\) is infinite, and hence \(X(U)\) is infinite.

\item
Let \(U\) be any half-step-invariant set with \(-\tfrac18\notin U\), as in Example~\ref{ex:U-examples}(4). Let \(X:U\to\mathbb C\) be given by
\[
X(s)=s^2+\frac14\,s,
\]
for every \(s\in U\), as in Example~\ref{ex:lattices-on-U}(4). A direct computation yields
\begin{align*}
Y+Z=2X+\frac{h^2}{2},\quad
YZ=X^2-\frac{h^2}{2}\,X+\frac{h^2(4h^2-1)}{64},
\end{align*}
throughout \(U\), so that \(Y+Z\) and \(YZ\) are polynomials in \(X\). Again, it suffices by linearity to consider a monomial \(p(x)=x^n\) with \(n\in\mathbb N^\times\). One then has
\[
\frac{Y^n-Z^n}{Y-Z}
=\sum_{k=0}^{n-1}Y^{n-1-k}Z^k,
\]
which is symmetric in \(Y\) and \(Z\). It is therefore a polynomial in the elementary symmetric functions \(Y+Z\) and \(YZ\). More precisely, the above polynomial is symmetric in \(Y\) and \(Z\), homogeneous of total degree \(n-1\), and can therefore be written as a linear combination of monomials of the form
\[
(Y+Z)^{\,n-1-2j}(YZ)^j,
\]
with \(j\) ranging over the integers satisfying \(0\le j\le \lfloor\tfrac{n-1}{2}\rfloor\). Now \(Y+Z\) is affine in \(X\), whereas \(YZ\) is quadratic in \(X\). Hence each such monomial has degree at most
\(
(n-1-2j)+2j=n-1
\)
as a polynomial in \(X\). Hence \eqref{Q} holds throughout \(U\). It remains to verify that \(X(U)\) is infinite. Since \(U\) is non-empty and half-step-invariant, for every \(v\in U\) it contains the whole coset
\(
v+\tfrac h2\mathbb Z.
\)
On that coset one has
\[
X\!\left(v+k\frac h2\right)
=
\left(v+k\frac h2\right)^2+\frac14\left(v+k\frac h2\right),
\]
which is a non-constant quadratic polynomial in \(k\in\mathbb Z\), with leading coefficient \(h^2/4\neq0\). Its set of values is therefore infinite, and hence so is \(X(U)\). Thus \(X\) is admissible on \(U\). In particular, \(X\) is admissible on the half-step-invariant periodic array of rectangles whose visible portion is shown on the right-hand side of Figure~\ref{fig:exp-preimage-sector}.

\end{enumerate}
\end{example}

The next example treats an important exponential case that will play a central role below.

\begin{example}
Let \(q\in\mathbb C^\times\). Assume that either
\[
q=e^{2\pi iM/\nu},
\quad
1\le M<\nu,
\quad
\gcd(M,\nu)=1,
\quad
\nu\ge3,
\]
or else
\[
|q|\neq1.
\]
Fix a determination of \(\log q\), let \(h\in\mathbb C^\times\), and let \(U\subseteq\mathbb C\) be a half-step-invariant set. Let \(X:U\to\mathbb C\) be given by
\[
X(s)=\frac12\bigl(q^{\,s}+q^{-s}\bigr),
\quad
q^{\,s}=e^{s\log q},
\]
for every \(s\in U\). Set
\(
a=q^{h/2}.
\)
Then
\[
Y(s)=\frac12\bigl(a\,q^{\,s}+a^{-1}q^{-s}\bigr),
\quad
Z(s)=\frac12\bigl(a^{-1}q^{\,s}+a\,q^{-s}\bigr),
\]
and a direct computation gives
\begin{equation}\label{eq:qcosh-YminusZ}
(Y-Z)(s)=\frac12\,(a-a^{-1})\bigl(q^{\,s}-q^{-s}\bigr).
\end{equation}
Set
\[
K=\frac{\pi i}{\log q}\,\mathbb Z.
\]
Since \(a^2=q^{\,h}\), it follows from \eqref{eq:qcosh-YminusZ} that \(X\) is a parametrising map on \(U\) if and only if
\[
q^{\,h}\neq1,
\quad
U\cap K=\varnothing.
\]
Equivalently, since \(q^{2s}=1\) if and only if \(q^{\,s}=\pm1\), and
\[
X(s)=\frac12\bigl(q^{\,s}+q^{-s}\bigr)=\pm1
\]
holds if and only if \(q^{\,s}=\pm1\), this condition may be written as
\[
q^{\,h}\neq1,
\quad
X(s)\neq\pm1
\]
for all \(s\in U\). Assume henceforth that these conditions are satisfied. Then \(Y+Z\) and \(YZ\) are polynomial functions of \(X\); indeed,
\begin{align*}
Y+Z=(a+a^{-1})\,X,\quad
YZ=X^2+\frac{(a-a^{-1})^2}{4}.
\end{align*}
Consequently, the same argument as in Example~\ref{ex:admissibility-checks}\emph{(4)} shows that \(X\) is admissible on \(U\) whenever \(X(U)\) is infinite. We next relate this criterion to the sets introduced in Example~\ref{ex:U-examples}.

\begin{enumerate}
\item
In Example~\ref{ex:U-examples}(1), the coset
\[
U=\frac h2\mathbb Z
\]
is not suitable in the present setting, since \(0\in U\cap K\), and hence \(X(0)=1\). One therefore replaces \(U\) by a translated coset
\[
U_\varepsilon=\varepsilon+\frac h2\mathbb Z,
\]
where \(\varepsilon\notin K-\tfrac h2\mathbb Z\). Since \(K-\tfrac h2\mathbb Z\) is countable, such a choice of \(\varepsilon\) always exists.

\item
In Example~\ref{ex:U-examples}(2), one argues similarly: it suffices to choose representatives of two distinct cosets such that neither of the corresponding cosets meets \(K\).

\item
In Example~\ref{ex:U-examples}(3), define
\begin{align*}
E'=\left\{v\in E:\ \left(v+\frac h2\mathbb Z\right)\cap K=\varnothing\right\},
\quad
U'=\bigcup_{v\in E'}\left(v+\frac h2\mathbb Z\right).
\end{align*}
Then \(X(s)\neq\pm1\) for all \(s\in U'\). Moreover, \(U'\neq\varnothing\) whenever
\[
E\not\subseteq K-\frac h2\mathbb Z;
\]
in particular, this holds whenever \(E\) is uncountable. In the situation represented in Figure~\ref{fig:strip-discrete-E}, passing from \(E\) to \(E'\) amounts simply to discarding those representatives whose cosets meet \(K\).

\item
In Example~\ref{ex:U-examples}(4), with \(h=1\) and \(E=A\), the original set meets \(K\) at \(0\). Indeed, one has \(0\in K\), and also \(0\in U\), since \(e^{4\pi i\cdot 0}=1\in A\). Thus the corresponding map is not a parametrising map on that original set. One could again restrict to those cosets that avoid \(K\), but that configuration will play no role in what follows.

\end{enumerate}

For admissibility, it remains only to ensure that the image of the chosen set is infinite. To simplify notation, write again \(U\) for any one of the modified sets arising from
Example~\ref{ex:U-examples}\emph{(1)}--\emph{(3)}. Since \(U\) is a union of cosets of \(\tfrac h2\mathbb Z\), we have
\[
X(U)=\bigcup_{v\in V}X\!\left(v+\frac h2\mathbb Z\right),
\]
where \(V\subseteq U\) is a set of representatives of the cosets contained in \(U\). For each such \(v\),
\[
X\!\left(v+k\frac h2\right)
=
\frac12\left(a^k q^{\,v}+a^{-k}q^{-v}\right),
\]
for every \(k\in\mathbb Z\). If \(a=q^{h/2}\) is not a root of unity, then \(X\!\left(v+\tfrac h2\mathbb Z\right)\) is infinite for every \(v\). Indeed, if
\[
\frac12\left(a^k q^{\,v}+a^{-k}q^{-v}\right)=\zeta,
\]
then, upon setting \(x=a^k\), one obtains the quadratic equation
\[
q^{\,v}x^2-2\zeta x+q^{-v}=0.
\]
Thus, for every fixed \(\zeta\), the quantity \(x=a^k\) can take at most two values. Since \(a\) is not a root of unity, the map \(k\mapsto a^k\) is injective on \(\mathbb Z\), and therefore only finitely many integers \(k\) can give the same value \(\zeta\). It follows that \(X\!\left(v+\tfrac h2\mathbb Z\right)\) is infinite. Consequently, in the situations arising from Examples~\ref{ex:U-examples}(1) and~\ref{ex:U-examples}(2), the corresponding modified domains yield admissible maps whenever \(q^{\,h}\neq1\) and \(a=q^{h/2}\) is not a root of unity. The same conclusion holds for the restricted set arising from Example~\ref{ex:U-examples}(3), provided \(E'\neq\varnothing\). When \(a=q^{h/2}\) is a root of unity, the image of each individual coset
\(
v+\tfrac h2\mathbb Z
\)
is finite, since, for every \(k\in\mathbb Z\),
\[
X\!\left(v+k\frac h2\right)
=
\frac12\left(a^k q^{\,v}+a^{-k}q^{-v}\right)
\]
depends only on the residue class of \(k\) modulo the order of \(a\). This does not, however, preclude admissibility. What matters is the behaviour of the full set \(U\), not merely that of a single arithmetic progression. In that case,
\[
X(U)=\bigcup_{v\in V}X\!\left(v+\frac h2\mathbb Z\right),
\]
and \(X(U)\) is infinite if and only if infinitely many of the finite sets
\(
X\!(v+\tfrac h2\mathbb Z),
\)
are distinct. This possibility is not merely formal, and it already occurs in the setting of Example~\ref{ex:U-examples}\emph{(3)}. Indeed, take \(h=1\), let
\(
q=e^{2\pi i/3},
\)
and choose
\(
\log q=2\pi i/3.
\)
Then
\(
a=q^{1/2}=e^{\pi i/3}
\)
is a root of unity, whereas
\(
q^h=q\neq1.
\)
Now let
\[
E=\{iy:\ y\in\mathbb R\}\subset
\left\{x+iy:\ 0\le x<\frac12,\ y\in\mathbb R\right\},
\]
and define
\[
U=\bigcup_{y\in\mathbb R}\left(iy+\frac12\mathbb Z\right).
\]
Since
\[
K=\frac32\,\mathbb Z\subset\mathbb R,
\]
every coset \(iy+\tfrac12\mathbb Z\) with \(y\neq0\) avoids \(K\). After removing the single bad coset \(\tfrac12\mathbb Z\), one obtains a half-step-invariant set \(U'\) on which \(X\) is parametrising and which still contains infinitely many cosets. For each fixed \(y\neq0\), the set
\(
X\!(iy+\frac12\mathbb Z)
\)
is finite, since \(a\) has finite order. On the other hand,
\[
X(iy)=\frac12\left(q^{iy}+q^{-iy}\right)
=\cosh\!\left(\frac{2\pi y}{3}\right),
\]
so \(X(U')\) is infinite. Thus the torsion case already exhibits the basic phenomenon that will be important later: although the image of each individual coset may be finite, the full image may still be infinite because infinitely many distinct cosets contribute distinct finite pieces.
\end{example}

One may ask from where the map
\[
X(s)=\frac12\bigl(q^s+q^{-s}\bigr)
\]
really arises. Its most familiar classical antecedent is the cosine variable
\[
\cos \theta=\frac12\bigl(e^{i\theta}+e^{-i\theta}\bigr),
\]
which already lies behind the Chebyshev polynomials and, more generally, behind the passage from an exponential parametrisation to a variable symmetric under inversion. From this point of view, the map \(X\) is more fundamental than any particular family in which it later came to prominence, including the Askey--Wilson polynomials; see \cite{AW85} and the references therein. For the purposes of the present paper, the historical question, whether for this map or for any other, is secondary. Our concern is not to trace the first appearance of a particular parametrisation, but to explain why such forms arise at all. What matters is that, once admissibility is imposed, the form above emerges as one of the possibilities singled out by the Nikiforov--Uvarov mechanism itself; this is precisely the content of the next section.

\section{The form of the admissible maps}
The content of this section marks our first contact with the material of the
1983 preprint of Nikiforov and Uvarov \cite{NU83}, closely related to the
treatment in their 1984 book \cite{NU84}\footnote{Although the later book written with Suslov \cite{NSU91} gives a refined and very useful exposition of this material, we shall not need to rely on it here. The recurrence relation used below already appears as equation~(39) of \cite{NU83}; \cite{NU84} contains the closely related published treatment.}. Their starting point is the discretisation of the Bochner differential equation: derivatives are replaced by symmetric finite differences, yielding a second-order difference equation that serves both as a discrete analogue of the continuous equation and, in the appropriate regime, as a second-order approximation to it. From the present perspective, that choice should be viewed not as a substantive restriction but rather as an expository normalisation naturally adapted to their way of understanding orthogonality. Once this discretisation has been adopted, Nikiforov and Uvarov ask a structural question: for which underlying discrete domains does the resulting equation retain the expected hypergeometric features, in the sense that the relevant operators preserve polynomiality and the coefficient functions retain the appropriate algebraic form?
The following theorem gives the classification of admissible maps, placing the corresponding classification of Nikiforov and Uvarov in the present framework.

\begin{theorem}\label{thm:admissible-lattice-classification-U}
Fix \(h\in\mathbb C^\times\), let \(U\subseteq\mathbb C\) be half-step-invariant, and let \(X:U\to\mathbb C\) be an admissible map. Then there exist constants \(A,B\in\mathbb C\) such that
\begin{equation}\label{eq:global-full-step-relation}
Y_1+Z_1=A\,X+B
\end{equation}
throughout \(U\). In particular, on every full-step arithmetic progression
\(s_0+h\mathbb Z\subseteq U\), the values of \(X\) satisfy a
second-order linear recurrence with constant coefficients, and are therefore
determined along that progression by the pair \(X(s_0)\) and \(Y_1(s_0)\). Moreover, exactly one of the following cases holds.
\begin{itemize}
\item[\emph{(i)}] \(A=2\).
For every \(s_0\in U\) and every \(k\in\mathbb Z\),
\[
X(s_0+kh)=a\,k^2+ b_{s_0}\,k+c_{s_0},
\]
where
\[
a=\frac{B}{2},\quad b_{s_0}=Y_1(s_0)-X(s_0)-\frac{B}{2}, \quad c_{s_0}=X(s_0).
\]

\item[\emph{(ii)}] \(A=-2\). Admissibility forces the additional first-order relation
\begin{equation}\label{eq:A=-2-first-order}
Y_1+X=\frac{B}{2}
\end{equation}
throughout \(U\), and consequently, for every \(s_0\in U\) and every \(k\in\mathbb Z\),
\[
X(s_0+k h)=(-1)^k\, a_{s_0}+b,
\]
where
\[
a_{s_0}=X(s_0)-\frac{B}{4},\quad b=\frac{B}{4}.
\]

\item[\emph{(iii)}] \(A^2\neq4\).
Fix \(q\in\mathbb C^\times\) with \(q\neq\pm1\) such that \(A=q+q^{-1}\), and set
\[
c=\frac{B}{2-A}.
\]
Then, for every \(s_0\in U\) and every \(k\in\mathbb Z\),
\[
X(s_0+kh)=a_{s_0}\,q^{k}+b_{s_0}\,q^{-k}+c,
\]
where \(a_{s_0}\) and \(b_{s_0}\) are given by
\begin{align*}
a_{s_0}=\frac{Y_1(s_0)-c-(X(s_0)-c)\,q^{-1}}{q-q^{-1}},\quad
b_{s_0}=\frac{(X(s_0)-c)\,q-Y_1(s_0)+c}{q-q^{-1}}.
\end{align*}
\end{itemize}
In particular, along each full-step arithmetic progression \(s_0+h\mathbb Z\subseteq U\), the map \(X\) is necessarily of quadratic type \emph{(i)}, alternating type \emph{(ii)}, or \(q\)-exponential type \emph{(iii)}. No other dependence of \(X\) on \(s\) along a full-step arithmetic progression can be compatible with admissibility. The constants \(A\) and \(B\) are global, that is, independent of the choice of \(s_0\). By contrast, the coefficients not already fixed globally by \(A\) and \(B\) are determined by the initial values along the chosen arithmetic progression and may therefore vary from one arithmetic progression to another.
\end{theorem}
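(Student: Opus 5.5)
The plan is to extract everything from the single instance \(p(x)=x^2\) of Definition~\ref{def:admissible-lattice}, and then propagate it by half-step translations. For \(p(x)=x^2\) the divided difference in \eqref{Q} equals \(Y+Z\). So admissibility gives an affine polynomial \(Q_2(x)=\alpha x+\beta\) with
\[
Y+Z=\alpha X+\beta
\]
throughout \(U\). The coefficients \(\alpha,\beta\) are uniquely determined, and hence global, because \(X(U)\) is infinite (the footnote to Definition~\ref{def:admissible-lattice}). The higher monomials, for instance \(p(x)=x^3\), which makes \(YZ\) a quadratic in \(X\), are not needed for the statement as formulated.

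The key step exploits the half-step invariance of \(U\) from Proposition~\ref{lem:U-classification}: the identity may be evaluated at \(s\pm h/2\), which still lie in \(U\). At \(s+h/2\) it reads \(Y_1+X=\alpha Y+\beta\), and at \(s-h/2\) it reads \(X+Z_1=\alpha Z+\beta\). Adding these and substituting \(Y+Z=\alpha X+\beta\) once more gives
\[
Y_1+Z_1=(\alpha^2-2)X+(\alpha+2)\beta .
\]
This is \eqref{eq:global-full-step-relation} with \(A=\alpha^2-2\) and \(B=(\alpha+2)\beta\), both global. Restricting to \(s_0+h\mathbb Z\) and writing \(x_k=X(s_0+kh)\) yields the constant-coefficient recurrence \(x_{k+1}+x_{k-1}=Ax_k+B\) for all \(k\in\mathbb Z\). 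Its solution is determined by \(x_0=X(s_0)\) and \(x_1=Y_1(s_0)\), running forwards and backwards.

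The trichotomy follows from the characteristic equation \(r^2-Ar+1=0\), whose roots are \(q,q^{-1}\) with \(q+q^{-1}=A\); the three cases are mutually exclusive by definition.
\begin{itemize}
\item[(i)] If \(A=2\), the double root is \(1\). A particular solution is \(\tfrac B2k^2\), since its second difference is \(B\). Fitting \(x_0,x_1\) gives the stated \(a,b_{s_0},c_{s_0}\).
\item[(iii)] If \(A^2\neq4\), the roots are distinct and \(q\neq\pm1\). The constant \(c=B/(2-A)\) is a fixed point of the recurrence, and solving the \(2\times2\) system for \(x_0-c\) and \(x_1-c\) gives the displayed \(a_{s_0},b_{s_0}\).
\end{itemize}

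The case I expect to need genuine care is (ii), \(A=-2\). There the recurrence alone allows \((-1)^k(a+bk)+B/4\), and the secular term \(bk\) must be excluded. Here I would return to the half-step relation rather than the full-step one. Since \(\alpha^2-2=-2\) forces \(\alpha=0\), we have \(Y+Z=\beta\) on \(U\) and \(B=2\beta\). Evaluating \(Y+Z=\beta\) at \(s+h/2\) gives \(Y_1+X=\beta=B/2\), which is \eqref{eq:A=-2-first-order}. This first-order relation gives \(x_{k+1}-\tfrac B4=-(x_k-\tfrac B4)\), hence \(x_k=(-1)^k\bigl(X(s_0)-\tfrac B4\bigr)+\tfrac B4\).

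In all three cases, the coefficients beyond \(A,B\) come from \(X(s_0)\) and \(Y_1(s_0)\) and so may depend on the progression. This gives the final remarks of the statement.
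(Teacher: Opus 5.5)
Your proposal is correct and follows essentially the same route as the paper. Both arguments take \(p(x)=x^2\) to get \(Y+Z=\alpha X+\beta\), evaluate it at \(s\pm\tfrac h2\) to obtain \(A=\alpha^2-2\) and \(B=(\alpha+2)\beta\), and solve the resulting constant-coefficient recurrence case by case; in case (ii) both use \(\alpha=0\) and the shifted half-step relation to derive \(Y_1+X=\tfrac B2\), which excludes the secular term.
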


The following remark explains the relation with the original Nikiforov--Uvarov recurrence.

\begin{remark}[Relation with Nikiforov--Uvarov, 1983]\label{rem:NU2order}
The relation \eqref{eq:global-full-step-relation} becomes, upon restriction to a full-step arithmetic progression \(s_0+h\mathbb Z\subseteq U\), the constant-coefficient recurrence considered by Nikiforov and Uvarov in \cite[(39)]{NU83}. Fix \(s_0\in U\) and restrict \(X\) to the full-step arithmetic progression \(s_0+h\mathbb Z\subseteq U\). If one sets, for every \(k\in\mathbb Z\),
\[
x_k=X(s_0+kh),
\]
then \eqref{eq:global-full-step-relation} becomes
\[
x_{k+1}+x_{k-1}=A\,x_k+B,
\]
which is exactly the recurrence appearing in \cite[(39)]{NU83}.
\end{remark}

The following example illustrates the alternating case in Theorem~\ref{thm:admissible-lattice-classification-U}.

\begin{example}[Normalised alternating case]\label{ex:A=-2-minus-one-power}
Let \(U\subseteq\mathbb C\) be the dense half-step-invariant set from Example~\ref{ex:U-examples}\emph{(3)} with \(h=1\), namely
\[
U=\{p+qi:\ p,q\in\mathbb Q\}\subset\mathbb C.
\]
Let \(X:U\to\mathbb C\) be given by
\[
X(s)=e^{\pi i s},
\]
for every \(s\in U\). Equivalently, one may write \(X(s)=(-1)^s\), provided the convention
\[
(-1)^s=e^{\pi i s}
\]
is understood. For each fixed \(s_0\in U\), the restriction of \(X\) to the full-step arithmetic progression \(s_0+\mathbb Z\subseteq U\) is given by
\[
X(s_0+k)=e^{\pi i(s_0+k)}=(-1)^kX(s_0),
\]
so the dependence on the chosen arithmetic progression enters only through the initial value \(X(s_0)=e^{\pi i s_0}\). Moreover, for every \(s\in U\),
\begin{align*}
Y(s)&=i\,e^{\pi i s}=i\,X(s),\quad \,\,\,\,\,
Z(s)=-i\,e^{\pi i s}=-i\,X(s),\\[7pt]
Y_1(s)&=-e^{\pi i s}=-X(s),\quad
Z_1(s)=-e^{\pi i s}=-X(s).
\end{align*}
Since \(Y=iX\) and \(Z=-iX\), one has \(Y-Z=2iX\neq0\) throughout \(U\). Moreover, writing
\[
p(x)=a(-x^2)+x\,b(-x^2),
\]
for polynomials \(a\) and \(b\), one finds
\[
\frac{p(Y)-p(Z)}{Y-Z}=b(X^2),
\]
so \eqref{Q} holds with a polynomial in \(X\) of degree at most \(\deg p-1\). Since \(X(U)\) is infinite, it follows that \(X\) is admissible on \(U\). Furthermore, \eqref{eq:global-full-step-relation} and \eqref{eq:A=-2-first-order} hold with \(A=-2\) and \(B=0\). Therefore the present map is a normalised realisation of case~\emph{(ii)} of Theorem~\ref{thm:admissible-lattice-classification-U}.

Why, then, does a single progression fail to display the admissible map,
whereas the ambient set succeeds? The mechanism becomes completely visible
once we follow a few progressions under \(X\). Take
\[
v_1=\frac18,
\quad
v_2=\frac14+\frac i4,
\quad
v_3=\frac38-\frac i6.
\]
For every \(j\in\{1,2,3\}\) and \(n\in\mathbb Z\),
\[
X\!\left(v_j+\frac n2\right)=i^nX(v_j).
\]
Thus each individual half-step arithmetic progression still produces only
four values. What varies from one progression to another is the initial value
\(X(v_j)\). Figure~\ref{fig:strip-discrete-E2} records this distinction
directly. The finite display is not intended to replace the density argument
given above; it isolates the mechanism that a restriction to one progression
necessarily conceals.

\begin{figure}[!htbp]
\centering

\begin{minipage}[c]{0.40\textwidth}
\centering
\resizebox{\linewidth}{!}{%
\begin{tikzpicture}[>=latex,x=2.4cm,y=2.4cm]

  \coordinate (v1) at ({1/8},0);
  \coordinate (v2) at ({1/4},{1/4});
  \coordinate (v3) at ({3/8},{-1/6});
  \coordinate (hh) at (0.5,0);

  \path[fill=gray!10]
    (0,-0.44) rectangle (0.5,0.44);
  \draw[gray!70,line width=0.40pt]
    (0,-0.44) -- (0,0.44);
  \draw[gray!70,line width=0.40pt,dashed]
    (0.5,-0.44) -- (0.5,0.44);

  \draw[->,gray!68,line width=0.35pt] (-1.05,0) -- (1.55,0)
    node[right,scale=0.72] {$\Re s$};
  \draw[->,gray!68,line width=0.35pt] (0,-0.48) -- (0,0.48)
    node[above,scale=0.72] {$\Im s$};

  \foreach \v in {v2,v3}{
    \draw[gray!55,line width=0.32pt,
      dash pattern=on 1.8pt off 1.4pt]
      ($(\v)-2.25*(hh)$) -- ($(\v)+2.25*(hh)$);
  }
  \foreach \v in {v1,v2,v3}{
    \foreach \n in {-2,-1,0,1,2}{
      \fill[black] ($(\v)+\n*(hh)$) circle (1.0pt);
    }
    \filldraw[black]
      ($(\v)+(-0.018,-0.018)$)
      rectangle
      ($(\v)+(0.018,0.018)$);
  }

  \node[above,yshift=2pt] at (v1) {$v_1$};
  \node[above,yshift=2pt] at (v2) {$v_2$};
  \node[below,yshift=-2pt] at (v3) {$v_3$};
  \node at (0.25,-0.39) {$S$};
  \node at (0.88,0.35) {$U$};

\end{tikzpicture}%
}
\end{minipage}\hfill
\begin{minipage}[c]{0.14\textwidth}
\centering
\[
\xrightarrow{\;X\;}
\]
\end{minipage}
\hfill
\begin{minipage}[c]{0.40\textwidth}
\centering
\resizebox{\linewidth}{!}{%
\begin{tikzpicture}[>=latex,x=1.7cm,y=1.7cm]

  \pgfmathsetmacro{\rOne}{1}
  \pgfmathsetmacro{\rTwo}{exp(-pi/4)}
  \pgfmathsetmacro{\rThree}{exp(pi/6)}

  \tikzset{
    imagebase/.style={
      rectangle,
      draw=black,
      fill=black,
      inner sep=0pt,
      minimum size=2.7pt
    }
  }

  \draw[->,gray!68,line width=0.35pt] (-1.92,0) -- (1.92,0)
    node[right,scale=0.72] {$\Re\!\bigl(X(s)\bigr)$};
  \draw[->,gray!68,line width=0.35pt] (0,-1.92) -- (0,1.92)
    node[above,scale=0.72] {$\Im\!\bigl(X(s)\bigr)$};

  \newcommand{\DrawFourCycle}[2]{%
    \draw[gray!55,line width=0.32pt,
      dash pattern=on 1.8pt off 1.4pt] (0,0) circle (#1);
    \foreach \j in {0,1,2,3}{
      \fill[black] ({#2+90*\j}:#1) circle (1.05pt);
    }
    \node[imagebase] at (#2:#1) {};
  }

  \DrawFourCycle{\rOne}{22.5}
  \DrawFourCycle{\rTwo}{45}
  \DrawFourCycle{\rThree}{67.5}

  \draw[->,gray!68,line width=0.40pt]
    (5:1.28) arc[start angle=5,end angle=95,radius=1.28];
  \node[scale=0.78] at (50:1.48) {$\times i$};

\end{tikzpicture}%
}
\end{minipage}

\caption[Half-step progressions and their four-cycles]
{The three selected representatives
\(v_1=\tfrac18\), \(v_2=\tfrac14+\tfrac i4\), and
\(v_3=\tfrac38-\tfrac i6\), together with portions of their half-step
arithmetic progressions. Under \(X(s)=e^{\pi i s}\), these progressions
become four-cycles with initial polar data
\((1,\pi/8)\), \((e^{-\pi/4},\pi/4)\), and
\((e^{\pi/6},3\pi/8)\), respectively. The squares mark the images of
the selected representatives. The dashed circles are polar guides only.}
\label{fig:strip-discrete-E2}
\end{figure}

\end{example}

The next example illustrates the same alternating mechanism on a non-discrete half-step-invariant set.

\begin{example}[Normalised alternating case revisited]\label{ex:rectangular-U-minus-one-power}
Let \(U\subseteq\mathbb C\) be the half-step-invariant set from Example~\ref{ex:U-examples}\emph{(4)} with \(h=1\), namely
\[
U
=
\bigsqcup_{k\in\mathbb Z}
\left(
\left(
\frac{k}{2}-\frac{1}{16},
\frac{k}{2}+\frac{1}{16}
\right)
\times
\left(
-\frac{\ln 2}{4\pi},
\frac{\ln 2}{4\pi}
\right)
\right)
\subset\mathbb C,
\]
where we identify \((x,y)\) with \(x+iy\). Let \(X:U\to\mathbb C\) be given by
\[
X(s)=e^{\pi i s},
\]
for every \(s\in U\). As in Example~\ref{ex:A=-2-minus-one-power}, one has
\[
Y(s)=iX(s),\quad Z(s)=-iX(s),
\]
for every \(s\in U\), and the same decomposition argument shows that \eqref{Q} holds with a polynomial in \(X\) of degree at most \(\deg p-1\). Thus, provided \(X(U)\) is infinite, the map \(X\) is admissible on \(U\). This remaining condition is indeed satisfied. For
\[
y\in\left(-\frac{\ln 2}{4\pi},\,\frac{\ln 2}{4\pi}\right)
\]
one has \(iy\in U\), and
\[
X(iy)=e^{\pi i(iy)}=e^{-\pi y}\in\bigl(2^{-1/4},\,2^{1/4}\bigr),
\]
so \(X(U)\) contains an open real interval.

The rectangular example permits us to display not merely a schematic
traversal, but the whole image. Put
\[
R_0=
\left(-\frac1{16},\frac1{16}\right)
+i\left(-\frac{\ln2}{4\pi},\frac{\ln2}{4\pi}\right),
\quad
R_k=R_0+\frac k2,
\]
and
\[
A_0=
\left\{
w\in\mathbb C:
2^{-1/4}<|w|<2^{1/4},
\quad
|\Arg w|<\frac{\pi}{16}
\right\}.
\]
Since
\[
X\!\left(s+\frac k2\right)=i^kX(s),
\]
we have
\[
X(R_k)=i^kA_0,
\quad
X(U)=\bigcup_{j=0}^{3}i^jA_0.
\]
Thus the four-valued behaviour on an individual half-step arithmetic
progression and the infinitude of the total image are not competing facts:
the former records the action of translation by \(\tfrac12\), while the latter
comes from the continuum of points already present in one rectangle.
Figure~\ref{fig:rectangular-U-orbits} makes both facts visible.

\begin{figure}[!htbp]
\centering

\begin{minipage}[c]{0.40\textwidth}
\centering
\resizebox{\linewidth}{!}{%
\begin{tikzpicture}[>=latex,x=3.2cm,y=3.2cm]

  \pgfmathsetmacro{\dx}{1/16}
  \pgfmathsetmacro{\dy}{ln(2)/(4*pi)}
  \pgfmathsetmacro{\yp}{ln(2)/(8*pi)}
  \pgfmathsetmacro{\ym}{-ln(2)/(8*pi)}

  \foreach \k in {0,1,2,3}{
    \pgfmathsetmacro{\c}{\k/2}

    \path[fill=gray!12,fill opacity=0.66]
      (\c-\dx,-\dy) rectangle (\c+\dx,\dy);
  }

  \draw[->,gray!68,line width=0.35pt] (-0.14,0) -- (1.68,0)
    node[right,scale=0.72] {$\Re s$};
  \draw[->,gray!68,line width=0.35pt] (0,-0.145) -- (0,0.145)
    node[above,scale=0.72] {$\Im s$};
  \foreach \k in {0,1,2,3}{
    \pgfmathsetmacro{\c}{\k/2}
    \draw[gray!72,line width=0.35pt,
      dash pattern=on 1.8pt off 1.4pt]
      (\c-\dx,-\dy) rectangle (\c+\dx,\dy);
  }

  \draw[gray!52,line width=0.30pt,
    dash pattern=on 1.8pt off 1.4pt] (-0.10,\yp) -- (1.62,\yp);
  \draw[gray!52,line width=0.30pt,
    dash pattern=on 1.8pt off 1.4pt] (-0.10,\ym) -- (1.62,\ym);

  \draw[->,gray!68,line width=0.40pt]
    (0,0.105) -- (0.5,0.105)
    node[midway,above,scale=0.78] {$+\tfrac12$};

  \foreach \k in {0,1,2,3}{
    \pgfmathsetmacro{\c}{\k/2}
    \fill[black] (\c,\yp) circle (1.0pt);
    \filldraw[black]
      (\c-0.012,\ym-0.012)
      rectangle
      (\c+0.012,\ym+0.012);
  }

  \node at (0.75,-0.125) {$U$};

\end{tikzpicture}%
}
\end{minipage}\hfill
\begin{minipage}[c]{0.14\textwidth}
\centering
\[
\xrightarrow{\;X\;}
\]
\end{minipage}
\hfill
\begin{minipage}[c]{0.40\textwidth}
\centering
\resizebox{\linewidth}{!}{%
\begin{tikzpicture}[>=latex,x=1.8cm,y=1.8cm]

  \pgfmathsetmacro{\rin}{pow(2,-1/4)}
  \pgfmathsetmacro{\rout}{pow(2,1/4)}
  \pgfmathsetmacro{\rplus}{pow(2,-1/8)}
  \pgfmathsetmacro{\rminus}{pow(2,1/8)}
  \pgfmathsetmacro{\halfangle}{180/16}

  \tikzset{
    sector/.style={
      fill=gray!12,
      fill opacity=0.66,
      draw=gray!72,
      line width=0.35pt,
      dash pattern=on 1.8pt off 1.4pt,
      line cap=round,
      line join=round
    },
    imagebase/.style={
      rectangle,
      draw=black,
      fill=black,
      inner sep=0pt,
      minimum size=2.7pt
    }
  }

  \foreach \a in {0,90,180,270}{
    \path[fill=gray!12,fill opacity=0.66,draw=none]
      ({\a-\halfangle}:\rin)
      --
      ({\a-\halfangle}:\rout)
      arc[
        start angle={\a-\halfangle},
        end angle={\a+\halfangle},
        radius=\rout
      ]
      --
      ({\a+\halfangle}:\rin)
      arc[
        start angle={\a+\halfangle},
        end angle={\a-\halfangle},
        radius=\rin
      ]
      -- cycle;
  }

  \draw[->,gray!68,line width=0.35pt] (-1.55,0) -- (1.55,0)
    node[right,scale=0.72] {$\Re\!\bigl(X(s)\bigr)$};
  \draw[->,gray!68,line width=0.35pt] (0,-1.55) -- (0,1.55)
    node[above,scale=0.72] {$\Im\!\bigl(X(s)\bigr)$};
  \foreach \a in {0,90,180,270}{
    \draw[gray!72,line width=0.35pt,
      dash pattern=on 1.8pt off 1.4pt,line cap=round,line join=round]
      ({\a-\halfangle}:\rin)
      --
      ({\a-\halfangle}:\rout)
      arc[
        start angle={\a-\halfangle},
        end angle={\a+\halfangle},
        radius=\rout
      ]
      --
      ({\a+\halfangle}:\rin)
      arc[
        start angle={\a+\halfangle},
        end angle={\a-\halfangle},
        radius=\rin
      ]
      -- cycle;
    \draw[gray!68,line width=0.35pt]
      ({\a-\halfangle}:\rplus)
      arc[
        start angle={\a-\halfangle},
        end angle={\a+\halfangle},
        radius=\rplus
      ];
    \draw[gray!68,line width=0.35pt]
      ({\a-\halfangle}:\rminus)
      arc[
        start angle={\a-\halfangle},
        end angle={\a+\halfangle},
        radius=\rminus
      ];
    \fill[black] (\a:\rplus) circle (1.0pt);
    \node[imagebase] at (\a:\rminus) {};
  }

  \draw[->,gray!68,line width=0.40pt]
    (5:1.38) arc[start angle=5,end angle=95,radius=1.38];
  \node[scale=0.78] at (50:1.53) {$\times i$};

  \node[scale=0.78] at (225:1.42) {$X(U)$};

\end{tikzpicture}%
}
\end{minipage}

\caption[The rectangular domain and its alternating image]
{Four consecutive rectangles in \(U\) and their images under
\(X(s)=e^{\pi i s}\). Translation by \(\tfrac12\) becomes multiplication
by \(i\), and \(X(U)=\bigcup_{j=0}^{3}i^jA_0\), where
\(A_0=\{w:2^{-1/4}<|w|<2^{1/4},\ |\Arg w|<\pi/16\}\).
The discs and squares correspond respectively to the levels
\(y_+=\ln2/(8\pi)\) and \(y_-=-\ln2/(8\pi)\).}
\label{fig:rectangular-U-orbits}
\end{figure}
\end{example}

\begin{proof}[Proof of Theorem~\ref{thm:admissible-lattice-classification-U}]

\medskip
\noindent{\em Step 1:}
By admissibility, for every polynomial \(p\) of degree at least \(1\), there exists a unique polynomial
\(Q\), with \(\deg Q\le \deg p-1\), such that
\[
\frac{p(Y)-p(Z)}{Y-Z}=Q(X)
\]
throughout \(U\). Since \(X\) is a parametrising map, \(Y\neq Z\) throughout \(U\). Taking \(p(x)=x^2\), we obtain
\begin{equation}\label{eq:halfstep-affine}
Y+Z=\alpha X+\beta
\end{equation}
for suitable \(\alpha,\beta\in\mathbb C\).

\medskip
\noindent{\em Step 2:}
Evaluating the identity \eqref{eq:halfstep-affine} at \(s+\tfrac h2\) and \(s-\tfrac h2\), we obtain
\begin{align*}
X+Y_1=\alpha Y+\beta,\quad
X+Z_1=\alpha Z+\beta.
\end{align*}
Adding these identities and using \eqref{eq:halfstep-affine}, we get
\[
Y_1+Z_1=(\alpha^2-2)X+(\alpha+2)\beta.
\]
Hence \eqref{eq:global-full-step-relation} holds throughout \(U\), with
\[
A=\alpha^2-2,
\quad
B=(\alpha+2)\beta.
\]
\medskip
\noindent\emph{Step 3:}
Fix \(s_0\in U\), and, for every \(k\in\mathbb Z\), set
\[
x_k=X(s_0+kh).
\]
Then \eqref{eq:global-full-step-relation} restricts to
\begin{equation}\label{eq:recurrence-n}
x_{k+1}-A\,x_k+x_{k-1}=B.
\end{equation}
This is an elementary constant-coefficient recurrence, and its closed-form solutions are recorded in \cite[pp.~18--20]{NU83}; see Remark~\ref{rem:NU2order}.

\medskip
\noindent\emph{Step 4:} \noindent\emph{Case (i): \(A=2\).}
Then \eqref{eq:recurrence-n} becomes
\[
x_{k+1}-2x_k+x_{k-1}=B.
\]
Let \(d_k=x_{k+1}-x_k\). Then \(d_k-d_{k-1}=B\), and therefore
\[
d_k=d_0+Bk,
\]
where
\[
d_0=x_1-x_0=Y_1(s_0)-X(s_0).
\]
It follows that the quadratic polynomial
\[
u_k=\frac{B}{2}\,k^2+\left(x_1-x_0-\frac{B}{2}\right)k+x_0
\]
satisfies
\[
u_{k+1}-2u_k+u_{k-1}=B,
\]
together with the initial conditions \(u_0=x_0\) and \(u_1=x_1\). By uniqueness for the recurrence \eqref{eq:recurrence-n}, we conclude that \(x_k=u_k\) for all \(k\in\mathbb Z\), and therefore the expression for \(x_k\) is exactly the one stated in Theorem~\ref{thm:admissible-lattice-classification-U}\emph{(i)}.

\smallskip
\noindent\emph{Case (ii): \(A=-2\).}
Since \(A=\alpha^2-2\), the condition \(A=-2\) forces \(\alpha=0\). Thus \eqref{eq:halfstep-affine} reads
\[
Y+Z=\beta,
\]
and from \(B=(\alpha+2)\beta\) we obtain \(B=2\beta\).
Replacing \(s\) by \(s+\tfrac{h}{2}\) gives
\[
X+Y_1=\beta=\frac{B}{2},
\]
which is \eqref{eq:A=-2-first-order}. Evaluating at \(s=s_0+kh\) yields
\[
x_{k+1}+x_k=\frac{B}{2},
\]
so \(x_{k+1}=\frac{B}{2}-x_k\), whence \(x_{k+2}=x_k\). Writing
\[
x_k=\frac{B}{4}+u_k,
\]
we obtain
\[
u_{k+1}=-u_k,
\]
so that \(u_k=(-1)^k u_0\). Hence the corresponding formula for \(x_k\) is precisely that stated in Theorem~\ref{thm:admissible-lattice-classification-U}\emph{(ii)}.

\smallskip
\noindent\emph{Case (iii): \(A^2\neq4\).}
Choose \(q\in\mathbb C^\times\), \(q\neq\pm1\), with \(A=q+q^{-1}\), and set
\[
c=\frac{B}{2-A},\quad y_k=x_k-c.
\]
Then \eqref{eq:recurrence-n} becomes the homogeneous recurrence
\[
y_{k+1}-A\,y_k+y_{k-1}=0,
\]
whose characteristic polynomial \(r^2-Ar+1\) has distinct roots \(q\) and \(q^{-1}\). Hence
\[
y_k=a_{s_0}\,q^k+b_{s_0}\,q^{-k},
\]
so
\[
x_k=c+a_{s_0}\,q^k+b_{s_0}\,q^{-k}.
\]
Moreover,
\[
y_0=x_0-c=X(s_0)-c,\quad y_1=x_1-c=Y_1(s_0)-c,
\]
so \(a_{s_0}\) and \(b_{s_0}\) are uniquely determined by
\[
a_{s_0}+b_{s_0}=y_0,\quad a_{s_0}q+b_{s_0}q^{-1}=y_1,
\]
and solving gives the stated formulas for \(a_{s_0}\) and \(b_{s_0}\). This is Theorem~\ref{thm:admissible-lattice-classification-U}\emph{(iii)}.

\end{proof}

\begin{corollary}[Half-step refinement of the classification]
\label{cor:admissible-half-step-branches}
Under the hypotheses of
Theorem~\ref{thm:admissible-lattice-classification-U}, there exist unique
constants \(\eta,\beta\in\mathbb C\) such that
\[
Y+Z=\eta X+\beta
\]
throughout \(U\). The constants \(A\) and \(B\) in
\eqref{eq:global-full-step-relation} are then given by
\[
A=\eta^2-2,
\quad
B=(\eta+2)\beta.
\]
If \(v\in U\) is fixed and
\[
x_m=X\!\left(v+m\frac h2\right),
\quad m\in\mathbb Z,
\]
then the following half-step alternatives result.
\begin{itemize}
\item[\emph{(i)}]
If \(\eta=2\), then
\[
x_m=\frac{\beta}{2}m^2+r_v m+t_v.
\]
This is the \emph{ordinary quadratic branch}.

\item[\emph{(ii)}]
If \(\eta=-2\), then
\[
x_m=\frac{\beta}{4}+(r_v+t_v m)(-1)^m.
\]
This is the \emph{reflected quadratic branch}. Both this branch and the
preceding one give \(A=2\), although in the reflected branch one necessarily
has \(B=0\).

\item[\emph{(iii)}]
If \(\eta=0\), then
\[
x_m=\frac{\beta}{2}+r_v i^m+t_v(-i)^m.
\]
This is the general alternating branch, and it gives \(A=-2\).

\item[\emph{(iv)}]
If \(\eta\notin\{0,\pm2\}\), choose \(\lambda\in\mathbb C^\times\) such that
\[
\lambda+\lambda^{-1}=\eta.
\]
Then
\[
x_m=\frac{\beta}{2-\eta}+r_v\lambda^m+t_v\lambda^{-m}.
\]
Writing \(q=\lambda^2\), this is precisely the \(q\)-exponential branch at
full step. Once one of the two global characteristic roots \(q\) satisfying
\[
A=q+q^{-1}
\]
has been fixed, there is a unique square root, denoted by \(q^{1/2}\), such
that
\[
q^{1/2}+q^{-1/2}=\eta.
\]
The parameter \(q\) itself remains fixed; the preceding condition merely
records which of its two square roots is dictated by the half-step translation.
\end{itemize}
Here \(r_v,t_v\in\mathbb C\) are determined by the initial values on the
half-step progression through \(v\).
\end{corollary}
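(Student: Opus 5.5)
The plan is to reuse Steps 1 and 2 of the proof of Theorem~\ref{thm:admissible-lattice-classification-U}, and then solve a second-order recurrence at half step rather than at full step.

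\emph{Existence, uniqueness and the formulas for $A$, $B$.} Applying admissibility to $p(x)=x^2$ gives an affine $Q$ with $Y+Z=Q(X)$. Writing $Q(x)=\eta x+\beta$, Step~1 yields the existence of $\eta,\beta$. Uniqueness follows because $X(U)$ is infinite: two affine polynomials agreeing on an infinite set coincide. Step~2 of the theorem already gives $A=\eta^2-2$ and $B=(\eta+2)\beta$, with $\alpha$ renamed $\eta$.

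\emph{The half-step recurrence.} Fix $v\in U$. Since $v+m\frac h2\in U$ for all $m$, evaluating $Y+Z=\eta X+\beta$ at $s=v+m\frac h2$ gives
\[
x_{m+1}+x_{m-1}=\eta x_m+\beta,\qquad m\in\mathbb Z .
\]
This has the same shape as the recurrence in Step~3, with $(A,B)$ replaced by $(\eta,\beta)$. Its characteristic polynomial is $r^2-\eta r+1$, and it is solved case by case, determining the two free constants $r_v,t_v$ from $x_0$ and $x_1$ exactly as in the theorem.

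\emph{Case $\eta=2$.} The argument is Case~(i) verbatim with $B$ replaced by $\beta$. This gives $x_m=\frac\beta2 m^2+r_vm+t_v$.

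\emph{Case $\eta=-2$.} There is a double root $-1$. Set $y_m=x_m-\frac\beta4$. The constant $\frac\beta4$ is a particular solution, since $2c=-2c+\beta$ gives $c=\beta/4$. Then $y_{m+1}+2y_m+y_{m-1}=0$, and $z_m=(-1)^my_m$ satisfies $z_{m+1}-2z_m+z_{m-1}=0$. Hence $z_m$ is affine in $m$, which gives the stated form. In this case $A=4-2=2$ and $B=0\cdot\beta=0$.

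\emph{Case $\eta=0$.} The roots are $\pm i$, and the particular solution is $\beta/2$. This gives the stated form, with $A=-2$.

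\emph{Case $\eta\notin\{0,\pm2\}$.} Choose $\lambda$ with $\lambda+\lambda^{-1}=\eta$. Then $\lambda\neq\pm1$, since $\eta\neq\pm2$, so the roots $\lambda^{\pm1}$ are distinct. The particular solution is $\beta/(2-\eta)$, which is well defined because $\eta\ne2$. With $q=\lambda^2$ we get $q+q^{-1}=\lambda^2+\lambda^{-2}=\eta^2-2=A$. Moreover $q\neq\pm1$: $q=1$ would force $\lambda=\pm1$, and $q=-1$ would force $\lambda=\pm i$, that is, $\eta=0$. So $A^2\neq4$ and this is branch (iii) of the theorem, with $x_{2k}$ reproducing the full-step formula.

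\emph{Uniqueness of $q^{1/2}$.} Once $q$ is fixed, its two square roots are $\pm\lambda$, with sums $\lambda+\lambda^{-1}=\pm\eta$ respectively. Since $\eta\neq0$, exactly one square root has sum $\eta$.

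\emph{Expected obstacle.} The only delicate points are the bookkeeping in the degenerate cases $\eta=\pm2$ and $\eta=0$. Specifically, one must check that $\eta=-2$ still lands in the $A=2$ branch, now with $B=0$, and that $\eta=0$ is the sole route to $A=-2$. The latter holds because $\eta^2-2=-2$ forces $\eta=0$. Everything else is routine linear-recurrence algebra.
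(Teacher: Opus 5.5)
Your proposal is correct and follows essentially the same route as the paper. You test admissibility on $p(x)=x^2$, with uniqueness of $\eta,\beta$ coming from the infinitude of $X(U)$, and read off $A=\eta^2-2$, $B=(\eta+2)\beta$ from Step~2. You then solve the half-step recurrence $x_{m+1}-\eta x_m+x_{m-1}=\beta$ case by case, and pin down $q^{1/2}$ by noting that replacing the root by its negative flips the sign of $q^{1/2}+q^{-1/2}$ while $\eta\neq0$; the paper does the same but leaves the elementary case-by-case solution of the recurrence to the reader, which you write out in full.
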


\begin{proof}
The first assertion is the admissibility condition applied to
\(p(x)=x^2\). The identities relating \(A,B\) to \(\eta,\beta\) are those
obtained in Step~2 of the proof of
Theorem~\ref{thm:admissible-lattice-classification-U}. On the half-step
progression through \(v\), the same relation becomes
\[
x_{m+1}-\eta x_m+x_{m-1}=\beta.
\]
The four displayed forms are the elementary solutions of this recurrence.
Finally, replacing a square root of \(q\) by its negative changes the sign of
\(q^{1/2}+q^{-1/2}\). Since \(\eta\neq0\) in case~\emph{(iv)}, precisely one
of the two roots is compatible.
\end{proof}

\begin{remark}[The two quadratic half-step branches]
The distinction made in
Corollary~\ref{cor:admissible-half-step-branches} must not be silently erased
by the full-step classification. The value \(A=2\) does not, by itself,
determine the sign of the half-step coefficient. Nor is the reflected branch
an empty formal possibility. Take \(h=2\), \(U=\mathbb Z\), and
\[
X(s)=s(-1)^s.
\]
Then
\[
Y+Z=-2X,
\quad
YZ=X^2-1,
\quad
Y-Z=-2(-1)^s,
\]
so \(X\) is admissible and belongs to the reflected quadratic branch,
although its full-step relation lies in case~\emph{(i)} of
Theorem~\ref{thm:admissible-lattice-classification-U}. At the operator level
the distinction is equally real. In the ordinary branch,
\[
D(x^n)=n x^{n-1}+\cdots,
\quad
S(x^n)=x^n+\cdots,
\]
whereas in the reflected branch,
\[
D(x^n)=n(-1)^{n-1}x^{n-1}+\cdots,
\quad
S(x^n)=(-1)^n x^n+\cdots.
\]
Thus any argument using the first pair of leading terms belongs to the
ordinary branch and should say so.
\end{remark}

\begin{remark}[Progression-wise coordinates and global operators]
There is a small but decisive point of interpretation here. A full-step
arithmetic progression \(s_0+h\mathbb Z\) is not half-step-invariant, and
therefore the restriction of \(X\) to that progression does not, by itself,
carry the operators \(D\) and \(S\). The progression-wise coefficients in
Theorem~\ref{thm:admissible-lattice-classification-U} are coordinates for one
and the same global admissible structure \((U,X,h)\), not parameters of
separate divided-difference calculi.

The missing half-step values are nevertheless recovered from the global
relation. In the \(q\)-exponential branch, if
\[
X(s_0+kh)=c+a_{s_0}q^k+b_{s_0}q^{-k}
\]
and \(q^{1/2}\) is the compatible root fixed in
Corollary~\ref{cor:admissible-half-step-branches}, then
\begin{align*}
Y(s_0+kh)
&=
c+a_{s_0}q^{1/2}q^k+b_{s_0}q^{-1/2}q^{-k},
\\[7pt]
Z(s_0+kh)
&=
c+a_{s_0}q^{-1/2}q^k+b_{s_0}q^{1/2}q^{-k}.
\end{align*}
In the ordinary quadratic branch, if
\[
f_{s_0}(t)=at^2+b_{s_0}t+c_{s_0},
\quad
X(s_0+kh)=f_{s_0}(k),
\]
then
\[
Y(s_0+kh)=f_{s_0}\!\left(k+\frac12\right),
\quad
Z(s_0+kh)=f_{s_0}\!\left(k-\frac12\right).
\]
These identities, and not an illegitimate restriction of the domain of the
operators, are what permit the progression-wise coefficients to enter the
global formulas for \(D\) and \(S\).
\end{remark}

\begin{remark}\label{rem:classification-from-degree-two}
It is worth stressing precisely what the degree-two argument proves, and what
it does not. Applied to an admissible map, the single test polynomial
\[
p(x)=x^2
\]
already forces
\[
Y+Z=\eta X+\beta
\]
and hence the global full-step relation
\[
Y_1+Z_1=AX+B
\]
follows immediately. It therefore restricts every admissible map to one of the
recurrence types listed in
Theorem~\ref{thm:admissible-lattice-classification-U}. The converse statement
would be false: the degree-two relation alone does not imply admissibility on a
general half-step-invariant set. Different full-step components may satisfy
the same second-order recurrence and yet fail to share the global polynomial
identities required in higher degree. Accordingly, higher-degree tests
introduce no further full-step recurrence type, but they do impose
indispensable compatibility conditions among the progression-wise
coefficients. The first of these already appears for \(p(x)=x^3\).
\end{remark}

\begin{remark}[Magnus's conic]\label{rem:Magnus-conic-solid}
Assume the hypotheses of Theorem~\ref{thm:admissible-lattice-classification-U}, and retain the notation introduced above. The proof of Theorem~\ref{thm:admissible-lattice-classification-U} already shows that \(Y+Z\) is an affine function of \(X\). Thus there exist constants \(B,D\in\mathbb C\), not to be confused with the constants in Theorem~\ref{thm:admissible-lattice-classification-U}, such that
\[
Y+Z=-2BX-2D.
\]
At that point, the progression-wise recurrence type is already fixed. Passing
to divided differences of higher degree produces no new such type, but it does
impose further global compatibility conditions. Indeed, taking \(p(t)=t^3\) in Definition~\ref{def:admissible-lattice} shows that
\[
\frac{Y^3-Z^3}{Y-Z}
\]
is a polynomial function of \(X\) of degree at most \(2\). Since
\[
\frac{Y^3-Z^3}{Y-Z}=(Y+Z)^2-YZ,
\]
and \(Y+Z\) is already known to be affine in \(X\), it follows that \(YZ\) is a polynomial function of \(X\) of degree at most \(2\). Thus there exist constants \(C,E,F\in\mathbb{C}\) such that
\[
YZ=CX^2+2EX+F.
\]
Fix \(s\in U\), and set
\[
x=X(s),\quad y=Y(s),\quad z=Z(s).
\]
By Viète's formulae, the numbers \(y\) and \(z\) are the roots of the quadratic equation
\[
u^2+(2Bx+2D)u+Cx^2+2Ex+F=0.
\]
This is precisely the form used by Magnus in \cite[(1.2), p.~262]{Mag88} and \cite[(5), p.~254]{Mag95}. The point, however, is that this conic is secondary rather than fundamental. It records one consequence of the local half-step structure, but neither determines the structure of \(U\) nor fixes the set of attained values \(X(U)\).
\end{remark}

\begin{remark}[Global compatibility already contained in admissibility]
\label{rem:degree-three-global-compatibility}
Let \(R\in\mathcal P\) be the unique polynomial of degree at most \(2\) such
that
\[
YZ=R(X)
\]
throughout \(U\), whose existence follows by applying admissibility to
\(p(x)=x^3\). This single global polynomial already controls part of the
freedom left by the full-step classification.

In the \(q\)-exponential branch, write
\[
X(s_0+kh)=c+a_{s_0}q^k+b_{s_0}q^{-k}
\]
and let \(\eta=q^{1/2}+q^{-1/2}\), with the compatible root understood. The
half-step identities above give
\[
R(x)
=
(x-c)^2+\eta c(x-c)+c^2
+(\eta^2-4)a_{s_0}b_{s_0}
\]
on the image of the chosen progression. If \(q\) is not a root of unity, that
image is infinite, and the displayed equality is therefore a polynomial
identity. Suppose instead that \(q\) is a primitive \(\nu\)-th root of unity.
The \(q\)-exponential branch excludes \(q=\pm1\), so \(\nu\ge3\). We claim that
the values on one full-step progression are then pairwise distinct. Indeed, if
\[
c+a_{s_0}q^k+b_{s_0}q^{-k}
=
c+a_{s_0}q^\ell+b_{s_0}q^{-\ell},
\quad
k\not\equiv\ell\pmod\nu,
\]
and \(a_{s_0}b_{s_0}\neq0\), then
\[
\frac{b_{s_0}}{a_{s_0}}=q^{k+\ell}.
\]
Put \(r=(k+\ell)/2\). The compatible half-step formula gives
\[
(Y-Z)(s_0+rh)
=
\bigl(q^{1/2}-q^{-1/2}\bigr)
\bigl(a_{s_0}q^r-b_{s_0}q^{-r}\bigr)
=0,
\]
contrary to the parametrising condition. Here \(r\in\tfrac12\mathbb Z\), so
the point in question belongs to the same half-step progression. If one of the
two coefficients vanishes, the claim is immediate; they cannot both vanish.
Thus the progression contains
\(\nu\ge3\) distinct values, which again suffice to identify two polynomials
of degree at most \(2\). Since \(\eta^2\neq4\), it follows in either case that
\[
a_{s_0}b_{s_0}
\]
is independent of the chosen full-step progression.

Likewise, in the ordinary quadratic branch, if
\[
X(s_0+kh)=ak^2+b_{s_0}k+c_{s_0},
\]
then
\[
R(x)
=
x^2-\frac a2x+\frac{a^2}{16}
-\frac14\bigl(b_{s_0}^{\,2}-4ac_{s_0}\bigr).
\]
The image of this progression is infinite: if \(a\neq0\), the displayed
quadratic polynomial is non-constant, while, if \(a=0\), the parametrising
condition forces \(b_{s_0}\neq0\). The equality above is therefore a polynomial
identity, and hence
\[
b_{s_0}^{\,2}-4ac_{s_0}
\]
is independent of the progression. These compatibilities belong to
admissibility itself; they do not first arise from regularity or from a
recurrence coefficient of an orthogonal polynomial sequence.

There is also a compatibility in the reflected quadratic branch. There the
full-step representation has the form
\[
X(s_0+kh)=b_{s_0}k+c_{s_0},
\]
and the parametrising condition forces \(b_{s_0}\neq0\). The half-step
relation gives
\[
R(x)
=
\left(x-\frac\beta2\right)^2-\frac{b_{s_0}^{\,2}}4,
\]
so \(b_{s_0}^{\,2}\) is again independent of the progression. In the general
alternating branch, by contrast, an individual full-step progression may
have only two image values. The polynomial \(R\) remains global, but no
polynomial-identity argument based on one such progression is then available.
\end{remark}

\begin{remark}\label{remarkq}
Neither case~\emph{(i)} nor case~\emph{(ii)} of Theorem~\ref{thm:admissible-lattice-classification-U} should be understood as arising by direct substitution of \(q=1\) or \(q=-1\) into the formula of case~\emph{(iii)}. Indeed, in Step~4 of the proof, the representation
\[
x_k=c+a_{s_0}q^k+b_{s_0}q^{-k}
\]
is obtained under the assumption that the characteristic polynomial
\[
r^2-Ar+1
\]
has two distinct roots \(q\) and \(q^{-1}\), equivalently \(A^2\neq4\). In particular, the coefficients \(a_{s_0}\) and \(b_{s_0}\) involve the denominator \(q-q^{-1}\), so the parametrisation in case~\emph{(iii)} is not defined when \(q=\pm1\). The relation between the three cases is instead one of degeneration at the level of the characteristic equation. When \(A=2\), we have
\[
r^2-Ar+1=(r-1)^2,
\]
so the two characteristic roots coalesce at \(r=1\). The corresponding homogeneous recurrence then has basis \(1\) and \(k\), and the inhomogeneous recurrence yields the quadratic behaviour described in case~\emph{(i)}. When \(A=-2\), one has
\[
r^2-Ar+1=(r+1)^2,
\]
so the double root is \(r=-1\). At the level of the second-order recurrence alone, the general solution is
\[
x_k=\frac{B}{4}+(u+vk)(-1)^k,
\]
where \(u,v\in\mathbb C\). On the other hand, admissibility yields the additional first-order relation
\[
x_{k+1}+x_k=\frac{B}{2},
\]
which is the progression-wise form of \eqref{eq:A=-2-first-order}. Substituting the above expression for \(x_k\) into this identity forces \(v=0\), and hence
\[
x_k=\frac{B}{4}+u(-1)^k,
\]
which is precisely the alternating form stated in case~\emph{(ii)}. Thus cases~\emph{(i)} and~\emph{(ii)} arise when the two characteristic roots merge, rather than by direct substitution into the formula of case~\emph{(iii)}.
\end{remark}

\begin{remark}
The alternatives in Theorem~\ref{thm:admissible-lattice-classification-U} are
global. Indeed, the constants \(A\) and \(B\) are determined on all of \(U\),
and therefore the same full-step recurrence holds on every arithmetic
progression
\(
s_0+h\mathbb Z\subseteq U.
\)
Consequently, only one of the three cases in the theorem can occur, and it
occurs simultaneously on all such progressions. In particular, one cannot have
quadratic behaviour on one full-step arithmetic progression and
\(q\)-exponential behaviour on another. What may vary from one progression to another are only those coefficients which
are determined by the initial values of \(X\) on the progression in question.
Thus, in case~\emph{(iii)}, once a parameter \(q\in\mathbb C^\times\) satisfying
\[
A=q+q^{-1}
\]
has been fixed, the same parameter \(q\) governs the description on every
full-step arithmetic progression, whereas the coefficients \(a_{s_0}\) and
\(b_{s_0}\) may depend on \(s_0\). This distinction is particularly relevant in
the alternating case and, more generally, in torsion \(q\)-exponential
situations. In those regimes, the restriction of \(X\) to a single full-step
arithmetic progression may have finite image. Hence, whenever the full image
\(X(U)\) is infinite, that infinitude cannot be inferred from the behaviour
along one progression alone; it must come from the contribution of sufficiently
many distinct full-step progressions in \(U\).
\end{remark}

\begin{remark}[Non-constant affine map]
The affine situation is already rigid at the level of
Theorem~\ref{thm:admissible-lattice-classification-U}. Indeed, if an admissible
map is non-constant affine on a full-step arithmetic progression
\(s_0+h\mathbb Z\subseteq U\), then the corresponding global case must be
case~\emph{(i)}, and the quadratic coefficient must vanish, that is, \(B=0\).
In that case the formula in the theorem reduces to
\[
X(s_0+kh)=\bigl(Y_1(s_0)-X(s_0)\bigr)k+X(s_0),
\]
so the coefficient
\[
Y_1(s_0)-X(s_0)=X(s_0+h)-X(s_0)
\]
is exactly the increment of the image along one full step of the underlying
arithmetic progression. In other words, it is precisely the common difference
of the affine parametrisation on that full-step progression. Since the constants \(A\) and \(B\) are global, and since in the affine case one
has \(A=2\) and \(B=0\), every full-step arithmetic progression in the domain is
mapped onto an arithmetic progression, possibly degenerate. However,
Theorem~\ref{thm:admissible-lattice-classification-U} alone does not imply that
the corresponding common difference is the same on all such progressions; a
priori, the quantity
\[
Y_1(s_0)-X(s_0)
\]
may depend on the chosen progression \(s_0+h\mathbb Z\). In particular, in the situation discussed in the introduction, one takes
\(h=2\), so that
\[
\mathbb S=\mathbb Z\cup\left(\frac{\gamma}{2}+\mathbb Z\right)
\]
is a union of two half-step arithmetic progressions, that is, of two cosets of
\(\tfrac h2\mathbb Z=\mathbb Z\). For the admissible affine parametrisation
\[
X(s)=2s
\]
one has
\[
Y-X=2,
\quad
Y_1-X=4.
\]
The two arithmetic progressions are therefore not being treated by two separate admissible parametrisations: the single map \(X(s)=2s\) is admissible on their union \(\mathbb S\), making explicit the structural reparametrisation described in the Introduction. Thus the two half-step progressions are mapped with the same common difference
\(2\), while the corresponding full-step progressions are mapped with common
difference \(4\). A complete treatment of the affine case, together with the classification of
the corresponding classical orthogonal polynomial families, was given in
\cite{CG26a}. Unlike the affine case, however, the genuinely quadratic case is
not governed by a quantity directly interpretable as a slope. There the step of
the arithmetic progression merely fixes the discrete scale, while the actual
structural datum is the constant second difference
\[
X(s_0+(k+2)h)-2X(s_0+(k+1)h)+X(s_0+kh)=B.
\]
\end{remark}

\section{Classicality and orthogonality \`a la Maroni}
The material presented in this section is drawn principally from the work of
Maroni \cite{M84, M88, M91a, M91b}; we also use the exposition in
\cite{CP25}. The material has been adapted to the present setting and made
self-contained. The underlying facts concerning topological
vector spaces are standard and may be found in almost any textbook on the
subject. Although Maroni did not consider the structural objects studied in the
present paper in the form adopted here, the notions of orthogonality and
classicality used below are directly indebted to his functional point of view. A word should also be added about a terminology sometimes encountered in the
literature. Maroni's approach is occasionally described as a ``formal algebraic
approach''. The adjective ``algebraic'' is justified by the aim of isolating the
intrinsic polynomial structure of orthogonal polynomial systems, up to the
limits of their existence as algebraic objects; this is the point of view
discussed in \cite{CG26a}. The adjective ``formal'', however, is misleading and
should be avoided here. As we shall see below, Maroni's framework is
functional-analytic in its very formulation: its basic objects are continuous
linear functionals on an LCS of polynomials, and its identities
are identities in the continuous dual of that space. Thus what is sometimes
called ``formal'' is not an analytically ungrounded symbolic calculus, but a
precise analytic setting designed to separate the intrinsic algebraic structure
of orthogonal polynomials from the accidental features of any particular
measure-theoretic representation.

For each \(n\in\mathbb N\), let \(\mathcal P_n\) denote the vector space of all complex polynomials of degree at most \(n\). Since \(\mathcal P_n\) is finite-dimensional, it carries a unique Hausdorff locally convex vector-space topology; equivalently, after choosing a basis, one may identify it with \(\mathbb C^{\,n+1}\) endowed with its usual Euclidean topology. For \(n\le m\), the inclusion
\(
\iota_{n,m}:\mathcal P_n\hookrightarrow\mathcal P_m
\)
is a continuous linear embedding, and its range is closed. Set
\[
\mathcal P=\bigcup_{n=0}^\infty \mathcal P_n.
\]
We equip \(\mathcal P\) with the locally convex inductive-limit topology with respect to the canonical inclusions
\(
\iota_n:\mathcal P_n\to\mathcal P.
\)
Thus
\[
\mathcal P=\varinjlim \mathcal P_n
\]
as a strict countable inductive limit of finite-dimensional spaces. In particular, \(\mathcal P\) is Hausdorff and locally convex. A subset \(\mathcal B\subset\mathcal P\) is bounded if and only if there exists \(m\) such that \(\mathcal B\subset\mathcal P_m\) and \(\mathcal B\) is bounded in the finite-dimensional space \(\mathcal P_m\). Likewise, if \(E\) is an LCS, a linear map \(T:\mathcal P\to E\) is continuous if and only if each restriction
\(
T|_{\mathcal P_n}:\mathcal P_n\to E
\)
is continuous. Let
\[
\mathcal P'=\mathcal L(\mathcal P,\mathbb C)
\]
denote the continuous dual of \(\mathcal P\). We use the canonical pairing
\[
\langle\cdot,\cdot\rangle:\mathcal P'\times\mathcal P\to\mathbb C,\quad
\langle \mathbf u,p\rangle=\mathbf u(p).
\]
No algebraic functional is lost by passing to this continuous dual. Indeed,
every linear functional on \(\mathcal P\) restricts continuously to each
finite-dimensional space \(\mathcal P_n\), and is therefore continuous for the
inductive-limit topology. Thus
\[
\mathcal P'=\mathcal P^*
\]
as vector spaces. The topology is needed to place the transposes and their
convergence on a precise footing; it does not impose an unspoken restriction on
the possible moment sequences.
Since \(\mathcal P\) is Hausdorff and locally convex, this pairing separates points: if \(p\in\mathcal P\) is non-zero, then there exists \(\mathbf u\in\mathcal P'\) such that
\[
\langle \mathbf u,p\rangle\neq0.
\]
Unless explicitly stated otherwise, \(\mathcal P'\) is endowed with the weak topology
\(
\sigma(\mathcal P',\mathcal P),
\)
that is, the coarsest topology for which all maps
\[
\mathbf u\longmapsto \langle \mathbf u,p\rangle,
\]
where \(p\in\mathcal P\), are continuous.

If \(p_0\in\mathcal P\) is fixed, the multiplication operator
\[
M_{p_0}:\mathcal P\to\mathcal P,\quad q\longmapsto p_0\,q
\]
is continuous. Indeed, if \(\deg p_0=d\), then
\(
M_{p_0}(\mathcal P_n)\subseteq\mathcal P_{n+d},
\)
so each restriction
\(
M_{p_0}|_{\mathcal P_n}:\mathcal P_n\to\mathcal P_{n+d}
\)
is continuous, and the continuity criterion for the inductive limit yields continuity of \(M_{p_0}\) on \(\mathcal P\). More generally, if \(T:\mathcal P\to\mathcal P\) is linear and there exists \(d\ge0\) such that
\[
T(\mathcal P_n)\subseteq \mathcal P_{n+d},
\]
then \(T\) is continuous. Whenever \(T:\mathcal P\to\mathcal P\) is continuous, its transpose
\(
T':\mathcal P'\to\mathcal P'
\)
is well defined by
\[
\langle T'\mathbf u,p\rangle=\langle \mathbf u,Tp\rangle
\]
for every \(\mathbf u\in\mathcal P'\) and every \(p\in\mathcal P\). In particular, for \(p_0\in\mathcal P\) we write
\[
p_0\mathbf u=M_{p_0}'\mathbf u,
\]
so that
\[
\langle p_0\mathbf u,q\rangle=\langle \mathbf u,p_0\,q\rangle,
\]
for every \(q\in\mathcal P\). An identity in \(\mathcal P'\) is always understood in the functional sense: for \(\mathbf u,\mathbf v\in\mathcal P'\),
\[
\mathbf u=\mathbf v
\]
means that
\(
\langle \mathbf u,p\rangle=\langle \mathbf v,p\rangle
\)
for all \(p\in\mathcal P\). Since the pairing separates points, this is equivalent to equality as elements of \(\mathcal P'\). Finally, if \(T:\mathcal P\to\mathcal P\) is continuous, then \(T'\) is
\(\sigma(\mathcal P',\mathcal P)\)-continuous. Indeed, for each fixed \(p\in\mathcal P\),
\[
\mathbf u\longmapsto \langle T'\mathbf u,p\rangle=\langle \mathbf u,Tp\rangle
\]
is one of the defining weak-coordinate maps. Accordingly, all duality statements involving transposes of continuous linear operators on \(\mathcal P\) are to be understood in this strictly functional sense.

\begin{notation}\label{not:degree-indexed-polynomial-sequences}
Throughout this paper, whenever a polynomial sequence
\(
(P_n)_{n\in I}\subset \mathcal P
\)
is indexed by a set \(I\subseteq \mathbb N\), it is understood that \(P_n\) has degree \(n\) for every \(n\in I\).
\end{notation}

\begin{definition}[Orthogonality]\label{def:OPS-functional}
Let \(\mathbf u\in\mathcal P'\), and let \(I\subseteq\mathbb N\) be such that \(0\in I\). A family of polynomials
\(
(P_n)_{n\in I}
\)
is said to be \emph{orthogonal with respect to} \(\mathbf u\) if
\[
\langle \mathbf u,P_nP_m\rangle=0
\]
for every \(m,n\in I\) with \(m<n\), and
\[
\langle \mathbf u,P_n^2\rangle\neq0
\]
for every \(n\in I\). We say that \(\mathbf u\) is \emph{regular} if there exists an orthogonal polynomial sequence
\(
(P_n)_{n\in\mathbb N}
\)
with respect to \(\mathbf u\). For \(N\in\mathbb N\), we shall say that
\(\mathbf u\) is \emph{regular of order \(N+1\)} if there exists an orthogonal polynomial sequence
\(
P_0,P_1,\dots,P_N
\)
with respect to \(\mathbf u\). Thus the order counts the number of polynomials, not the largest degree.
\end{definition}

At this point, the role of admissibility becomes more concrete. Its purpose is precisely to ensure that the symmetric divided difference of a polynomial, computed from the neighbouring values \(Y\) and \(Z\), is again a polynomial in the base variable \(X\). Thus admissibility is exactly the condition that allows one to pass from the pointwise quotient in \eqref{Q} to a well-defined operator on \(\mathcal P\). The next definition isolates that operator.

\begin{definition}[Divided-difference operator]\label{def:divided-difference-operator}
Fix \(h\in\mathbb C^\times\), let \(U\subseteq\mathbb C\) be half-step-invariant, and let \(X:U\to\mathbb C\) be an admissible map. The symmetric divided-difference operator associated with \(X\) is the linear map
\(
D:\mathcal P\to\mathcal P
\)
which assigns to each \(p\in\mathcal P\) the unique polynomial \(Dp\in\mathcal P\) satisfying
\[
(Dp)(X)
=
\frac{p\!\left(Y\right)
      -p\!\left(Z\right)}
     {Y-Z}
\]
throughout \(U\). The dependence of \(D\) on the underlying admissible map \(X\) is understood and will remain implicit unless explicit notation is needed. The distributional transpose of \(D\), hereafter simply called its transpose, is denoted by
\(
\mathbf D:\mathcal P'\to\mathcal P',
\)
and is defined by
\[
\langle \mathbf D \mathbf u ,\, p\rangle
=
-\langle \mathbf u ,\, Dp\rangle.
\]
Thus
\[
\mathbf D=-D',
\]
where \(D'\) denotes the ordinary transpose of \(D\).
\end{definition}

\begin{definition}[Averaging operator]\label{def:averaging-operator}
Fix \(h\in\mathbb C^\times\), let \(U\subseteq\mathbb C\) be half-step-invariant, and let \(X:U\to\mathbb C\) be an admissible map. The symmetric averaging operator associated with \(X\) is the linear map
\(
S:\mathcal P\to\mathcal P
\)
which assigns to each \(p\in\mathcal P\) the unique polynomial \(Sp\in\mathcal P\) satisfying
\[
(Sp)(X)
=
\frac{
p\!\left(Y\right)
+
p\!\left(Z\right)
}{2}
\]
throughout \(U\). The dependence of \(S\) on the underlying admissible map \(X\) is understood and will remain implicit unless explicit notation is needed. The transpose of \(S\) is denoted by
\(
\mathbf S:\mathcal P'\to\mathcal P',
\)
and is defined by
\[
\langle \mathbf S \mathbf u ,\, p\rangle
=
\langle \mathbf u ,\, Sp\rangle,
\]
for every \(\mathbf u\in\mathcal P'\) and every \(p\in\mathcal P\).
\end{definition}

\begin{remark}\label{rem:D-and-S-well-defined}
The operator \(D\) is well defined directly from
Definition~\ref{def:admissible-lattice}. Indeed, if \(\deg p\ge1\), admissibility gives a polynomial \(Q\), uniquely determined because \(X(U)\) is infinite, such that
\[
Q(X)=\frac{p(Y)-p(Z)}{Y-Z}
\]
throughout \(U\), and we set \(Dp=Q\). If \(p\) is constant, then \(p(Y)-p(Z)=0\), and we set \(Dp=0\). The operator \(S\) is also well defined. By
Remark~\ref{rem:Magnus-conic-solid}, the quantities \(Y+Z\) and \(YZ\) are
polynomial functions of \(X\). Since
\[
\frac{p(Y)+p(Z)}{2}
\]
is symmetric in \(Y\) and \(Z\), it is a polynomial in the elementary symmetric
functions \(Y+Z\) and \(YZ\). Hence it is a polynomial function of \(X\). Since
\(X(U)\) is infinite, this polynomial is uniquely determined, and we denote it
by \(Sp\).
\end{remark}

The operators \(D\) and \(S\) are continuous on \(\mathcal P\). Indeed, by admissibility, for every \(n\in\mathbb N^\times\) one has
\(
D(\mathcal P_n)\subseteq \mathcal P_{n-1},
\)
while \(D(\mathcal P_0)=\{0\}\). Moreover,
\(
S(\mathcal P_n)\subseteq \mathcal P_n
\)
for every \(n\in\mathbb N\). For \(n\in \mathbb N^\times\), the restrictions
\(
D|_{\mathcal P_n}:\mathcal P_n\to\mathcal P_{n-1}
\)
are linear maps between finite-dimensional spaces, and for every \(n\in\mathbb N\), the restrictions
\(
S|_{\mathcal P_n}:\mathcal P_n\to\mathcal P_n
\)
are likewise linear maps between finite-dimensional spaces. Hence all these restrictions are continuous. By the continuity criterion for the inductive-limit topology on \(\mathcal P\), both \(D\) and \(S\) are therefore continuous. Consequently, the dual operators
\[
\mathbf D=-D',
\quad
\mathbf S=S',
\]
are well defined and \(\sigma(\mathcal P',\mathcal P)\)-continuous. In particular, finite compositions involving multiplication by fixed polynomials and the operators \(D\) and \(S\) are again continuous linear endomorphisms of \(\mathcal P\).

\begin{definition}[Classicality]\label{def:classical}
Fix \(h\in\mathbb C\). If \(h\neq0\), let \(U\subseteq\mathbb C\) be half-step-invariant, and let
\(X:U\to\mathbb C\) be an admissible map on \(U\). Let
\(
D,S:\mathcal P\to\mathcal P
\)
be the associated divided-difference and averaging operators. If \(h=0\), set
\[
D=\frac{d}{dx},
\quad
S=\mathrm{id}_{\mathcal P}.
\]
In either case, let \(\mathbf D=-D'\) and \(\mathbf S=S'\) denote the corresponding dual operators on \(\mathcal P'\). A functional \(\mathbf u\in\mathcal P'\) is said to be
\emph{classical} if it is regular of order at least \(3\) and there exist polynomials \(\phi\) and \(\psi\),
where \(\phi\) has degree at most \(2\) and \(\psi\) has degree at most \(1\), not both identically zero, such that
\[
\mathbf D(\phi\,\mathbf u)
=
\mathbf S(\psi\,\mathbf u).
\]
\end{definition}

One point should be stated without equivocation. Classicality is always
understood relative to the fixed calculus. When \(h\neq0\), this means the
full structure \((U,X,h)\); for the purpose of the definition, only the
operators \(D,S\) that it induces enter. Classicality is not a property of the
visible image \(X(U)\) alone.

The requirement, in the preceding definition, that \(\mathbf u\) be regular of
order at least \(3\), rather than merely of order at least \(2\), is imposed in
order to exclude certain degenerate situations that will be described in the
following remark.

\begin{remark}\label{rem:continuous-degenerate-classicality}
In the case \(h=0\), one has
\(
\mathbf D(\phi\,\mathbf u)=\psi\,\mathbf u.
\)
We record what happens in the two degenerate cases. Assume first that \(\phi=0\). Then
\[
\psi\,\mathbf u=0.
\]
Since \(\phi\) and \(\psi\) are not both identically zero, one has
\(\psi\neq0\). If \(\psi\) is a non-zero constant, then
\(\mathbf u=0\), which is impossible under the standing regularity
requirement. Hence
\(\deg\psi=1\), and we may write
\(
\psi(x)=a(x-\tau),
\)
with \(a\in\mathbb C^\times\) and \(\tau\in\mathbb C\). Thus
\[
(x-\tau)\mathbf u=0.
\]
Equivalently,
\[
\langle \mathbf u,p\rangle
=
p(\tau)\langle \mathbf u,1\rangle
\]
for every \(p\in\mathcal P\). Hence \(\mathbf u\) is a non-zero scalar multiple
of the evaluation functional at \(\tau\), and such a functional cannot be
regular beyond order \(1\). For if \(P_1\) were
orthogonal to \(P_0=1\), then
\[
0=\langle \mathbf u,P_1\rangle
=
P_1(\tau)\langle \mathbf u,1\rangle,
\]
so \(P_1(\tau)=0\), and consequently
\[
\langle \mathbf u,P_1^2\rangle
=
P_1(\tau)^2\langle \mathbf u,1\rangle
=
0.
\]
Assume next that \(\psi=0\). Then \(\phi\neq0\) and
\[
\mathbf D(\phi\,\mathbf u)=0.
\]
Since \(D=\frac{d}{dx}\) is surjective on \(\mathcal P\), it follows that
\(
\phi\,\mathbf u=0.
\)
Indeed, given \(q\in\mathcal P\), choose \(p\in\mathcal P\) such that
\(Dp=q\). Then
\[
0=\langle \mathbf D(\phi\,\mathbf u),p\rangle
=
-\langle \phi\,\mathbf u,q\rangle.
\]
Thus \(\mathbf u\) vanishes on the ideal \(\phi\mathcal P\), and therefore
factors through the quotient \(\mathcal P/\phi\mathcal P\). Since
\(\deg\phi\le2\), this quotient has dimension at most \(2\). Consequently,
\(\mathbf u\) cannot be regular of order greater than \(2\). To see this, if
\(P_0,\dots,P_N\) were orthogonal with respect to \(\mathbf u\), then their
classes in \(\mathcal P/\phi\mathcal P\) would be linearly independent: whenever
\[
\sum_{j=0}^{N}c_jP_j\in\phi\mathcal P,
\]
one has, for every \(m=0,\dots,N\),
\[
0=
\left\langle\mathbf u,
\left(\sum_{j=0}^{N}c_jP_j\right)P_m
\right\rangle
=
c_m\langle \mathbf u,P_m^2\rangle,
\]
and hence \(c_m=0\). Therefore
\[
N+1\le \dim(\mathcal P/\phi\mathcal P)\le2.
\]
Thus, in the continuous case, degenerate choices of \(\phi\) and \(\psi\)
cannot give classical functionals in the sense of
Definition~\ref{def:classical}. More precisely, when \(\phi=0\), regularity is
possible only up to order \(1\), whereas, when \(\psi=0\), regularity is
possible only up to order \(2\).
\end{remark}
\section{Regularity and recurrence coefficients}
In this section we refine the known necessary and sufficient conditions for the regularity of classical functionals in the \(q\)-exponential and ordinary quadratic cases, and derive the corresponding recurrence coefficients. Before turning to their closed non-resonant forms, we isolate the finite moment mechanism which remains valid when one of the usual divisors vanishes. We then treat the \(q\)-exponential case when \(q\) is not a root of unity, the ordinary quadratic case, and finally the finite torsion regime when \(q\) is a root of unity.

\subsection{Finite moment systems and resonances}\label{subsec:finite-resonance}
The finite case contains a point at which a formally innocent division may
change the statement one is trying to prove. What happens if the coefficient
which ordinarily determines the next moment vanishes? It does not follow that
finite regularity has disappeared. The same equation may instead become a
compatibility condition and leave one moment free. The correct finite object is
therefore the moment system itself.

Fix \(h\in\mathbb C^\times\), a half-step-invariant set \(U\), an admissible
map \(X:U\to\mathbb C\), and the associated operators \(D,S\). Fix also
\(\phi\in\mathcal P_2\) and \(\psi\in\mathcal P_1\), not both zero. For every
\(n\in\mathbb N\), write
\begin{equation}\label{eq:Theta-functional-equation}
\Theta_n(x)
:=
\phi(x)D(x^n)+\psi(x)S(x^n)
=
\sum_{k=0}^{n+1}\vartheta_{n,k}x^k,
\end{equation}
with \(\vartheta_{n,k}=0\) outside the displayed range. If
\[
\mu_k=\langle\mathbf u,x^k\rangle,
\]
then the functional equation
\[
\mathbf D(\phi\mathbf u)=\mathbf S(\psi\mathbf u)
\]
is equivalent to
\begin{equation}\label{eq:finite-moment-system}
\sum_{k=0}^{n+1}\vartheta_{n,k}\mu_k=0,
\quad n\in\mathbb N.
\end{equation}
Indeed,
\[
\big\langle
\mathbf D(\phi\mathbf u)-\mathbf S(\psi\mathbf u),x^n
\big\rangle
=
-\langle\mathbf u,\Theta_n\rangle.
\]
In the ordinary quadratic and \(q\)-exponential branches considered below,
the leading coefficient will be denoted by \(d_n\); there one has
\[
\vartheta_{n,n+1}=d_n.
\]
At the present abstract level, we shall call \(n\) a \emph{resonant index}
when \(\vartheta_{n,n+1}=0\).

For \(N\in\mathbb N\), let
\[
A_N
=
(\vartheta_{n,k})_{
0\le n\le2N+1,\,0\le k\le2N+2}
\]
and define the affine solution set of normalised finite moment vectors
\[
\mathfrak M_N
=
\left\{
\boldsymbol\mu=(\mu_0,\dots,\mu_{2N+2})\in\mathbb C^{2N+3}:
\mu_0=1,\ A_N\boldsymbol\mu=0
\right\}.
\]
For \(0\le m\le N+1\), put
\[
\Delta_m(\boldsymbol\mu)
=
\det(\mu_{i+j})_{i,j=0}^{m}.
\]
The vector \(\boldsymbol\mu\) defines a linear functional
\(\mathscr L_{\boldsymbol\mu}\) on \(\mathcal P_{2N+2}\) by
\[
\mathscr L_{\boldsymbol\mu}(x^k)=\mu_k,
\quad 0\le k\le2N+2.
\]
Whenever orthogonality with respect to
\(\mathscr L_{\boldsymbol\mu}\) is mentioned below,
Definition~\ref{def:OPS-functional} is understood in the truncated sense:
every product under consideration belongs to \(\mathcal P_{2N+2}\).
Equivalently, \(\mathscr L_{\boldsymbol\mu}\) may be extended to an element
of \(\mathcal P'\), and the choice of extension is immaterial through degree
\(N+1\).

\begin{theorem}[Finite resonant regularity]
\label{thm:finite-resonant-regularity}
Let \(N\in\mathbb N\). The set \(\mathfrak M_N\) consists precisely of the
normalised finite moment vectors, through degree \(2N+2\), which satisfy the
functional equation when it is tested on \(\mathcal P_{2N+1}\). Such a vector
admits a monic orthogonal polynomial sequence
\[
P_0,P_1,\dots,P_{N+1}
\]
with respect to \(\mathscr L_{\boldsymbol\mu}\) if and only if
\[
\Delta_m(\boldsymbol\mu)\neq0,
\quad m=0,1,\dots,N+1.
\]
When these conditions hold, the sequence is unique. With \(\Delta_{-1}=1\),
its squared norms and its subdiagonal recurrence coefficients are
\[
\mathscr L_{\boldsymbol\mu}(P_m^2)
=
\frac{\Delta_m}{\Delta_{m-1}},
\quad
C_m
=
\frac{\Delta_m\Delta_{m-2}}{\Delta_{m-1}^2},
\quad 1\le m\le N+1.
\]
\end{theorem}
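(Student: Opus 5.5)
The plan has three parts: identify \(\mathfrak M_N\) from the moment computation preceding the statement, prove the Hankel criterion by finite linear algebra, and then extract the norms and recurrence coefficients.

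\textbf{Identifying \(\mathfrak M_N\).} By the displayed identity
\[
\bigl\langle\mathbf D(\phi\mathbf u)-\mathbf S(\psi\mathbf u),x^n\bigr\rangle=-\langle\mathbf u,\Theta_n\rangle,
\]
the functional equation tested on \(x^n\) involves only \(\mu_0,\dots,\mu_{n+1}\), since \(\Theta_n\in\mathcal P_{n+1}\). Testing on \(\mathcal P_{2N+1}\), that is on \(x^0,\dots,x^{2N+1}\), therefore involves exactly the moments through degree \(2N+2\). The resulting equations are precisely the rows \(0\le n\le 2N+1\) of \(A_N\boldsymbol\mu=0\). Adding the normalisation \(\mu_0=1\) gives the first assertion. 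No division by \(\vartheta_{n,n+1}\) is ever performed, so resonant indices cause no difficulty here. This is the point of working with the moment system rather than with a recursion for the moments.

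\textbf{Hankel criterion.} Write a monic \(P_m=x^m+\sum_{j<m}c_jx^j\). The conditions \(\mathscr L_{\boldsymbol\mu}(P_mx^i)=0\) for \(i<m\) form a linear system for \((c_j)\) with matrix \((\mu_{i+j})_{i,j=0}^{m-1}\). All products involved have degree at most \(2m-1\le 2N+1\), so they lie in the truncated space.

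\emph{Sufficiency.} If \(\Delta_{m-1}\neq0\), the system has a unique solution. One then checks, with the cofactor (Heine) expansion
\[
P_m=\frac{1}{\Delta_{m-1}}\det\begin{pmatrix}\mu_0&\cdots&\mu_m\\ \vdots&&\vdots\\ \mu_{m-1}&\cdots&\mu_{2m-1}\\ 1&\cdots&x^m\end{pmatrix},
\]
that \(\mathscr L_{\boldsymbol\mu}(P_m x^m)=\Delta_m/\Delta_{m-1}\). Since the lower-degree part of \(P_m\) contributes nothing by orthogonality, this gives
\[
\mathscr L_{\boldsymbol\mu}(P_m^2)=\mathscr L_{\boldsymbol\mu}(P_mx^m)=\frac{\Delta_m}{\Delta_{m-1}}.
\]
Here \(P_m^2\) has degree \(2m\le 2N+2\). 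Hence the \(P_0,\dots,P_{N+1}\) so defined are orthogonal, with non-zero norms, exactly when \(\Delta_0,\dots,\Delta_{N+1}\neq0\).

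\emph{Necessity.} Argue by induction on \(m\). If \(P_0,\dots,P_m\) are orthogonal, then the Gram matrix of \(\mathscr L_{\boldsymbol\mu}\) in the basis \(P_0,\dots,P_m\) is diagonal with non-zero entries. The change of basis from monomials to this basis is unitriangular, so the Hankel matrix of order \(m+1\) has determinant \(\prod_{j\le m}\mathscr L_{\boldsymbol\mu}(P_j^2)\neq0\). This gives \(\Delta_m\neq0\). Uniqueness follows from the uniqueness of the solution of the nonsingular system.

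\textbf{Recurrence coefficients.} Multiply \(P_m\) by \(x\) and expand in the basis \(P_0,\dots,P_m,P_{m+1}\); this is where orthogonality through degree \(N+1\) is used. Standard orthogonality arguments give the three-term relation
\[
xP_m=P_{m+1}+B_mP_m+C_mP_{m-1},\qquad
C_m=\frac{\mathscr L_{\boldsymbol\mu}(P_m^2)}{\mathscr L_{\boldsymbol\mu}(P_{m-1}^2)}.
\]
The main obstacle is the bookkeeping: each functional value invoked must involve only degrees up to \(2N+2\). In particular, the identity for \(C_m\) uses \(\mathscr L_{\boldsymbol\mu}(xP_mP_{m-1})=\mathscr L_{\boldsymbol\mu}(P_m^2)\), where \(xP_mP_{m-1}\) has degree \(2m\le 2N+2\). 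For \(m=N+1\) we take the relation only as defining \(C_{N+1}\) through the norms, since it would otherwise require \(P_{N+2}\). Substituting the norm formula gives
\[
C_m=\frac{\Delta_m\Delta_{m-2}}{\Delta_{m-1}^2},
\]
with the convention \(\Delta_{-1}=1\).
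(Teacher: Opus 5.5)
Your proposal is correct and follows the same route as the paper. The paper also obtains \(A_N\boldsymbol\mu=0\) by testing the functional equation on \(1,x,\dots,x^{2N+1}\), and then cites the standard Gram/Hankel determinant criterion and the determinantal (Heine) construction, with successive quotients of Hankel determinants giving the norms and \(C_m\). You have written out those standard steps in full, including the degree bookkeeping within \(\mathcal P_{2N+2}\) and the reading of \(C_{N+1}\) as the terminal ratio of norms.
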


\begin{proof}
Testing the functional equation on the monomial basis
\[
1,x,\dots,x^{2N+1}
\]
gives exactly \(A_N\boldsymbol\mu=0\), and the converse follows by linearity.
The remaining assertions are the Gram determinant criterion and the standard
determinantal construction of the monic orthogonal polynomials. The formulas
for the norms and for \(C_m\) follow by taking successive quotients of the
Hankel determinants.
\end{proof}

\begin{remark}\label{rem:finite-resonant-elimination}
Equation~\eqref{eq:finite-moment-system} has the triangular form
\[
\vartheta_{n,n+1}\mu_{n+1}
+\sum_{k=0}^{n}\vartheta_{n,k}\mu_k=0.
\]
If \(\vartheta_{n,n+1}\neq0\), it determines \(\mu_{n+1}\). If
\(\vartheta_{r,r+1}=0\), the equation at level \(r\) is instead the
compatibility condition
\[
\sum_{k=0}^{r}\vartheta_{r,k}\mu_k=0.
\]
When that condition holds, \(\mu_{r+1}\) is free at this level; a later
resonance may, of course, impose a further condition on moments already
obtained. Thus no division by \(\vartheta_{r,r+1}\) is legitimate, but
nothing in the finite problem has become indeterminate: the affine system
\(A_N\boldsymbol\mu=0\), followed by the Hankel conditions in
Theorem~\ref{thm:finite-resonant-regularity}, decides the truncated finite
problem completely at the prescribed order.
The same formulation accommodates several resonant indices without alteration,
as may occur in the torsion regime.
\end{remark}

\begin{example}[A resonance which remains regular]
\label{ex:quadratic-resonance}
Let \(h=2\), \(U=\mathbb C\), and \(X(s)=s\). Then
\[
(Dp)(x)=\frac{p(x+1)-p(x-1)}2,
\quad
(Sp)(x)=\frac{p(x+1)+p(x-1)}2.
\]
Take \(\phi(x)=x^2+1\) and \(\psi(x)=-2x\). For
\(\tau\in\mathbb C\), define
\begin{align*}
\langle\mathbf u_\tau,p\rangle
&=
\left(\frac12-\frac\tau4\right)p(1)
+\left(\frac12+\frac\tau4\right)p(-1)
\\[7pt]
&\quad
+\frac\tau4\bigl(p'(1)+p'(-1)\bigr).
\end{align*}
Now
\[
\Theta_p:=\phi Dp+\psi Sp
=
\frac12\bigl((x-1)^2p(x+1)-(x+1)^2p(x-1)\bigr),
\]
and direct evaluation gives
\begin{align*}
\Theta_p(1)&=-2p(0),
&
\Theta_p(-1)&=2p(0),
\\[7pt]
\Theta_p'(1)&=-2p(0)-2p'(0),
&
\Theta_p'(-1)&=-2p(0)+2p'(0).
\end{align*}
Substitution in the definition of \(\mathbf u_\tau\) now gives
\[
\langle\mathbf u_\tau,\phi Dp+\psi Sp\rangle=0
\]
for every \(p\in\mathcal P\). Hence
\[
\mathbf D(\phi\mathbf u_\tau)=\mathbf S(\psi\mathbf u_\tau).
\]
Its moments are
\[
\mu_{2k}=1,
\quad
\mu_{2k+1}=k\tau,
\]
and its first Hankel determinants are
\[
\Delta_0=1,
\quad
\Delta_1=1,
\quad
\Delta_2=-\tau^2,
\quad
\Delta_3=\tau^4,
\quad
\Delta_4=0.
\]
Here \(d_n=n-2\), so \(d_2=0\). Consequently, for \(\tau\neq0\), the
functional is regular of order \(4\) despite the resonance, but not of order
\(5\). This is exactly the possibility lost if the moment equation is divided
by every \(d_n\) before its compatibility has been examined.
\end{example}

\begin{remark}[The two-string finite representation]
The same mechanism is already visible, in a particularly concrete form, in
the affine treatment of the para--Krawtchouk functional in
\cite[Examples~3.6 and~8.6]{CG26a}. There the finite functional representation
is carried by two arithmetic strings. The step equations determine the weights
along each string, while the resonant equation leaves their relative
normalisation to be fixed by the functional itself. The explicit weights in
\cite{CG26a} do precisely that. Thus the two-string representation is not an
exception placed outside the present theory; it is one of the clearest ways in
which the finite moment freedom becomes visible.
\end{remark}

\subsection{\texorpdfstring{\(q\)}{q}-exponential map: \texorpdfstring{\(q\)}{q} is not a root of unity}
At the level of ideas, the proof of the following theorem is not essentially different from that of \cite[Theorem~4.1]{CMP22a} and \cite[Theorem~9.2]{CP25}. Nevertheless, the argument given there is not written in a form that makes entirely transparent how finite orthogonal polynomial sequences are to be included; moreover, it assumes unnecessarily that \(q>0\), and it does not make explicit how the decomposition of the underlying half-step-invariant sets enters the argument.

\begin{theorem}\label{thm:regularity-admissible-orbit}
Fix \(h\in\mathbb C^\times\), let \(U\subseteq\mathbb C\) be half-step-invariant, and let
\(X:U\to\mathbb C\) be an admissible map.
Let
\(
D,S:\mathcal P\to\mathcal P
\)
be the associated divided-difference and averaging operators, with transposes
\(
\mathbf D,\mathbf S:\mathcal P'\to\mathcal P'.
\)
Let \(\mathbf u\in\mathcal P'\) be such that
\(
\langle \mathbf u,1\rangle\neq0,
\)
and assume that there exist polynomials \(\phi\) and \(\psi\), where \(\phi\) has degree at most \(2\) and \(\psi\) has degree at most \(1\), not both identically zero, such that
\[
\mathbf D(\phi\,\mathbf u)=\mathbf S(\psi\,\mathbf u)
\]
in \(\mathcal P'\). Write
\[
\phi(x)=\phi_2 x^2+\phi_1 x+\phi_0,
\quad
\psi(x)=\psi_1 x+\psi_0.
\]
Assume that \(X\) belongs to the \(q\)-exponential branch of
Corollary~\ref{cor:admissible-half-step-branches}. Fix one of the two global
characteristic roots \(q\), with \(q\) not a root of unity, and let \(\eta\)
be the global coefficient determined by
\[
Y+Z=\eta X+\beta.
\]
Fix \(s_0\in U\), and write the progression-wise representation furnished by
Theorem~\ref{thm:admissible-lattice-classification-U} as
\[
X(s_0+kh)=a_{s_0}q^k+b_{s_0}q^{-k}+c,
\]
for every \(k\in\mathbb Z\), where \(c=B/(2-A)\) is global. Denote by
\(q^{1/2}\) the unique square root compatible with the half-step relation,
namely
\[
q^{1/2}+q^{-1/2}=\eta,
\]
and define \(q^{n/2}=(q^{1/2})^n\) for every \(n\in\mathbb Z\). Set
\[
\Pi=a_{s_0}b_{s_0};
\]
by Remark~\ref{rem:degree-three-global-compatibility}, \(\Pi\) is independent
of the chosen progression. Define
\[
\alpha_n=\frac{q^{n/2}+q^{-n/2}}{2},
\quad
\gamma_n=\frac{q^{n/2}-q^{-n/2}}{q^{1/2}-q^{-1/2}}.
\]
Write also \(\alpha=\alpha_1\). Set
\[
d_n=\phi_2\,\gamma_n+\psi_1\,\alpha_n,
\quad
e_n=(2\phi_2 c+\phi_1)\gamma_n+(\psi_1 c+\psi_0)\alpha_n,
\]
and define
\begin{align*}
\phi^{[n]}(x)
&=
\bigl(\psi_1(\alpha^2-1)\gamma_{2n}+\phi_2\alpha_{2n}\bigr)
\bigl((x-c)^2-2\Pi\bigr)
\\[7pt]
&\quad
+\bigl(\phi'(c)\alpha_n+\psi(c)(\alpha^2-1)\gamma_n\bigr)(x-c)
+\phi(c)+2\phi_2\Pi.
\end{align*}
Let \(I\subseteq\mathbb N\) be either \(I=\mathbb N\), or
\(
I=\{0,1,\dots,N+1\}
\)
for some \(N\in\mathbb N\). In the finite case set
\[
J_I=\{0,1,\dots,2N+1\},
\quad
K_I=\{0,1,\dots,N\},
\]
whereas in the infinite case set
\[
J_I=K_I=\mathbb N.
\]
If \(I=\mathbb N\), then there exists a monic orthogonal polynomial sequence
with respect to \(\mathbf u\) if and only if
\[
d_j\neq0,
\quad j\in\mathbb N,
\]
and
\[
\phi^{[n]}\!\left(c-\frac{e_n}{d_{2n}}\right)\neq0,
\quad n\in\mathbb N.
\]

If \(I=\{0,1,\dots,N+1\}\), assume first that
\[
d_j\neq0,
\quad j=0,1,\dots,2N+1.
\]
Then there exists a monic orthogonal polynomial sequence
\((P_n)_{n\in I}\) with respect to \(\mathbf u\) if and only if
\[
\phi^{[n]}\!\left(c-\frac{e_n}{d_{2n}}\right)\neq0,
\quad n=0,1,\dots,N.
\]
The finite solutions excluded by this non-resonance hypothesis are governed by
Theorem~\ref{thm:finite-resonant-regularity}. Whenever the non-vanishing
conditions displayed in the present theorem hold, the corresponding monic orthogonal polynomial sequence is
uniquely determined by \(P_{-1}=0\), \(P_0=1\), and by the recurrence
\[
P_{n+1}(x)=(x-B_n)P_n(x)-C_nP_{n-1}(x),
\]
for every \(n\in K_I\), with the convention that the term involving
\(C_0P_{-1}\) is void. The coefficients are given by
\[
B_0=c-\frac{\gamma_1e_0}{d_0},
\quad
C_1=-\frac{\gamma_1}{d_1}\,
\phi^{[0]}\!\left(c-\frac{e_0}{d_0}\right),
\]
and, for every \(n\in K_I^\times\),
\begin{align*}
B_n
&=
c+\frac{\gamma_n e_{n-1}}{d_{2n-2}}
-\frac{\gamma_{n+1}e_n}{d_{2n}},\\[7pt]
C_{n+1}
&=
-\frac{\gamma_{n+1}d_{n-1}}{d_{2n-1}d_{2n+1}}
\,
\phi^{[n]}\!\left(c-\frac{e_n}{d_{2n}}\right).
\end{align*}
In the finite case, the last coefficient \(C_{N+1}\) is the terminal norm
coefficient; it is not used to construct a polynomial \(P_{N+2}\).
\end{theorem}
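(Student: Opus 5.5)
The plan is to follow the scheme of \cite[Theorem~4.1]{CMP22a} and \cite[Theorem~9.2]{CP25}, but to carry out every computation in the polynomial variable \(x\) alone. Dependence on \(U\) enters only through the global constants \(\eta,\beta,c,\Pi\) supplied by Corollary~\ref{cor:admissible-half-step-branches} and Remark~\ref{rem:degree-three-global-compatibility}.

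\emph{Step 1 (global calculus).} After the translation \(x\mapsto x-c\), the half-step identities give
\[
Y+Z=2\alpha(X-c)+2c,\qquad (Y-c)(Z-c)=(X-c)^2+(\alpha^2-1)\cdot\text{const}\cdot\Pi .
\]
From these I would derive, as identities in \(\mathcal P\), the following:
\begin{itemize}
\item the actions \(D(x-c)^n=\gamma_n(x-c)^{n-1}+\cdots\) and \(S(x-c)^n=\alpha_n(x-c)^n+\cdots\);
\item the product rules \(D(fg)=Df\,Sg+Sf\,Dg\) and \(S(fg)=Sf\,Sg+\mathrm U_2\,Df\,Dg\), with \(\mathrm U_2=(\alpha^2-1)\bigl((x-c)^2-2\Pi\bigr)\) up to normalisation;
\item the commutation relations for \(DS\) and \(SD\).
\end{itemize}
Since \(\Pi\) is progression-independent, these are genuine polynomial identities. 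This is the only place where the decomposition of \(U\) into cosets enters. The step should also yield the triangular moment relation \(\langle\mathbf u,\Theta_n\rangle=0\) with leading coefficient \(\vartheta_{n,n+1}=d_n\), which links the statement to \S\ref{subsec:finite-resonant}.

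\emph{Step 2 (the derived functionals).} Assume \(d_j\neq0\) for the relevant range. Define \(\mathbf u^{[1]}\) by \(\mathbf S(\phi\mathbf u)\) suitably corrected, or equivalently \(\phi^{[1]}\)-weighted. I would then show, using the product rules, that \(\mathbf u^{[1]}\) is again classical, with the pair \((\phi^{[1]},\psi^{[1]})\), and that \(D P_{n+1}/\gamma_{n+1}\) is a monic orthogonal polynomial sequence for it whenever \((P_n)\) is one for \(\mathbf u\). Iterating gives \(\mathbf u^{[n]}\) with the stated \(\phi^{[n]}\), and its \(\psi\)-part has leading coefficient \(d_{2n}\) and value \(e_n\) at \(c\). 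The point \(c-e_n/d_{2n}\) is then the zero of \(\psi^{[n]}\). From the Pearson equation for \(\mathbf u^{[n]}\) tested against \(1\) and \(x\), I would compute its first moments explicitly. The quantity \(\langle\mathbf u^{[n]},1\rangle\)-ratio then comes out proportional to \(\phi^{[n]}(c-e_n/d_{2n})\).

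\emph{Step 3 (recurrence coefficients).} Next I would relate \(\langle\mathbf u,P_{n+1}^2\rangle/\langle\mathbf u,P_n^2\rangle\) to the corresponding ratio for \(\mathbf u^{[1]}\), using \(\langle\mathbf u,P_{n+1}^2\rangle=-\gamma_{n+1}^{-1}\ldots\langle\phi\mathbf u,\ldots\rangle\) via \(\mathbf D\). Telescoping down to \(\mathbf u^{[n]}\) and degree \(0\) should give the stated \(C_{n+1}\). The coefficient \(B_n\) would be obtained by comparing subleading coefficients of \(D P_{n+1}\) and \(P_n^{[1]}\). Sufficiency in both the finite and infinite cases then follows from Favard, or from the nonvanishing of the Hankel ratios \(\Delta_m/\Delta_{m-1}=\prod C_k\) as in Theorem~\ref{thm:finite-resonant-regularity}. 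Necessity of the \(\phi^{[n]}\)-condition is immediate, since \(C_{n+1}\neq0\) is required.

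\emph{Main obstacle.} I expect the hardest part to be necessity of \(d_j\neq0\) for all \(j\) when \(I=\mathbb N\). The plan there is by contradiction: if \(d_r=0\), test the Pearson equation against \(P_{r+1}\)-adapted polynomials. Because \(q\) is not a root of unity, \(\gamma_n\neq0\) for all \(n\), so \(D\) preserves exact degree. One can then produce a nonzero polynomial of degree \(r+1\) that is \(\mathbf u\)-orthogonal to \(\mathcal P_{r+1}\), and this contradicts regularity of order \(r+3\). The algebra verifying the closed form of \(\phi^{[n]}\) in Step 2 is the other laborious point. I would do it first for the \(\alpha_{2n},\gamma_{2n}\) coefficients by induction, using \(\alpha_{m}\alpha_1-\alpha_{m\pm1}=\pm(\alpha^2-1)\gamma_{m}\)-type identities.
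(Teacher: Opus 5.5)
Your finite-necessity skeleton matches Steps 6--10 of the paper's proof: the shifted pairs with $d^{[k]}_j=d_{j+2k}$, the derived families $D^kP_{j+k}$, and $C^{[n]}_1=-\phi^{[n]}(c-e_n/d_{2n})/d_{2n+1}$ from testing the shifted Pearson equation against $1$ and $x$. The two steps that carry the real weight, however, are not supplied.

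\textbf{Necessity of $d_j\neq0$ for $I=\mathbb N$.} This is the step you flag, and the mechanism you sketch cannot work. A nonzero polynomial of degree $r+1$ that is $\mathbf u$-orthogonal to $\mathcal P_{r+1}$ is a multiple of $P_{r+1}$ with $\langle\mathbf u,P_{r+1}^2\rangle=0$. It would therefore contradict regularity of order $r+2$, not $r+3$, and that stronger conclusion is false in general. For $r=0$, take $\psi=0$, $\phi=(x-x_1)(x-x_2)$ with $x_1\neq x_2$, and $\langle\mathbf u,p\rangle=p(x_1)+p(x_2)$. Then $\phi\mathbf u=0$, so the Pearson equation holds and $d_0=0$, yet $\langle\mathbf u,P_1^2\rangle=(x_1-x_2)^2/2\neq0$. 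Example~\ref{ex:quadratic-resonance} shows the same phenomenon in the quadratic branch, with $d_2=0$ and $h_3\neq0$. What a vanishing $d_r$ (with $r$ minimal) forces is $h_{r+2}=0$, and this is visible only one level up, with $P_{r+2}$ in hand. The paper's Step~11 does exactly this, in three parts. First, $d_0=\psi_1\neq0$ by a separate annihilator argument: $\psi_1=0$ forces $\psi=0$, hence $\phi\mathbf u=0$ because $D$ is onto, which is impossible for a regular functional. Second, the derived-norm identity $\langle\mathbf u^{[1]},P^{[1]}_jP^{[1]}_k\rangle=\alpha d_j\gamma_{j+1}^{-1}h_{j+1}\delta_{jk}$ is proved \emph{without} assuming $d_j\neq0$. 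Third, a bridge identity pairs $\mathbf u$ with $G_nP_{n+2}$, where $G_n=U_2\psi\,DP^{[1]}_n+\phi\,SP^{[1]}_n$ has leading coefficient $\alpha d_n-d_{n-1}$. It gives $(\alpha d_n-d_{n-1})h_{n+2}=-\alpha c_{n,n}d_n\gamma_{n+1}^{-1}h_{n+1}$, so $d_n=0$ forces $d_{n-1}h_{n+2}=0$, and induction from $d_0\neq0$ finishes. ``Testing against $P_{r+1}$-adapted polynomials'' supplies none of this.

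\textbf{Sufficiency.} Your closed form for $h_{n+1}/h_n$ is obtained from the derived families of an orthogonal sequence with non-zero norms, so it cannot be used to produce that sequence. Favard yields \emph{some} functional with the coefficients $B_n,C_n$, but nothing in your plan identifies it with $\mathbf u$. Likewise, equating $\Delta_m/\Delta_{m-1}$ with $\prod_k C_k$ presupposes what is to be proved. The paper avoids this with the Rodrigues construction $R_n\mathbf u=\mathbf D^n\mathbf u^{[n]}$, built under the non-vanishing hypotheses with no regularity assumed, which gives orthogonality at once from $D^nP_m=0$. It then adds a terminal-norm computation that uses only off-diagonal orthogonality of the top polynomial. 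Your route can be repaired by an explicit induction on $N$: $\Delta_N\neq0$ gives a monic $P_{N+1}\perp\mathcal P_N$, and the recursion $H^{[k+1]}_j=\alpha d_{j+2k}\gamma_{j+1}^{-1}H^{[k]}_{j+1}$ must then be run without assuming $h_{N+1}\neq0$. This has to be stated and checked.

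\textbf{Smaller points.}
\begin{enumerate}
\item $U_2=(Y-Z)^2/4=(\alpha^2-1)\bigl((x-c)^2-4\Pi\bigr)$, not $-2\Pi$. The $-2\Pi$ belongs to $\phi^{[n]}$; with your $U_2$ the product rule for $S$ fails and the $\Pi$-dependence of $C_{n+1}$ comes out wrong.
\item $\mathbf u^{[1]}=\mathbf D(U_2\psi\mathbf u)-\mathbf S(\phi\mathbf u)$ is not a polynomial multiple of $\mathbf u$, so ``$\phi^{[1]}$-weighted'' is not a correct description.
\item Comparing subleading coefficients of $DP_{n+1}$ and $P^{[1]}_n$ is tautological, since $P^{[1]}_n$ is defined as $DP_{n+1}/\gamma_{n+1}$. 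What does work is that $D(x-c)^m=\gamma_m(x-c)^{m-1}$ plus terms of degree at most $m-3$. Descending to $P^{[n]}_1=x-c+e_n/d_{2n}$ then gives the $(x-c)^n$-coefficient $\sigma_{n+1}=\gamma_{n+1}e_n/d_{2n}$ of $P_{n+1}$, and hence $B_n=c+\sigma_n-\sigma_{n+1}$. This is a legitimate alternative to reading $B_n$ off the Rodrigues recurrence.
\end{enumerate}
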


\begin{proof}
Since \(q\) is not a root of unity, one has
\(
\gamma_n\neq0
\)
for every \(n\in\mathbb Z^\times\). Moreover, \(\alpha\neq0\), since
\(\alpha=0\) would imply \(q=-1\). Thus all normalisations below are well
defined. We first treat the case where
\(
I=\{0,1,\dots,N+1\}
\)
for some \(N\in\mathbb N\), under the non-resonance hypothesis in the
statement. In the infinite case the construction remains degree-local, but
necessity requires the global argument given in Step~11.

\medskip
\noindent{\em Step 1:}
Define
\[
\psi^{[n]}(x)=d_{2n}(x-c)+e_n,
\]
where
\[
d_n=\phi_2\gamma_n+\psi_1\alpha_n,
\quad
e_n=(2\phi_2 c+\phi_1)\gamma_n+(\psi_1 c+\psi_0)\alpha_n.
\]
Also define \(\phi^{[n]}\) by
\begin{align}
\phi^{[n]}(x)
&=
\bigl(\psi_1(\alpha^2-1)\gamma_{2n}+\phi_2\alpha_{2n}\bigr)
\bigl((x-c)^2-2\Pi\bigr)
\notag\\[7pt]
&\quad
+\bigl(\phi'(c)\alpha_n+\psi(c)(\alpha^2-1)\gamma_n\bigr)(x-c)
+\phi(c)+2\phi_2\Pi.
\label{eq:phi-n-present-proof-solid2}
\end{align}
These are exactly the transformed coefficients appearing in
\cite[Proposition~9.2]{CP25}, written in the notation of the present paper.

\medskip
\noindent{\em Step 2:}
The two elementary symmetric functions of the half-step values give
\[
U_1(x)=(\alpha^2-1)(x-c),
\quad
U_2(x)=\frac{(Y-Z)^2}{4}
=(\alpha^2-1)\bigl((x-c)^2-4\Pi\bigr).
\]
The factor \(-4\Pi\) in \(U_2\) should not be confused with the factor
\(-2\Pi\) occurring in \(\phi^{[n]}\); they belong to different polynomial
identities.
Define recursively functionals \(\mathbf u^{[n]}\in\mathcal P'\) by
\[
\mathbf u^{[0]}=\mathbf u,
\]
and, for every \(n\in\mathbb N\),
\begin{equation}\label{eq:u-n-present-proof-solid2}
\mathbf u^{[n+1]}
=
\mathbf D\!\Bigl((\alpha^2-1)\bigl((x-c)^2-4\Pi\bigr)\,\psi^{[n]}\mathbf u^{[n]}\Bigr)
-
\mathbf S\!\bigl(\phi^{[n]}\mathbf u^{[n]}\bigr).
\end{equation}
For \(n=0\), the identity
\[
\mathbf D\!\bigl(\phi^{[0]}\mathbf u^{[0]}\bigr)
=
\mathbf S\!\bigl(\psi^{[0]}\mathbf u^{[0]}\bigr)
\]
is precisely the assumed relation, since
\[
\mathbf u^{[0]}=\mathbf u,
\quad
\phi^{[0]}=\phi,
\quad
\psi^{[0]}=\psi.
\]
By \cite[Lemma~9.1, equation~(9.27)]{CP25}, together with the explicit
coefficient formulae in \cite[Proposition~9.2]{CP25}, the validity of the
identity at level \(n\) implies the corresponding identity at level \(n+1\).
Here both results have simply been rewritten in the present notation and used
with the definition of \(\mathbf u^{[n+1]}\). Hence, by induction,
\begin{equation}\label{eq:shifted-relation-present-proof-solid2}
\mathbf D\!\bigl(\phi^{[n]}\mathbf u^{[n]}\bigr)
=
\mathbf S\!\bigl(\psi^{[n]}\mathbf u^{[n]}\bigr),
\end{equation}
for every \(n\in\mathbb N\).

\medskip
\noindent{\em Step 3:}
Assume that
\(
d_j\neq0
\)
for every \(j\in J_I\), and that
\[
\phi^{[n]}\!\left(c-\frac{e_n}{d_{2n}}\right)\neq0
\]
for every \(n\in K_I\). We claim that, for each \(n\in I\), there exists a polynomial \(R_n\) such that
\begin{equation}\label{eq:Rn-rodrigues-present-proof-solid2}
R_n\,\mathbf u=\mathbf D^n\mathbf u^{[n]}.
\end{equation}
Moreover, these polynomials satisfy
\[
R_{-1}=0,
\quad
R_0=1,
\quad
R_1(x)=-\alpha\,\psi^{[0]}(x)
=
-\alpha\bigl(d_0(x-c)+e_0\bigr),
\]
and, for every \(n\in K_I^\times\),
\begin{equation}\label{eq:Rn-ttrr-present-proof-solid2}
R_{n+1}(x)=(a_n x-s_n)R_n(x)-t_nR_{n-1}(x),
\end{equation}
where
\begin{align}
a_n&=-\alpha\,\frac{d_{2n}d_{2n-1}}{d_{n-1}},
\label{eq:an-present-proof-solid2}\\[7pt]
s_n&=
a_n\left(
c+\frac{\gamma_n e_{n-1}}{d_{2n-2}}
-\frac{\gamma_{n+1}e_n}{d_{2n}}
\right),
\label{eq:sn-present-proof-solid2}\\[7pt]
t_n&=
a_n\,\frac{\alpha\gamma_n d_{2n-2}}{d_{2n-1}}\,
\phi^{[n-1]}\!\left(c-\frac{e_{n-1}}{d_{2n-2}}\right).
\label{eq:tn-present-proof-solid2}
\end{align}
This is the degree-local \(q\)-exponential Rodrigues construction given in the
proof of \cite[Theorem~9.1]{CP25}, rewritten in the notation of the present
paper. The locality of that argument is worth spelling out, since the theorem
there is stated for \(q>0\) and under a global non-vanishing hypothesis. If
\(q\) is not a root of unity, then
\[
\alpha_m\gamma_m\neq0,
\quad m\in\mathbb N^\times,
\]
for \(\alpha_m=0\) would give \(q^m=-1\), while \(\gamma_m=0\) would give
\(q^m=1\). In the inductive calculation which produces \(R_{n+1}\), the only
divisions, apart from these automatically non-zero factors and \(\alpha\), are
by the coefficients
\(
d_j\neq0
\)
for
\(
j\in\{0,1,\dots,2n\},
\)
and the preceding evaluation factors required are
\[
\phi^{[j]}\!\left(c-\frac{e_j}{d_{2j}}\right)\neq0
\]
for
\(
j\in\{0,1,\dots,n-1\}.
\)
Thus the initial step requires only \(d_0\neq0\), and the passage from level
\(n\) to level \(n+1\) uses no non-vanishing assumption from a later level.
The additional factors \(d_{2n+1}\) and
\(
\phi^{[n]}(c-e_n/d_{2n})
\)
enter only when the candidate terminal norm coefficient \(C_{n+1}\) is
evaluated; they are not used to construct \(R_{n+1}\). Every identity involved
is algebraic once these displayed divisors have been cleared. Consequently,
the same inductive construction remains valid for complex non-torsion \(q\)
and in the present finite setting. In particular, under the assumptions above,
it yields the existence of \((R_n)_{n\in I}\). We now normalise these
polynomials so as to obtain a monic family.

\medskip
\noindent{\em Step 4:}
For every \(n\in I\), set
\[
k_0=1,
\quad
k_n=(-\alpha)^{-n}\prod_{j=1}^{n}d_{n+j-2}^{-1},
\]
and define
\[
P_n=k_nR_n.
\]
By the leading-coefficient computation contained in
\cite[Theorem~9.1]{CP25}, this choice of \(k_n\) makes \(P_n\) monic. Multiplying
\eqref{eq:Rn-ttrr-present-proof-solid2} by \(k_{n+1}\), and using
\eqref{eq:an-present-proof-solid2}--\eqref{eq:tn-present-proof-solid2}, one obtains
\[
P_{n+1}(x)=(x-B_n)P_n(x)-C_nP_{n-1}(x),
\]
for \(n=0,1,\dots,N\), where
\[
B_0=c-\frac{\gamma_1e_0}{d_0},
\quad
C_1=-\frac{\gamma_1}{d_1}\,
\phi^{[0]}\!\left(c-\frac{e_0}{d_0}\right),
\]
and, for \(n=1,2,\dots,N\),
\[
B_n
=
c+\frac{\gamma_n e_{n-1}}{d_{2n-2}}
-\frac{\gamma_{n+1}e_n}{d_{2n}}.
\]
For \(n=1,2,\dots,N-1\), when this range is non-empty, the coefficients
already used in the constructed recurrence are
\[
C_{n+1}
=
-\frac{\gamma_{n+1}d_{n-1}}{d_{2n-1}d_{2n+1}}
\,
\phi^{[n]}\!\left(c-\frac{e_n}{d_{2n}}\right).
\]
The same closed expression at \(n=N\) will provisionally be denoted by
\(C_{N+1}\) when \(N\ge1\); for \(N=0\), the provisional terminal quantity
is the displayed value of \(C_1\). At this stage it is only the candidate
terminal coefficient: it has not been obtained by constructing
\(P_{N+2}\). Under the present assumptions all the displayed quantities are
well defined: every displayed denominator is non-zero, and every displayed
\(C\)-quantity is algebraically non-zero. The argument below proves, without
extending the recurrence, that the terminal candidate is precisely
\(h_{N+1}/h_N\).

\medskip
\noindent{\em Step 5:}
Let \(0\le m<n\le N+1\). Since \(\deg P_m=m<n\), one has
\(
D^nP_m=0.
\)
Using \eqref{eq:Rn-rodrigues-present-proof-solid2} and the definition of the transpose
\(\mathbf D\), we obtain
\begin{align*}
\langle \mathbf u,P_nP_m\rangle
&=
k_n\langle \mathbf u,R_nP_m\rangle
=
k_n\langle R_n\mathbf u,P_m\rangle
=
k_n\langle \mathbf D^n\mathbf u^{[n]},P_m\rangle
\\[7pt]
&=
(-1)^n k_n\langle \mathbf u^{[n]},D^nP_m\rangle
=
0.
\end{align*}
Thus
\[
\langle \mathbf u,P_nP_m\rangle=0
\]
whenever \(0\le m<n\le N+1\).
It remains to show that the squared norms do not vanish.
Since \(P_0=1\), we have
\[
\langle \mathbf u,P_0^2\rangle=\langle \mathbf u,1\rangle\neq0.
\]
Now let
\(
n=1,\dots,N.
\)
Pairing
\[
P_n(x)=(x-B_{n-1})P_{n-1}(x)-C_{n-1}P_{n-2}(x)
\]
with \(P_n\), and using orthogonality, gives
\[
\langle \mathbf u,P_n^2\rangle
=
\langle \mathbf u,xP_{n-1}P_n\rangle.
\]
Pairing
\[
P_{n+1}(x)=(x-B_n)P_n(x)-C_nP_{n-1}(x)
\]
with \(P_{n-1}\), and again using orthogonality, yields
\[
\langle \mathbf u,xP_nP_{n-1}\rangle
=
C_n\langle \mathbf u,P_{n-1}^2\rangle.
\]
Hence
\[
\langle \mathbf u,P_n^2\rangle
=
C_n\langle \mathbf u,P_{n-1}^2\rangle.
\]
This proves the non-vanishing of the norms through \(h_N\), where
\[
h_j=\langle\mathbf u,P_j^2\rangle.
\]
It remains to justify the terminal norm: the recurrence which constructs
\(P_0,\dots,P_{N+1}\) does not use \(C_{N+1}\), and therefore that coefficient
must not simply be presumed to be \(h_{N+1}/h_N\).

For \(0\le k\le N\) and \(0\le j\le N+1-k\), put
\begin{equation}\label{eq:derived-present-proof-solid2}
P_j^{[k]}(x)
=
\frac{D^kP_{j+k}(x)}{\displaystyle\prod_{\ell=1}^{k}\gamma_{j+\ell}},
\end{equation}
with the empty product interpreted as \(1\), and set
\[
H_j^{[k]}
=
\big\langle\mathbf u^{[k]},(P_j^{[k]})^2\big\rangle.
\]
We now make the degree-local norm calculation explicit. The product rules for
the fixed \(q\)-exponential calculus give, for arbitrary \(f,g\in\mathcal P\),
\begin{align*}
fDg
&=
D\!\left(\left(Sf-\alpha^{-1}U_1Df\right)g\right)
-\alpha^{-1}S(gDf),
\\[7pt]
fSg
&=
S\!\left(\left(Sf-\alpha^{-1}U_1Df\right)g\right)
-\alpha^{-1}U_2D(gDf).
\end{align*}
Let \(\mathbf v\) satisfy
\(
\mathbf D(\varphi\mathbf v)=\mathbf S(\vartheta\mathbf v)
\), and put
\[
\mathbf v^+
=
\mathbf D(U_2\vartheta\mathbf v)-\mathbf S(\varphi\mathbf v).
\]
Transposition of the preceding two identities, followed by the functional
equation for \(\mathbf v\), gives the local integration formula
\begin{equation}\label{eq:q-local-integration-by-parts}
\langle\mathbf v^+,fDg\rangle
=
\alpha\langle\mathbf v,
g(\varphi Df+\vartheta Sf)\rangle.
\end{equation}
Starting from the off-diagonal orthogonality at level \(k=0\) proved above,
assume inductively that the derived polynomials are orthogonal off the
diagonal at level \(k\). Apply
\eqref{eq:q-local-integration-by-parts} with
\[
\mathbf v=\mathbf u^{[k]},
\quad
\mathbf v^+=\mathbf u^{[k+1]},
\quad
\varphi=\phi^{[k]},
\quad
\vartheta=\psi^{[k]},
\]
and with
\(
f=P_m^{[k+1]}
\),
\(
g=P_{j+1}^{[k]}
\).
The polynomial
\[
\phi^{[k]}DP_m^{[k+1]}
+\psi^{[k]}SP_m^{[k+1]}
\]
has degree at most \(m+1\), and its leading coefficient is
\(
d_{m+2k}
\).
The induction hypothesis at level \(k\) therefore gives
\[
\gamma_{j+1}
\big\langle
\mathbf u^{[k+1]},
P_m^{[k+1]}P_j^{[k+1]}
\big\rangle
=
\alpha d_{j+2k}H_{j+1}^{[k]}\delta_{mj},
\]
for \(0\le m\le j\le N-k\). This establishes both the off-diagonal
orthogonality at level \(k+1\) and the norm identity
\begin{equation}\label{eq:local-norm-terminal-q}
H_j^{[k+1]}
=
\alpha\,\frac{d_{j+2k}}{\gamma_{j+1}}H_{j+1}^{[k]}
\end{equation}
whenever \(0\le k\le N-1\) and \(0\le j\le N-k\). Induction gives these
relations throughout the stated range. Notice that no non-vanishing of the
last norm has entered this calculation.

For \(N\ge1\), iteration along the two adjacent diagonals gives
\[
H_0^{[N]}
=
\alpha^N
\frac{\displaystyle\prod_{r=N-1}^{2N-2}d_r}
     {\displaystyle\prod_{r=1}^{N}\gamma_r}\,h_N,
\quad
H_1^{[N]}
=
\alpha^N
\frac{\displaystyle\prod_{r=N}^{2N-1}d_r}
     {\displaystyle\prod_{r=2}^{N+1}\gamma_r}\,h_{N+1}.
\]
At level \(N\), put
\[
r_N=c-\frac{e_N}{d_{2N}}.
\]
Testing the shifted functional equation first against \(1\) and then against
\(x\) gives, respectively,
\begin{align*}
\big\langle\mathbf u^{[N]},x-r_N\big\rangle
&=0,
\\[7pt]
\big\langle\mathbf u^{[N]},
\phi^{[N]}+\psi^{[N]}Sx\big\rangle
&=0.
\end{align*}
Since
\(
Sx=c+\alpha(x-c)
\),
\(
\psi^{[N]}(x)=d_{2N}(x-r_N)
\), and the coefficient of \(x^2\) in \(\phi^{[N]}\), say
\(\Phi_{2,N}\), satisfies
\[
\Phi_{2,N}+\alpha d_{2N}=d_{2N+1},
\]
the two displayed moment identities yield directly
\[
\frac{H_1^{[N]}}{H_0^{[N]}}
=
-\frac{1}{d_{2N+1}}\,
\phi^{[N]}\!\left(c-\frac{e_N}{d_{2N}}\right).
\]
Consequently,
\begin{equation}\label{eq:terminal-norm-q}
\frac{h_{N+1}}{h_N}
=
-\frac{\gamma_{N+1}d_{N-1}}{d_{2N-1}d_{2N+1}}\,
\phi^{[N]}\!\left(c-\frac{e_N}{d_{2N}}\right)
=C_{N+1}.
\end{equation}
For \(N=0\), this is precisely the direct first-coefficient calculation for
\(C_1=h_1/h_0\). Thus the terminal evaluation condition is exactly the
non-vanishing condition for \(h_{N+1}\), and
\[
\langle \mathbf u,P_n^2\rangle\neq0
\]
for \(n=0,1,\dots,N+1\). Hence
\(
(P_n)_{n\in I}
\)
is a monic orthogonal polynomial sequence with respect to \(\mathbf u\). This proves the sufficiency of the stated non-vanishing conditions in the case
\(
I=\{0,1,\dots,N+1\}.
\)

\medskip
\noindent{\em Step 6:}
Assume now that there exists a monic orthogonal polynomial sequence
\(
(P_n)_{n\in I}
\)
with respect to \(\mathbf u\), where
\(
I=\{0,1,\dots,N+1\}.
\)
For integers \(k,j\in\mathbb N\) with \(j+k\le N+1\), retain the notation
\eqref{eq:derived-present-proof-solid2}.
Since \(q\) is not a root of unity, one has
\(
\gamma_{j+\ell}\neq0
\)
for every index appearing in the product, so this is well defined. Moreover, \(D\) lowers degree by one, and its action on the leading term is governed by the coefficient \(\gamma_n\); hence each \(P_j^{[k]}\) is monic of degree \(j\).

\medskip
\noindent{\em Step 7:}
Let \(\phi_2^{[k]}\) denote the quadratic coefficient of \(\phi^{[k]}\), and \(\psi_1^{[k]}\) the linear coefficient of \(\psi^{[k]}\). By
\eqref{eq:phi-n-present-proof-solid2} and the definition of \(\psi^{[k]}\),
\begin{align*}
\phi_2^{[k]}=\psi_1(\alpha^2-1)\gamma_{2k}+\phi_2\alpha_{2k},\quad
\psi_1^{[k]}=\phi_2\gamma_{2k}+\psi_1\alpha_{2k}.
\end{align*}
Define
\[
d_n^{[k]}=\phi_2^{[k]}\gamma_n+\psi_1^{[k]}\alpha_n.
\]
A direct computation from the definitions of \(\alpha_n\) and \(\gamma_n\) gives
\[
\alpha_{2k}\gamma_n+\gamma_{2k}\alpha_n=\gamma_{n+2k},
\quad
\alpha_{2k}\alpha_n+(\alpha^2-1)\gamma_{2k}\gamma_n=\alpha_{n+2k},
\]
and therefore
\begin{equation}\label{eq:shifted-d-present-proof-solid2}
d_n^{[k]}=\phi_2\gamma_{n+2k}+\psi_1\alpha_{n+2k}=d_{n+2k}.
\end{equation}

\medskip
\noindent{\em Step 8:}
The local integration formula
\eqref{eq:q-local-integration-by-parts} applies verbatim at each shifted
level. Assuming orthogonality at level \(k\), take
\(
f=P_m^{[k+1]}
\)
and
\(
g=P_{n+1}^{[k]}
\).
For \(0\le m\le n\), the same degree and leading-coefficient calculation used
above gives
\begin{equation}\label{eq:norm-present-proof-solid2}
\big\langle \mathbf u^{[k+1]},P_n^{[k+1]}P_m^{[k+1]}\big\rangle
=
\alpha\,\frac{d_n^{[k]}}{\gamma_{n+1}}\,
\big\langle \mathbf u^{[k]},(P_{n+1}^{[k]})^2\big\rangle
\delta_{n,m}.
\end{equation}
This calculation uses only the transformed relation at level \(k\), the
derived polynomials up to index \(n+1\), and the orthogonality relations whose
indices do not exceed \(n+k+1\). It is therefore valid in the finite setting
whenever
\[
n+k+1\le N+1.
\]

\medskip
\noindent{\em Step 9:}
We claim that, for \(k=0,1,\dots,N\), the sequence
\(
P_0^{[k]},P_1^{[k]},\dots,P_{N+1-k}^{[k]}
\)
is orthogonal with respect to \(\mathbf u^{[k]}\), and that
\[
\big\langle \mathbf u^{[k]},(P_n^{[k]})^2\big\rangle\neq0
\]
for \(n=0,1,\dots,N+1-k\).
For \(k=0\), this is precisely the assumption that
\(
(P_n)_{n\in I}
\)
is orthogonal with respect to \(\mathbf u\). Assume it holds for some
\(k\le N-1\). Equation~\eqref{eq:norm-present-proof-solid2}, applied at level
\(k\), gives, for
\(0\le n,m\le N-k\),
\[
\big\langle \mathbf u^{[k+1]},P_n^{[k+1]}P_m^{[k+1]}\big\rangle
=
\alpha\,\frac{d_n^{[k]}}{\gamma_{n+1}}\,
\big\langle \mathbf u^{[k]},(P_{n+1}^{[k]})^2\big\rangle
\delta_{n,m}.
\]
By \eqref{eq:shifted-d-present-proof-solid2} and the standing finite
non-resonance hypothesis, every factor
\(d_n^{[k]}=d_{n+2k}\) occurring here is non-zero. The squared norms at level
\(k\) are non-zero by the induction hypothesis, and
\(\alpha\gamma_{n+1}\neq0\). Hence the displayed identity proves
simultaneously that
\(
P_0^{[k+1]},P_1^{[k+1]},\dots,P_{N-k}^{[k+1]}
\)
is orthogonal with respect to \(\mathbf u^{[k+1]}\) and that all its squared
norms are non-zero. This proves the induction step, including the terminal
derived norm.

\medskip
\noindent{\em Step 10:}
Fix \(n\in K_I\). By Step 9, the sequence
\(
P_0^{[n]},P_1^{[n]},\dots,P_{N+1-n}^{[n]}
\)
is orthogonal with respect to \(\mathbf u^{[n]}\), with non-zero squared norms.
Hence its first recurrence coefficients \(B_0^{[n]}\) and \(C_1^{[n]}\) are well defined, with
\[
C_1^{[n]}\neq0.
\]
Put
\[
r_n
=
c-\frac{\gamma_1e_n}{d_0^{[n]}},
\]
where \(\gamma_1=1\). Testing the shifted functional equation against \(1\)
gives
\[
\big\langle\mathbf u^{[n]},x-r_n\big\rangle=0.
\]
Thus
\(
P_1^{[n]}=x-r_n
\).
Testing the same equation against \(x\) gives
\[
\big\langle\mathbf u^{[n]},
\phi^{[n]}+\psi^{[n]}Sx\big\rangle=0.
\]
Since
\(
Sx=c+\alpha(x-c)
\), the coefficient of \((x-r_n)^2\) in the last integrand is
\[
\phi_2^{[n]}+\alpha\psi_1^{[n]}=d_1^{[n]}.
\]
The term linear in \(x-r_n\) has zero moment, and hence
\begin{equation}\label{eq:C1n-present-proof-solid2}
C_1^{[n]}
=
-\frac{\gamma_1}{d_1^{[n]}}\,
\phi^{[n]}\!\left(c-\frac{e_n}{d_0^{[n]}}\right).
\end{equation}
Using \eqref{eq:shifted-d-present-proof-solid2}, we have
\[
d_0^{[n]}=d_{2n},
\quad
d_1^{[n]}=d_{2n+1}.
\]
The factors \(d_{2n}\) and \(d_{2n+1}\) are non-zero by the standing finite
non-resonance hypothesis. Since \(C_1^{[n]}\neq0\), the displayed formula
therefore gives
\[
\phi^{[n]}\!\left(c-\frac{e_n}{d_{2n}}\right)\neq0
\]
for every \(n\in K_I\).
This proves the necessity of the stated evaluation conditions when
\(
I=\{0,1,\dots,N+1\}.
\)

This establishes the theorem in the finite case. Indeed, under the stated
non-vanishing conditions, Step~4 yields a monic family satisfying the recurrence
with the explicit coefficients displayed in the statement. Conversely, Step~10
shows that the stated evaluation conditions hold, and therefore Step~4 may
be applied to the same data. The monic family thereby obtained satisfies the
same orthogonality conditions as the given one, and by the standard uniqueness
of the monic orthogonal polynomial sequence indexed by \(I\), the two families
coincide. Hence the recurrence coefficients are necessarily the coefficients
displayed in the statement.

\medskip
\noindent{\em Step 11:}
We now turn to the case \(I=\mathbb N\). The passage to the infinite case is
not an inference from finite initial segments: infinite regularity enters
precisely in the proof that no coefficient \(d_n\) can vanish. We therefore
reproduce, in the present notation, the global argument of
\cite[Lemma~3.4]{CMP22a}.

Assume first that \(\mathbf u\) is regular, let
\((P_n)_{n\in\mathbb N}\) be its monic orthogonal polynomial sequence, and put
\[
\mathfrak h_n=\langle\mathbf u,P_n^2\rangle.
\]
Since \(q\) is not a root of unity, one has
\[
\alpha_m\neq0\quad(m\in\mathbb N),
\qquad
\gamma_m\neq0\quad(m\in\mathbb N^\times).
\]
Indeed, \(\alpha_m=0\) would give \(q^m=-1\), whereas \(\gamma_m=0\)
would give \(q^m=1\). In particular, \(\alpha\neq0\), and
\(D:\mathcal P_{m+1}\to\mathcal P_m\) is onto for every \(m\), since
\[
D(x^{m+1})=\gamma_{m+1}x^m+\text{terms of lower degree}.
\]
We first note that
\[
d_0=\psi_1\neq0.
\]
If \(\psi_1=0\), testing the functional equation at \(1\) gives
\(\psi_0\langle\mathbf u,1\rangle=0\), and hence \(\psi=0\). The functional
equation would then give \(\mathbf D(\phi\mathbf u)=0\). Since \(D\) is onto,
this means \(\phi\mathbf u=0\). A regular functional has no non-zero
polynomial annihilator: if \(f=\sum_{j=0}^m a_jP_j\), with \(a_m\neq0\), then
\[
\langle\mathbf u,fP_m\rangle=a_m\mathfrak h_m\neq0.
\]
Thus \(\phi=0\), contrary to the hypothesis that \(\phi\) and \(\psi\) are
not both zero.

Set
\[
\mathcal U_1(x)=(\alpha^2-1)(x-c),
\qquad
\mathcal U_2(x)=(\alpha^2-1)\bigl((x-c)^2-4\Pi\bigr),
\]
so that \eqref{eq:u-n-present-proof-solid2}, at the first level, reads
\[
\mathbf u^{[1]}
=
\mathbf D\!\bigl(\mathcal U_2\psi\mathbf u\bigr)
-\mathbf S(\phi\mathbf u).
\]
For \(j\in\mathbb N\), define
\[
P_j^{[1]}=\frac{DP_{j+1}}{\gamma_{j+1}}.
\]
The product identities for \(D\) and \(S\), together with the functional
equation, give the degree-local identity
\begin{equation}\label{eq:q-global-derived-norm}
\big\langle\mathbf u^{[1]},P_j^{[1]}P_k^{[1]}\big\rangle
=
\alpha\,\frac{d_j}{\gamma_{j+1}}\,
\mathfrak h_{j+1}\delta_{j,k},
\qquad 0\le k\le j.
\end{equation}
No regularity of \(\mathbf u^{[1]}\), and no non-vanishing of \(d_j\), is
used in this calculation. For \(n\in\mathbb N^\times\), put
\begin{align*}
G_n
&=
\mathcal U_2\psi\,DP_n^{[1]}
+\phi\,SP_n^{[1]},
\\[7pt]
H_n
&=
SP_{n+2}
+\alpha^{-1}\mathcal U_1DP_{n+2}.
\end{align*}
The same product identities give
\begin{equation}\label{eq:q-global-bridge}
\langle\mathbf u,G_nP_{n+2}\rangle
=
-\big\langle\mathbf u^{[1]},H_nP_n^{[1]}\big\rangle.
\end{equation}
Write
\[
H_n=\sum_{j=0}^{n+2}c_{n,j}P_j^{[1]}.
\]
The coefficient of \(x^{n+2}\) in \(G_n\) is
\[
(\alpha^2-1)\psi_1\gamma_n+\phi_2\alpha_n
=
\alpha d_n-d_{n-1}.
\]
Consequently, orthogonality of \((P_n)_{n\in\mathbb N}\), followed by
\eqref{eq:q-global-derived-norm} and \eqref{eq:q-global-bridge}, yields
\[
(\alpha d_n-d_{n-1})\mathfrak h_{n+2}
=
-\alpha\,\frac{c_{n,n}d_n}{\gamma_{n+1}}\,
\mathfrak h_{n+1}.
\]
Since
\[
C_{n+2}=\frac{\mathfrak h_{n+2}}{\mathfrak h_{n+1}}\neq0,
\]
we obtain
\begin{equation}\label{eq:q-global-d-induction}
\alpha\left(
1+\frac{c_{n,n}}{\gamma_{n+1}C_{n+2}}
\right)d_n
=d_{n-1}.
\end{equation}
Starting from \(d_0\neq0\), equation~\eqref{eq:q-global-d-induction} proves
inductively that
\[
d_n\neq0,
\qquad n\in\mathbb N.
\]
Equation~\eqref{eq:q-global-derived-norm} now shows that
\((P_j^{[1]})_{j\in\mathbb N}\) is the monic orthogonal polynomial sequence
associated with \(\mathbf u^{[1]}\). Applying the same argument to the
transformed functional equation and using
\[
d_j^{[k]}=d_{j+2k},
\]
we conclude that every \(\mathbf u^{[k]}\) is regular and that
\((P_j^{[k]})_{j\in\mathbb N}\) is its monic orthogonal polynomial sequence.
The first recurrence coefficient at level \(n\) is therefore non-zero, and the
calculation of Step~10 gives
\[
0\neq C_1^{[n]}
=
-\frac{\gamma_1}{d_{2n+1}}\,
\phi^{[n]}\!\left(c-\frac{e_n}{d_{2n}}\right).
\]
Hence
\[
\phi^{[n]}\!\left(c-\frac{e_n}{d_{2n}}\right)\neq0,
\qquad n\in\mathbb N.
\]
This proves necessity in the infinite case.

Conversely, assume that the two non-vanishing conditions in the statement
hold for every relevant index. The Rodrigues construction of Steps~1--4 is
degree-local and therefore produces \(P_n\) for every \(n\). Step~5 gives
\[
\langle\mathbf u,P_nP_m\rangle=0,
\qquad 0\le m<n,
\]
while the recurrence and the displayed formula for \(C_n\) give
\[
\langle\mathbf u,P_n^2\rangle
=
C_n\langle\mathbf u,P_{n-1}^2\rangle\neq0,
\qquad n\in\mathbb N^\times.
\]
Thus \((P_n)_{n\in\mathbb N}\) is the monic orthogonal polynomial sequence
associated with \(\mathbf u\), and the proof is complete. Although
\cite{CMP22a} states the result for positive \(q\), the preceding argument is
algebraic in the compatible root \(q^{1/2}\): no order, positivity, or
conjugation has been used.
\end{proof}

\begin{remark}\label{rem:q-exponential-global}
Remark~\ref{rem:degree-three-global-compatibility} has already shown, directly
from admissibility, that
\[
\Pi=a_{s_0}b_{s_0}
\]
is global in the non-torsion \(q\)-exponential branch. It is nevertheless
instructive to see that the recurrence coefficients remember the same
invariant. Whenever \(C_2\) belongs to the available orthogonal segment and
the non-resonance conditions include \(d_0d_3\neq0\), taking \(n=1\) in
Theorem~\ref{thm:regularity-admissible-orbit} gives
\[
C_2
=
-\frac{\gamma_2d_0}{d_1d_3}
\,
\phi^{[1]}\!\left(
c-\frac{e_1}{d_2}
\right).
\]
The dependence on \(\Pi\) is made explicit by
\begin{align*}
\phi^{[1]}(x)
&=
\bigl(\psi_1(\alpha^2-1)\gamma_2+\phi_2\alpha_2\bigr)
\bigl((x-c)^2-2\Pi\bigr)
\\[7pt]
&\quad
+\bigl(\phi'(c)\alpha_1+\psi(c)(\alpha^2-1)\gamma_1\bigr)(x-c)
+\phi(c)+2\phi_2\Pi.
\end{align*}
Thus the coefficient multiplying \(\Pi\) in \(\phi^{[1]}\) is
\[
-2\bigl(\psi_1(\alpha^2-1)\gamma_2+\phi_2\alpha_2\bigr)+2\phi_2.
\]
Using
\[
\gamma_2=2\alpha,
\quad
\alpha_2=2\alpha^2-1,
\]
this coefficient becomes
\[
-4(\alpha^2-1)(\phi_2+\alpha\psi_1)
=
-4(\alpha^2-1)d_1.
\]
Consequently, the coefficient multiplying \(\Pi\) in \(C_2\) is
\[
-\frac{\gamma_2d_0}{d_1d_3}
\bigl(-4(\alpha^2-1)d_1\bigr)
=
\frac{4\gamma_2d_0(\alpha^2-1)}{d_3}.
\]
In the present range one has
\[
\gamma_2\neq0,
\quad
\alpha^2-1\neq0.
\]
Therefore \(C_2\) detects \(\Pi\) through a non-zero linear coefficient. It
does not create its independence of the progression: that has already been imposed by
the global admissible structure. Nor does this calculation imply that
\(a_{s_0}\) and \(b_{s_0}\) are separately independent of the progression, or
that an equality of their products would suffice to assemble arbitrary local
descriptions into a single functional and a single orthogonal polynomial
sequence.

The role of \(U\) should be understood in this operator-theoretic sense. The
functional \(\mathbf u\) is an element of \(\mathcal P'\), not a functional on
\(U\). The set \(U\), together with \(X\) and \(h\), enters through the
definition of the operators \(D\) and \(S\). It becomes visible at the level of
support only when \(\mathbf u\) is represented by a measure, or by a discrete
sum, involving the points \(X(s)\). This is the situation alluded to in the introduction: different full-step
progressions in the parameter set may contribute to one and the same
representing support, without producing different orthogonal polynomial
sequences.
\end{remark}

\begin{remark}\label{rem:q-exponential-no-degenerate-pair}
In the \(q\)-exponential case, the regularity criterion of
Theorem~\ref{thm:regularity-admissible-orbit} shows that the degenerate
possibilities \(\phi=0\) and \(\psi=0\), although not excluded a priori by
Definition~\ref{def:classical}, cannot occur either in the infinite situation
or in a finite non-resonant segment covered by that theorem. Indeed, in either
of those two settings, if there exists a monic orthogonal polynomial sequence
\(
(P_n)_{n\in I}
\)
with respect to \(\mathbf u\), then the theorem gives in particular
\[
d_0\neq0.
\]
Since
\[
d_0=\phi_2\gamma_0+\psi_1\alpha_0=\psi_1,
\]
it follows that
\(
\psi_1\neq0.
\)
Hence \(\deg\psi=1\). Writing
\[
\psi(x)=\psi_1\left(x+\frac{\psi_0}{\psi_1}\right),
\]
one also has
\(
e_0=\psi_1c+\psi_0,
\)
and therefore
\[
c-\frac{e_0}{d_0}=-\frac{\psi_0}{\psi_1}.
\]
For \(n=0\), the transformed polynomial \(\phi^{[0]}\) reduces to \(\phi\), since
\[
\phi^{[0]}(x)
=
\phi_2(x-c)^2+\phi'(c)(x-c)+\phi(c)
=
\phi(x).
\]
Thus the regularity criterion gives
\[
0\neq \phi^{[0]}\!\left(c-\frac{e_0}{d_0}\right)=\phi\left(-\frac{\psi_0}{\psi_1}\right).
\]
In particular, \(\phi\) is not the zero polynomial. Therefore, in the
\(q\)-exponential case, any classical functional in the sense of
Definition~\ref{def:classical} to which
Theorem~\ref{thm:regularity-admissible-orbit} applies necessarily satisfies
\[
\deg\psi=1,
\quad
\phi\left(-\frac{\psi_0}{\psi_1}\right)\neq0.
\]
Consequently, neither
\(\phi=0\) nor \(\psi=0\) can occur.
No corresponding conclusion is being asserted here for a finite resonant
solution, which must instead be examined through
Theorem~\ref{thm:finite-resonant-regularity}.
\end{remark}

\subsection{Ordinary quadratic map}
The following theorem may, at a purely formal level, be viewed as the
degeneration \(q\to1\) of
Theorem~\ref{thm:regularity-admissible-orbit}. But why introduce a singular
family of auxiliary maps when the quadratic calculus already contains the
identities one needs? We work directly with the fixed operators \(D\) and
\(S\). This removes any ambiguity about a limiting functional and, at the same
time, makes visible the exact hypothesis on which the calculation rests.
Throughout this subsection, the expression \emph{quadratic case} refers to the
ordinary half-step branch. The reflected branch identified in
Corollary~\ref{cor:admissible-half-step-branches} is a genuine admissible
possibility, but the formulas below do not absorb it by a change of notation.
\begin{theorem}\label{thm:regularity-admissible-orbit-quadratic}
Fix \(h\in\mathbb C^\times\), let \(U\subseteq\mathbb C\) be half-step-invariant, and let
\(X:U\to\mathbb C\) be an admissible map.
Assume that \(X\) belongs to the ordinary quadratic half-step branch. Thus
there exists a global constant \(a\in\mathbb C\) such that
\[
Y+Z=2X+\frac a2
\]
throughout \(U\).
Let
\(
D,S:\mathcal P\to\mathcal P
\)
be the associated divided-difference and averaging operators, with transposes
\(
\mathbf D,\mathbf S:\mathcal P'\to\mathcal P'.
\)
Let \(\mathbf u\in\mathcal P'\) be such that
\(
\langle \mathbf u,1\rangle\neq0,
\)
and assume that there exist polynomials \(\phi\) and \(\psi\), where \(\phi\) has degree at most \(2\) and \(\psi\) has degree at most \(1\), not both identically zero, such that
\[
\mathbf D(\phi\,\mathbf u)=\mathbf S(\psi\,\mathbf u)
\]
in \(\mathcal P'\). Write
\[
\phi(x)=\phi_2 x^2+\phi_1 x+\phi_0,
\quad
\psi(x)=\psi_1 x+\psi_0.
\]
Fix \(s_0\in U\). The progression-wise representation furnished by
Theorem~\ref{thm:admissible-lattice-classification-U} may then be written as
\[
X(s_0+kh)=a k^2+b_{s_0}k+c_{s_0},
\]
for every \(k\in\mathbb Z\), with the same global coefficient \(a\). Set
\[
d_n=\phi_2 n+\psi_1,
\quad
e_n=\phi_1 n+\psi_0+\frac12\,a\psi_1 n^2,
\]
and define
\begin{align*}
\phi^{[n]}(x)
&=
\phi_2 x^2
+
\left(
\phi_1+\frac32\,a n\,d_n
\right)x
\\[7pt]
&\quad
+\phi\!\left(\frac14\,a n^2\right)
+\frac12\,a n\,\psi\!\left(\frac14\,a n^2\right)
+\frac n4\bigl(b_{s_0}^{\,2}-4ac_{s_0}\bigr)d_n.
\end{align*}
Let \(I\subseteq\mathbb N\) be either \(I=\mathbb N\), or
\(
I=\{0,1,\dots,N+1\}
\)
for some \(N\in\mathbb N\). In the finite case set
\[
J_I=\{0,1,\dots,2N+1\},
\quad
K_I=\{0,1,\dots,N\},
\]
whereas in the infinite case set
\[
J_I=K_I=\mathbb N.
\]
If \(I=\mathbb N\), then there exists a monic orthogonal polynomial sequence
with respect to \(\mathbf u\) if and only if
\[
d_j\neq0,
\quad j\in\mathbb N,
\]
and
\[
\phi^{[n]}\!\left(
-\frac14\,a n^2-\frac{e_n}{d_{2n}}
\right)\neq0,
\quad n\in\mathbb N.
\]

If \(I=\{0,1,\dots,N+1\}\), assume first that
\[
d_j\neq0,
\quad j=0,1,\dots,2N+1.
\]
Then there exists a monic orthogonal polynomial sequence
\((P_n)_{n\in I}\) with respect to \(\mathbf u\) if and only if
\[
\phi^{[n]}\!\left(
-\frac14\,a n^2-\frac{e_n}{d_{2n}}
\right)\neq0,
\quad n=0,1,\dots,N.
\]
The finite resonant solutions are governed by
Theorem~\ref{thm:finite-resonant-regularity}. Whenever the non-vanishing
conditions displayed in the present theorem hold, the corresponding monic orthogonal polynomial sequence is
uniquely determined by
\(P_{-1}=0\), \(P_0=1\),
and by the recurrence
\[
P_{n+1}(x)=(x-B_n)P_n(x)-C_nP_{n-1}(x),
\]
for every \(n\in K_I\), with the convention that the term involving
\(C_0P_{-1}\) is void. The coefficients are given by
\[
B_0=-\frac{e_0}{d_0},
\quad
C_1=-\frac{1}{d_1}\,
\phi^{[0]}\!\left(-\frac{e_0}{d_0}\right),
\]
and, for every \(n\in K_I^\times\),
\begin{align*}
B_n
&=
\frac{n\,e_{n-1}}{d_{2n-2}}
-\frac{(n+1)e_n}{d_{2n}}
-\frac12\,a n(n-1),\\[7pt]
C_{n+1}
&=
-\frac{(n+1)d_{n-1}}{d_{2n-1}d_{2n+1}}
\,
\phi^{[n]}\!\left(
-\frac14\,a n^2-\frac{e_n}{d_{2n}}
\right).
\end{align*}
In the finite case, the last coefficient \(C_{N+1}\) is the terminal norm
coefficient; it is not used to construct a polynomial \(P_{N+2}\).
\end{theorem}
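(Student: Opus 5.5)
The plan is to transcribe the proof of Theorem~\ref{thm:regularity-admissible-orbit} step by step into the ordinary quadratic calculus, working directly with the fixed operators \(D\) and \(S\) rather than taking a limit \(q\to1\).

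\textbf{Structural data.} First I will collect the data that replace \(\alpha_n,\gamma_n\). In the ordinary branch \(\eta=2\), so \(Y+Z=2X+\tfrac a2\). By Remark~\ref{rem:degree-three-global-compatibility},
\[
YZ=X^2-\tfrac a2X+\tfrac{a^2}{16}-\tfrac14\Delta,
\qquad
\Delta=b_{s_0}^{\,2}-4ac_{s_0},
\]
and \(\Delta\) is global. Hence \(U_1=\tfrac12(Y+Z)-X=\tfrac a4\) and
\[
U_2=\tfrac14(Y-Z)^2=aX+\tfrac14\Delta .
\]
From these I compute the leading behaviour \(D(x^n)=nx^{n-1}+\cdots\) and \(S(x^n)=x^n+\cdots\). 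This is the ordinary branch, as the remark on the two quadratic branches requires. So \(\gamma_n\) is replaced by \(n\) and \(\alpha_n\) by \(1\), which gives \(d_n=\phi_2n+\psi_1\), the leading coefficient of \(\phi Dx^n+\psi Sx^n\).

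I will then derive the two product rules \(fDg=D((Sf-U_1Df)g)-S(gDf)\) and \(fSg=S((Sf-U_1Df)g)-U_2D(gDf)\). This is a direct check on \(U\) using the pointwise formulas for \(Y,Z\). Transposing them gives the local integration formula
\[
\langle\mathbf v^+,fDg\rangle=\langle\mathbf v,g(\varphi Df+\vartheta Sf)\rangle,
\qquad
\mathbf v^+=\mathbf D(U_2\vartheta\mathbf v)-\mathbf S(\varphi\mathbf v).
\]

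\textbf{Shifted equations and Rodrigues construction.} Next I will define \(\psi^{[n]}(x)=d_{2n}(x+\tfrac14an^2)+e_n\) and \(\mathbf u^{[n+1]}=\mathbf D(U_2\psi^{[n]}\mathbf u^{[n]})-\mathbf S(\phi^{[n]}\mathbf u^{[n]})\). I will prove by induction that
\[
\mathbf D(\phi^{[n]}\mathbf u^{[n]})=\mathbf S(\psi^{[n]}\mathbf u^{[n]}).
\]
This is a polynomial identity check: test against \(Dp\) and \(Sp\), then use the product rules to rewrite \(\phi^{[n+1]}\) and \(\psi^{[n+1]}\) in terms of \(\phi^{[n]},\psi^{[n]},U_1,U_2\). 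I expect this verification to be the main obstacle, because here the explicit constants in \(\phi^{[n]}\) must emerge: the shift \(\tfrac14an^2\), the term \(\tfrac32and_n\), and the \(\tfrac n4\Delta d_n\) contribution. I would first recover the recursion for the coefficients of \(\phi^{[n]}\) and \(\psi^{[n]}\), in the style of \cite[Proposition~9.2]{CP25}, and then solve it by induction, checking \(n=1\) by hand.

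The Rodrigues polynomials \(R_n\mathbf u=\mathbf D^n\mathbf u^{[n]}\) then satisfy a three-term recurrence, obtained exactly as in Step~3 of the \(q\)-case. Monic normalisation yields \(B_n,C_n\). The only divisions are by \(d_j\), \(j\le 2n\), and by the earlier evaluation factors.

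\textbf{Orthogonality, terminal norm, and necessity.} Orthogonality follows as in Step~5, from \(D^nP_m=0\) for \(m<n\). For the terminal norm, I will apply the integration formula to the derived polynomials \(P_j^{[k]}=D^kP_{j+k}/\prod_{\ell=1}^k(j+\ell)\). This gives
\[
H_j^{[k+1]}=\frac{d_{j+2k}}{j+1}\,H_{j+1}^{[k]},
\]
using the shift identity \(d_n^{[k]}=d_{n+2k}\), which is immediate since \(\phi_2^{[k]}=\phi_2\) and \(\psi_1^{[k]}=d_{2k}\). Testing the level-\(N\) equation against \(1\) and \(x\) gives \(H_1^{[N]}/H_0^{[N]}=-\phi^{[N]}(r_N)/d_{2N+1}\), where \(r_N=-\tfrac14aN^2-e_N/d_{2N}\). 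Together these give \(C_{N+1}=h_{N+1}/h_N\).

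Necessity in the finite case follows Steps~6--10 verbatim. The factors \(\alpha\gamma_{n+1}\) become \(n+1\neq0\), so no torsion issue arises. For \(I=\mathbb N\), I will reproduce Step~11: \(d_0=\psi_1\neq0\) because \(D\) is onto and a regular functional has no polynomial annihilator. The bridge identity then gives \(d_n\) as a non-zero multiple of \(d_{n-1}\), with the coefficient of \(x^{n+2}\) in \(G_n\) now equal to \(d_n-d_{n-1}\). Regularity of every \(\mathbf u^{[k]}\) follows, and hence the evaluation conditions hold.
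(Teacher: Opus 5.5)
Your architecture matches the paper's proof. You work in the fixed quadratic calculus and generate the shifted pairs $(\phi^{[n]},\psi^{[n]})$ from the product rules, solving the coefficient recursion in closed form. You then build $R_n\mathbf u=\mathbf D^n\mathbf u^{[n]}$ and get orthogonality from $D^nP_m=0$. Necessity comes from the derived norms $\langle\mathbf u^{[1]},Q_nQ_m\rangle=\frac{d_n}{n+1}h_{n+1}\delta_{nm}$ and the bridge identity with leading coefficient $\phi_2=d_n-d_{n-1}$.

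There is one concrete error, and it sits in the step you call the main obstacle. The constant in your product rules is not $Sx-x$. It is the $U_1$ with $DS-SD=U_1D^2$, which in the ordinary branch equals $2(Sx-x)=\tfrac a2$, not $\tfrac a4$. This is consistent with the $q$-case, where $U_1=(\alpha^2-1)(x-c)=(\alpha+1)(Sx-x)$.
\begin{itemize}
\item With $U_1=\tfrac a4$, your first product rule already fails for $f=g=x$: its right-hand side is $D(x^2)-Sx=x+\tfrac a4$.
\item In the recursion $\psi^{[n+1]}=D\phi^{[n]}+S\psi^{[n]}+U_1D\psi^{[n]}$ it would give $e_1=\phi_1+\psi_0+\tfrac14a\psi_1$ instead of $\phi_1+\psi_0+\tfrac12a\psi_1$. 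The stated $e_n$ and $\phi^{[n]}$ therefore cannot come out.
\end{itemize}
Your planned pointwise check will expose this, but the correction must be carried through everything that follows. Also, the local integration formula is not pure transposition. It uses the functional equation for $\mathbf v$ as well, through $\mathbf D\mathbf v^+=-\vartheta\mathbf v$ and $\mathbf S\mathbf v^+=-(\varphi+U_1\vartheta)\mathbf v$.

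Two smaller differences of route. For the terminal norm you iterate $H_j^{[k]}$ along two diagonals; that is the paper's $q$-exponential argument, and it works. In the quadratic proof the paper does something more direct:
\begin{itemize}
\item It tracks $m_n=\langle\mathbf u^{[n]},1\rangle$.
\item It derives $m_{n+1}=-\frac{d_{2n}}{d_{2n+1}}\phi^{[n]}(\zeta_n)m_n$ by testing the level-$n$ equation against $1$ and $x$, and the definition of $\mathbf u^{[n+1]}$ against $1$.
\item It reads off $h_n=(-1)^nn!\,k_nm_n$ from the Rodrigues formula, since $D^nP_n=n!$.
\end{itemize}
This gives every ratio $h_{n+1}/h_n$, the terminal one included, without first proving orthogonality of the derived families, which your route needs.

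Second, the paper does not import the three-term recurrence for $R_n$ from the $q$-case, where Step~3 is only a citation. It derives $\mathfrak a_n,\mathfrak s_n,\mathfrak t_n$ by explicit elimination, using commutation relations such as $\mathbf D(x\mathbf v)=(x-\tfrac a4)\mathbf D\mathbf v+\mathbf S\mathbf v$. Those coefficients are exactly what produce $B_n$ and $C_{n+1}$. So ``exactly as in Step~3'' should be replaced by that computation, or by a precise citation of its quadratic analogue.
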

\begin{proof}
Fix \(s_0\in U\), and abbreviate
\[
b=b_{s_0},
\quad
c=c_{s_0},
\quad
\Delta=b^2-4ac,
\quad
\delta=\frac a4.
\]
The half-step hypothesis is decisive. Applying
\[
Y+Z=2X+\frac a2
\]
successively along the half-step orbit through \(s_0\), and using the values
already prescribed on \(s_0+h\mathbb Z\), gives
\[
X(s_0+rh)=ar^2+br+c,
\quad
r\in\frac12\mathbb Z.
\]
Consequently, on that orbit,
\begin{align*}
Y+Z
&=2X+\frac a2,
\\[7pt]
YZ
&=X^2-\frac a2X+\frac{a^2}{16}-\frac{\Delta}{4}.
\end{align*}
By admissibility, \(YZ\) is a polynomial in \(X\). Moreover, the selected
half-step orbit has infinite image: if \(a\neq0\) this is immediate, whereas,
if \(a=0\), the parametrising condition forces \(b\neq0\). The preceding
identity therefore determines that polynomial throughout \(U\). In
particular, \(\Delta\) is independent of the selected full-step progression.
Thus the ordinary quadratic calculus is governed globally by
\[
Sx=x+\delta,
\quad
U_1=\frac a2,
\quad
U_2:=S(x^2)-(Sx)^2=ax+\frac{\Delta}{4}.
\]
Equivalently,
\[
U_2=\frac{(Y-Z)^2}{4}.
\]

For \(f,g\in\mathcal P\), direct substitution in the definitions of \(D\)
and \(S\) gives
\begin{align*}
D(fg)
&=(Df)(Sg)+(Sf)(Dg),
\\[7pt]
S(fg)
&=(Sf)(Sg)+U_2(Df)(Dg),
\\[7pt]
DSf
&=SDf+U_1D^2f.
\end{align*}
Two forms of these identities which will be used below are
\begin{align*}
fDg
&=
D\!\bigl((Sf-U_1Df)g\bigr)-S\!\bigl(gDf\bigr),
\\[7pt]
fSg
&=
S\!\bigl((Sf-U_1Df)g\bigr)-U_2D\!\bigl(gDf\bigr).
\end{align*}
By transposition,
\begin{align*}
\mathbf D(f\mathbf v)
&=
(Sf-U_1Df)\mathbf D\mathbf v
+(Df)\mathbf S\mathbf v,
\\[7pt]
\mathbf S(f\mathbf v)
&=
(Sf-U_1Df)\mathbf S\mathbf v
+(Df)\mathbf D(U_2\mathbf v),
\end{align*}
for every \(\mathbf v\in\mathcal P'\).

We now remain entirely within this fixed quadratic calculus. Set
\[
\phi^{[0]}=\phi,
\quad
\psi^{[0]}=\psi,
\quad
\mathbf u^{[0]}=\mathbf u,
\]
and define recursively
\begin{align*}
\phi^{[n+1]}
&=
S\phi^{[n]}+U_1S\psi^{[n]}+U_2D\psi^{[n]},
\\[7pt]
\psi^{[n+1]}
&=
D\phi^{[n]}+S\psi^{[n]}+U_1D\psi^{[n]},
\\[7pt]
\mathbf u^{[n+1]}
&=
\mathbf D\!\left(U_2\psi^{[n]}\mathbf u^{[n]}\right)
-\mathbf S\!\left(\phi^{[n]}\mathbf u^{[n]}\right).
\end{align*}
The preceding product identities, together with
\[
\mathbf D(\phi^{[0]}\mathbf u^{[0]})
=
\mathbf S(\psi^{[0]}\mathbf u^{[0]}),
\]
give, by induction,
\begin{align}
\mathbf D\!\left(\phi^{[n]}\mathbf u^{[n]}\right)
&=
\mathbf S\!\left(\psi^{[n]}\mathbf u^{[n]}\right),
\label{eq:quadratic-shifted-functional-equation}
\\[7pt]
\mathbf D\mathbf u^{[n+1]}
&=
-\psi^{[n]}\mathbf u^{[n]},
\label{eq:quadratic-derived-D}
\\[7pt]
\mathbf S\mathbf u^{[n+1]}
&=
-\bigl(\phi^{[n]}+U_1\psi^{[n]}\bigr)\mathbf u^{[n]},
\label{eq:quadratic-derived-S}
\\[7pt]
2U_1\mathbf u^{[n+1]}
&=
\mathbf S\!\left(U_2\psi^{[n]}\mathbf u^{[n]}\right)
-\mathbf D\!\left(U_2\phi^{[n]}\mathbf u^{[n]}\right).
\label{eq:quadratic-derived-U}
\end{align}
These are algebraic identities for the fixed operators \(D\) and \(S\); no
limiting functional is involved.

For completeness, write
\[
\phi^{[n]}(x)
=
\Phi_{2,n}x^2+\Phi_{1,n}x+\Phi_{0,n},
\quad
\psi^{[n]}(x)
=
\Psi_{1,n}x+\Psi_{0,n}.
\]
Using
\begin{align*}
Dx&=1, \quad Sx=x+\delta,
\\[7pt]
D(x^2)&=2x+2\delta, \quad 
S(x^2)=x^2+6\delta x+\delta^2+\frac{\Delta}{4},
\end{align*}
the preceding recursion becomes
\begin{align*}
\Phi_{2,n+1}
&=\Phi_{2,n},
\\[7pt]
\Psi_{1,n+1}
&=2\Phi_{2,n}+\Psi_{1,n},
\\[7pt]
\Phi_{1,n+1}
&=\Phi_{1,n}
+6\delta\bigl(\Phi_{2,n}+\Psi_{1,n}\bigr),
\\[7pt]
\Psi_{0,n+1}
&=\Psi_{0,n}+\Phi_{1,n}
+\delta\bigl(2\Phi_{2,n}+3\Psi_{1,n}\bigr),
\\[7pt]
\Phi_{0,n+1}
&=\Phi_{0,n}
+\delta\bigl(\Phi_{1,n}+2\Psi_{0,n}\bigr)
+\delta^2\Psi_{1,n}
\\[7pt]
&\quad
+\left(\delta^2+\frac{\Delta}{4}\right)
\bigl(\Phi_{2,n}+\Psi_{1,n}\bigr).
\end{align*}
Solving this elementary system, or simply checking the proposed solution by
induction, gives
\[
\psi^{[n]}(x)
=
d_{2n}\left(x+\frac14an^2\right)+e_n,
\]
where
\[
d_n=\phi_2n+\psi_1,
\quad
e_n=\phi_1n+\psi_0+\frac12a\psi_1n^2,
\]
and
\begin{align*}
\phi^{[n]}(x)
&=
\phi_2x^2
+\left(\phi_1+\frac32an\,d_n\right)x
\\[7pt]
&\quad
+\phi\!\left(\frac14an^2\right)
+\frac12an\,\psi\!\left(\frac14an^2\right)
+\frac n4\Delta d_n.
\end{align*}
In particular,
\[
d_j^{[n]}
:=
\phi_2^{[n]}j+\psi_1^{[n]}
=
d_{j+2n}.
\]
Set
\[
\zeta_n
=
-\frac14an^2-\frac{e_n}{d_{2n}},
\]
whenever \(d_{2n}\neq0\). Thus \(\zeta_n\) is the unique zero of
\(\psi^{[n]}\).

We next record the direct functional Rodrigues construction. Assume first that
the quantities \(d_j\) required below do not vanish. Set
\[
R_0=1,
\quad
R_1=-\psi,
\]
and, for \(n\in K_I^\times\), put
\begin{align*}
\mathfrak a_n
&=
-\frac{d_{2n}d_{2n-1}}{d_{n-1}},
\\[7pt]
\mathfrak s_n
&=
\mathfrak a_n
\left(
\frac{n e_{n-1}}{d_{2n-2}}
-\frac{(n+1)e_n}{d_{2n}}
-\frac12an(n-1)
\right),
\\[7pt]
\mathfrak t_n
&=
\mathfrak a_n
\frac{n d_{2n-2}}{d_{2n-1}}\,
\phi^{[n-1]}(\zeta_{n-1}),
\end{align*}
and
\[
R_{n+1}(x)
=
(\mathfrak a_nx-\mathfrak s_n)R_n(x)
-\mathfrak t_nR_{n-1}(x).
\]
The initial identity
\[
R_1\mathbf u
=
-\psi\mathbf u
=
\mathbf D\mathbf u^{[1]}
\]
is \eqref{eq:quadratic-derived-D} with \(n=0\). Suppose that
\[
R_{n-1}\mathbf u
=
\mathbf D^{\,n-1}\mathbf u^{[n-1]},
\quad
R_n\mathbf u
=
\mathbf D^{\,n}\mathbf u^{[n]}.
\]
We make the elimination in this induction step explicit. Put
\[
A=\mathbf D,
\quad
B=\mathbf S,
\quad
M\mathbf v=x\mathbf v,
\quad
\chi=\frac{\Delta}{4}-4\delta^2.
\]
The transposed product identities already established give
\begin{align*}
AM
&=(M-\delta)A+B,
\\[7pt]
AB
&=BA+2\delta A^2,
\\[7pt]
BM
&=(M+3\delta)B+4\delta MA+\chi A.
\end{align*}
Induction on \(m\) therefore yields, for \(m\ge1\),
\begin{align}
A^mM
&=
\bigl(M+\delta m(m-2)\bigr)A^m
+mBA^{m-1},
\label{eq:quadratic-AmM}
\\[7pt]
A^mM^2
&=
\bigl(
M^2+2\delta m^2M
+\delta^2m^2(m-2)^2+m\chi
\bigr)A^m
\notag\\[7pt]
&\quad
+2m\bigl(M+\delta(m^2-3m+3)\bigr)BA^{m-1}
+m(m-1)B^2A^{m-2}.
\label{eq:quadratic-AmM2}
\end{align}
In the second identity the last term is absent when \(m=1\).

We shall use the following immediate consequence, recorded here so that no
coefficient is hidden. Suppose that
\[
A(\varphi\mathbf v)=B(\vartheta\mathbf v),
\quad
\varphi(x)=A_2x^2+A_1x+A_0,
\quad
\vartheta(x)=px+r,
\]
where \(p\neq0\) and \(p-mA_2\neq0\). If
\[
T=A^m\mathbf v,
\quad
W=-A^m(\vartheta\mathbf v),
\]
then \eqref{eq:quadratic-AmM}--\eqref{eq:quadratic-AmM2} give
\begin{equation}\label{eq:quadratic-elimination-lemma}
A^m\bigl((\varphi+2\delta\vartheta)\mathbf v\bigr)
=
-\frac{A_2}{p-mA_2}\,xW
+\beta_mW
+\frac{p}{p-mA_2}\,
\varphi\!\left(-\frac rp\right)T,
\end{equation}
where
\[
\beta_m
=
-\frac{
A_1p-A_2\delta m(m+2)p-A_2r+2\delta p^2
}{p(p-mA_2)}.
\]
For \(m\ge1\), expand both sides by the two power identities and use
\[
A^m(\varphi\mathbf v)
=
BA^{m-1}(\vartheta\mathbf v)
+2\delta(m-1)A^m(\vartheta\mathbf v).
\]
The terms containing \(BA^{m-1}\mathbf v\) and
\(B^2A^{m-2}\mathbf v\) are thereby eliminated; the coefficients of
\(x^2T\) and \(xT\) cancel, and the three remaining coefficients are exactly
those in \eqref{eq:quadratic-elimination-lemma}. For \(m=0\), the identity is
simply the Euclidean division of \(\varphi+2\delta\vartheta\) by
\(\vartheta\); for \(m=1\), the term containing \(B^2\) is absent.

Now put
\[
W_j=A^j\mathbf u^{[j]}.
\]
Formula \eqref{eq:quadratic-AmM}, together with
\eqref{eq:quadratic-derived-S} at level \(n-1\), gives
\begin{equation}\label{eq:quadratic-W-first-reduction}
W_{n+1}
=
-d_{2n}xW_n-e_nW_n
+nd_{2n}A^{n-1}
\bigl((\phi^{[n-1]}+2\delta\psi^{[n-1]})
\mathbf u^{[n-1]}\bigr).
\end{equation}
Apply \eqref{eq:quadratic-elimination-lemma} with
\begin{align*}
m&=n-1,
&
\mathbf v&=\mathbf u^{[n-1]},
&
\varphi&=\phi^{[n-1]},
\\[7pt]
\vartheta&=\psi^{[n-1]},
&
p&=d_{2n-2},
&
r&=e_{n-1}+\delta(n-1)^2d_{2n-2}.
\end{align*}
Then
\[
p-mA_2=d_{n-1},
\quad
-\frac rp=\zeta_{n-1},
\quad
T=W_{n-1},
\quad
W=W_n.
\]
Substitution in \eqref{eq:quadratic-W-first-reduction} gives
\begin{align*}
-d_{2n}\left(1+\frac{n\phi_2}{d_{n-1}}\right)
&=
-\frac{d_{2n}d_{2n-1}}{d_{n-1}}
=\mathfrak a_n,
\\[7pt]
-e_n+nd_{2n}\beta_{n-1}
&=-\mathfrak s_n,
\\[7pt]
\frac{nd_{2n}d_{2n-2}}{d_{n-1}}
\phi^{[n-1]}(\zeta_{n-1})
&=-\mathfrak t_n.
\end{align*}
Consequently,
\[
W_{n+1}
=
(\mathfrak a_nx-\mathfrak s_n)W_n
-\mathfrak t_nW_{n-1}.
\]
Using the induction hypothesis, we conclude that
\[
R_{n+1}\mathbf u
=
\mathbf D^{\,n+1}\mathbf u^{[n+1]}.
\]
This proves, by induction,
\begin{equation}\label{eq:quadratic-Rodrigues-R}
R_n\mathbf u
=
\mathbf D^{\,n}\mathbf u^{[n]}.
\end{equation}
This is the fixed-operator algebra underlying
\cite[Theorems~3.1 and~4.2]{CMP22a} and
\cite[Theorem~9.3]{CP25}; the calculation above shows explicitly that no
passage from a \(q\)-functional is required.

For \(n\in I\), set
\[
k_0=1,
\quad
k_n=(-1)^n\prod_{j=1}^{n}d_{n+j-2}^{-1},
\quad
P_n=k_nR_n.
\]
The leading coefficient in the preceding recursion gives
\[
k_{n+1}\mathfrak a_n=k_n,
\]
for \(n\in K_I^\times\), while the initial case follows from
\(R_1=-\psi\). Thus \(P_n\) is monic. Moreover,
\eqref{eq:quadratic-Rodrigues-R} becomes
\begin{equation}\label{eq:quadratic-functional-Rodrigues}
P_n\mathbf u
=
k_n\mathbf D^{\,n}\mathbf u^{[n]}.
\end{equation}
Normalising the recurrence for \(R_n\) now gives
\[
P_{n+1}(x)
=
(x-B_n)P_n(x)-C_nP_{n-1}(x),
\]
for \(n\in K_I\), where
\[
B_0=-\frac{e_0}{d_0},
\quad
C_1=-\frac{1}{d_1}\phi^{[0]}(\zeta_0).
\]
In the finite case \(N=0\), the displayed value of \(C_1\) is only the
provisional terminal coefficient.
For \(n\in K_I^\times\), one has
\[
B_n
=
\frac{n e_{n-1}}{d_{2n-2}}
-\frac{(n+1)e_n}{d_{2n}}
-\frac12an(n-1).
\]
In the finite case, the remaining coefficients
already used in the constructed recurrence are
\[
C_{n+1}
=
-\frac{(n+1)d_{n-1}}{d_{2n-1}d_{2n+1}}\,
\phi^{[n]}(\zeta_n)
\]
for \(n=1,2,\dots,N-1\), when this range is non-empty; \(C_1\) is the
initial value displayed above. When \(N\ge1\), the same closed expression at
\(n=N\) is only the candidate terminal coefficient at this point; when
\(N=0\), that candidate is the displayed value of \(C_1\). It will be
justified below without constructing \(P_{N+2}\). In the infinite case the displayed formula
holds for every \(n\ge1\). Thus all coefficients actually used in the
recurrence have been obtained directly from the quadratic calculus.

We verify both orthogonality and the terminal norm. If \(m<n\), then
\(D^nP_m=0\), and \eqref{eq:quadratic-functional-Rodrigues} gives
\begin{align*}
\langle\mathbf u,P_nP_m\rangle
&=
k_n\langle\mathbf D^{\,n}\mathbf u^{[n]},P_m\rangle
\\[7pt]
&=
(-1)^nk_n\langle\mathbf u^{[n]},D^nP_m\rangle
=0.
\end{align*}
Write
\[
m_n=\langle\mathbf u^{[n]},1\rangle.
\]
Testing \eqref{eq:quadratic-shifted-functional-equation} against \(1\) gives
\[
\langle\mathbf u^{[n]},x-\zeta_n\rangle=0.
\]
Testing it against \(x\), and using \(Dx=1\) and \(Sx=x+\delta\), gives
\[
-\langle\mathbf u^{[n]},\phi^{[n]}\rangle
=
d_{2n}
\left\langle
\mathbf u^{[n]},
(x-\zeta_n)(x+\delta)
\right\rangle.
\]
Since
\[
(x-\zeta_n)(x+\delta)
=
(x-\zeta_n)^2+(\zeta_n+\delta)(x-\zeta_n),
\]
we obtain
\[
-\langle\mathbf u^{[n]},\phi^{[n]}\rangle
=
d_{2n}
\left\langle
\mathbf u^{[n]},
(x-\zeta_n)^2
\right\rangle.
\]
On the other hand,
\[
\phi^{[n]}(x)
=
\phi^{[n]}(\zeta_n)
+\ell_n(x-\zeta_n)
+\phi_2(x-\zeta_n)^2
\]
for a suitable \(\ell_n\in\mathbb C\). Hence
\[
d_{2n+1}
\left\langle\mathbf u^{[n]},(x-\zeta_n)^2\right\rangle
=
-\phi^{[n]}(\zeta_n)m_n.
\]
Finally, the definition of \(\mathbf u^{[n+1]}\), tested against \(1\),
gives
\begin{align*}
m_{n+1}
&=
-\langle\mathbf u^{[n]},\phi^{[n]}\rangle
\\[7pt]
&=
-\frac{d_{2n}}{d_{2n+1}}\,
\phi^{[n]}(\zeta_n)m_n.
\end{align*}
Starting from
\(
m_0=\langle\mathbf u,1\rangle\neq0
\)
and proceeding inductively, the last identity proves that all \(m_n\) in the
required range are non-zero under the stated non-vanishing assumptions.

Let
\[
h_n=\langle\mathbf u,P_n^2\rangle.
\]
Since \(D^nP_n=n!\), the functional Rodrigues formula gives
\[
h_n=(-1)^nn!k_nm_n.
\]
For \(n\in K_I^\times\),
\[
\frac{k_{n+1}}{k_n}
=
-\frac{d_{n-1}}{d_{2n-1}d_{2n}},
\]
and consequently
\begin{align*}
\frac{h_{n+1}}{h_n}
&=
-(n+1)\frac{k_{n+1}}{k_n}\frac{m_{n+1}}{m_n}
\\[7pt]
&=
-\frac{(n+1)d_{n-1}}{d_{2n-1}d_{2n+1}}\,
\phi^{[n]}(\zeta_n)
=C_{n+1}.
\end{align*}
For \(n=0\), the preceding first-norm computation gives directly
\[
\frac{h_1}{h_0}
=
-\frac{\phi^{[0]}(\zeta_0)}{d_1}
=C_1.
\]
Thus every norm in the required range is non-zero. In particular, in the
finite case,
\[
\frac{\langle\mathbf u,P_{N+1}^2\rangle}
     {\langle\mathbf u,P_N^2\rangle}
=C_{N+1}.
\]
The coefficient \(C_{N+1}\) is therefore genuinely the terminal norm
coefficient; no recurrence involving a non-existent \(P_{N+2}\) has been
used. This proves sufficiency, both in the infinite case and in every finite
non-resonant segment.

It remains to discuss necessity. We begin with the infinite case, for here one
must not replace a global argument by an arbitrary collection of finite ones.
Suppose that \(\mathbf u\) is regular, and let \((P_n)_{n\ge0}\) be its monic
orthogonal polynomial sequence. Set
\[
h_n=\langle\mathbf u,P_n^2\rangle.
\]
First, \(d_0=\psi_1\neq0\). Indeed, if \(\psi_1=0\), testing the functional
equation against \(1\) gives \(\psi_0=0\), and hence \(\psi=0\). Since \(D\)
is surjective on \(\mathcal P\) in the ordinary quadratic branch,
\(\mathbf D(\phi\mathbf u)=0\) would imply \(\phi\mathbf u=0\). A regular
functional has no non-zero polynomial annihilator, so this would force
\(\phi=0\), contrary to the hypotheses.

Set
\[
\mathbf u^{[1]}
=
\mathbf D(U_2\psi\mathbf u)-\mathbf S(\phi\mathbf u),
\quad
Q_n=\frac{1}{n+1}DP_{n+1}.
\]
The polynomials \(Q_n\) are monic. From the identity for \(fDg\), together
with \eqref{eq:quadratic-derived-D} and
\eqref{eq:quadratic-derived-S}, one obtains
\begin{equation}\label{eq:quadratic-integration-by-parts}
\langle\mathbf u^{[1]},fDg\rangle
=
\langle\mathbf u,g(\phi Df+\psi Sf)\rangle
\end{equation}
for all \(f,g\in\mathcal P\). Taking \(f=Q_m\) and \(g=P_{n+1}\), and using
orthogonality, we find
\begin{equation}\label{eq:quadratic-derived-norm}
\langle\mathbf u^{[1]},Q_nQ_m\rangle
=
\frac{d_n}{n+1}h_{n+1}\delta_{n,m}.
\end{equation}
Indeed, if \(m<n\), then \(\phi DQ_m+\psi SQ_m\) has degree at most
\(m+1<n+1\); if \(m=n\), its leading coefficient is
\[
\phi_2n+\psi_1=d_n.
\]

We now show that no \(d_n\) can vanish. For \(n\ge1\), expand
\[
SP_{n+2}+U_1DP_{n+2}
=
\sum_{j=0}^{n+2}\chi_{n,j}Q_j.
\]
The two polynomial identities for \(fDg\) and \(fSg\), together with
\eqref{eq:quadratic-derived-U}, give
\begin{align*}
&\left\langle
\mathbf u,
\bigl(U_2\psi DQ_n+\phi SQ_n\bigr)P_{n+2}
\right\rangle
\\[7pt]
&\qquad
=
-\left\langle
\mathbf u^{[1]},
\bigl(SP_{n+2}+U_1DP_{n+2}\bigr)Q_n
\right\rangle.
\end{align*}
The polynomial multiplying \(P_{n+2}\) on the left has leading coefficient
\[
\phi_2=d_n-d_{n-1},
\]
whereas \eqref{eq:quadratic-derived-norm} reduces the right-hand side to its
\(\chi_{n,n}\)-term. Hence
\[
(d_n-d_{n-1})h_{n+2}
=
-\frac{\chi_{n,n}d_n}{n+1}h_{n+1}.
\]
If \(d_n=0\), this identity gives
\[
d_{n-1}h_{n+2}=0.
\]
Since \(h_{n+2}\neq0\), induction from \(d_0\neq0\) yields
\[
d_n\neq0,
\quad n\in\mathbb N.
\]
Notice that this argument uses \(P_{n+2}\) and its non-zero norm. It is
therefore genuinely global and cannot be replaced by a finite initial segment
at its terminal index.

Now \eqref{eq:quadratic-derived-norm} shows that \((Q_n)_{n\ge0}\) is the
monic orthogonal polynomial sequence with respect to \(\mathbf u^{[1]}\), so
\(\mathbf u^{[1]}\) is regular. Repeating the argument and using
\[
d_j^{[n]}=d_{j+2n},
\]
we conclude that every \(\mathbf u^{[n]}\) is regular. Its first recurrence
coefficient is
\[
C_1^{[n]}
=
-\frac{\phi^{[n]}(\zeta_n)}{d_{2n+1}}.
\]
Since \(C_1^{[n]}\neq0\), it follows that
\[
\phi^{[n]}(\zeta_n)\neq0,
\quad n\in\mathbb N.
\]
This proves infinite necessity.

Finally, let
\[
I=\{0,1,\dots,N+1\}
\]
and assume the finite non-resonance conditions
\[
d_j\neq0,
\quad j=0,1,\dots,2N+1.
\]
All the identities above are degree-local. In particular,
\eqref{eq:quadratic-derived-norm} shows successively that
\[
P_0^{[n]},P_1^{[n]},\dots,P_{N+1-n}^{[n]}
\]
is a finite monic orthogonal polynomial sequence with respect to
\(\mathbf u^{[n]}\), where
\[
P_j^{[n]}
=
\frac{j!}{(j+n)!}D^nP_{j+n}.
\]
For \(n=0,1,\dots,N\), its first norm coefficient is therefore non-zero, and
the direct first-moment computation gives
\[
0\neq C_1^{[n]}
=
-\frac{\phi^{[n]}(\zeta_n)}{d_{2n+1}}.
\]
Hence
\[
\phi^{[n]}(\zeta_n)\neq0,
\quad n=0,1,\dots,N.
\]
This proves necessity within the finite non-resonant sector and completes the
proof.
\end{proof}

\begin{remark}\label{rem:quadratic-global}
The discriminant-like quantity
\[
\Delta=b_{s_0}^{\,2}-4ac_{s_0}
\]
is already global by
Remark~\ref{rem:degree-three-global-compatibility}. Once again, however, the
recurrence coefficient \(C_2\) provides a useful internal reflection of that
fact. Whenever \(C_2\) belongs to the available segment and the
non-resonance conditions include \(d_0d_3\neq0\), its formula
in Theorem~\ref{thm:regularity-admissible-orbit-quadratic} gives
\[
C_2
=
-\frac{2d_0}{d_1d_3}
\,
\phi^{[1]}\!\left(
-\frac14\,a-\frac{e_1}{d_2}
\right).
\]
Its dependence on \(\Delta\) comes from the final term in \(\phi^{[1]}\),
namely
\[
\frac14\Delta d_1.
\]
Therefore the coefficient of \(\Delta\) in \(C_2\) is
\[
-\frac{2d_0}{d_1d_3}\cdot\frac14d_1
=
-\frac{d_0}{2d_3}
\]
which is non-zero in the stated range. Thus \(C_2\) faithfully detects the
same invariant. It does not create its independence of the progression: that has
already been imposed by admissibility. Here \(a\) is global, whereas
\(b_{s_0}\) and \(c_{s_0}\) may separately depend on the chosen progression.

As in Remark~\ref{rem:q-exponential-global}, this statement should be
understood in the operator-theoretic sense. The functional \(\mathbf u\) is an
element of \(\mathcal P'\), not a functional on \(U\). The different full-step
progressions enter through the same admissible structure \((U,X,h)\), and hence
through the same operators \(D\) and \(S\). They may become visible at the level
of support only when \(\mathbf u\) is represented by a measure, or by a
discrete sum, involving the points \(X(s)\).
\end{remark}

\begin{remark}\label{rem:quadratic-no-degenerate-pair}
In the ordinary quadratic branch, the regularity criterion of
Theorem~\ref{thm:regularity-admissible-orbit-quadratic} likewise shows that the
degenerate possibilities \(\phi=0\) and \(\psi=0\), although not excluded
a priori by Definition~\ref{def:classical}, cannot occur either in the
infinite situation or in a finite non-resonant segment covered by that
theorem. Indeed, in either of those settings, if there exists a monic orthogonal polynomial sequence
\(
(P_n)_{n\in I}
\)
with respect to \(\mathbf u\), then the theorem gives in particular
\[
d_0\neq0.
\]
Since
\[
d_0=\phi_2\cdot 0+\psi_1=\psi_1,
\]
it follows that
\(
\psi_1\neq0.
\)
Hence \(\deg\psi=1\). Writing
\[
\psi(x)=\psi_1\left(x+\frac{\psi_0}{\psi_1}\right),
\]
one has
\(
e_0=\psi_0,
\)
and therefore
\[
-\frac{e_0}{d_0}=-\frac{\psi_0}{\psi_1}.
\]
Moreover, for \(n=0\), the transformed polynomial \(\phi^{[0]}\) is exactly \(\phi\), since
\[
\phi^{[0]}(x)=\phi_2x^2+\phi_1x+\phi_0=\phi(x).
\]
Hence the regularity criterion yields
\[
0\neq \phi^{[0]}\!\left(-\frac{e_0}{d_0}\right)=\phi\left(-\frac{\psi_0}{\psi_1}\right).
\]
In particular, \(\phi\) is not the zero polynomial. Therefore, in the quadratic case, any classical functional in the sense of
Definition~\ref{def:classical} to which
Theorem~\ref{thm:regularity-admissible-orbit-quadratic} applies necessarily
satisfies
\[
\deg\psi=1,
\quad
\phi\left(-\frac{\psi_0}{\psi_1}\right)\neq0.
\]
Consequently, neither
\(\phi=0\) nor \(\psi=0\) can occur.
No corresponding conclusion is asserted for a finite resonant solution.
\end{remark}

\subsection{\texorpdfstring{\(q\)}{q}-exponential map: \texorpdfstring{\(q\)}{q} is a root of unity}\label{rootunit}

The following corollary gives the finite non-resonant part of the torsion
\(q\)-exponential theory. The positive-real assumption in
\cite{CMP22a,CP25} excludes this regime, but the identities used there are
algebraic and may be specialised once the denominators which remain in the
finite statement have been identified. The degenerate value \(q=1\) splits,
according to its compatible half-step root, into the ordinary and reflected
quadratic branches; the theorem above treats the ordinary one. The value
\(q=-1\) belongs instead to the alternating regime discussed later. Repeated
torsion resonances, when they occur, are not to be removed by this corollary:
they remain under the exact finite moment criterion of
Theorem~\ref{thm:finite-resonant-regularity}.

\begin{corollary}\label{cor:torsion-q-exponential-finite}
Retain the hypotheses and progression-wise notation of
Theorem~\ref{thm:regularity-admissible-orbit}, except for the assumption that
\(q\) is not a root of unity. Assume instead that \(q\) is a primitive
\(\nu\)-th root of unity, with \(\nu\ge3\), use the compatible root
\(q^{1/2}\), and put
\[
\Pi=a_{s_0}b_{s_0}.
\]
This quantity is independent of the chosen progression by
Remark~\ref{rem:degree-three-global-compatibility}. Let
\(
I=\{0,\dots,N+1\}
\)
for some \(N\in\mathbb N\) such that
\[
N+1<\nu.
\]
Assume that
\[
d_j\neq0,
\quad j=0,1,\dots,2N+1.
\]
Then \(\mathbf u\) is regular of order \(N+2\) if and only if
\[
\phi^{[n]}\!\left(c-\frac{e_n}{d_{2n}}\right)\neq0,
\quad n=0,1,\dots,N.
\]
Whenever this holds, the recurrence coefficients are those displayed in
Theorem~\ref{thm:regularity-admissible-orbit}. If one of the stated
coefficients \(d_j\) vanishes, the finite problem is instead governed by
Theorem~\ref{thm:finite-resonant-regularity}.
\end{corollary}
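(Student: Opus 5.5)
The plan is to rerun the finite part of the proof of Theorem~\ref{thm:regularity-admissible-orbit} (Steps~1--10) verbatim, checking at each place where non-torsion of \(q\) was used that only indices in the finite range actually enter. The only point at which ``\(q\) not a root of unity'' was invoked in the finite argument is the non-vanishing of \(\alpha\), of \(\gamma_m\), and of \(\alpha_m\) for the indices that appear as divisors. So the first step is to list these divisors. They are \(\gamma_{j+\ell}\) in the derived polynomials \eqref{eq:derived-present-proof-solid2}, the factor \(\gamma_{n+1}\) in \eqref{eq:local-norm-terminal-q} and \eqref{eq:norm-present-proof-solid2}, and the leading-coefficient claim \(D(x^n)=\gamma_nx^{n-1}+\cdots\) used to make \(P_j^{[k]}\) monic. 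All of these involve only \(\gamma_m\) with \(1\le m\le N+1\). Since \(q\) is a primitive \(\nu\)-th root of unity, \(\gamma_m=0\) iff \(q^m=1\) iff \(\nu\mid m\). The hypothesis \(N+1<\nu\) therefore gives \(\gamma_m\neq0\) for \(1\le m\le N+1\). Also \(\alpha\neq0\), because \(\alpha=0\) would force \(q=-1\), excluded by \(\nu\ge3\).

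Next, I will check whether any \(\alpha_m\) with \(m\ge2\) is ever divided by. In Step~3 it was only remarked that \(\alpha_m\gamma_m\neq0\) as automatic. The divisions in the Rodrigues recursion \eqref{eq:an-present-proof-solid2}--\eqref{eq:tn-present-proof-solid2} and in \(k_n\) involve only \(\alpha\), the \(d_j\) with \(j\le 2N+1\), and evaluation factors. I would verify this explicitly from the coefficient formulas, so that no hidden \(\alpha_m\) or \(\gamma_m\) with \(m>N+1\) is a denominator. The identities of Steps~1--2 (the shifted functional equations, via \cite[Lemma~9.1, Proposition~9.2]{CP25}) are polynomial identities in \(q^{1/2}\) once denominators are cleared. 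Likewise the product rules and the addition formulas of Step~7 giving \(d_n^{[k]}=d_{n+2k}\) are polynomial in \(q^{\pm1/2}\). I will therefore argue that these identities remain valid for torsion \(q\) by specialisation: they are valid for infinitely many non-torsion values, hence identically as Laurent polynomials in \(q^{1/2}\). The global invariance of \(\Pi\) in the torsion case is supplied by Remark~\ref{rem:degree-three-global-compatibility}, so \(U_2\) and \(\phi^{[n]}\) are globally defined.

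With these checks done, sufficiency follows as in Steps~3--5. The terminal-norm computation leading to \eqref{eq:terminal-norm-q} uses \(H^{[k]}_j\) only for \(j+k\le N+1\), hence only \(\gamma\)'s up to \(N+1\). Necessity follows as in Steps~6--10, with Step~9 needing \(\alpha\gamma_{n+1}\neq0\) for \(n\le N\). Uniqueness of the monic orthogonal polynomial sequence then identifies the recurrence coefficients with those of Theorem~\ref{thm:regularity-admissible-orbit}. The resonant alternative is simply a pointer to Theorem~\ref{thm:finite-resonant-regularity}.

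The main obstacle is the bookkeeping in Step~3. The inductive Rodrigues construction is quoted from \cite{CP25} rather than written out, so I must confirm that its intermediate elimination (the analogue of \eqref{eq:quadratic-elimination-lemma}) divides only by \(\alpha\), the \(d_j\) with \(j\le 2n\), and \(\gamma_m\) with \(m\le n+1\). To settle this, I would redo that elimination in the \(q\)-calculus, in parallel with the explicit quadratic computation, and read off the denominators.
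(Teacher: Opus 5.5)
Your divisor audit is correct for everything that does not pass through the Rodrigues construction. The derived polynomials \(P_j^{[k]}\), the norm identities \eqref{eq:local-norm-terminal-q} and \eqref{eq:norm-present-proof-solid2}, and Steps~6--10 divide only by \(\alpha\), by \(d_j\) with \(j\le 2N+1\), and by \(\gamma_m\) with \(m\le N+1\). The hypothesis \(N+1<\nu\) is exactly what keeps those \(\gamma_m\) non-zero.

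The gap is Step~3. You flag it as the main obstacle but do not resolve it, and both sufficiency and the identification of \(B_n,C_{n+1}\) depend on it: Step~5 obtains orthogonality from \(R_n\mathbf u=\mathbf D^n\mathbf u^{[n]}\), and Step~4 reads the coefficients off the recurrence for \(R_n\). You need the elimination producing \(R_{n+1}\) to divide only by \(\alpha\), the \(d_j\), and \(\gamma_m\) with \(m\le n+1\). That is not what the paper records. The remark in Step~3 that \(\alpha_m\gamma_m\neq0\) is there because, besides \(\alpha\) and the \(d_j\), the inductive calculation divides by such factors. The paper's proof of this corollary also warns that in even order this can conceal a vanishing \(\alpha_m\). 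Indeed \(\alpha_m=0\) if and only if \(q^m=-1\), so for even \(\nu\) one has \(\alpha_{\nu/2}=0\) inside your allowed range. For \(\nu=4\), \(N=2\) one has \(N+1<\nu\) but \(\alpha_2=(q+q^{-1})/2=0\). A verbatim rerun may therefore divide by zero at an intermediate stage even though nothing vanishes in the final formulas. Your plan has no mechanism for getting past such a step.

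Your specialisation remark does not reach this step either. The Rodrigues identity involves \(\mathbf u\) itself, so to specialise it you must know that the moments of \(\mathbf u\) that enter are, at the torsion point, specialisations of the generic ones. The finite moment system supplies exactly that, and the paper uses it instead of rerunning the proof.

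With \(\mu_0=1\) and \(d_0,\dots,d_{2N+1}\neq0\), the triangular system \eqref{eq:finite-moment-system} determines \(\mu_1,\dots,\mu_{2N+2}\) with the \(d_j\) as the only pivots. So these moments, the Hankel determinants \(\Delta_0,\dots,\Delta_{N+1}\), and the recurrence coefficients are rational functions of \(\varrho=q^{1/2}\) and the other structure constants. For non-torsion \(\varrho\), the theorem identifies them with the displayed \(B_n\) and \(C_{n+1}\), terminal coefficient included. Clear the \(d_j\), \(\varrho-\varrho^{-1}\), and \(\varrho^m-\varrho^{-m}\) with \(m\le N+1\). The differences become Laurent polynomials vanishing on a Zariski-dense set, hence identically zero. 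They then specialise to the torsion value, where exactly those cleared factors are non-zero.

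Finally, normalise \(h_0=1\) and use \(\Delta_m=\prod_{j=1}^{m}C_j^{\,m+1-j}\) together with Theorem~\ref{thm:finite-resonant-regularity}. This converts the Hankel criterion into the stated evaluation conditions, with no intermediate division ever performed at the torsion point.
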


\begin{proof}
Write \(\varrho=q^{1/2}\) for the compatible root. Since \(q\neq1\), one
has \(\varrho-\varrho^{-1}\neq0\), and
\[
\gamma_m
=
\frac{\varrho^m-\varrho^{-m}}
     {\varrho-\varrho^{-1}}.
\]
Thus \(\gamma_m=0\) if and only if \(q^m=1\). The inequalities
\[
1\le m\le N+1<\nu
\]
therefore give
\[
\gamma_m\neq0,
\quad m=1,\dots,N+1.
\]
Moreover, \(\alpha\neq0\), since \(\alpha=0\) would give \(q=-1\), which
cannot be a primitive root of order at least \(3\).

One might now be tempted simply to repeat every intermediate division in the
non-torsion proof. That is unnecessary, and in even order it may conceal a
vanishing \(\alpha_m\) which does not occur in the final formulas. We argue
instead at the level at which the finite statement lives. Normalise
\(\mu_0=1\). Under the standing assumptions
\[
d_j\neq0,
\quad j=0,1,\dots,2N+1,
\]
the triangular system \eqref{eq:finite-moment-system} determines successively
\(\mu_1,\dots,\mu_{2N+2}\); the only divisors introduced by this elimination
are the displayed \(d_j\). Consequently these moments, their Hankel
determinants, and the recurrence coefficients obtained from them are rational
functions of the structure constants.

For non-torsion values of \(\varrho\), the identities proved in
Theorem~\ref{thm:regularity-admissible-orbit} identify those rational
functions with the displayed expressions for \(B_n\), \(C_{n+1}\), and the
terminal norm. Let us make the specialisation step precise. Fix \(N\), regard
\(\varrho\) and the remaining structure constants as indeterminates, and
perform the preceding triangular elimination in the corresponding Laurent
polynomial ring, localised at the finitely many divisors which occur. For
suitable non-negative integers \(M,M_j,L_m\), multiplication by
\[
(\varrho-\varrho^{-1})^M
\prod_{j=0}^{2N+1}d_j^{M_j}
\prod_{m=1}^{N+1}(\varrho^m-\varrho^{-m})^{L_m}
\]
and by the remaining denominators explicitly present in the formulas turns
the difference between either side of each identity into a Laurent polynomial
in \(\varrho\), with polynomial dependence on the other structure constants.
It vanishes for the generic non-torsion values covered by
Theorem~\ref{thm:regularity-admissible-orbit}; those values are Zariski dense,
so the Laurent polynomial is identically zero. At the present torsion value,
\(\varrho-\varrho^{-1}\neq0\), the factors \(d_0,\dots,d_{2N+1}\) are non-zero
by hypothesis, and \(\varrho^m-\varrho^{-m}\neq0\) for
\(1\le m\le N+1\). The cleared identities may therefore be specialised and
then divided back by precisely the factors which occur in the final formulas.
No analytic limiting argument, and no division by a possibly vanishing
\(\alpha_m\), is involved.

Let
\[
h_j=\langle\mathbf u,P_j^2\rangle.
\]
With the normalisation \(h_0=\mu_0=1\), the elementary determinant identities
\[
\Delta_m
=
\prod_{j=0}^{m}h_j
=
\prod_{j=1}^{m}C_j^{\,m+1-j},
\quad m=0,1,\dots,N+1,
\]
show exactly what survives the specialisation. The factors
\(\gamma_1,\dots,\gamma_{N+1}\) do not vanish, as shown above, and every
remaining denominator in the formulas for \(C_1,\dots,C_{N+1}\) is non-zero
by hypothesis. Thus the displayed Hankel determinants are non-zero exactly
when
\[
\phi^{[n]}\!\left(c-\frac{e_n}{d_{2n}}\right)\neq0,
\quad n=0,1,\dots,N.
\]
The recurrence and terminal-norm formulas specialise without alteration.
This proves the corollary.
\end{proof}

\begin{remark}\label{rem:torsion-q-exponential-global}
The globality of
\[
\Pi=a_{s_0}b_{s_0}
\]
in the torsion regime is already a consequence of admissibility, not of
regularity. When \(N\ge1\), the formula for \(C_2\) in
Corollary~\ref{cor:torsion-q-exponential-finite} still records that invariant,
exactly as in Remark~\ref{rem:q-exponential-global}; it does not create it.
\end{remark}

\begin{remark}\label{rem:torsion-q-exponential-no-degenerate-pair}
Within the finite non-resonant torsion \(q\)-exponential regime covered by
Corollary~\ref{cor:torsion-q-exponential-finite}, the degenerate possibilities
\(\phi=0\) and \(\psi=0\), although not excluded a priori by
Definition~\ref{def:classical}, cannot occur for a classical functional.
Indeed, by Definition~\ref{def:classical}, \(\mathbf u\) is regular of order at
least \(3\). Hence, in the finite torsion setting under consideration, we may
work on a finite initial segment
\(
I=\{0,1,\dots,N+1\}
\)
with \(N\in\mathbb N^\times\) and \(N+1<\nu\), for instance \(N=1\). Whenever
the hypotheses of Corollary~\ref{cor:torsion-q-exponential-finite} are
satisfied for such a segment, the corollary gives the same non-vanishing
conditions as in Theorem~\ref{thm:regularity-admissible-orbit}. Therefore, by
the same argument used in the non-torsion case, neither \(\phi=0\) nor
\(\psi=0\) can occur.
\end{remark}

The next proposition records a standard finite discrete orthogonality consequence
associated with the simple zeros of the truncating polynomial \(P_{N+1}\). The
reader should keep in mind that this is a rather special situation within the
present theory. Nevertheless, since even this restricted case gives rise to
apparently new consequences, it is useful to pause and treat it explicitly.

\begin{proposition}\label{prop:simple-zeros-discrete-orthogonality}
Let \(N\in\mathbb N\), and let \(\mathbf u\in\mathcal P'\) be regular of
order \(N+1\), with monic
orthogonal polynomial sequence \(P_0,P_1,\dots,P_N\), and set
\[
h_n=\langle \mathbf u,P_n^2\rangle,
\]
for \(n=0,1,\dots,N\).
Assume that there exists a monic polynomial \(P_{N+1}\) of degree \(N+1\)
such that
\[
\langle\mathbf u,P_{N+1}p\rangle=0,
\quad p\in\mathcal P_N,
\]
and that \(P_{N+1}\) has \(N+1\) simple zeros
\(
x_0,x_1,\dots,x_N.
\)
Then the following statements hold.
\begin{itemize}
\item[\emph{(a)}]
\[
P_N(x_s)\neq0
\]
for every \(s=0,1,\dots,N\).

\item[\emph{(b)}]
The Christoffel numbers
\[
\lambda_s=\frac{h_N}{P_N(x_s)\,P'_{N+1}(x_s)},
\]
for every \(s=0,1,\dots,N\), are well defined.

\item[\emph{(c)}]
For every polynomial \(p\) with \(\deg p\le 2N+1\), one has
\[
\langle \mathbf u,p\rangle
=
\sum_{s=0}^{N}\lambda_s\,p(x_s).
\]

\item[\emph{(d)}]
In particular,
\[
\sum_{s=0}^{N}\lambda_s\,P_n(x_s)P_m(x_s)
=
h_n\,\delta_{nm}
\]
for every \(0\le n,m\le N\).
\end{itemize}
\end{proposition}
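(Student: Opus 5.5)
The plan is the classical Gauss-quadrature argument, carried out purely algebraically through the pairing with \(\mathbf u\); no positivity is used anywhere.

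\emph{Preliminary facts.}
\begin{itemize}
\item Since \(\mathbf u\) is regular of order \(N+1\), the polynomials \(P_0,\dots,P_N\) form a basis of \(\mathcal P_N\). Hence \(\langle\mathbf u,P_Np\rangle=0\) for \(p\in\mathcal P_{N-1}\), and \(\langle\mathbf u,P_Nx^N\rangle=h_N\).
\item \(P_{N+1}\) is monic and annihilates \(\mathcal P_N\). Therefore \(\langle\mathbf u,P_{N+1}P_N\rangle=0\).
\item For each zero \(x_s\), set \(\ell_s(x)=P_{N+1}(x)/(x-x_s)\). This is monic of degree \(N\), so \(\ell_s-P_N\in\mathcal P_{N-1}\). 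Consequently \(\langle\mathbf u,\ell_sP_N\rangle=\langle\mathbf u,P_N^2\rangle=h_N\neq0\).
\end{itemize}

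\emph{Step (a): \(P_N(x_s)\neq0\).} Write \(P_N(x)=P_N(x_s)+(x-x_s)r(x)\) with \(\deg r\le N-1\). Then
\[
\langle\mathbf u,\ell_sP_N\rangle=P_N(x_s)\langle\mathbf u,\ell_s\rangle+\langle\mathbf u,P_{N+1}r\rangle=P_N(x_s)\langle\mathbf u,\ell_s\rangle,
\]
because \(r\in\mathcal P_N\). The left side equals \(h_N\neq0\), so \(P_N(x_s)\neq0\). As a by-product, \(\langle\mathbf u,\ell_s\rangle=h_N/P_N(x_s)\).

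\emph{Step (b): the \(\lambda_s\) are well defined.} Since the zeros are simple, \(P'_{N+1}(x_s)=\ell_s(x_s)\neq0\). Together with (a), the denominators in \(\lambda_s\) do not vanish.

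\emph{Step (c): the quadrature identity.}
\begin{itemize}
\item For \(\deg p\le2N+1\), divide: \(p=P_{N+1}q+r\) with \(\deg q\le N\) and \(\deg r\le N\).
\item Then \(\langle\mathbf u,p\rangle=\langle\mathbf u,r\rangle\), and \(p(x_s)=r(x_s)\) for every \(s\).
\item Since the \(x_s\) are distinct, Lagrange interpolation gives \(r=\sum_s r(x_s)\,\ell_s/\ell_s(x_s)\).
\item Applying \(\mathbf u\) and using the by-product of (a) yields weights \(\langle\mathbf u,\ell_s\rangle/P'_{N+1}(x_s)=h_N/(P_N(x_s)P'_{N+1}(x_s))=\lambda_s\). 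This is exactly (c).
\end{itemize}

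\emph{Step (d).} Apply (c) to \(p=P_nP_m\), which has degree at most \(2N\). Then orthogonality of \(P_0,\dots,P_N\) gives \(h_n\delta_{nm}\).

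\emph{Main obstacle.} The only delicate point is the identity \(\langle\mathbf u,\ell_s\rangle=h_N/P_N(x_s)\). It must be derived without assuming \(P_N(x_s)\neq0\) beforehand, since that nonvanishing is what (a) asserts. The decomposition in Step (a) handles this: the same computation proves (a) and produces the weights simultaneously.
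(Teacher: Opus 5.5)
Your proof is correct and follows the same overall scheme as the paper's. You divide \(p\) by \(P_{N+1}\), interpolate at the simple zeros, and compute the weights from
\(\langle\mathbf u,P_{N+1}(x)/(x-x_s)\rangle=h_N/P_N(x_s)\), obtained by splitting \(P_N(x)=P_N(x_s)+(x-x_s)r(x)\).

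The one genuine difference is in part (a).

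\textbf{The paper's argument for (a).} It proves \(P_N(x_s)\neq0\) first and separately. It writes \(P_{N+1}=(x-B_N)P_N-C_NP_{N-1}\) with \(C_N=h_N/h_{N-1}\neq0\). A common zero of \(P_N\) and \(P_{N+1}\) would then descend through the three-term recurrences to force \(P_0=0\); the case \(N=0\) is handled apart. Only after this does the paper derive the weight identity.

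\textbf{Your argument for (a).} You read (a) off the relation \(h_N=P_N(x_s)\langle\mathbf u,P_{N+1}(x)/(x-x_s)\rangle\), which the paper also proves. Since \(h_N\neq0\), the factor \(P_N(x_s)\) cannot vanish.

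\textbf{Comparison.} Your version shows that the recurrence descent is dispensable. A single computation gives both (a) and the Christoffel numbers, with no case split in \(N\). It uses only \(h_N\neq0\), the orthogonality of \(P_N\) to \(\mathcal P_{N-1}\), and the orthogonality of \(P_{N+1}\) to \(\mathcal P_{N-1}\). The paper's route instead isolates the familiar structural fact that consecutive members of a finite orthogonal family have no common zero, independently of the quadrature formula.
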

\begin{proof}
This is the standard finite Christoffel--Darboux argument, but it is worth
noting that no non-zero norm for \(P_{N+1}\) is being assumed. If
\(N\in\mathbb N^\times\), orthogonality gives
\[
P_{N+1}(x)
=
(x-B_N)P_N(x)-C_NP_{N-1}(x),
\quad
C_N=\frac{h_N}{h_{N-1}}\neq0.
\]
If \(P_N\) and \(P_{N+1}\) had a common zero, this recurrence would first
force \(P_{N-1}\) to vanish there; descending through the preceding
three-term recurrences would then give \(P_0=0\), which is impossible. If
\(N=0\), the same conclusion is immediate because \(P_0=1\). Thus
\(P_N(x_s)\neq0\), and the numbers \(\lambda_s\) are well defined because the
zeros \(x_s\) are simple.

For each \(s=0,1,\dots,N\), put
\[
\ell_s(x)
=
\frac{P_{N+1}(x)}
     {(x-x_s)P'_{N+1}(x_s)}.
\]
The polynomial \(P_{N+1}(x)/(x-x_s)\) is monic of degree \(N\); hence its
inner product with \(P_N\) is \(h_N\). On the other hand,
\[
P_N(x)-P_N(x_s)=(x-x_s)r_s(x),
\quad
r_s=0\ \hbox{if }N=0,
\quad
\deg r_s\le N-1\ \hbox{if }N\ge1,
\]
and therefore the assumed orthogonality of \(P_{N+1}\) gives
\[
\left\langle
\mathbf u,
\frac{P_{N+1}(x)}{x-x_s}P_N(x)
\right\rangle
=
P_N(x_s)
\left\langle
\mathbf u,
\frac{P_{N+1}(x)}{x-x_s}
\right\rangle.
\]
Consequently,
\[
\langle\mathbf u,\ell_s\rangle
=
\frac{h_N}{P_N(x_s)P'_{N+1}(x_s)}
=
\lambda_s.
\]

Finally, if \(\deg p\le2N+1\), division by \(P_{N+1}\) gives
\[
p=P_{N+1}q+r,
\quad q,r\in\mathcal P_N.
\]
The first term is annihilated by the hypothesis, while Lagrange interpolation
at the simple zeros of \(P_{N+1}\) gives
\[
r(x)=\sum_{s=0}^{N}r(x_s)\ell_s(x)
=\sum_{s=0}^{N}p(x_s)\ell_s(x).
\]
It follows that the Gaussian quadrature formula
\[
\langle \mathbf u,p\rangle
=
\sum_{s=0}^{N}\lambda_s p(x_s),
\quad \deg p\le 2N+1,
\]
holds. Taking \(p=P_nP_m\) gives the displayed discrete orthogonality
relation.
\end{proof}

The following example illustrates the preceding root-of-unity theory for the
\(q\)-exponential map.

\begin{example}[\(q\)-exponential map for \(q\) a root of unity]\label{AW}
Let
\[
q=e^{2\pi iM/\nu},
\quad
1\le M<\nu,
\quad
\gcd(M,\nu)=1,
\quad
\nu\ge3,
\]
let \(q^{1/2}\) denote the half-step-compatible root belonging to the ambient
admissible structure. Let \(A,B\in\mathbb C^\times\), and fix once and for
all
\[
\kappa=\sqrt{AB}.
\]
Consider a full-step arithmetic progression \(s_0+h\mathbb Z\subseteq U\) on which
\[
X(s_0+kh)=Aq^{-k}+Bq^k+C,
\]
with \(C\in\mathbb C\). Let \(\mathbf u\in\mathcal P'\) be such that
\(
\langle \mathbf u,1\rangle\neq0,
\)
and satisfy
\[
\mathbf D(\phi\,\mathbf u)=\mathbf S(\psi\,\mathbf u),
\]
where
\begin{align*}
\phi(x)
&=
2(1+abcd)(x-C)^2-2\kappa\,
(a+b+c+d+abc+abd+acd+bcd)(x-C)
\\[7pt]
&\quad
+4AB\,(ab+ac+ad+bc+bd+cd-abcd-1),
\\[7pt]
\psi(x)
&=
\frac{4q^{1/2}}{q-1}
\Bigl(
(abcd-1)(x-C)+\kappa\,(a+b+c+d-abc-abd-acd-bcd)
\Bigr),
\end{align*}
with \(a,b,c,d\in\mathbb C\). Set
\[
g=abcd.
\]
For this case, the quantity \(d_n\) appearing in
Theorem~\ref{thm:regularity-admissible-orbit} is
\[
d_n=
-\frac{4}{q^{1/2}-q^{-1/2}}\,
q^{-n/2}(1-gq^n).
\]
Fix \(N\) with \(0\le N\le\nu-2\). In the non-resonant sector
\[
1-gq^j\neq0
\]
for every
\(
j=0,1,\dots,2N+1
\), Corollary~\ref{cor:torsion-q-exponential-finite} says that
\(\mathbf u\) is regular of order \(N+2\) if and only if
\[
AB(1-abq^n)(1-acq^n)(1-adq^n)
(1-bcq^n)(1-bdq^n)(1-cdq^n)\neq0
\]
for \(
n=0,1,\dots,N.
\)
At the resonant values \(gq^j=1\), finite regularity is governed instead by
the moment system of Theorem~\ref{thm:finite-resonant-regularity}; resonance
alone is no obstruction.
Moreover, the recurrence coefficients furnished by
Theorem~\ref{thm:regularity-admissible-orbit} are as follows.
The coefficients \(B_0\) and \(C_1\) are given by the corresponding initial
formulas in Theorem~\ref{thm:regularity-admissible-orbit}. For
\(n=1,2,\dots,N\), one has
\begin{align*}
B_n
=
C+\kappa
&\left(
a+\frac1a
-\frac{(1-abq^n)(1-acq^n)(1-adq^n)(1-gq^{n-1})}
       {a(1-gq^{2n-1})(1-gq^{2n})}
\right.
\\[7pt]
&\quad\left.
-\frac{a(1-q^n)(1-bcq^{n-1})(1-bdq^{n-1})(1-cdq^{n-1})}
       {(1-gq^{2n-1})(1-gq^{2n-2})}
\right),
\end{align*}
with the usual limiting interpretation when \(a=0\), and
\begin{align*}
C_{n+1}
&=
AB(1-q^{n+1})(1-gq^{n-1})
\\[7pt]
&\quad\times
\frac{
(1-abq^n)(1-acq^n)(1-adq^n)(1-bcq^n)(1-bdq^n)(1-cdq^n)
}{
(1-gq^{2n-1})(1-gq^{2n})^2(1-gq^{2n+1})
}.
\end{align*}
Assume now that \(\mathbf u\) is regular of order \(\nu\) and lies in the
non-resonant sector
\[
g\notin\{q^k:\ k=0,1,\dots,\nu-1\}.
\]
To keep the four-parameter calculation away from its exceptional algebraic
loci, assume in addition that
\[
a,b,c,d\in\mathbb C^\times,
\quad
ab,ac,ad,bc,bd,cd\notin\{q^k:\ k=0,1,\dots,\nu-1\}.
\]
Let
\(
P_0,P_1,\dots,P_{\nu-1}
\)
be the corresponding monic orthogonal polynomial sequence. In particular,
\(
C_n\neq0
\)
for \(
n=1,\dots,\nu-1.
\)
These coefficients are precisely the recurrence coefficients of the monic
Askey--Wilson polynomial sequence. Hence, by uniqueness of the monic polynomial sequence determined by the three-term recurrence, one has
\[
P_n(x)=
2^n\kappa^n
Q_n\!\left(
\frac{x-C}{2\kappa};
a,b,c,d\Bigm| q
\right),
\]
for \(n=0,1,\dots,\nu-1\),
where \((Q_n)_{n\in\mathbb N}\) denotes the monic Askey--Wilson polynomial sequence; see
\cite[(14.1.5)]{KLS10}. In particular,
if one writes
\[
x=C+\kappa(t+t^{-1}),
\]
then
\[
\frac{x-C}{2\kappa}=\frac{t+t^{-1}}{2}.
\]
The denominators involving powers of \(g\) in the displayed rational factors
are non-zero under the preceding assumptions. We now determine the terminal
polynomial directly from the functional equation. This is the proper point at
which to proceed internally: the regular sequence has only been constructed
through degree \(\nu-1\), and no formula lying beyond that range need be
imported.

Put
\[
\varrho=q^{1/2},
\quad
\varepsilon=\varrho^\nu\in\{1,-1\}.
\]
In the variable
\[
x=C+\kappa(t+t^{-1}),
\]
the two half-step neighbours are
\[
Y=C+\kappa(\varrho t+\varrho^{-1}t^{-1}),
\quad
Z=C+\kappa(\varrho^{-1}t+\varrho t^{-1}).
\]
The expression
\[
\Omega_\nu(x)=\kappa^\nu(t^\nu+t^{-\nu})
\]
is a monic polynomial of degree \(\nu\) in \(x\), and
\[
\Omega_\nu(Y)=\Omega_\nu(Z)=\varepsilon\Omega_\nu(X).
\]
Consequently,
\[
D(\Omega_\nu p)=\varepsilon\Omega_\nu Dp,
\quad
S(\Omega_\nu p)=\varepsilon\Omega_\nu Sp.
\]
Write
\[
L_{a,b,c,d}p=\phi Dp+\psi Sp.
\]
The functional equation is precisely
\[
\langle\mathbf u,L_{a,b,c,d}p\rangle=0,
\quad
p\in\mathcal P.
\]
The preceding identities show that
\[
L_{a,b,c,d}(\Omega_\nu p)
=
\varepsilon\Omega_\nu L_{a,b,c,d}p,
\]
and hence that \(\Omega_\nu\mathbf u\) satisfies the same functional equation
as \(\mathbf u\).

There is no hidden question of uniqueness here. Since
\[
g\notin\{q^k:\ k=0,1,\dots,\nu-1\},
\]
the displayed formula for \(d_j\) gives
\[
d_j\neq0,
\quad
j\in\mathbb N.
\]
The triangular moment system therefore determines every solution from its
zeroth moment. Its solution space is one-dimensional, and there exists a
scalar \(\varkappa_\nu\) such that
\[
\Omega_\nu\mathbf u=\varkappa_\nu\mathbf u.
\]
It remains only to determine that scalar.

For \(\zeta\in\mathbb C\), set
\[
\chi_\zeta(x)
=
1-\frac{\zeta}{\kappa}(x-C)+\zeta^2
=
(1-\zeta t)(1-\zeta/t).
\]
A direct factorisation of the displayed pair \((\phi,\psi)\) gives
\begin{align*}
\phi+\frac{Y-Z}{2}\psi
&=
4\kappa^2t^{-2}
\prod_{\zeta\in\{a,b,c,d\}}(1-\zeta t),
\\[7pt]
\phi-\frac{Y-Z}{2}\psi
&=
4\kappa^2t^2
\prod_{\zeta\in\{a,b,c,d\}}(1-\zeta/t).
\end{align*}
These two identities yield the exact intertwining relation
\[
L_{a,b,c,d}(\chi_{a\varrho}p)
=
\chi_aL_{aq,b,c,d}p.
\]
Thus, if
\[
\Xi_m(x)
=
(at,a/t;q)_m
=
\prod_{j=0}^{m-1}\chi_{aq^j}(x),
\quad
\mathbf u_m=\Xi_m\mathbf u,
\]
then \(\mathbf u_m\) satisfies the functional equation obtained by replacing
\(a\) with \(aq^m\).

Set
\[
F_m=\langle\mathbf u,\Xi_m\rangle
=\langle\mathbf u_m,1\rangle.
\]
Testing the shifted functional equation against the constant polynomial \(1\)
gives
\[
\frac{\langle\mathbf u_m,x-C\rangle}{\kappa}
=
\frac{
aq^m+b+c+d-abcq^m-abdq^m-acdq^m-bcd
}{
1-gq^m
}
F_m.
\]
On the other hand,
\[
F_{m+1}
=
(1+a^2q^{2m})F_m
-\frac{aq^m}{\kappa}\langle\mathbf u_m,x-C\rangle.
\]
Combining the last two identities, we obtain
\[
(1-gq^m)F_{m+1}
=
(1-abq^m)(1-acq^m)(1-adq^m)F_m.
\]
Multiplication over one complete \(q\)-cycle therefore gives
\[
\frac{F_\nu}{F_0}
=
\frac{
(1-(ab)^\nu)(1-(ac)^\nu)(1-(ad)^\nu)
}{
1-g^\nu
}.
\]
Moreover,
\[
\Xi_\nu(x)
=
1-a^\nu(t^\nu+t^{-\nu})+a^{2\nu}.
\]
It follows that
\[
\frac{\langle\mathbf u,t^\nu+t^{-\nu}\rangle}{F_0}
=
\frac{1+a^{2\nu}-F_\nu/F_0}{a^\nu}
=
\mathcal E_\nu,
\]
where
\[
\mathcal E_\nu=
\frac{
a^\nu+b^\nu+c^\nu+d^\nu
-(abc)^\nu-(abd)^\nu-(acd)^\nu-(bcd)^\nu
}{
1-g^\nu
}.
\]
Pairing
\(
\Omega_\nu\mathbf u=\varkappa_\nu\mathbf u
\)
with \(1\) now gives
\[
\varkappa_\nu=\kappa^\nu\mathcal E_\nu.
\]
Hence
\[
P_\nu(x)
=
\Omega_\nu(x)-\varkappa_\nu
=
\kappa^\nu
\bigl(t^\nu+t^{-\nu}-\mathcal E_\nu\bigr)
\]
satisfies
\[
P_\nu\mathbf u=0.
\]
Thus \(P_\nu\) is the unique monic polynomial of degree \(\nu\) orthogonal to
\(\mathcal P_{\nu-1}\); indeed, it annihilates the functional after
multiplication by any polynomial. In particular,
\[
\langle\mathbf u,P_\nu^2\rangle=0.
\]
This is the precise meaning of the terminal identity \(C_\nu=0\): it is a
vanishing terminal norm coefficient, not an invocation of the regular
recurrence beyond degree \(\nu-1\).

Assume, in addition, that
\[
\mathcal E_\nu\neq\pm2.
\]
Then the zeros of \(P_\nu\) are simple. More precisely, if \(r\) is chosen so
that
\[
r^\nu=\frac{\mathcal E_\nu}{2}+\sqrt{\frac{\mathcal E_\nu^2}{4}-1},
\]
then the zeros of \(P_\nu\) are
\[
\xi_s=
C+\kappa(rq^s+r^{-1}q^{-s}),
\]
for every \(s=0,1,\dots,\nu-1\).
We have already proved that
\[
P_\nu\mathbf u=0.
\]
Since \(\mathbf u\) is regular of order \(\nu\), the hypotheses of
Proposition~\ref{prop:simple-zeros-discrete-orthogonality} are therefore
satisfied with \(N=\nu-1\).
Writing
\[
h_n=\langle \mathbf u,P_n^2\rangle,
\]
for \(n=0,1,\dots,\nu-1\), one obtains
\[
\sum_{s=0}^{\nu-1}\lambda_s\,P_n(\xi_s)P_m(\xi_s)
=
h_n\,\delta_{nm}
\]
for every \(0\le n,m\le \nu-1\), where
\[
\lambda_s=
\frac{h_{\nu-1}}{P_{\nu-1}(\xi_s)\,P_\nu'(\xi_s)}.
\]
To compute \(P_\nu'(\xi_s)\), differentiate the identity
\[
P_\nu(x)=\kappa^\nu\bigl(t^\nu+t^{-\nu}-\mathcal E_\nu\bigr),
\quad
x=C+\kappa(t+t^{-1}).
\]
One has
\[
P_\nu'(x)
=
\frac{dP_\nu/dt}{dx/dt}
=
\nu\kappa^{\nu-1}\frac{t^\nu-t^{-\nu}}{t-t^{-1}}.
\]
Evaluating at \(t=rq^s\), one finds
\[
P_\nu'(\xi_s)
=
\nu\kappa^{\nu-1}\frac{r^\nu-r^{-\nu}}{rq^s-r^{-1}q^{-s}}.
\]
The terminal identity also extends the preceding quadrature formula to an
identity of functionals. Indeed, division by \(P_\nu\) shows that
\[
\langle\mathbf u,p\rangle
=
\sum_{s=0}^{\nu-1}\lambda_s p(\xi_s),
\quad
p\in\mathcal P,
\]
because both sides annihilate every multiple of \(P_\nu\), while they agree on
polynomials of degree at most \(\nu-1\).

We may therefore substitute this functional representation directly into the
functional equation. Put
\[
t_s=rq^s,
\]
and define the intermediate half-step points
\[
y_s
=
C+\kappa(\varrho t_s+\varrho^{-1}t_s^{-1}),
\quad
z_s
=
C+\kappa(\varrho^{-1}t_s+\varrho t_s^{-1}).
\]
The indices are read modulo \(\nu\), and
\[
y_s=z_{s+1}.
\]
The hypothesis
\(
\mathcal E_\nu\neq\pm2
\)
also ensures that these intermediate points are distinct. Using the two
factorisations obtained above and collecting the coefficient of \(p(y_s)\) in
\[
\sum_{j=0}^{\nu-1}\lambda_jL_{a,b,c,d}p(\xi_j)=0,
\]
we obtain
\[
\frac{\lambda_sR_+(t_s)}{y_s-z_s}
=
\frac{\lambda_{s+1}R_-(t_{s+1})}{y_{s+1}-z_{s+1}},
\]
where
\[
R_+(t)
=
4\kappa^2t^{-2}
\prod_{\zeta\in\{a,b,c,d\}}(1-\zeta t),
\quad
R_-(t)
=
4\kappa^2t^2
\prod_{\zeta\in\{a,b,c,d\}}(1-\zeta/t).
\]
The Christoffel formula above also gives \(\lambda_s\neq0\): the terminal
zeros are simple, and consecutive polynomials cannot have a common zero while
the recurrence coefficients \(C_1,\dots,C_{\nu-1}\) are non-zero. No division
by a vanishing factor is therefore concealed here. Indeed, put
\[
\mathsf a=a^\nu,
\quad
\mathsf b=b^\nu,
\quad
\mathsf c=c^\nu,
\quad
\mathsf d=d^\nu,
\quad
R=r^\nu.
\]
If, for instance, \(R=\mathsf a\) or \(R=\mathsf a^{-1}\), then the defining
identity for \(\mathcal E_\nu=R+R^{-1}\), after multiplication by
\(\mathsf a\) and collection of factors, gives
\[
0
=
(1-\mathsf a\mathsf b)
(1-\mathsf a\mathsf c)
(1-\mathsf a\mathsf d).
\]
Since \(q\) is primitive of order \(\nu\), one of \(ab,ac,ad\) would then be
a power of \(q\), contrary to the hypotheses. The same argument applies to
\(b,c,d\). Hence
\[
R\notin
\{\zeta^\nu,\zeta^{-\nu}:\ \zeta\in\{a,b,c,d\}\}.
\]
Moreover, \(\mathcal E_\nu\neq\pm2\) gives \(R^2\neq1\). Raising a
potentially vanishing factor in the balance to the power \(\nu\) now proves
that every factor by which we divide below is non-zero.
Since
\[
y_s-z_s
=
\kappa(\varrho-\varrho^{-1})(t_s-t_s^{-1}),
\]
the preceding balance reduces to
\[
\frac{\lambda_{s+1}}{\lambda_s}
=
\frac{q}{g}
\frac{1-r^2q^{2s+2}}{1-r^2q^{2s}}
\frac{(1-arq^s)(1-brq^s)(1-crq^s)(1-drq^s)}
     {(1-rq^{s+1}/a)(1-rq^{s+1}/b)(1-rq^{s+1}/c)(1-rq^{s+1}/d)}.
\]
Iterating from \(s=0\), we find
\[
\lambda_s
=
\lambda_0
\left(\frac{q}{g}\right)^s
\frac{
(1-r^2q^{2s})(ar,br,cr,dr;q)_s
}{
(1-r^2)(qr/a,qr/b,qr/c,qr/d;q)_s
},
\]
for every \(s=0,1,\dots,\nu-1\). To see the cyclic compatibility without
leaving anything implicit, put
\(
\mathsf g=\mathsf a\mathsf b\mathsf c\mathsf d=g^\nu
\).
A direct expansion gives
\[
\prod_{\zeta\in\{\mathsf a,\mathsf b,\mathsf c,\mathsf d\}}
(1-\zeta R)
-
\prod_{\zeta\in\{\mathsf a,\mathsf b,\mathsf c,\mathsf d\}}
(\zeta-R)
=
(R^2-1)(\mathsf g-1)
(R^2+1-\mathcal E_\nu R)
=0.
\]
The product of the displayed weight ratios over one complete cycle is
therefore, since the factor involving \(r^2\) telescopes and
\(q^\nu=1\),
\[
\left(\frac qg\right)^\nu
\frac{
\prod_{\zeta\in\{\mathsf a,\mathsf b,\mathsf c,\mathsf d\}}(1-\zeta R)
}{
\prod_{\zeta\in\{\mathsf a,\mathsf b,\mathsf c,\mathsf d\}}(1-R/\zeta)
}
=
\frac{
\prod_{\zeta\in\{\mathsf a,\mathsf b,\mathsf c,\mathsf d\}}(1-\zeta R)
}{
\prod_{\zeta\in\{\mathsf a,\mathsf b,\mathsf c,\mathsf d\}}(\zeta-R)
}
=1.
\]
Thus the finite root-of-unity orthogonality formula appears here as a
special instance of the general framework developed above, subject to the
additional requirement that the truncating polynomial have simple zeros.
\end{example}

\begin{remark}\label{AWremark}
The explicit support and weights obtained above may naturally be compared with
the root-of-unity formulas in \cite{SZ97} and with their
\(q\)-ultraspherical specialisation in \cite{SZ96}. The direction of
the present argument should nevertheless be kept clear. We have not identified
the functional by importing a previously known relation between weights. The
terminal identity
\[
P_\nu\mathbf u=0
\]
was derived from the functional equation and the one-dimensional moment
recursion, while the ratio of successive weights followed from substituting
the resulting functional representation into that same equation.

The cited works provide particular representations under additional
family-specific assumptions. Here the finite regular family is obtained first,
the terminal polynomial is then determined internally, and only afterwards do
its simple zeros yield the discrete functional representation. This order
separates the structural mechanism from the explicit evaluation of the support
and its Christoffel numbers, and shows precisely which part of the
root-of-unity formula belongs to the general theory.
\end{remark}
\section{Normalised alternating map}

Although Definition~\ref{def:classical} is formally uniform across all admissible map types, its degenerate subcases behave differently in the three regimes. In the infinite and finite non-resonant ordinary quadratic and \(q\)-exponential settings considered above, regularity forces \(\deg\psi=1\), whereas in the alternating case additional degenerate phenomena may still occur. We now turn to a distinguished globally normalised subcase of
case~\emph{(ii)} in
Theorem~\ref{thm:admissible-lattice-classification-U}, namely the alternating
map
\[
X(s)=e^{\pi i s/h}.
\]
This map is not merely formal: as observed above, there are natural
half-step-invariant sets on which it is admissible. In practice, such sets often
arise in simple forms. For instance, in
Example~\ref{ex:U-examples}\emph{(3)}, take
\(
E=h(\mathbb Q\cap[0,\frac12)).
\)
Then one obtains
\[
U=\bigsqcup_{v\in E}\left(v+\frac h2\mathbb Z\right)=h\mathbb Q.
\]
In this case
all representatives lie on the affine line \(h\mathbb R\), although they still determine infinitely many distinct
cosets of \(\tfrac h2\mathbb Z\). For a fixed \(s_0\in E\), on the corresponding full-step arithmetic progression
\(
s_0+h\mathbb Z,
\)
one has
\begin{align*}
X(s_0+kh)
=
e^{\pi i(s_0+kh)/h}=
(-1)^kX(s_0),
\end{align*}
which is precisely the alternating form in
Theorem~\ref{thm:admissible-lattice-classification-U}\emph{(ii)}, with
\[
a_{s_0}=X(s_0), \quad b=0.
\]
Thus the coefficient multiplying \((-1)^k\) is allowed to depend on the chosen
full-step progression, exactly as the theorem states. In the present example,
this dependence is genuinely nontrivial. Indeed, if \(s_0,s_0'\in E\) and
\(X(s_0)=X(s_0')\), then
\[
e^{\pi i(s_0-s_0')/h}=1,
\]
so that
\[
\frac{s_0-s_0'}{h}\in 2\mathbb Z.
\]
But, since \(s_0,s_0'\in E=h\bigl(\mathbb Q\cap[0,\tfrac12)\bigr)\), one has
\[
\frac{s_0-s_0'}{h}\in \mathbb Q\cap\left(-\frac12,\frac12\right),
\]
and the only even integer in that interval is \(0\). Hence \(s_0=s_0'\).
Therefore the map \(s_0\mapsto X(s_0)\) is injective on \(E\). Since \(E\) is
infinite, it follows that \(X(E)\), and therefore also \(X(h\mathbb Q)\),
is infinite.

We shall not pursue here the full range of alternating admissible maps allowed
by Theorem~\ref{thm:admissible-lattice-classification-U}. The normalised case
already contains the structural mechanism that distinguishes the alternating
regime from the quadratic and \(q\)-exponential ones. Accordingly, throughout
the remainder of this section, let \(h\in\mathbb C^\times\), let
\(U\subseteq\mathbb C\) be half-step-invariant, and let
\(
X:U\to\mathbb C
\)
be an admissible normalised alternating map on \(U\). Then, for every
polynomial \(p\),
\begin{align*}
(Dp)(X)
=
\frac{p\!\left(iX\right)-p\!\left(-\,iX\right)}{2i\,X},\quad
(Sp)(X)
=
\frac{p\!\left(iX\right)+p\!\left(-\,iX\right)}{2},
\end{align*}
throughout \(U\). This is the key structural feature of the normalised alternating case. In the quadratic and \(q\)-exponential regimes, the operators \(D\) and \(S\) are governed by neighbouring values of the parametrising map within the ordinary polynomial calculus. Here, by contrast, the half-step neighbours are simply \(iX\) and \(-iX\), and the corresponding calculus factors through the decomposition of \(\mathcal P\) induced by the quadratic substitution. For this reason, the alternating case is better treated as a separate structural regime rather than as a naive \(q\to -1\) limit.

The following proposition makes this decomposition explicit.

\begin{proposition}\label{prop:alternating-DS}
Fix \(h\in\mathbb C^\times\), let \(U\subseteq\mathbb C\) be half-step-invariant, and let
\(
X:U\to\mathbb C
\)
be the admissible map on \(U\) defined by
\[
X(s)=e^{\pi i s/h}
\]
for every \(s\in U\). Let
\(
D,S:\mathcal P\to\mathcal P
\)
be the associated divided-difference and averaging operators. Then every
polynomial \(p\) admits a unique decomposition
\[
p(x)=a(-x^2)+x\,b(-x^2),
\]
where \(a\) and \(b\) are polynomials. For these uniquely determined
polynomials \(a\) and \(b\), one has
\[
(Dp)(X)=b(X^2),
\quad
(Sp)(X)=a(X^2)
\]
throughout \(U\).
\end{proposition}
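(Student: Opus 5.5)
The proof splits into three parts: the existence and uniqueness of the decomposition, the computation of the half-step neighbours, and the evaluation of the divided difference and the average.

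For the decomposition, I will split \(p\) into its even and odd parts,
\[
p(x)=\frac{p(x)+p(-x)}{2}+\frac{p(x)-p(-x)}{2}.
\]
The even part is a polynomial in \(x^2\), hence in \(-x^2\), so it can be written as \(a(-x^2)\). The odd part is \(x\) times an even polynomial, which gives \(x\,b(-x^2)\). Uniqueness follows by comparing monomials. The terms \(a(-x^2)\) involve only even powers of \(x\), while the terms \(x\,b(-x^2)\) involve only odd powers. So if \(a(-x^2)+x\,b(-x^2)=0\), both parts vanish identically. Then \(a(-x^2)=0\) forces \(a=0\), for instance via the injective substitution \(y\mapsto -x^2\) on coefficients, and likewise \(b=0\).

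Next I will compute the half-step neighbours. Since \(X(s)=e^{\pi i s/h}\), we get
\[
Y(s)=e^{\pi i (s+h/2)/h}=iX(s),\qquad Z(s)=-iX(s),
\]
exactly as in Example~\ref{ex:A=-2-minus-one-power} (there with \(h=1\)). The key observation is then
\[
-Y^2=-(iX)^2=X^2,\qquad -Z^2=X^2.
\]
Substituting into the decomposition gives
\[
p(Y)=a(X^2)+iX\,b(X^2),\qquad p(Z)=a(X^2)-iX\,b(X^2).
\]

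Now the pointwise formulas follow. Since \(Y-Z=2iX\neq0\) on \(U\), we have
\[
\frac{p(Y)-p(Z)}{Y-Z}=\frac{2iX\,b(X^2)}{2iX}=b(X^2),\qquad \frac{p(Y)+p(Z)}{2}=a(X^2).
\]
The polynomials \(x\mapsto b(x^2)\) and \(x\mapsto a(x^2)\) have degrees at most \(\deg p-1\) and \(\deg p\) respectively. By Definitions~\ref{def:divided-difference-operator} and~\ref{def:averaging-operator}, together with the uniqueness recorded in Remark~\ref{rem:D-and-S-well-defined} (which holds because \(X(U)\) is infinite, by admissibility), these are exactly \(Dp\) and \(Sp\). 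The constant case \(\deg p=0\) is consistent: there \(b=0\) and \(Dp=0\).

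There is no real obstacle. The only points that need care are the uniqueness of the decomposition, which rests on the parity of exponents, and invoking the infinitude of \(X(U)\), so that these pointwise identities genuinely identify the polynomials \(Dp\) and \(Sp\).
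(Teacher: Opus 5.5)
Your proof is correct and follows essentially the same route as the paper. Both arguments take the even--odd decomposition \(p(x)=a(-x^2)+x\,b(-x^2)\), use \(Y=iX\), \(Z=-iX\) (so that \(-Y^2=-Z^2=X^2\)), and evaluate the divided difference and the average directly; your appeal to the infinitude of \(X(U)\) to identify \(Dp\) and \(Sp\) with \(b(x^2)\) and \(a(x^2)\) goes slightly beyond the pointwise statement, but it is exactly what the paper relies on in the next proposition.
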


\begin{proof}
Fix \(p\in\mathcal P\). By decomposition into even and odd parts, there exist
unique polynomials \(a\) and \(b\) such that
\(
p(x)=a(-x^2)+x\,b(-x^2).
\)
Evaluating at \(x=X\), one obtains
\[
p(X)=a(-X^2)+X\,b(-X^2)
\]
throughout \(U\). Now
\[
p(iX)=a(X^2)+iX\,b(X^2),
\]
since \((iX)^2=-X^2\). Likewise,
\[
p(-iX)=a(X^2)-iX\,b(X^2).
\]
Hence
\[
p(iX)-p(-iX)=2iX\,b(X^2),
\]
and therefore
\[
(Dp)(X)
=
b(X^2).
\]
Similarly,
\[
(Sp)(X)=a(X^2).
\]
This proves the stated formulas.
\end{proof}

\begin{definition}[Quadratic substitution operator]\label{def:alternating-sigma}
The quadratic substitution operator is the linear map
\(
\sigma:\mathcal P\to\mathcal P
\)
defined by
\[
(\sigma p)(x)=p(x^2)
\]
for every \(p\in\mathcal P\). The transpose of \(\sigma\) is denoted by
\(
\boldsymbol{\sigma}:\mathcal P'\to\mathcal P'
\)
and defined by
\[
\langle \boldsymbol{\sigma}\mathbf u,p\rangle
=
\langle \mathbf u,\sigma p\rangle
\]
for every \(p\in\mathcal P\) and every \(\mathbf u\in\mathcal P'\).
\end{definition}

The formulas in Proposition~\ref{prop:alternating-DS} also show, without any
further calculation, that
\[
D^2=0,
\quad
DS=0,
\quad
\ker D=\sigma(\mathcal P),
\quad
\ker S=x\,\sigma(\mathcal P),
\quad
D(\mathcal P)=S(\mathcal P)=\sigma(\mathcal P).
\]
Thus the failure of surjectivity in the alternating regime is not a secondary
technical inconvenience: it is already encoded in the algebra of the two
operators.

The operator \(\sigma\) is continuous on \(\mathcal P\). Indeed, for every
\(n\in\mathbb N\) one has
\(
\sigma(\mathcal P_n)\subseteq \mathcal P_{2n}.
\)
Since each restriction
\[
\sigma|_{\mathcal P_n}:\mathcal P_n\to\mathcal P_{2n}
\]
is linear between finite-dimensional spaces, it is continuous. By the
continuity criterion for the inductive-limit topology on \(\mathcal P\),
\(\sigma\) is therefore continuous. Hence its transpose
\(\boldsymbol{\sigma}\) is well defined and \(\sigma(\mathcal P',\mathcal P)\)-continuous.

Proposition~\ref{prop:alternating-DS} shows that \(\mathcal P\) splits,
in the alternating case, according to the quadratic substitution. The operators
\(S\) and \(D\) detect the two components of the even--odd decomposition written
in the form
\[
p(x)=a(-x^2)+x\,b(-x^2).
\]
More generally, for every fixed \(\tau\in\mathbb C\), one also has the
direct-sum decomposition
\[
\mathcal P=\sigma(\mathcal P)\oplus (x-\tau)\sigma(\mathcal P).
\]
Thus \(\sigma\) records the passage from the original polynomial variable
\(x\) to the quadratic variable \(x^2\), while its transpose
\(\boldsymbol{\sigma}\) records the corresponding quadratic component on the
dual side. We now translate this decomposition into the corresponding dual
statement. The next proposition makes the resulting annihilation conditions
explicit.

\begin{proposition}\label{prop:alternating-structural-equation}
Fix \(h\in\mathbb C^\times\), let \(U\subseteq\mathbb C\) be half-step-invariant,
and let
\(
X:U\to\mathbb C
\)
be the admissible map on \(U\) defined by
\[
X(s)=e^{\pi i s/h}
\]
for every \(s\in U\). Let
\(
D,S:\mathcal P\to\mathcal P
\)
be the associated divided-difference and averaging operators, with transposes
\(
\mathbf D,\mathbf S:\mathcal P'\to\mathcal P'.
\)
Let \(\mathbf u\in\mathcal P'\), and let \(\phi\) and \(\psi\) be polynomials,
not both identically zero, such that \(\phi\) has degree at most \(2\) and
\(\psi\) has degree at most \(1\). Then the following conditions are equivalent:
\begin{itemize}
\item[\emph{(i)}]
\[
\mathbf D(\phi\,\mathbf u)=\mathbf S(\psi\,\mathbf u)
\]
in \(\mathcal P'\).

\item[\emph{(ii)}]
\begin{align*}
\langle \mathbf u,\phi(x)\,p(x^2)\rangle=0,\quad
\langle \mathbf u,\psi(x)\,p(x^2)\rangle=0,
\end{align*}
for every \(p\in\mathcal P\).
\end{itemize}
\end{proposition}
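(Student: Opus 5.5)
The plan is to unfold condition (i) into a scalar identity on test polynomials, and then use Proposition~\ref{prop:alternating-DS} to separate that identity into its two quadratic components. By the definitions of the transposes $\mathbf D=-D'$ and $\mathbf S=S'$, and of multiplication of a functional by a polynomial, (i) is equivalent to
\[
\langle \mathbf u,\phi\,Dp+\psi\,Sp\rangle=0,\quad p\in\mathcal P.
\]
This is exactly the reformulation already recorded in \eqref{eq:finite-moment-system}.

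Next I will identify $Dp$ and $Sp$ as polynomials, not merely as functions on $U$. Write $p(x)=a(-x^2)+x\,b(-x^2)$ as in Proposition~\ref{prop:alternating-DS}. That proposition gives $(Dp)(X)=b(X^2)$ and $(Sp)(X)=a(X^2)$ throughout $U$. Since $X(U)$ is infinite by admissibility, the two polynomials $Dp$ and $\sigma b$ agree on an infinite set and therefore coincide. The same argument gives $Sp=\sigma a$. Hence (i) becomes
\[
\langle \mathbf u,\phi\,\sigma b+\psi\,\sigma a\rangle=0
\]
for every $p$, with $a,b$ attached to $p$ as above.

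The key observation is that the assignment $p\mapsto(a,b)$ is a bijection of $\mathcal P$ onto $\mathcal P\times\mathcal P$. Uniqueness is part of Proposition~\ref{prop:alternating-DS}. For surjectivity, given arbitrary $a,b\in\mathcal P$, the expression $a(-x^2)+x\,b(-x^2)$ is itself a polynomial whose decomposition returns that same pair $(a,b)$.

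With this in hand, both directions are immediate.
\begin{itemize}
\item[(i)$\Rightarrow$(ii).] Take $p=x\,b(-x^2)$, so that $a=0$; this gives $\langle\mathbf u,\phi\,\sigma b\rangle=0$ for every $b\in\mathcal P$. Then take $p=a(-x^2)$, so that $b=0$; this gives $\langle\mathbf u,\psi\,\sigma a\rangle=0$ for every $a\in\mathcal P$. These are precisely the two annihilation conditions in (ii).
\item[(ii)$\Rightarrow$(i).] For a general $p$, the pairing $\langle \mathbf u,\phi\,\sigma b+\psi\,\sigma a\rangle$ splits by linearity into $\langle \mathbf u,\phi\,\sigma b\rangle+\langle \mathbf u,\psi\,\sigma a\rangle$, and both terms vanish by (ii).
\end{itemize}

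There is no genuine obstacle. The only point requiring care is the passage from the pointwise identities on $U$ to identities in $\mathcal P$, which is where the infinitude of $X(U)$ enters. The hypotheses on the degrees of $\phi$ and $\psi$ play no role in the argument.
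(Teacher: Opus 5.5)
Your proof is correct and follows essentially the same route as the paper, which also tests (i) against $x\,p(-x^2)$ and $p(-x^2)$ and uses the infinitude of $X(U)$ to turn the pointwise identities of Proposition~\ref{prop:alternating-DS} into polynomial identities. For the converse, the paper likewise decomposes an arbitrary test polynomial and concludes by linearity; your remark that neither the degree bounds on $\phi,\psi$ nor their non-vanishing is used is also accurate.
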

\begin{proof}
Assume \emph{(i)}, and let \(p\in\mathcal P\). First take
\[
p_o(x)=x\,p(-x^2).
\]
By Proposition~\ref{prop:alternating-DS}, the decomposition of \(p_o\) is obtained
with even part equal to \(0\) and odd part equal to \(p\). Hence
\[
(Dp_o)(X)=p(X^2),
\quad
(Sp_o)(X)=0
\]
throughout \(U\). Since \(X\) is admissible, \(X(U)\) is infinite. Hence, as both sides are
polynomials in the free variable \(x\), these identities imply
\[
Dp_o(x)=p(x^2),
\quad
Sp_o(x)=0
\]
as polynomial identities. Therefore
\[
0
=
\bigl\langle \mathbf D(\phi\,\mathbf u)-\mathbf S(\psi\,\mathbf u),p_o\bigr\rangle
=
-\langle \mathbf u,\phi(x)\,p(x^2)\rangle.
\]
Thus
\[
\langle \mathbf u,\phi(x)\,p(x^2)\rangle=0
\]
for every \(p\in\mathcal P\). Next take
\[
p_e(x)=p(-x^2).
\]
By Proposition~\ref{prop:alternating-DS}, the decomposition of \(p_e\) has odd
part equal to \(0\) and even part equal to \(p\). Hence
\[
(Dp_e)(X)=0,
\quad
(Sp_e)(X)=p(X^2)
\]
throughout \(U\). Again, since \(X\) is admissible, \(X(U)\) is infinite, and therefore these are
polynomial identities:
\[
Dp_e(x)=0,
\quad
Sp_e(x)=p(x^2).
\]
Hence
\begin{align*}
0=
\bigl\langle \mathbf D(\phi\,\mathbf u)-\mathbf S(\psi\,\mathbf u),p_e\bigr\rangle=
-\langle \mathbf u,\psi(x)\,p(x^2)\rangle.
\end{align*}
Thus \emph{(ii)} follows.

Conversely, assume \emph{(ii)}, and let \(q\in\mathcal P\). By
Proposition~\ref{prop:alternating-DS}, there exist polynomials \(a\) and \(b\)
such that
\[
(Dq)(X)=b(X^2),
\quad
(Sq)(X)=a(X^2)
\]
throughout \(U\). Since \(X\) is admissible, \(X(U)\) is infinite, and this gives the polynomial
identities
\[
Dq(x)=b(x^2),
\quad
Sq(x)=a(x^2).
\]
Using \emph{(ii)}, we obtain
\[
\langle \mathbf u,\phi(x)Dq(x)\rangle=0,
\quad
\langle \mathbf u,\psi(x)Sq(x)\rangle=0.
\]
Therefore
\begin{align*}
\bigl\langle \mathbf D(\phi\,\mathbf u)-\mathbf S(\psi\,\mathbf u),q\bigr\rangle=
-\langle \mathbf u,\phi(x)Dq(x)\rangle
-
\langle \mathbf u,\psi(x)Sq(x)\rangle=
0.
\end{align*}
Since this holds for every \(q\in\mathcal P\), the two functionals agree on
\(\mathcal P\). Hence \emph{(i)} follows.
\end{proof}

Before turning to the non-degenerate case \(\deg\psi=1\), we record a phenomenon peculiar to the alternating map. In the infinite and finite non-resonant ordinary quadratic and \(q\)-exponential regimes treated above, the regularity
criteria force
\[
d_0=\psi_1\neq0,
\]
and hence \(\deg\psi=1\). Thus the cases \(\psi=0\) and \(\psi\) constant are
automatically excluded there in the regular situation. For the normalised
alternating map this is no longer automatic. By
Proposition~\ref{prop:alternating-structural-equation}, the structural equation
is equivalent to
\[
\langle \mathbf u,\phi(x)\,p(x^2)\rangle=0,
\quad
\langle \mathbf u,\psi(x)\,p(x^2)\rangle=0
\]
for every \(p\in\mathcal P\).

If \(\psi=0\), the condition reduces to
\[
\langle \mathbf u,\phi(x)\,p(x^2)\rangle=0,
\]
that is, \(\mathbf u\) vanishes on
\(
\phi\,\sigma(\mathcal P).
\)
Writing the even and odd components of \(\mathbf u\) as
\[
\langle \mathbf v,p\rangle=\langle \mathbf u,p(x^2)\rangle,
\quad
\langle \mathbf w,p\rangle=\langle \mathbf u,xp(x^2)\rangle,
\]
and writing
\[
\phi(x)=\phi_2x^2+\phi_1x+\phi_0,
\]
this is equivalent to
\[
\phi_1\mathbf w=-(\phi_2x+\phi_0)\mathbf v
\]
in \(\mathcal P'\). Hence, if \(\phi_1\neq0\), the odd component
\(\mathbf w\) is determined by the even component \(\mathbf v\). If
\(\phi_1=0\), the condition reduces to
\[
(\phi_2x+\phi_0)\mathbf v=0
\]
and imposes no restriction on \(\mathbf w\). Thus the case \(\psi=0\) is not empty; it may contain regular functionals, but
it is governed only by this relation between the even and odd components.
Concrete examples can be produced, although they are not especially revealing,
since in the alternating setting the same regular functional may also satisfy
the equation for non-zero choices of \(\psi\). We do not pursue this degenerate case further, nor do we modify the definition of classicality so as to exclude it, since that would artificially remove part of the alternating phenomenon.

If \(\psi\) is a non-zero constant, then the second annihilation condition gives
\[
\langle \mathbf u,p(x^2)\rangle=0
\]
for every \(p\in\mathcal P\). In particular,
\(
\langle \mathbf u,1\rangle=0,
\)
so this case is incompatible with regularity.

We now pass to the non-degenerate case. Thus \(\psi\) is assumed to have degree
\(1\), and, after multiplication of the structural equation by a non-zero
constant, we may write
\[
\psi(x)=x-\tau
\]
for a uniquely determined \(\tau\in\mathbb C\). This is the case in which the
quadratic substitution yields the reconstruction theorem below.

\begin{definition}[Alternating splitting operator]\label{def:alternating-Jtau}
Fix \(\tau\in\mathbb C\). The alternating splitting operator
\(
J_\tau:\mathcal P\to\mathcal P
\)
is defined by
\(
J_\tau p=a,
\)
where \(a\) is determined by the unique decomposition
\[
p(x)=a(x^2)+(x-\tau)b(x^2)
\]
of \(p\in\mathcal P\). The transpose of \(J_\tau\) is denoted by
\(
\mathbf J_\tau:\mathcal P'\to\mathcal P'
\)
and defined by
\[
\langle \mathbf J_\tau\mathbf v,p\rangle
=
\langle \mathbf v,J_\tau p\rangle,
\]
for every \(p\in\mathcal P\) and every \(\mathbf v\in\mathcal P'\).
\end{definition}

The operator \(J_\tau\) is continuous on \(\mathcal P\). Indeed, if \(p\in\mathcal P_n\), then in the decomposition
\(
p(x)=a(x^2)+(x-\tau)b(x^2)
\)
one has
\(
\deg a\le \left\lfloor \frac n2\right\rfloor,
\)
and, for every \(n\in \mathbb N^\times\),
\(
\deg b\le \left\lfloor \frac{n-1}{2}\right\rfloor,
\)
while \(b=0\) when \(n=0\). Hence
\[
J_\tau(\mathcal P_n)\subseteq \mathcal P_{\lfloor n/2\rfloor}.
\]
The restrictions
\(
J_\tau|_{\mathcal P_n}:\mathcal P_n\to\mathcal P_{\lfloor n/2\rfloor}
\)
are linear maps between finite-dimensional spaces, and therefore continuous.
By the continuity criterion for the inductive-limit topology on \(\mathcal P\),
\(J_\tau\) is continuous. Consequently, its transpose
\(\mathbf J_\tau\) is well defined and continuous for the weak topology
\(\sigma(\mathcal P',\mathcal P)\).

\begin{proposition}\label{prop:alternating-construction}
Fix \(\tau\in\mathbb C\), and let \(\mathbf v\in\mathcal P'\). If
\(
\mathbf u=\mathbf J_\tau\mathbf v,
\)
then
\[
\langle \mathbf u,(x-\tau)r(x^2)\rangle=0
\]
for every \(r\in\mathcal P\), and
\(
\boldsymbol{\sigma}\mathbf u=\mathbf v.
\)
Conversely, if \(\mathbf u\in\mathcal P'\) satisfies
\[
\langle \mathbf u,(x-\tau)r(x^2)\rangle=0
\]
for every \(r\in\mathcal P\), then
\(
\mathbf u=\mathbf J_\tau(\boldsymbol{\sigma}\mathbf u).
\)
If, in addition,
\(
\langle \mathbf u,1\rangle\neq0,
\)
then \(\tau\) is uniquely determined by \(\mathbf u\), and
\[
\tau=\frac{\langle \mathbf u,x\rangle}{\langle \mathbf u,1\rangle}.
\]
\end{proposition}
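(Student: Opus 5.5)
The plan is to work entirely from the defining decomposition in Definition~\ref{def:alternating-Jtau}. First we record that the decomposition \(p(x)=a(x^2)+(x-\tau)b(x^2)\) exists and is unique, so that \(J_\tau\) is well defined and linear. Write \(p(x)=p_e(x^2)+x\,p_o(x^2)\) using even and odd parts. Then take \(b=p_o\) and \(a=p_e+\tau p_o\). For uniqueness, suppose \(a(x^2)+(x-\tau)b(x^2)=0\). Comparing odd parts gives \(b=0\), and then \(a=0\). From this we read off two identities. For every \(r\in\mathcal P\), \(J_\tau\bigl((x-\tau)r(x^2)\bigr)=0\), since this polynomial has \(a=0\) and \(b=r\). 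Also \(J_\tau(\sigma r)=r\), since \(r(x^2)\) has \(a=r\) and \(b=0\). In other words, \(J_\tau\) is a left inverse of \(\sigma\) whose kernel is \((x-\tau)\sigma(\mathcal P)\). This matches the direct-sum decomposition \(\mathcal P=\sigma(\mathcal P)\oplus(x-\tau)\sigma(\mathcal P)\) recorded before the definition.

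For the first assertion, put \(\mathbf u=\mathbf J_\tau\mathbf v\) and pair it with test polynomials. We get \(\langle\mathbf u,(x-\tau)r(x^2)\rangle=\langle\mathbf v,J_\tau((x-\tau)\sigma r)\rangle=0\). We also get \(\langle\boldsymbol\sigma\mathbf u,r\rangle=\langle\mathbf u,\sigma r\rangle=\langle\mathbf v,J_\tau\sigma r\rangle=\langle\mathbf v,r\rangle\), so \(\boldsymbol\sigma\mathbf u=\mathbf v\). For the converse, assume \(\mathbf u\) annihilates \((x-\tau)\sigma(\mathcal P)\). Take any \(p\) and write \(p=\sigma(J_\tau p)+(x-\tau)\sigma b\). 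Then \(\langle\mathbf u,p\rangle=\langle\mathbf u,\sigma J_\tau p\rangle=\langle\boldsymbol\sigma\mathbf u,J_\tau p\rangle=\langle\mathbf J_\tau\boldsymbol\sigma\mathbf u,p\rangle\). All identities here are in \(\mathcal P'\) in the functional sense, and continuity of \(J_\tau\) and \(\sigma\) has already been noted, so the transposes are legitimate.

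For the last assertion, apply the annihilation condition with \(r=1\). This gives \(\langle\mathbf u,x\rangle=\tau\langle\mathbf u,1\rangle\), and dividing by \(\langle\mathbf u,1\rangle\neq0\) yields the formula for \(\tau\). Uniqueness follows immediately: if \(\tau'\) also works, both \(\tau\) and \(\tau'\) equal the same quotient.

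There is no real obstacle in this argument. The only point needing care is the uniqueness of the decomposition, which rests on the even/odd separation of polynomials. Everything else consists of one-line transposition identities.
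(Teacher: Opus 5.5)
Your proposal is correct and follows essentially the same route as the paper: you test \(\mathbf J_\tau\mathbf v\) on \((x-\tau)r(x^2)\) and on \(r(x^2)\), use the decomposition \(p=a(x^2)+(x-\tau)b(x^2)\) for the converse, and take \(r=1\) to obtain the formula for \(\tau\). The only difference is that you verify existence and uniqueness of that decomposition explicitly via even and odd parts, with \(b=p_o\) and \(a=p_e+\tau p_o\), whereas the paper simply asserts it.
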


\begin{proof}
The decomposition
\[
p(x)=a(x^2)+(x-\tau)b(x^2)
\]
is unique for every \(p\in\mathcal P\), so the definition of
\(\mathbf J_\tau\mathbf v\) is well posed. Let \(\mathbf v\in\mathcal P'\), and set
\(\mathbf u=\mathbf J_\tau\mathbf v\). Applying the definition to polynomials of the form
\[
p(x)=(x-\tau)r(x^2),
\]
for which \(a=0\), gives
\[
\langle \mathbf u,(x-\tau)r(x^2)\rangle=0
\]
for every \(r\in\mathcal P\). Applying it to polynomials of the form
\[
p(x)=r(x^2),
\]
for which \(a=r\), gives
\[
\langle \mathbf u,r(x^2)\rangle=\langle \mathbf v,r\rangle
\]
for every \(r\in\mathcal P\). Hence
\(
\boldsymbol{\sigma}\mathbf u=\mathbf v.
\)

Conversely, suppose that \(\mathbf u\in\mathcal P'\) satisfies
\[
\langle \mathbf u,(x-\tau)r(x^2)\rangle=0
\]
for every \(r\in\mathcal P\). If
\[
p(x)=a(x^2)+(x-\tau)b(x^2),
\]
then
\[
\langle \mathbf u,p\rangle
=
\langle \mathbf u,a(x^2)\rangle
=
\langle \boldsymbol{\sigma}\mathbf u,a\rangle.
\]
By the definition of \(\mathbf J_\tau\), this is precisely
\(
\mathbf u=\mathbf J_\tau(\boldsymbol{\sigma}\mathbf u).
\)
Finally, taking \(r=1\) gives
\(
\langle \mathbf u,x-\tau\rangle=0.
\)
If \(\langle \mathbf u,1\rangle\neq0\), then
\[
\tau=\frac{\langle \mathbf u,x\rangle}{\langle \mathbf u,1\rangle},
\]
so \(\tau\) is uniquely determined by \(\mathbf u\).
\end{proof}

The preceding proposition shows that, once the annihilation condition
\[
\langle \mathbf u,(x-\tau)r(x^2)\rangle=0
\]
is imposed, the functional \(\mathbf u\) is completely recovered from its
quadratic component \(\boldsymbol{\sigma}\mathbf u\). Thus, in the
non-degenerate alternating case, the orthogonality problem is reduced to an
orthogonality problem in the quadratic variable. The next theorem gives the
precise reconstruction.

\begin{theorem}\label{thm:first-order-alternating}
Fix \(h\in\mathbb C^\times\), let \(U\subseteq\mathbb C\) be half-step-invariant, and let
\(
X:U\to\mathbb C
\)
be the admissible map on \(U\) defined by
\[
X(s)=e^{\pi i s/h}
\]
for every \(s\in U\). Let
\(
D,S:\mathcal P\to\mathcal P
\)
be the associated divided-difference and averaging operators, with transposes
\(
\mathbf D,\mathbf S:\mathcal P'\to\mathcal P'.
\)
Let \(\mathbf u\in\mathcal P'\) be such that
\(
\langle \mathbf u,1\rangle\neq0.
\)

Then the following statements are equivalent:
\begin{itemize}
\item[\emph{(i)}]
There exist polynomials \(\phi\) and \(\psi\), where \(\phi\) has degree at most
\(2\) and \(\psi\) has degree \(1\), such that
\[
\mathbf D(\phi\,\mathbf u)=\mathbf S(\psi\,\mathbf u)
\]
in \(\mathcal P'\).

\item[\emph{(ii)}]
There exists \(\tau\in\mathbb C\) such that
\[
\langle \mathbf u,(x-\tau)r(x^2)\rangle=0
\]
for every \(r\in\mathcal P\).

\item[\emph{(iii)}]
There exist \(\tau\in\mathbb C\) and \(\mathbf v\in\mathcal P'\) such that
\[
\mathbf u=\mathbf J_\tau\mathbf v.
\]
\end{itemize}
In that case, \(\tau\) and \(\mathbf v\) are uniquely determined by \(\mathbf u\), and are given by
\[
\tau=\frac{\langle \mathbf u,x\rangle}{\langle \mathbf u,1\rangle},
\quad
\mathbf v=\boldsymbol{\sigma}\mathbf u.
\]
Assume henceforth that these equivalent conditions hold. Let \(I\subseteq\mathbb N\) be either \(I=\mathbb N\), or
\(
I=\{0,1,\dots,2N+2\}
\)
for some \(N\in\mathbb N\). In the finite case set
\[
J_I=\{0,1,\dots,N+1\},
\quad
K_I=\{0,1,\dots,N\},
\]
whereas in the infinite case set
\[
J_I=K_I=\mathbb N.
\]

Then the following statements are equivalent:
\begin{itemize}
\item[\emph{(iv)}]
There exists a monic orthogonal polynomial sequence
\(
(R_n)_{n\in J_I}
\)
with respect to \(\mathbf v\) such that
\[
R_n(\tau^2)\neq0
\]
for every \(n\in J_I\).

\item[\emph{(v)}]
There exists a monic orthogonal polynomial sequence
\(
(P_n)_{n\in I}
\)
with respect to \(\mathbf u\).
\end{itemize}

Whenever these conditions hold, the functional
\(
(x-\tau^2)\mathbf v
\)
admits a monic orthogonal polynomial sequence
\(
(S_n)_{n\in K_I}
\)
with respect to \((x-\tau^2)\mathbf v\), where, for every \(n\in K_I\),
\[
S_n(x)=
\frac{
R_{n+1}(x)-\dfrac{R_{n+1}(\tau^2)}{R_n(\tau^2)}\,R_n(x)
}{
x-\tau^2
}.
\]
Moreover, the monic orthogonal polynomial sequence with respect to \(\mathbf u\) is given, for every \(n\in K_I\), by
\[
P_{2n}(x)=R_n(x^2),
\quad
P_{2n+1}(x)=(x-\tau)S_n(x^2),
\]
together with
\[
P_{2N+2}(x)=R_{N+1}(x^2)
\]
in the finite case.
\end{theorem}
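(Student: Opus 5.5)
The first block of equivalences follows almost directly from earlier results. For (i)\(\Rightarrow\)(ii), Proposition~\ref{prop:alternating-structural-equation} shows that the structural equation implies \(\langle\mathbf u,\psi(x)p(x^2)\rangle=0\) for every \(p\). Since \(\deg\psi=1\), we may divide \(\psi\) by its leading coefficient and write it as \(\psi=\psi_1(x-\tau)\); this gives (ii). The step (ii)\(\Leftrightarrow\)(iii), together with the formulas for \(\tau\) and for \(\mathbf v=\boldsymbol\sigma\mathbf u\) and their uniqueness, is exactly Proposition~\ref{prop:alternating-construction}; it uses \(\langle\mathbf u,1\rangle\neq0\). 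For (ii)\(\Rightarrow\)(i), take \(\psi=x-\tau\) and \(\phi=(x-\tau)(x+\tau)=x^2-\tau^2\). Then \(\phi\,p(x^2)=(x^2-\tau^2)p(x^2)\) is purely even, so \(\phi\) alone would fail. Instead I would take \(\phi=0\), which is allowed since \(\deg\phi\le2\) and \(\psi\neq0\). Then the first annihilation condition in Proposition~\ref{prop:alternating-structural-equation}(ii) holds trivially, the second is (ii), and the converse direction of that proposition gives (i).

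For (iv)\(\Leftrightarrow\)(v) I would use the classical quadratic-decomposition (symmetrisation) argument, adapted to the non-symmetric annihilator \(x-\tau\). The key identities, valid for all \(r,s\in\mathcal P\), are
\[
\langle\mathbf u,r(x^2)s(x^2)\rangle=\langle\mathbf v,rs\rangle,\quad
\langle\mathbf u,(x-\tau)r(x^2)\,s(x^2)\rangle=0,
\]
\[
\langle\mathbf u,(x-\tau)r(x^2)(x-\tau)s(x^2)\rangle=\langle\mathbf u,(x^2-2\tau x+\tau^2)rs(x^2)\rangle=\langle(x-\tau^2)\mathbf v,rs\rangle .
\]
The last equality holds because \(x=(x-\tau)+\tau\), so \(-2\tau x\) contributes \(-2\tau^2\) modulo the annihilated part, and the whole expression reduces to \(x^2-\tau^2\). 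These identities show that \(\mathcal P=\sigma\mathcal P\oplus(x-\tau)\sigma\mathcal P\) is an orthogonal decomposition for the bilinear form \((p,q)\mapsto\langle\mathbf u,pq\rangle\). The restriction to the even part is the form of \(\mathbf v\), and the restriction to the odd part is the form of \((x-\tau^2)\mathbf v\), both written in the variable \(x^2\). I would then show that \(\mathbf u\) has an orthogonal polynomial sequence up to degree \(2N+2\) if and only if \(\mathbf v\) is regular through degree \(N+1\) and \((x-\tau^2)\mathbf v\) is regular through degree \(N\). Two routes are available: a Hankel-determinant factorisation via Theorem~\ref{thm:finite-resonant-regularity}-style Gram determinants, where the Gram matrix of \(\mathbf u\) in the basis \(1,(x-\tau),x^2,(x-\tau)x^2,\dots\) is block diagonal after reordering; or a direct check of the displayed \(P_n\). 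In the direct check, \(P_{2n}=R_n(x^2)\) is orthogonal to lower degrees because lower-degree polynomials split into even parts of degree \(<n\) in \(x^2\) and odd parts annihilated by the cross term. The argument for \(P_{2n+1}\) is analogous.

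What remains is the Christoffel step: \((x-\tau^2)\mathbf v\) is regular through degree \(N\) if and only if \(R_n(\tau^2)\neq0\) for \(n\in J_I\), and in that case its monic orthogonal polynomials are the kernel polynomials \(S_n\) displayed in the statement. The proof is standard: \(S_n\) is a polynomial because the numerator vanishes at \(\tau^2\), and it is monic of degree \(n\). Orthogonality follows from \(\langle(x-\tau^2)\mathbf v,S_n q\rangle=\langle\mathbf v,(R_{n+1}-cR_n)q\rangle=0\) for \(\deg q<n\). The squared norm is \(-\frac{R_{n+1}(\tau^2)}{R_n(\tau^2)}h_n^{\mathbf v}\), which is non-zero exactly when \(R_{n+1}(\tau^2)\neq0\); this makes the condition necessary and sufficient, given \(R_0(\tau^2)=1\). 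For the converse (v)\(\Rightarrow\)(iv), I would define \(R_n\) by \(P_{2n}(x)=R_n(x^2)\), after first showing that \(P_{2n}\) is even. This follows from uniqueness: the even polynomial built by orthogonalising within \(\sigma\mathcal P\) is already orthogonal to everything of lower degree, so it must equal \(P_{2n}\). Non-vanishing of \(R_n(\tau^2)\) then comes from the norms of the odd \(P_{2n+1}\) through the norm formula above.

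I expect the main obstacle to be this necessity direction, and in particular the bookkeeping of indices at the terminal degree in the finite case. Regularity of \(\mathbf u\) of order \(2N+3\) must yield \(R_{N+1}(\tau^2)\neq0\), even though \(S_{N+1}\) is not available. I would handle this by a determinant count: the Hankel determinant of \(\mathbf u\) of size \(2N+3\) factors as the product of the \(\mathbf v\)-determinant of size \(N+2\) and the \((x-\tau^2)\mathbf v\)-determinant of size \(N+1\). The latter equals \(\prod_{k\le N}\) of the kernel norms, and so is a product of the factors \(R_{k+1}(\tau^2)/R_k(\tau^2)\) times norms of \(\mathbf v\), which covers \(R_{N+1}(\tau^2)\).
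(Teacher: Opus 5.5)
Your proposal is correct and follows essentially the paper's route---the orthogonal splitting \(\mathcal P=\sigma(\mathcal P)\oplus(x-\tau)\sigma(\mathcal P)\) with \(\langle\mathbf u,(x-\tau)^2r(x^2)\rangle=\langle(x-\tau^2)\mathbf v,r\rangle\), uniqueness to identify \(P_{2n}\) and \(P_{2n+1}\), and the Christoffel kernel-polynomial criterion---and your factorisation of the Hankel determinants of \(\mathbf u\) into those of \(\mathbf v\) and \((x-\tau^2)\mathbf v\) is just the determinant form of the paper's remark that a non-degenerate form remains non-degenerate on orthogonal summands (it also settles \(I=\mathbb N\) at once). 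Your worry about the terminal index is unfounded: by your own formula \(\langle(x-\tau^2)\mathbf v,S_N^2\rangle=-\frac{R_{N+1}(\tau^2)}{R_N(\tau^2)}\langle\mathbf v,R_N^2\rangle\), the value \(R_{N+1}(\tau^2)\) is already controlled by the norm of \(P_{2N+1}=(x-\tau)S_N(x^2)\), which lies in \(I\), so \(S_{N+1}\) is never needed.
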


\begin{proof}
The implication \emph{(i)} \(\Rightarrow\) \emph{(ii)} follows immediately from
Proposition~\ref{prop:alternating-structural-equation}, since the condition
\(\deg\psi=1\) means precisely that
\[
\psi(x)=c(x-\tau)
\]
for some \(c\in\mathbb C^\times\) and some \(\tau\in\mathbb C\). Dividing by
\(c\), one obtains
\[
\langle \mathbf u,(x-\tau)r(x^2)\rangle=0
\]
for every \(r\in\mathcal P\). Conversely, if \emph{(ii)} holds, then
Proposition~\ref{prop:alternating-structural-equation} applies with
\[
\phi=0,
\quad
\psi=x-\tau,
\]
and yields \emph{(i)}. The equivalence of \emph{(ii)} and \emph{(iii)} is exactly
Proposition~\ref{prop:alternating-construction}. That proposition also gives
\[
\tau=\frac{\langle \mathbf u,x\rangle}{\langle \mathbf u,1\rangle},
\quad
\mathbf v=\boldsymbol{\sigma}\mathbf u,
\]
and hence the uniqueness of \(\tau\) and \(\mathbf v\). Assume henceforth that these equivalent conditions hold, and set
\(
a=\tau^2.
\)

We treat the finite case. Thus let
\(
I=\{0,1,\dots,2N+2\}
\)
for some \(N\in\mathbb N\). The case \(I=\mathbb N\) is obtained by the same argument, with the finite
index ranges removed; the precise point at which this extension is made will be
indicated at the end of the proof.

\medskip
\noindent{\em Step 1:}
Assume that \(\mathbf v\) admits a monic orthogonal polynomial sequence
\(
R_0,R_1,\dots,R_{N+1}
\)
with respect to \(\mathbf v\), and that
\[
R_n(a)\neq0
\]
for \(n\in\{0,1,\dots,N+1\}\). Since the distinguished factor in the original variable
is \(x-\tau\), the corresponding distinguished point in the quadratic variable
is
\(
a=\tau^2.
\)
Accordingly, the relevant transformed functional is \((x-a)\mathbf v\), and by
the Christoffel criterion for multiplication by \(x-a\), the functional
\(
(x-a)\mathbf v
\)
admits a monic orthogonal polynomial sequence
\(
S_0,S_1,\dots,S_N
\)
with respect to \((x-a)\mathbf v\). Moreover, for \(n=0,1,\dots,N\), one has
\[
S_n(x)=
\frac{
R_{n+1}(x)-\dfrac{R_{n+1}(a)}{R_n(a)}\,R_n(x)
}{
x-a
}.
\]

\medskip
\noindent{\em Step 2:}
Since \(\mathbf u=\mathbf J_\tau\mathbf v\),
Proposition~\ref{prop:alternating-construction} gives
\[
\langle \mathbf u,r(x^2)\rangle=\langle \mathbf v,r\rangle,
\quad
\langle \mathbf u,(x-\tau)r(x^2)\rangle=0
\]
for every \(r\in\mathcal P\). Moreover, for every \(r\in\mathcal P\),
\[
\langle \mathbf u,(x-\tau)^2r(x^2)\rangle
=
\langle \mathbf u,(x^2-a)r(x^2)\rangle
=
\langle (x-a)\mathbf v,r\rangle.
\]
Indeed,
\[
(x-\tau)^2-(x^2-a)=-2\tau(x-\tau),
\]
and therefore
\[
\bigl((x-\tau)^2-(x^2-a)\bigr)r(x^2)
=
-2\tau(x-\tau)r(x^2),
\]
whose pairing with \(\mathbf u\) vanishes by the defining annihilation
condition. For every \(n=0,1,\dots,N\), define
\[
P_{2n}(x)=R_n(x^2),
\quad
P_{2n+1}(x)=(x-\tau)S_n(x^2),
\]
and also set
\[
P_{2N+2}(x)=R_{N+1}(x^2).
\]
For \(0\le m,n\le N\), using the preceding identities, we obtain
\begin{align*}
\langle \mathbf u,P_{2n}P_{2m}\rangle
&=
\langle \mathbf v,R_nR_m\rangle,\\[7pt]
\langle \mathbf u,P_{2n}P_{2m+1}\rangle
&=0,\\[7pt]
\langle \mathbf u,P_{2n+1}P_{2m+1}\rangle
&=
\langle (x-a)\mathbf v,S_nS_m\rangle.
\end{align*}
Moreover, for \(m=0,1,\dots,N\),
\[
\langle \mathbf u,P_{2N+2}P_{2m+1}\rangle=0,
\]
and
\[
\langle \mathbf u,P_{2N+2}P_{2m}\rangle
=
\langle \mathbf v,R_{N+1}R_m\rangle=0.
\]
Hence
\[
\langle \mathbf u,P_nP_m\rangle=0
\]
whenever
\(
0\le m<n\le 2N+2.
\)
Moreover, for \(n=0,1,\dots,N\), one has
\[
\langle \mathbf u,P_{2n}^2\rangle
=
\langle \mathbf v,R_n^2\rangle\neq0,
\]
and
\[
\langle \mathbf u,P_{2n+1}^2\rangle
=
\langle (x-a)\mathbf v,S_n^2\rangle\neq0.
\]
Finally,
\[
\langle \mathbf u,P_{2N+2}^2\rangle
=
\langle \mathbf v,R_{N+1}^2\rangle\neq0.
\]
Thus
\(
P_0,P_1,\dots,P_{2N+2}
\)
is a monic orthogonal polynomial sequence with respect to \(\mathbf u\). This proves that
\emph{(iv)} implies \emph{(v)} in the finite case. The displayed formula for
\(S_n\) is already known from Step~1, and the displayed formulas for the
polynomials \(P_n\) hold by construction.

\medskip
\noindent{\em Step 3:}
Assume now that \(\mathbf u\) admits a monic orthogonal polynomial sequence
through degree \(2N+2\), denoted by \(P_0,P_1,\dots,P_{2N+2}\). Since this
sequence is monic and orthogonal, it follows that, for
each \(m=0,1,\dots,N+1\), the bilinear form induced by \(\mathbf u\) is
non-degenerate on \(\mathcal P_{2m}\), and, for each \(m=0,1,\dots,N\), it is
also non-degenerate on \(\mathcal P_{2m+1}\). Indeed, with respect to the basis
\(
P_0,P_1,\dots,P_{2m}
\)
(resp.\ \(P_0,P_1,\dots,P_{2m+1}\)), the Gram matrix of the bilinear form induced by
\(\mathbf u\) on \(\mathcal P_{2m}\) (resp.\ \(\mathcal P_{2m+1}\)) is diagonal with non-zero
diagonal entries. Now, for each \(m=0,1,\dots,N+1\), one has the orthogonal decomposition
\[
\mathcal P_{2m}
=
\sigma(\mathcal P_m)\oplus (x-\tau)\sigma(\mathcal P_{m-1}),
\]
with the convention that \(\sigma(\mathcal P_{-1})=\{0\}\), and for each
\(m=0,1,\dots,N\),
\[
\mathcal P_{2m+1}
=
\sigma(\mathcal P_m)\oplus (x-\tau)\sigma(\mathcal P_m).
\]
As above, these summands are orthogonal with respect to the bilinear form
induced by \(\mathbf u\). Since the whole form is non-degenerate on
\(\mathcal P_{2m}\) and \(\mathcal P_{2m+1}\), respectively, the restrictions to
the summands are non-degenerate as well.

The restriction of the bilinear form induced by \(\mathbf u\) to
\(\sigma(\mathcal P_m)\) is precisely the bilinear form induced by
\(\mathbf v\) on \(\mathcal P_m\). Hence, for every \(m=0,1,\dots,N+1\), that
form is non-degenerate on \(\mathcal P_m\). Likewise, the restriction of the
bilinear form induced by \(\mathbf u\) to \((x-\tau)\sigma(\mathcal P_m)\) is
precisely the bilinear form induced by \((x-a)\mathbf v\) on \(\mathcal P_m\),
where \(a=\tau^2\). Hence, for every \(m=0,1,\dots,N\), that form is
non-degenerate on \(\mathcal P_m\). Therefore \(\mathbf v\) is regular of order \(N+2\), while \((x-a)\mathbf v\) is
regular of order \(N+1\). Equivalently, \(\mathbf v\) admits a monic orthogonal
family
\(
R_0,R_1,\dots,R_{N+1}
\)
and \((x-a)\mathbf v\) admits a monic orthogonal polynomial sequence
\(
S_0,S_1,\dots,S_N.
\)
For each \(n=0,1,\dots,N+1\), the polynomial
\(
R_n(x^2)
\)
is monic of degree \(2n\). Let \(q\in\mathcal P\) have degree at most \(2n-1\). By the
direct-sum decomposition underlying the definition of \(J_\tau\), we may write
\[
q(x)=A(x^2)+(x-\tau)B(x^2),
\]
with \(\deg A\le n-1\). Hence
\[
\langle \mathbf u,R_n(x^2)q(x)\rangle
=
\langle \mathbf v,R_nA\rangle
=
0.
\]
Thus \(R_n(x^2)\) is orthogonal to every polynomial of degree at most \(2n-1\).
Hence, by uniqueness of the monic orthogonal polynomial of degree \(2n\),
\[
P_{2n}(x)=R_n(x^2)
\]
for \(n=0,1,\dots,N+1\).

\medskip
\noindent{\em Step 4:}
Let \(n=0,1,\dots,N\). Since \(P_{2n+1}\) is monic of degree \(2n+1\), its
unique decomposition associated with \(J_\tau\) has the form
\[
P_{2n+1}(x)=A_n(x^2)+(x-\tau)S_n(x^2),
\]
where \(A_n,S_n\in\mathcal P\). If \(Q\in\mathcal P\) has degree at most \(n\),
then \(\deg Q(x^2)\le2n\), so orthogonality gives
\begin{align*}
0=
\langle \mathbf u,P_{2n+1}(x)Q(x^2)\rangle=
\langle \mathbf v,A_nQ\rangle.
\end{align*}
Since
\(
R_0,R_1,\dots,R_n
\)
are orthogonal with respect to \(\mathbf v\), the bilinear form induced by
\(\mathbf v\) is non-degenerate on \(\mathcal P_n\). Hence \(A_n=0\). It follows that
\[
P_{2n+1}(x)=(x-\tau)S_n(x^2)
\]
for \(n=0,1,\dots,N\), with \(S_n\) monic.

\medskip
\noindent{\em Step 5:}
Let \(n=0,1,\dots,N\), and let \(Q\in\mathcal P\) have degree at most \(n-1\). Then
\[
\deg\bigl((x-\tau)Q(x^2)\bigr)\le2n-1,
\]
and therefore
\begin{align*}
0=
\langle \mathbf u,P_{2n+1}(x)(x-\tau)Q(x^2)\rangle=
\langle (x-a)\mathbf v,S_nQ\rangle.
\end{align*}
Thus \(S_n\) is orthogonal, with respect to \((x-a)\mathbf v\), to every
polynomial of degree at most \(n-1\). Moreover,
\[
0\neq \langle \mathbf u,P_{2n+1}^2\rangle
=
\langle (x-a)\mathbf v,S_n^2\rangle.
\]
Hence
\(
S_0,S_1,\dots,S_N
\)
is the monic orthogonal polynomial sequence with respect to \((x-a)\mathbf v\). In particular,
\((x-a)\mathbf v\) admits monic polynomials of degrees \(0,1,\dots,N\)
orthogonal with respect to it. Applying the Christoffel criterion, we conclude that
\[
R_n(a)\neq0
\]
for
\(
n\in\{0,1,\dots,N+1\},
\)
and that
\[
S_n(x)=
\frac{R_{n+1}(x)-\dfrac{R_{n+1}(a)}{R_n(a)}\,R_n(x)}{x-a}
\]
for
\(
n\in\{0,1,\dots,N\}.
\)
This proves that \emph{(v)} implies \emph{(iv)} in the finite case. This establishes the theorem in the case
\(
I=\{0,1,\dots,2N+2\}.
\)

\medskip
\noindent{\em Step 6:}
We now turn to the case
\(
I=\mathbb N.
\) Assume that \(\mathbf v\) admits a monic orthogonal polynomial sequence \((R_n)_{n\in\mathbb N}\) with respect to \(\mathbf v\). If
\[
R_n(\tau^2)\neq0
\]
for every \(n\in\mathbb N\), then for each fixed \(N\in\mathbb N\) the finite family
\(
R_0,R_1,\dots,R_{N+1}
\)
satisfies the finite case already proved. Hence there exist monic polynomials
\(
P_0,P_1,\dots,P_{2N+2}
\)
orthogonal with respect to \(\mathbf u\), where
\[
P_{2n}(x)=R_n(x^2),
\quad
P_{2n+1}(x)=(x-\tau)S_n(x^2)
\]
for \(n=0,1,\dots,N\), and
\[
P_{2N+2}(x)=R_{N+1}(x^2),
\]
with
\[
S_n(x)=
\frac{
R_{n+1}(x)-\dfrac{R_{n+1}(\tau^2)}{R_n(\tau^2)}\,R_n(x)
}{
x-\tau^2
}
\]
for \(n=0,1,\dots,N\). For each fixed \(n\), the formulas
\[
P_{2n}(x)=R_n(x^2),
\quad
P_{2n+1}(x)=(x-\tau)S_n(x^2),
\]
and
\[
S_n(x)=
\frac{
R_{n+1}(x)-\dfrac{R_{n+1}(\tau^2)}{R_n(\tau^2)}\,R_n(x)
}{
x-\tau^2
}
\]
do not depend on the truncation level \(N\). Hence the finite families obtained
for different values of \(N\) are compatible. Therefore there exist monic
polynomials \((P_n)_{n\in\mathbb N}\)
orthogonal with respect to \(\mathbf u\), and monic polynomials \((S_n)_{n\in\mathbb N}\) orthogonal with respect to \((x-\tau^2)\mathbf v\), with the stated formulas.
Thus \emph{(iv)} implies \emph{(v)} when
\(
I=\mathbb N.
\)

Conversely, assume that \(\mathbf u\) admits a monic orthogonal polynomial
sequence \((P_n)_{n\in\mathbb N}\). Then, for each \(N\in\mathbb N\), the finite
subfamily
\(
P_0,P_1,\dots,P_{2N+2}
\)
satisfies the finite case already proved. Hence, for each \(N\), the functional
\(\mathbf v\) is regular of order \(N+2\). Since \(N\) is arbitrary,
\(\mathbf v\) admits a monic orthogonal polynomial sequence
\(
(R_n)_{n\in\mathbb N}.
\)
By uniqueness of the monic orthogonal polynomial of each degree, the finite
polynomials obtained from the finite case coincide with the corresponding
initial segment of this global sequence. Applying the finite case once more, we
obtain
\[
R_n(\tau^2)\neq0
\]
for \(n=0,1,\dots,N+1\). Since \(N\) is arbitrary, it follows that
\[
R_n(\tau^2)\neq0
\]
for every \(n\in\mathbb N\). Thus \emph{(v)} implies \emph{(iv)} when
\(
I=\mathbb N.
\)
Finally, the formulas for \(S_n\), \(P_{2n}\), and \(P_{2n+1}\) in the case
\(
I=\mathbb N
\)
follow, for each fixed \(n\), by applying the finite case with \(N\ge n\).
\end{proof}

\begin{remark}\label{rem:alternating-not-primitive}
Theorem~\ref{thm:first-order-alternating} shows that, in the non-degenerate
case, the alternating theory is not primitive: it is reconstructed from an
ordinary orthogonality problem in the quadratic variable, together with a single
linear annihilation condition. More precisely, once \(\tau\) is fixed, the
functional \(\mathbf u\) is completely determined by
\(\mathbf v=\boldsymbol{\sigma}\mathbf u\) and by
\[
\langle \mathbf u,(x-\tau)r(x^2)\rangle=0
\]
for every \(r\in\mathcal P\). Thus the normalised alternating case is
structurally rigid, but in a manner entirely different from the quadratic and
\(q\)-exponential regimes.
\end{remark}

\begin{example}[Jacobi family for the normalised alternating map]\label{ex:alternating-Jacobi}
Fix \(h\in\mathbb C^\times\), let \(U\subseteq\mathbb C\) be half-step-invariant,
and let
\(
X:U\to\mathbb C
\)
be the admissible map given by
\[
X(s)=e^{\pi i s/h}
\]
for every \(s\in U\). Fix \(\alpha,\beta\in\mathbb C\) such that
\[
-\alpha,\ -\beta,\ -(\alpha+\beta+1)\notin\mathbb N^\times,
\]
and let
\[
\mathbf v_{\alpha,\beta}\in\mathcal P'
\]
be the shifted Jacobi functional associated with the monic shifted Jacobi polynomial sequence
with parameters \((\alpha,\beta)\); see \cite{CG26a}. Let \((R_n)_{n\in\mathbb N}\) denote the corresponding monic orthogonal polynomial sequence. Thus
\[
R_n(t)
=
(-1)^n\frac{(\alpha+1)_n}{(n+\alpha+\beta+1)_n}\,
{}_2F_1\!\left(
\begin{matrix}
-n,\ n+\alpha+\beta+1\\[7pt]
\alpha+1
\end{matrix}
\,;\, t
\right).
\]
We now choose
\(
\tau=1,
\)
and define
\[
\mathbf u_{\alpha,\beta}=\mathbf J_1\mathbf v_{\alpha,\beta}.
\]
Since
\[
R_n(1)=\frac{(\beta+1)_n}{(n+\alpha+\beta+1)_n},
\]
the assumptions on \(\alpha\) and \(\beta\) imply that
\[
R_n(1)\neq0
\]
for every \(n\in\mathbb N\). Hence Theorem~\ref{thm:first-order-alternating} applies.
Moreover, by Proposition~\ref{prop:alternating-construction},
\[
\langle \mathbf u_{\alpha,\beta},(x-1)P(x^2)\rangle=0
\]
for every \(P\in\mathcal P\), and
\[
\boldsymbol{\sigma}\mathbf u_{\alpha,\beta}
=
\mathbf v_{\alpha,\beta}.
\]
Now
\[
(x-\tau^2)\mathbf v_{\alpha,\beta}=(x-1)\mathbf v_{\alpha,\beta},
\]
and, up to a non-zero scalar factor, this is precisely the shifted Jacobi
functional with parameters \((\alpha,\beta+1)\). Therefore its monic orthogonal polynomial sequence \((S_n)_{n\in\mathbb N}\) is given by
\[
S_n(t)
=
(-1)^n\frac{(\alpha+1)_n}{(n+\alpha+\beta+2)_n}\,
{}_2F_1\!\left(
\begin{matrix}
-n,\ n+\alpha+\beta+2\\[7pt]
\alpha+1
\end{matrix}
\,;\, t
\right).
\]
Theorem~\ref{thm:first-order-alternating} now yields the monic orthogonal polynomial sequence
\((P_n)_{n\in\mathbb N}\) of \(\mathbf u_{\alpha,\beta}\). For
\[
\varepsilon_n=\frac{1-(-1)^n}{2},
\quad
m_n=\left\lfloor\frac n2\right\rfloor,
\]
one has
\[
\begin{aligned}
P_n(x)
={}&
(-1)^{m_n}(x-1)^{\varepsilon_n}
\frac{(\alpha+1)_{m_n}}{(m_n+\alpha+\beta+1+\varepsilon_n)_{m_n}}
\\[7pt]
&{}\times{}_2F_1\!\left(
\begin{matrix}
-m_n,\ m_n+\alpha+\beta+1+\varepsilon_n\\[7pt]
\alpha+1
\end{matrix}
\,;\, x^2
\right).
\end{aligned}
\]
\end{example}

\begin{example}[Hermite family for the normalised alternating map]\label{ex:alternating-Hermite}
Retain the normalised alternating map \(X\) introduced in
Example~\ref{ex:alternating-Jacobi}. Let
\(
\mathbf v\in\mathcal P'
\)
be the Laguerre functional with parameter \(-\tfrac12\), and let \((R_n)_{n\in\mathbb N}\)
denote its monic orthogonal polynomial sequence. Thus
\[
R_n(t)
=
(-1)^n\left(\frac12\right)_n
{}_1F_1\!\left(
\begin{matrix}
-n\\[7pt]
\tfrac12
\end{matrix}
\,;\, t
\right).
\]
We now choose
\(
\tau=0,
\)
and define
\[
\mathbf u=\mathbf J_0\mathbf v.
\]
Since
\[
R_n(0)=(-1)^n\left(\frac12\right)_n,
\]
one has
\[
R_n(0)\neq0
\]
for every \(n\in\mathbb N\). Hence Theorem~\ref{thm:first-order-alternating} applies.
Moreover, by Proposition~\ref{prop:alternating-construction},
\[
\langle \mathbf u,x\,P(x^2)\rangle=0
\]
for every \(P\in\mathcal P\), and
\[
\boldsymbol{\sigma}\mathbf u=\mathbf v.
\]
Now
\[
(x-\tau^2)\mathbf v=x\,\mathbf v,
\]
and, up to a non-zero scalar factor, this is precisely the Laguerre functional with parameter
\(\tfrac12\). Therefore its monic orthogonal polynomial sequence \((S_n)_{n\in\mathbb N}\) is given by
\[
S_n(t)
=
(-1)^n\left(\frac32\right)_n
{}_1F_1\!\left(
\begin{matrix}
-n\\[7pt]
\frac32
\end{matrix}
\,;\, t
\right).
\]
Theorem~\ref{thm:first-order-alternating} now yields the monic orthogonal polynomial sequence
\((P_n)_{n\in\mathbb N}\) of \(\mathbf u\). For
\[
\varepsilon_n=\frac{1-(-1)^n}{2},
\quad
m_n=\left\lfloor\frac n2\right\rfloor,
\]
one has
\[
P_n(x)
=
(-1)^{m_n}x^{\varepsilon_n}
\left(\frac12+\varepsilon_n\right)_{m_n}
\,{}_1F_1\!\left(
\begin{matrix}
-m_n\\[7pt]
\frac12+\varepsilon_n
\end{matrix}
\,;\, x^2
\right).
\]
More precisely, if \(H_n\) denotes the classical Hermite polynomial
normalised by
\[
H_n(x)=2^n x^n+\cdots,
\]
then
\[
P_n=2^{-n}H_n.
\]
In particular, the same annihilation condition
\[
\langle \mathbf u,x\,P(x^2)\rangle=0
\]
has two interpretations through
Proposition~\ref{prop:alternating-structural-equation}. Taking \(\phi=0\) and \(\psi=x\), one obtains the non-degenerate structural equation
\[
\mathbf S(x\,\mathbf u)=0.
\]
Taking instead \(\phi=x\), \(\psi=0\), one obtains the degenerate structural equation
\[
\mathbf D(x\,\mathbf u)=0.
\]
Thus the Hermite family occupies a distinguished position in the alternating
theory: it belongs simultaneously to the non-degenerate case \(\psi(x)=x\) and
to the degenerate case \(\psi=0\). In this sense, it sits exactly at the point
where the two descriptions meet.
\end{example}

The preceding theorem suggests a different reading of the so-called
\(-1\) families. In \cite{GVZ13}, these families are described through
Dunkl-type operators involving reflections and as \(q\to-1\) limits of
certain \(q\)-polynomials. Such descriptions are useful, and in many concrete
situations indispensable, but they do not identify the structural mechanism
isolated here. In the present framework, this mechanism is the normalised
alternating map together with the quadratic-substitution decomposition of
\(\mathcal P\). The \(q\to-1\) limit should therefore be viewed as one
realisation of the mechanism, not as its source.

We now illustrate this point with representative examples. It is convenient to isolate the reconstruction mechanism obtained above in a
form that may be applied directly to concrete families.

\begin{corollary}\label{cor:alternating-pullback}
Fix \(\tau\in\mathbb C\) and \(N\in\mathbb N\), and let \(\mathbf v\in\mathcal P'\) be regular, with monic orthogonal polynomial sequence
\(
R_0,R_1,\dots,R_{N+1}.
\)
Assume that
\[
R_n(\tau^2)\neq0
\]
for
\(
n=0,1,\dots,N+1.
\)
Then there exists a unique functional \(\mathbf u\in\mathcal P'\) such that
\[
\boldsymbol{\sigma}\mathbf u=\mathbf v,
\quad
\langle \mathbf u,(x-\tau)P(x^2)\rangle=0
\]
for every \(P\in\mathcal P\), and \(\mathbf u\) is regular of order \(2N+3\). More precisely, if
\[
S_n(x)=
\frac{
R_{n+1}(x)-\dfrac{R_{n+1}(\tau^2)}{R_n(\tau^2)}\,R_n(x)
}{
x-\tau^2
},
\]
for
\(
n=0,1,\dots,N,
\)
then
\(
S_0,S_1,\dots,S_N
\)
is the monic orthogonal polynomial sequence associated with
\[
(x-\tau^2)\mathbf v,
\]
and the monic orthogonal polynomial sequence of \(\mathbf u\) is given by
\[
P_{2n}(x)=R_n(x^2),
\quad
P_{2n+1}(x)=(x-\tau)S_n(x^2),
\]
for
\(
n=0,1,\dots,N,
\)
together with
\[
P_{2N+2}(x)=R_{N+1}(x^2).
\]
\end{corollary}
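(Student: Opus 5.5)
The corollary is essentially a repackaging of Theorem~\ref{thm:first-order-alternating} together with Proposition~\ref{prop:alternating-construction}. The proof splits into an existence-and-uniqueness part and a regularity part.

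\emph{Existence and uniqueness.} I would set
\[
\mathbf u=\mathbf J_\tau\mathbf v .
\]
Proposition~\ref{prop:alternating-construction} then gives both $\boldsymbol{\sigma}\mathbf u=\mathbf v$ and the annihilation condition
\[
\langle\mathbf u,(x-\tau)P(x^2)\rangle=0 .
\]
Conversely, let $\mathbf u'$ be any functional with these two properties. The converse half of the same proposition gives
\[
\mathbf u'=\mathbf J_\tau(\boldsymbol{\sigma}\mathbf u')=\mathbf J_\tau\mathbf v=\mathbf u .
\]
A direct argument also works: every $p$ decomposes uniquely as $p=a(x^2)+(x-\tau)b(x^2)$, so $\langle\mathbf u',p\rangle=\langle\mathbf v,a\rangle$ is forced.

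\emph{Regularity.} Since $\langle\mathbf u,1\rangle=\langle\mathbf v,1\rangle=\langle\mathbf v,R_0^2\rangle\neq0$, condition~(iii) of Theorem~\ref{thm:first-order-alternating} holds for $\mathbf u$. I would then apply that theorem in its finite form, with
\[
I=\{0,1,\dots,2N+2\},\quad J_I=\{0,\dots,N+1\}.
\]
Hypothesis~(iv) is exactly the assumption that $R_0,\dots,R_{N+1}$ are orthogonal for $\mathbf v$ with $R_n(\tau^2)\neq0$. The theorem then supplies the following:
\begin{itemize}
\item[$\bullet$] the Christoffel-type polynomials $S_0,\dots,S_N$, as the monic orthogonal sequence of $(x-\tau^2)\mathbf v$;
\item[$\bullet$] the monic orthogonal sequence $P_0,\dots,P_{2N+2}$ of $\mathbf u$, given by the displayed formulas.
\end{itemize}
Having $2N+3$ polynomials means $\mathbf u$ is regular of order $2N+3$. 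The theorem's $\tau$ coincides with ours because it is determined by $\langle\mathbf u,x\rangle/\langle\mathbf u,1\rangle$.

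\emph{Interpretive point.} The only point needing care is how the hypothesis ``$\mathbf v$ regular'' is read. If it means only regularity of order $N+2$, nothing changes, since the finite case of the theorem uses only $R_0,\dots,R_{N+1}$. Uniqueness of the monic orthogonal polynomials then identifies them with the given $P_n$ and $S_n$.

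I expect no real obstacle here. The one step to check is that Step~1 of the theorem's proof, the Christoffel criterion for $(x-\tau^2)\mathbf v$, requires only $R_n(\tau^2)\neq0$ for $n\le N+1$ and orthogonality up to degree $N+1$. That is exactly what is assumed.
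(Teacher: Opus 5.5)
Your proposal is correct and follows the paper's proof essentially step for step. You set $\mathbf u=\mathbf J_\tau\mathbf v$, take existence and uniqueness from Proposition~\ref{prop:alternating-construction}, check that $\langle\mathbf u,1\rangle=\langle\mathbf v,1\rangle\neq0$, and apply Theorem~\ref{thm:first-order-alternating} with $I=\{0,1,\dots,2N+2\}$, where the hypothesis is exactly condition~(iv). The only differences are cosmetic: you invoke condition~(iii), which holds by construction, where the paper invokes the equivalent condition~(ii), and you note explicitly that the theorem's $\tau$ agrees with the given one, a small but worthwhile detail the paper leaves implicit.
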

\begin{proof}
Set
\[
\mathbf u=\mathbf J_\tau\mathbf v.
\]
By Proposition~\ref{prop:alternating-construction}, this is the unique
functional satisfying
\[
\boldsymbol{\sigma}\mathbf u=\mathbf v,
\quad
\langle \mathbf u,(x-\tau)P(x^2)\rangle=0
\]
for every \(P\in\mathcal P\). Since \(\mathbf v\) is regular, one has
\(
\langle \mathbf v,1\rangle\neq0.
\)
Hence
\[
\langle \mathbf u,1\rangle
=
\langle \mathbf u,\sigma(1)\rangle
=
\langle \boldsymbol{\sigma}\mathbf u,1\rangle
=
\langle \mathbf v,1\rangle
\neq0.
\]
Therefore \(\mathbf u\) satisfies condition \emph{(ii)} of
Theorem~\ref{thm:first-order-alternating}, with associated functional
\[
\mathbf v=\boldsymbol{\sigma}\mathbf u.
\]
Now apply Theorem~\ref{thm:first-order-alternating} with
\(
I=\{0,1,\dots,2N+2\}.
\)
Then
\[
J_I=\{0,1,\dots,N+1\},
\quad
K_I=\{0,1,\dots,N\},
\]
so the non-vanishing assumption
\[
R_n(\tau^2)\neq0
\]
for
\(
n=0,1,\dots,N+1
\)
is precisely condition \emph{(iv)} in that theorem. It follows that there exists a monic orthogonal polynomial sequence
\(
P_0,P_1,\dots,P_{2N+2}
\)
with respect to \(\mathbf u\). In particular, \(\mathbf u\) is regular of order \(2N+3\). The same theorem also shows that
\(
S_0,S_1,\dots,S_N
\)
is the monic orthogonal polynomial sequence of
\[
(x-\tau^2)\mathbf v,
\]
and that the monic orthogonal polynomial sequence of \(\mathbf u\) is given by
\[
P_{2n}(x)=R_n(x^2),
\quad
P_{2n+1}(x)=(x-\tau)S_n(x^2),
\]
for
\(
n=0,1,\dots,N,
\)
together with
\[
P_{2N+2}(x)=R_{N+1}(x^2).
\]
This completes the proof.
\end{proof}

\begin{example}[Complementary Bannai--Ito polynomials]
\label{ex:alternating-CBI}
Fix parameters \(\alpha,\beta,\gamma,\delta\in\mathbb C\), and set
\[
g=\alpha+\beta-\gamma-\delta.
\]
Assume that \(\mathbf v\in\mathcal P'\) is a regular functional whose monic orthogonal polynomial sequence \((R_n)_{n\in\mathbb N}\) satisfies
\(R_{-1}=0\), \(R_0=1\), and the recurrence
\[
R_1(t)+\bigl(\beta^2-a_0+c_0\bigr)R_0(t)=t\,R_0(t),
\]
together with, for \(n\in\mathbb N^\times\), the relation
\begin{equation}\label{eq:CBI-R-recurrence}
R_{n+1}(t)+\bigl(\beta^2-a_n+c_n\bigr)R_n(t)
-a_{n-1}c_n\,R_{n-1}(t)=t\,R_n(t),
\end{equation}
where
\begin{align*}
a_n&=
\frac{(n+g+1)(n+\alpha+\beta+1)
\bigl(n+\beta-\gamma+\frac12\bigr)
\bigl(n+\beta-\delta+\frac12\bigr)}
{(2n+g+1)(2n+g+2)},
\\[7pt]
c_n&=
-\frac{n(n-\gamma-\delta)
\bigl(n+\alpha-\gamma+\frac12\bigr)
\bigl(n+\alpha-\delta+\frac12\bigr)}
{(2n+g)(2n+g+1)}.
\end{align*}
Assume that all denominators occurring in the definitions of \(a_n\) and
\(c_n\) are non-zero for the indices under consideration. In the infinite case,
assume moreover that
\[
R_n(\beta^2)\neq0
\]
for every
\(
n\in\mathbb N.
\)
In a finite truncation, the same assumption is required only for the indices
which occur in the construction. We now write out the infinite case. Set
\[
\mathbf u=\mathbf J_\beta\mathbf v.
\]
By Proposition~\ref{prop:alternating-construction}, this is the unique
functional satisfying
\[
\boldsymbol{\sigma}\mathbf u=\mathbf v,
\quad
\langle \mathbf u,(x-\beta)P(x^2)\rangle=0
\]
for every \(P\in\mathcal P\). We now apply
Theorem~\ref{thm:first-order-alternating} with
\[
\tau=\beta.
\]
In the infinite case, since \(\mathbf v\) is regular and
\[
R_n(\beta^2)\neq0
\]
for every \(n\in\mathbb N\), the functional \(\mathbf u\) is regular and its
monic orthogonal polynomial sequence
\(
(P_n)_{n\in\mathbb N}
\)
is given by
\[
P_{2n}(x)=R_n(x^2),
\quad
P_{2n+1}(x)=(x-\beta)S_n(x^2),
\]
where
\[
S_n(t)=
\frac{
R_{n+1}(t)-\dfrac{R_{n+1}(\beta^2)}{R_n(\beta^2)}\,R_n(t)
}{
t-\beta^2
},
\]
for every \(n\in\mathbb N\).

We first compute the quotient
\[
\frac{R_{n+1}(\beta^2)}{R_n(\beta^2)}.
\]
Since \(c_0=0\), the initial recurrence gives
\[
R_1(t)+(\beta^2-a_0)R_0(t)=tR_0(t),
\]
and therefore
\[
R_1(t)=t-\beta^2+a_0.
\]
Evaluating at \(t=\beta^2\), we obtain
\[
R_1(\beta^2)=a_0=a_0R_0(\beta^2).
\]

Assume inductively that
\[
R_n(\beta^2)=a_{n-1}R_{n-1}(\beta^2).
\]
Evaluating \eqref{eq:CBI-R-recurrence} at \(t=\beta^2\), we obtain
\[
R_{n+1}(\beta^2)
=
(a_n-c_n)R_n(\beta^2)+a_{n-1}c_nR_{n-1}(\beta^2).
\]
Using the induction hypothesis, this becomes
\[
R_{n+1}(\beta^2)
=
(a_n-c_n)R_n(\beta^2)+c_nR_n(\beta^2)
=
a_nR_n(\beta^2).
\]
Hence
\[
\frac{R_{n+1}(\beta^2)}{R_n(\beta^2)}=a_n
\]
for every \(n\in\mathbb N\). Consequently,
\[
S_n(t)=\frac{R_{n+1}(t)-a_nR_n(t)}{t-\beta^2}
\]
for every \(n\in\mathbb N\). Since
\[
R_{n+1}(\beta^2)-a_nR_n(\beta^2)=0,
\]
the numerator is divisible by \(t-\beta^2\), and therefore \(S_n\) is indeed a polynomial. Substituting this expression into the definition of \(P_{2n+1}\), we obtain
\begin{align*}
P_{2n+1}(x)
=
(x-\beta)\,
\frac{R_{n+1}(x^2)-a_nR_n(x^2)}{x^2-\beta^2}=
\frac{R_{n+1}(x^2)-a_nR_n(x^2)}{x+\beta}.
\end{align*}
This identity is legitimate because the numerator vanishes at \(x=-\beta\), being equal there to
\[
R_{n+1}(\beta^2)-a_nR_n(\beta^2)=0.
\]
In particular, \(P_0(x)=1\), \(P_1(x)=x-\beta\). We claim that
\begin{equation}\label{eq:CBI-main-recurrence}
P_{n+1}(x)+(-1)^n\beta\,P_n(x)+\tau_nP_{n-1}(x)=xP_n(x),
\end{equation}
where
\[
\tau_{2n}=c_n,
\quad
\tau_{2n+1}=-a_n.
\]
Indeed, from the preceding identity one has
\[
R_{n+1}(x^2)=(x+\beta)P_{2n+1}(x)+a_nP_{2n}(x).
\]
For \(n=0\), the recurrence gives directly
\[
xP_0(x)=P_1(x)+\beta P_0(x).
\]
Now let \(n\in\mathbb N^\times\). Replacing \(n\) by \(n-1\), one finds
\[
P_{2n-1}(x)=\frac{R_n(x^2)-a_{n-1}R_{n-1}(x^2)}{x+\beta},
\]
and hence
\[
a_{n-1}R_{n-1}(x^2)=R_n(x^2)-(x+\beta)P_{2n-1}(x)
=P_{2n}(x)-(x+\beta)P_{2n-1}(x).
\]
Substituting these identities into \eqref{eq:CBI-R-recurrence} with \(t=x^2\), we get
\begin{align*}
x^2P_{2n}(x)
&=
R_{n+1}(x^2)+(\beta^2-a_n+c_n)R_n(x^2)-a_{n-1}c_nR_{n-1}(x^2)
\\[7pt]
&=
\bigl((x+\beta)P_{2n+1}(x)+a_nP_{2n}(x)\bigr)
+(\beta^2-a_n+c_n)P_{2n}(x)
\\[7pt]
&\quad
-c_n\bigl(P_{2n}(x)-(x+\beta)P_{2n-1}(x)\bigr).
\end{align*}
The terms involving \(a_n\) and \(c_n\) cancel, and one obtains
\[
x^2P_{2n}(x)
=
(x+\beta)P_{2n+1}(x)+\beta^2P_{2n}(x)+c_n(x+\beta)P_{2n-1}(x).
\]
Therefore
\[
(x+\beta)(x-\beta)P_{2n}(x)
=
(x+\beta)\bigl(P_{2n+1}(x)+c_nP_{2n-1}(x)\bigr).
\]
Since both sides are polynomials and \(x+\beta\) is a common factor, it follows that
\[
(x-\beta)P_{2n}(x)=P_{2n+1}(x)+c_nP_{2n-1}(x),
\]
that is,
\[
xP_{2n}(x)=P_{2n+1}(x)+\beta P_{2n}(x)+c_nP_{2n-1}(x).
\]
This is exactly \eqref{eq:CBI-main-recurrence} for even index, since
\[
\tau_{2n}=c_n.
\]

For odd index, multiplying the formula for \(P_{2n+1}\) by \(x+\beta\) yields
\[
(x+\beta)P_{2n+1}(x)=R_{n+1}(x^2)-a_nR_n(x^2)=P_{2n+2}(x)-a_nP_{2n}(x).
\]
Thus
\[
xP_{2n+1}(x)=P_{2n+2}(x)-\beta P_{2n+1}(x)-a_nP_{2n}(x),
\]
that is,
\[
P_{2n+2}(x)-\beta P_{2n+1}(x)-a_nP_{2n}(x)=xP_{2n+1}(x).
\]
This is exactly \eqref{eq:CBI-main-recurrence} for odd index, since
\[
\tau_{2n+1}=-a_n.
\]
Therefore the monic polynomial sequence \((P_n)_{n\in\mathbb N}\) satisfies the recurrence \eqref{eq:CBI-main-recurrence}, with
\[
\tau_{2n}=c_n,
\quad
\tau_{2n+1}=-a_n.
\]
By the definitions of \(a_n\) and \(c_n\), this means explicitly
\begin{align*}
\tau_{2n}&=
-\frac{n(n-\gamma-\delta)
\bigl(n+\alpha-\gamma+\tfrac12\bigr)
\bigl(n+\alpha-\delta+\tfrac12\bigr)}
{(2n+g)(2n+g+1)},
\\[7pt]
\tau_{2n+1}&=
-\frac{(n+g+1)(n+\alpha+\beta+1)
\bigl(n+\beta-\gamma+\tfrac12\bigr)
\bigl(n+\beta-\delta+\tfrac12\bigr)}
{(2n+g+1)(2n+g+2)}.
\end{align*}
Together with \(P_0(x)=1\) and \(P_1(x)=x-\beta\), this is precisely the standard recurrence relation for the monic complementary Bannai--Ito polynomials. More explicitly, under the parameter identification \(\rho_1=\alpha\), \(\rho_2=\beta\), \(r_1=\gamma\), and \(r_2=\delta\), the recurrence obtained above is exactly the monic complementary Bannai--Ito recurrence displayed in \cite[(3.4), (3.5)]{GVZ13}.
\end{example}

\begin{remark}
In \cite{GVZ13}, the complementary Bannai--Ito polynomials are introduced
through their recurrence and orthogonality relations, and their even--odd
splitting is written out explicitly. They are then shown to satisfy a
Dunkl-type bispectral problem. The point of the preceding construction is not
merely that the complementary Bannai--Ito recurrence is recovered from the
alternating pullback. Rather, it shows that the even--odd splitting is the
expected functional manifestation of the alternating mechanism.
\end{remark}

\begin{example}[Dual \((-1)\)-Hahn polynomials: the even-\(N\) case]
\label{ex:alternating-dual-minus-one-Hahn}
We now illustrate the alternating mechanism with the dual \((-1)\)-Hahn
polynomials, restricting ourselves to the case where \(N\) is even. This is
already sufficient to make the structural point completely transparent. Let
\(N=2M\) be a positive even integer, fix \(\alpha,\beta\in\mathbb C\), and let
\(
R_0,R_1,\dots,R_N
\)
denote the monic dual \((-1)\)-Hahn polynomial sequence. By
\cite[(3.6), (3.7)]{TVZ13}, one has
\(R_{-1}=0\), \(R_0=1\), and, for
\(
n\in\{0,1,\dots,N-1\},
\)
one has
\[
R_{n+1}(t)+b_nR_n(t)+u_nR_{n-1}(t)
=
t\,R_n(t),
\]
where
\begin{align*}
u_n&=
\begin{cases}
4n(\alpha-n), & \text{if } n \text{ is even},\\[7pt]
4(N-n+1)(n+\beta-N-1), & \text{if } n \text{ is odd},
\end{cases}\\[7pt]
b_n&=
\begin{cases}
2N+1-\alpha-\beta, & \text{if } n \text{ is even},\\[7pt]
-2N-3+\alpha+\beta, & \text{if } n \text{ is odd}.
\end{cases}
\end{align*}
We shall also use the same recurrence at \(n=N\) to define the auxiliary
monic polynomial \(R_{N+1}\). Only \(R_0,R_1,\dots,R_N\) belong to the finite
orthogonal family; the extra polynomial merely records the terminal
coefficient \(u_N\), and no non-zero norm is asserted for it.
Assume that the displayed coefficients are defined and that
\[
u_n\neq0,
\quad n=1,2,\dots,N.
\]
The finite Favard theorem then gives a functional
\(\widehat{\mathbf u}\in\mathcal P'\), regular of order \(N+1\), with respect to
which the shifted sequence introduced below is orthogonal.
Set
\[
\tau=2N+2-\alpha-\beta.
\]
Then
\[
b_n=(-1)^n\tau-1.
\]
Define the shifted monic polynomials, including the auxiliary terminal one, by
\[
\widehat R_n(t)=R_n(t-1).
\]
It follows immediately that \(\widehat R_{-1}=0\), \(\widehat R_0=1\), and
\begin{equation}\label{eq:d-1H-even-shifted-alternating}
\widehat R_{n+1}(t)+(-1)^n\tau\,\widehat R_n(t)+u_n\widehat R_{n-1}(t)
=
t\,\widehat R_n(t).
\end{equation}
for \(n=0,1,\dots,N\). Here again only
\(\widehat R_0,\dots,\widehat R_N\) form the regular finite family.
Indeed, substituting \(t-1\) for \(t\) in the recurrence for \(R_n\), one obtains
\[
\widehat R_{n+1}(t)+\bigl(b_n+1\bigr)\widehat R_n(t)+u_n\widehat R_{n-1}(t)
=
t\,\widehat R_n(t),
\]
and the identity \(b_n+1=(-1)^n\tau\) yields
\eqref{eq:d-1H-even-shifted-alternating}. Thus, after the elementary shift \(t\mapsto t-1\), the even-\(N\) dual
\((-1)\)-Hahn family falls exactly into the alternating recurrence pattern
considered above. This is precisely the recurrence-level shadow of the normalised alternating
pullback. At the recurrence level, the shifted family
\(
\widehat R_0,\widehat R_1,\dots,\widehat R_N
\)
has exactly the alternating form of the full monic sequence in the present
framework. Its even and odd subsequences therefore play the roles, respectively,
of the quadratic base family and of its Christoffel transform. At this stage we
assert only what the finite recurrence proves; the corresponding functional
identity will be recovered below through the degrees actually present. Thus the
even members of the full family must be polynomials in \(t^2\), while the odd
members must contain the distinguished factor \(t-\tau\).
Consequently, the recurrence-level alternating splitting applies here and
yields monic polynomial sequences
\(
P_0,P_1,\dots,P_M
\)
and
\(
Q_0,Q_1,\dots,Q_{M-1}.
\)
More precisely, for \(n=0,1,\dots,M\), the even part is described by
\[
\widehat R_{2n}(t)=P_n(t^2),
\]
whereas, for \(n=0,1,\dots,M-1\), the odd part is described by
\[
\widehat R_{2n+1}(t)=(t-\tau)Q_n(t^2).
\]
Since the polynomials \(Q_0,Q_1,\dots,Q_{M-1}\) are monic of consecutive
degrees, the odd members form a basis of
\((t-\tau)\sigma(\mathcal P_{M-1})\). Their orthogonality to
\(\widehat R_0=1\) therefore gives
\[
\langle\widehat{\mathbf u},(t-\tau)p(t^2)\rangle=0,
\quad p\in\mathcal P_{M-1}.
\]
Put
\[
\mathbf v=\boldsymbol{\sigma}\widehat{\mathbf u}.
\]
Every polynomial \(p\in\mathcal P_N\) has a unique decomposition
\[
p(t)=A(t^2)+(t-\tau)B(t^2),
\quad A\in\mathcal P_M,
\quad B\in\mathcal P_{M-1}.
\]
Consequently,
\[
\langle\widehat{\mathbf u},p\rangle
=
\langle\mathbf v,A\rangle
=
\langle\mathbf J_\tau\mathbf v,p\rangle,
\quad p\in\mathcal P_N.
\]
Thus the functional mechanism is obtained exactly on the finite polynomial
space controlled by the dual \((-1)\)-Hahn family; no infinite identity has
been inferred from a finite recurrence.
The corresponding quadratic recurrence relations are then the following. For
\(n=0\), the last term is absent and one has
\[
P_1(x)
+\bigl(u_0+u_1+\tau^2\bigr)P_0(x)
=
x\,P_0(x).
\]
For
\(
n\in\{1,\dots,M-1\},
\)
one has
\begin{align*}
P_{n+1}(x)
+\bigl(u_{2n}+u_{2n+1}+\tau^2\bigr)P_n(x)
+u_{2n}u_{2n-1}P_{n-1}(x)
=
x\,P_n(x).
\end{align*}
Similarly, one has
\[
Q_{n+1}(x)
+\bigl(u_{2n+2}+u_{2n+1}+\tau^2\bigr)Q_n(x)
+u_{2n}u_{2n+1}Q_{n-1}(x)
=
x\,Q_n(x)
\]
for
\(
n=0,1,\dots,M-2
\),
with the convention \(Q_{-1}=0\). Finally, for
\(
n=0,1,\dots,M-1
\), the Christoffel relation is
\begin{equation}\label{eq:d-1H-even-Christoffel}
Q_n(x)
=
\frac{P_{n+1}(x)+u_{2n+1}P_n(x)}{x-\tau^2}.
\end{equation}
Now
\[
u_{2n}=8n(\alpha-2n),
\quad
u_{2n+1}=4(N-2n)(2n+\beta-N),
\]
so the preceding formulas agree with the quadratic recurrence relations displayed
in \cite[(4.2)--(4.5)]{TVZ13}. Moreover, the explicit hypergeometric formulas in
\cite[(4.6), (4.7)]{TVZ13} identify \(P_n\) and \(Q_n\) as ordinary dual Hahn
polynomials, after the corresponding identification of the quadratic variable.

In the even-\(N\) case, the decomposition exhibited in \cite{TVZ13} is
naturally read as an instance of the alternating splitting mechanism. What
appears there through direct manipulation of the recurrence is seen here as the
expected manifestation of the quadratic-substitution pattern underlying the
alternating case. The odd-\(N\) case leads, after the corresponding modification
of the coefficients, to the same conclusion. We do not repeat it here, since,
for the present purpose, the even-\(N\) case already makes the mechanism
entirely clear.
\end{example}

The examples discussed above also clarify the precise role of the families
considered in \cite{TVZ13} and \cite{GVZ13}. From the viewpoint developed here,
these families should not be regarded as isolated exceptional objects. Nor is
their nature adequately explained by saying that they arise as \(q\to -1\)
limits, or that they are governed by Dunkl-type operators. Such statements may
be computationally correct, but they do not by themselves provide the relevant
structural explanation. The same structural question must be put, mutatis
mutandis, to the
Bannai--Ito polynomials~\cite{TVZ12}, to the big \((-1)\)-Jacobi
polynomials~\cite{VZ12b}, to the \((-1)\)-Meixner--Pollaczek
polynomials~\cite{PVZ24}, and to other families appearing in the literature.

The point is not to deny the validity of the calculations leading to
Dunkl-type operators or to \(q\to -1\) limits. The point is that such
calculations should not be mistaken for the conceptual origin of the
phenomenon. The alternating case is not merely the formal substitution
\(q=-1\) inside a \(q\)-exponential construction, just as the quadratic case is
not merely the value \(q=1\) of that construction. It is a distinct admissible
regime. This returns us to the principle suggested at the outset by the Erdős
anecdote: a correct formal description is not enough; one must identify the
structure that governs the phenomenon.

Perhaps the clearest way to see this is to recall the much-cited article
\emph{A ``missing'' family of classical orthogonal polynomials} \cite{VZ11}.
Nor is the connection merely verbal. If
\[
c_n=\langle\mathbf u,x^n\rangle
\]
denotes the moment sequence displayed in \cite[(2.9)]{VZ11}, then
\(c_{2n}=c_{2n-1}\) for every \(n\in\mathbb N^\times\). Define
\begin{align*}
\langle\mathbf v,p\rangle&=\langle\mathbf u,p(x^2)\rangle,
\\[7pt]
\langle\mathbf w,p\rangle&=\langle\mathbf u,xp(x^2)\rangle.
\end{align*}
The moment identity is exactly \(\mathbf w=x\mathbf v\), and hence
\[
\langle\mathbf u,x(1-x)p(x^2)\rangle=0,
\quad p\in\mathcal P.
\]
By Proposition~\ref{prop:alternating-structural-equation}, this is the concrete
alternating structural equation obtained with
\(\phi(x)=x(1-x)\) and \(\psi=0\). The title is revealing. In the usual classifications, the structural
possibilities explicitly encoded are essentially the quadratic and
\(q\)-exponential ones. From that standpoint the family described in
\cite{VZ11} may appear to be ``missing''. From the viewpoint developed here,
however, it is not missing at all; the theory itself tells us that it was being
sought in a place where it could not reside. It is a concrete manifestation of
the alternating mechanism, and could not be expected to occupy a place in a
classification which does not admit that mechanism. As the preceding sections
show, the case ``\(q=-1\)'' belongs to a different admissible reality: the
alternating regime.

\section{The Nikiforov--Uvarov equation}\label{NUSec}

The next theorem, which is independent of any regularity assumption, makes clear the direct connection between the developments obtained so far and the Nikiforov--Uvarov framework.

\begin{theorem}\label{thm:NU-formal-symmetry}
Fix \(h\in\mathbb C^\times\), let \(U\subseteq\mathbb C\) be half-step-invariant, and let
\(
X:U\to\mathbb C
\)
be an admissible map on \(U\). Let
\(
D,S:\mathcal P\to\mathcal P
\)
be the associated divided-difference and averaging operators, with transposes
\(
\mathbf D,\mathbf S:\mathcal P'\to\mathcal P'.
\)
Let \(\mathbf u\in\mathcal P'\), let \(\phi\) and \(\psi\) be polynomials, and define
\[
L=\phi\,D^2+\psi\,SD.
\]
Then the following statements hold.

\begin{itemize}
\item[\emph{(i)}]
If
\(
\mathbf D(\phi\,\mathbf u)=\mathbf S(\psi\,\mathbf u)
\)
in \(\mathcal P'\), then
\[
\langle \mathbf u,(Lp)\,r\rangle
=
\langle \mathbf u,p\,(Lr)\rangle
\]
for every \(p,r\in\mathcal P\).

\item[\emph{(ii)}]
Assume, in addition, that
\(
D(\mathcal P_{n+1})=\mathcal P_n
\)
for every \(n\in\mathbb N\). If
\(
\langle \mathbf u,(Lp)\,r\rangle
=
\langle \mathbf u,p\,(Lr)\rangle
\)
for every \(p,r\in\mathcal P\), then
\[
\mathbf D(\phi\,\mathbf u)=\mathbf S(\psi\,\mathbf u)
\]
in \(\mathcal P'\).

\item[\emph{(iii)}]
Let \(N\in\mathbb N^\times\). Assume that
\(
D(\mathcal P_{n+1})=\mathcal P_n
\)
for \(
n=0,1,\dots,2N-1.
\)
If
\[
\langle \mathbf u,(Lp)\,r\rangle
=
\langle \mathbf u,p\,(Lr)\rangle
\]
for every \(p\in\mathcal P\) and every \(r\in\mathcal P_{2N}\), then
\[
\bigl\langle \mathbf D(\phi\,\mathbf u)-\mathbf S(\psi\,\mathbf u),\,v\bigr\rangle=0
\]
for every \(v\in\mathcal P_{2N-1}\).
\end{itemize}

Moreover, for every function \(f:U\to\mathbb C\), set
\[
(\Delta f)(s)=f(s+h)-f(s),
\quad
(\nabla f)(s)=f(s)-f(s-h).
\]
Then, for every \(p\in\mathcal P\),
\begin{align*}
(Lp)(X(s))
&=
\phi(X(s))
\frac{1}{Y(s)-Z(s)}
\,
\Delta\!\left(
\frac{\nabla(p\circ X)}{X-Z_1}
\right)(s)
\\[7pt]
&\quad
+
\frac{\psi(X(s))}{2}
\left(
\frac{\Delta(p\circ X)(s)}{Y_1(s)-X(s)}
+
\frac{\nabla(p\circ X)(s)}{X(s)-Z_1(s)}
\right)
\end{align*}
for every \(s\in U\)\footnote{The denominators which occur in this pointwise expression are
non-zero by admissibility. Indeed, \(Y(s)-Z(s)\) is the admissible denominator
at \(s\), while
\[
Y_1(s)-X(s)
=
X(s+h)-X(s)
=
Y\!\left(s+\frac h2\right)-Z\!\left(s+\frac h2\right)
\]
and
\[
X(s)-Z_1(s)
=
X(s)-X(s-h)
=
Y\!\left(s-\frac h2\right)-Z\!\left(s-\frac h2\right).
\]
Since \(U\) is half-step-invariant, the shifted points \(s+\frac h2\) and
\(s-\frac h2\) still belong to \(U\).}.
\end{theorem}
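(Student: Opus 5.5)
The plan is to reduce all three statements to a single key identity. For \(p,r\in\mathcal P\), I would show that
\[
\langle \mathbf u,(Lp)\,r\rangle-\langle \mathbf u,p\,(Lr)\rangle
=
\bigl\langle \mathbf D(\phi\mathbf u)-\mathbf S(\psi\mathbf u),\,W(p,r)\bigr\rangle,
\]
where \(W(p,r)\) is a suitable antisymmetric bilinear expression. The natural candidate is the discrete Wronskian \(W(p,r)=(Sp)(Dr)-(Dp)(Sr)\), possibly corrected by terms involving \(U_1\) or \(U_2\).

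The derivation rests on the product rules used in the proof of Theorem~\ref{thm:regularity-admissible-orbit-quadratic}, namely \(D(fg)=(Df)(Sg)+(Sf)(Dg)\) and \(S(fg)=(Sf)(Sg)+U_2(Df)(Dg)\), with \(U_2=(Y-Z)^2/4\). These follow pointwise on \(U\) for any admissible map, since \(U_2\) is a polynomial in \(X\) by Remark~\ref{rem:Magnus-conic-solid}. Given them, I would compute \(D(W(p,r))\) and \(S(W(p,r))\). The expectation is that \(D\bigl((Sp)(Dr)-(Dp)(Sr)\bigr)\) collapses to \((SDp)(Dr)-(Dp)(SDr)\) plus \(D^2\)-terms, i.e.\ to \(p\,D^2r-r\,D^2p\) after re-expressing via \(S\). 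I would also need the commutation \(DS=SD+U_1D^2\) and its general-branch analogue, checked pointwise on \(U\).

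Statement (i) follows from the identity together with the definitions \(\langle\mathbf D(\phi\mathbf u),W\rangle=-\langle\mathbf u,\phi DW\rangle\) and \(\langle\mathbf S(\psi\mathbf u),W\rangle=\langle\mathbf u,\psi SW\rangle\). The main obstacle is finding the exact \(W\) that works uniformly across all admissible branches, including the alternating and reflected ones. I would try first to verify it pointwise on \(U\) in terms of \(Y,Z\), using that the identity is a polynomial one. Infinitude of \(X(U)\) then lifts the pointwise equality to \(\mathcal P\).

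For (ii) and (iii) I would specialise to \(r=1\) and \(r=x\). Since \(L1=0\), taking \(r=1\) gives \(\langle\mathbf u,Lp\rangle=0\) for all \(p\). This reads \(\langle \mathbf u,\phi D(Dp)+\psi S(Dp)\rangle=0\), i.e.\ \(\langle \mathbf D(\phi\mathbf u)-\mathbf S(\psi\mathbf u),Dp\rangle=0\). Surjectivity \(D(\mathcal P_{n+1})=\mathcal P_n\) then yields the functional equation on all of \(\mathcal P\). In the finite case (iii), surjectivity up to degree \(2N-1\) gives every \(v\in\mathcal P_{2N-1}\) as \(Dp\) with \(p\in\mathcal P_{2N}\). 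So only \(r=1\) is actually needed, and the hypothesis on \(r\in\mathcal P_{2N}\) is more than sufficient.

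Finally, the pointwise Nikiforov--Uvarov formula is a direct computation. First, \((Dp)(Y)=(p(Y_1)-p(X))/(Y_1-X)\), obtained by evaluating the definition at \(s+h/2\), and similarly at \(Z\). Then \((D^2p)(X)\) is the divided difference of these two quotients over \(Y-Z\), which is exactly the \(\Delta(\nabla(p\circ X)/(X-Z_1))/(Y-Z)\) term. Likewise \((SDp)(X)\) is their average. The denominators are nonzero by the parametrising condition at \(s\pm h/2\), as noted in the footnote.
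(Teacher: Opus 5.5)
Your plan matches the paper's proof. The uncorrected Wronskian $W=(Sp)(Dr)-(Dp)(Sr)$, which is the negative of the paper's $w$, already gives $\phi\,DW+\psi\,SW=p\,(Lr)-(Lp)\,r$ for every admissible map, with no $U_1$ or $U_2$ corrections. Your pointwise fallback is exactly how the paper proves this: $W(X)=\frac{p(Z)r(Y)-p(Y)r(Z)}{Y-Z}$, evaluated at $s\pm\frac h2$. Parts (ii)--(iii) (with $r=1$ in place of the paper's $p=1$) and the pointwise formula then go as you describe.

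Lead with that pointwise computation. The product-rule and commutation route is an unnecessary detour: it would also need an identity for $S^2$ and the branch-dependent factor in the commutation of $D$ and $S$.
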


\begin{proof}
By Definition~\ref{def:divided-difference-operator}, evaluating the defining identity at
\(s+\tfrac h2\) and \(s-\tfrac h2\) gives
\begin{align*}
(Dp)(Y(s))
=
\frac{\Delta(p\circ X)(s)}{Y_1(s)-X(s)},\quad
(Dp)(Z(s))
=
\frac{\nabla(p\circ X)(s)}{X(s)-Z_1(s)}.
\end{align*}
Hence
\begin{align*}
(D^2p)(X(s))
=
\frac{1}{Y(s)-Z(s)}
\,
\Delta\!\left(
\frac{\nabla(p\circ X)}{X-Z_1}
\right)(s).
\end{align*}
Likewise,
\begin{align*}
(SDp)(X(s))
=
\frac12
\left(
\frac{\Delta(p\circ X)(s)}{Y_1(s)-X(s)}
+
\frac{\nabla(p\circ X)(s)}{X(s)-Z_1(s)}
\right).
\end{align*}
This establishes the displayed formula for \(L\). Now let \(r\in\mathcal P\), and define
\[
w=(Dp)(Sr)-(Sp)(Dr).
\]
A direct computation gives
\[
w(X)
=
\frac{p(Y)\,r(Z)-p(Z)\,r(Y)}{Y-Z}.
\]
Evaluating at \(s+\tfrac h2\) and \(s-\tfrac h2\), one obtains
\begin{align*}
w(Y)
=
\frac{p(Y_1)\,r(X)-p(X)\,r(Y_1)}{Y_1-X},\quad
w(Z)
=
\frac{p(X)\,r(Z_1)-p(Z_1)\,r(X)}{X-Z_1}.
\end{align*}
Hence
\begin{align*}
(Dw)(X)
&=
\frac{1}{Y-Z}
\Biggl(
r(X)
\frac{p(Y_1)-p(X)}{Y_1-X}
-
r(X)
\frac{p(X)-p(Z_1)}{X-Z_1}
\Biggr)\\[7pt]
&\quad
-\frac{1}{Y-Z}
\Biggl(
p(X)
\frac{r(Y_1)-r(X)}{Y_1-X}
-
p(X)
\frac{r(X)-r(Z_1)}{X-Z_1}
\Biggr)\\[7pt]
&=
(D^2p)(X)\,r(X)-p(X)(D^2r)(X).
\end{align*}
Therefore
\[
Dw=(D^2p)\,r-p\,(D^2r).
\]
Likewise,
\[
Sw=(SDp)\,r-p\,(SDr).
\]
Therefore
\begin{align*}
(Lp)\,r-p\,(Lr)
&=
\phi\bigl((D^2p)r-p(D^2r)\bigr)
+
\psi\bigl((SDp)r-p(SDr)\bigr)
\\[7pt]
&=
\phi\,Dw+\psi\,Sw.
\end{align*}
Assume first that
\[
\mathbf D(\phi\,\mathbf u)=\mathbf S(\psi\,\mathbf u).
\]
Then, for every \(v\in\mathcal P\),
\begin{align}
0=
\bigl\langle \mathbf D(\phi\,\mathbf u)-\mathbf S(\psi\,\mathbf u),\,v\bigr\rangle=
-\langle \mathbf u,\phi\,Dv+\psi\,Sv\rangle.
\label{eq:NU-tested}
\end{align}
Applying \eqref{eq:NU-tested} with \(v=w\), we obtain
\(
\langle \mathbf u,(Lp)\,r\rangle
=
\langle \mathbf u,p\,(Lr)\rangle.
\)
This proves \emph{(i)}. Conversely, assume that
\[
\langle \mathbf u,(Lp)\,r\rangle
=
\langle \mathbf u,p\,(Lr)\rangle
\]
for every \(p,r\in\mathcal P\), and that
\(
D(\mathcal P_{n+1})=\mathcal P_n
\)
for every \(n\in\mathbb N\). Taking \(p=1\), and using \(L1=0\), we obtain
\(
\langle \mathbf u,Lr\rangle=0.
\)
Thus
\[
\langle \mathbf u,\phi\,D^2r+\psi\,SDr\rangle=0.
\]
Using the definitions of \(\mathbf D\) and \(\mathbf S\), this becomes
\[
\bigl\langle \mathbf S(\psi\,\mathbf u)-\mathbf D(\phi\,\mathbf u),Dr\bigr\rangle=0.
\]
Now let \(p\in\mathcal P_n\). By hypothesis, there exists
\(r\in\mathcal P_{n+1}\) such that
\(
Dr=p.
\)
Hence
\[
\bigl\langle \mathbf S(\psi\,\mathbf u)-\mathbf D(\phi\,\mathbf u),p\bigr\rangle=0
\]
for every \(p\in\mathcal P\). Since the pairing between \(\mathcal P'\) and \(\mathcal P\) separates points, condition \emph{(ii)} follows.

Finally, assume the hypotheses of \emph{(iii)}. Taking again \(p=1\), we obtain
\(
\langle \mathbf u,Lr\rangle=0
\)
for every \(r\in\mathcal P_{2N}\). Hence
\[
\bigl\langle \mathbf S(\psi\,\mathbf u)-\mathbf D(\phi\,\mathbf u),Dr\bigr\rangle=0
\]
for every \(r\in\mathcal P_{2N}\). Now let \(v\in\mathcal P_{2N-1}\). By the hypothesis
\(
D(\mathcal P_{n+1})=\mathcal P_n
\)
for \(n=0,1,\dots,2N-1\), there exists \(r\in\mathcal P_{2N}\) such that
\(
Dr=v.
\)
Therefore
\(
\bigl\langle \mathbf S(\psi\,\mathbf u)-\mathbf D(\phi\,\mathbf u),v\bigr\rangle=0
\)
for every \(v\in\mathcal P_{2N-1}\). Equivalently,
\[
\bigl\langle \mathbf D(\phi\,\mathbf u)-\mathbf S(\psi\,\mathbf u),\,v\bigr\rangle=0
\]
for every \(v\in\mathcal P_{2N-1}\).
\end{proof}

\begin{remark}\label{rem:NU-converse-alternating}
The additional surjectivity hypothesis in parts~\emph{(ii)} and~\emph{(iii)}
is automatic in both quadratic half-step branches, but not in general in the
\(q\)-exponential one, and it fails in the alternating case. Indeed, in the
ordinary quadratic branch one has
\[
D(x^{n+1})-(n+1)x^n\in\mathcal P_{n-1},
\]
with the convention \(\mathcal P_{-1}=\{0\}\),
whereas in the reflected quadratic branch one has
\[
D(x^{n+1})-(n+1)(-1)^n x^n\in\mathcal P_{n-1}.
\]
The leading coefficient is non-zero in either case, and therefore
\[
D(\mathcal P_{n+1})=\mathcal P_n
\]
for every \(n\in\mathbb N\). In the \(q\)-exponential case one has
\[
D(x^{n+1})-\gamma_{n+1}x^n\in\mathcal P_{n-1},
\]
where the numbers \(\gamma_n\) are those introduced in
Theorem~\ref{thm:regularity-admissible-orbit}. Hence
\[
D(\mathcal P_{n+1})=\mathcal P_n
\]
for every \(n\in\mathbb N\) if and only if
\(
\gamma_{n+1}\neq0
\)
for every \(n\in\mathbb N\). In particular, this holds whenever \(q\) is not a root of
unity. If \(q\) is a root of unity, then \(\gamma_m=0\) at certain indices, so
the global surjectivity-by-degree condition fails; a finite theory may
nevertheless persist below the first such index. In
that case, part~\emph{(iii)} of Theorem~\ref{thm:NU-formal-symmetry} may still
be applicable up to the corresponding finite level, whereas part~\emph{(ii)}
need not apply globally. Finally, by Proposition~\ref{prop:alternating-DS}, in the normalised alternating case one
has
\(
D(\mathcal P)=\sigma(\mathcal P).
\)
Hence,
\[
D(\mathcal P_{n+1})\subseteq \sigma(\mathcal P)\cap \mathcal P_n,
\]
whereas \(\mathcal P_n\) contains odd polynomials. It follows that
\[
D(\mathcal P_{n+1})\neq\mathcal P_n
\]
for every \(n\in\mathbb N^\times\). Thus the implication asserted in part~\emph{(ii)} of
Theorem~\ref{thm:NU-formal-symmetry} applies automatically in the quadratic
case, applies in the \(q\)-exponential case precisely under the non-vanishing
condition stated above, and does not apply in the normalised alternating case; in the
torsion \(q\)-exponential situation, however, the finite-level variant given by
part~\emph{(iii)} may still remain available.
\end{remark}

In the alternating case, the converse direction in
Theorem~\ref{thm:NU-formal-symmetry} fails in general. This is not merely an
inference from the failure of surjectivity. Since \(D^2=0\), the choice
\(\phi=1\), \(\psi=0\) gives \(L=0\), which is formally symmetric for every
functional. The corresponding structural equation would be
\(\mathbf D\mathbf u=0\); but, if \(\langle\mathbf u,1\rangle\neq0\), then
\[
\langle\mathbf D\mathbf u,x\rangle=-\langle\mathbf u,1\rangle\neq0.
\]
Thus the converse really does fail. When \(\psi\) is of degree \(1\), however,
formal symmetry still recovers the first-order structural condition appearing
in Definition~\ref{def:classical}. The following corollary makes this precise.

\begin{corollary}\label{cor:NU-alternating-degree-one}
Retain the hypotheses and notation of Theorem~\ref{thm:NU-formal-symmetry}. Assume, in addition, that \(X\) is the normalised alternating map given by
\[
X(s)=e^{\pi i s/h}
\]
for every \(s\in U\), and that \(\psi\) is of degree \(1\).
If
\[
\langle \mathbf u,(Lp)\,r\rangle
=
\langle \mathbf u,p\,(Lr)\rangle
\]
for every \(p,r\in\mathcal P\), then there exist \(c\in\mathbb C^\times\) and
\(\tau\in\mathbb C\) such that
\[
\psi(x)=c(x-\tau),
\]
and
\[
\mathbf S\bigl((x-\tau)\mathbf u\bigr)=0
\]
in \(\mathcal P'\). Equivalently,
\[
\langle \mathbf u,(x-\tau)p(x^2)\rangle=0
\]
for every \(p\in\mathcal P\).
\end{corollary}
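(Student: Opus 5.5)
The plan is to reuse the first step of the proof of Theorem~\ref{thm:NU-formal-symmetry}(ii), which needs no surjectivity. Take \(p=1\). Since \(D1=0\), we have \(L1=0\), so formal symmetry gives \(\langle\mathbf u,Lr\rangle=0\) for every \(r\in\mathcal P\), that is,
\[
\langle\mathbf u,\phi\,D^2r+\psi\,SDr\rangle=0 .
\]
In the normalised alternating case the identity \(D^2=0\), recorded after Proposition~\ref{prop:alternating-DS}, kills the \(\phi\)-term. We are left with
\[
\langle\mathbf u,\psi\,S(Dr)\rangle=0\qquad\text{for all } r\in\mathcal P.
\]

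Next we identify the range of \(S\circ D\). By Proposition~\ref{prop:alternating-DS}, \(D(\mathcal P)=\sigma(\mathcal P)\), so it suffices to compute \(S\) on \(\sigma(\mathcal P)\). Take \(g(x)=p(x^2)\) and write it in the form \(a(-x^2)+x\,b(-x^2)\): then \(b=0\) and \(a(y)=p(-y)\). Hence \((Sg)(X)=a(X^2)=p(-X^2)\). Since \(X(U)\) is infinite, this is the polynomial identity \(S(p(x^2))=p(-x^2)\).

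The map \(p\mapsto p(-\,\cdot)\) is a bijection of \(\mathcal P\), so \(S(\sigma(\mathcal P))=\sigma(\mathcal P)\) and therefore \(SD(\mathcal P)=\sigma(\mathcal P)\). Concretely, for a given \(p\) we take \(r(x)=x\,p(x^2)\). Its decomposition has \(a=0\) and \(b(y)=p(-y)\), so \(Dr=p(-x^2)\) and then \(S(Dr)=p(x^2)\). Substituting this \(r\) yields \(\langle\mathbf u,\psi(x)p(x^2)\rangle=0\) for all \(p\in\mathcal P\).

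Because \(\deg\psi=1\), we can write \(\psi=c(x-\tau)\) with \(c\neq0\). Dividing by \(c\) gives \(\langle\mathbf u,(x-\tau)p(x^2)\rangle=0\) for every \(p\in\mathcal P\).

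It remains to show this is equivalent to \(\mathbf S((x-\tau)\mathbf u)=0\). By definition, \(\langle\mathbf S((x-\tau)\mathbf u),q\rangle=\langle\mathbf u,(x-\tau)Sq\rangle\). Since \(S(\mathcal P)=\sigma(\mathcal P)\), again by Proposition~\ref{prop:alternating-DS}, this pairing vanishes for all \(q\) exactly when \(\mathbf u\) annihilates \((x-\tau)\sigma(\mathcal P)\). This is the same equivalence as Proposition~\ref{prop:alternating-structural-equation} with \(\phi=0\).

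The only step that needs care is the sign bookkeeping in the decomposition \(a(-x^2)+x\,b(-x^2)\), which is needed to confirm that \(SD\) is onto \(\sigma(\mathcal P)\) rather than onto some proper subspace. The explicit choice \(r=x\,p(x^2)\) settles this directly.
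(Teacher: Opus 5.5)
Your proof is correct and follows the paper's argument. You test formal symmetry with \(p=1\), remove the \(\phi\)-term, identify the relevant range as \(\sigma(\mathcal P)\), divide by the leading coefficient of \(\psi\), and obtain the equivalence from \(S(\mathcal P)=\sigma(\mathcal P)\). The differences are cosmetic: you kill the \(\phi\)-term with \(D^2=0\) at the polynomial level and substitute the explicit choice \(r=x\,p(x^2)\), whereas the paper works on the dual side, showing \(\boldsymbol{\sigma}\mathbf D(\phi\,\mathbf u)=0\) and then replacing \(p(x)\) by \(p(-x)\).
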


\begin{proof}
By hypothesis,
\(
\langle \mathbf u,(Lp)\,r\rangle
=
\langle \mathbf u,p\,(Lr)\rangle
\)
for every \(p,r\in\mathcal P\). Taking \(p=1\), and using \(L1=0\), we obtain
\(
\langle \mathbf u,Lr\rangle=0.
\)
Hence
\[
\bigl\langle \mathbf S(\psi\,\mathbf u)-\mathbf D(\phi\,\mathbf u),Dr\bigr\rangle=0.
\]
Now assume that \(X\) is the normalised alternating map. By
Proposition~\ref{prop:alternating-DS}, one has
\[
D(\mathcal P)=\sigma(\mathcal P).
\]
Therefore
\[
\boldsymbol{\sigma}\bigl(\mathbf S(\psi\,\mathbf u)-\mathbf D(\phi\,\mathbf u)\bigr)=0.
\]
Moreover,
\[
\boldsymbol{\sigma}\mathbf D(\phi\,\mathbf u)=0.
\]
Indeed, for every \(p\in\mathcal P\),
\begin{align*}
\bigl\langle \boldsymbol{\sigma}\mathbf D(\phi\,\mathbf u),p\bigr\rangle
=
\bigl\langle \mathbf D(\phi\,\mathbf u),p(x^2)\bigr\rangle=
-\bigl\langle \phi\,\mathbf u,D(p(x^2))\bigr\rangle
=
0,
\end{align*}
since \(D(p(x^2))=0\). Thus
\[
\boldsymbol{\sigma}\mathbf S(\psi\,\mathbf u)=0.
\]
Since \(\psi\) is of degree \(1\), there exist \(c\in\mathbb C^\times\) and
\(\tau\in\mathbb C\) such that
\[
\psi(x)=c(x-\tau).
\]
It follows that
\[
\boldsymbol{\sigma}\mathbf S\bigl((x-\tau)\mathbf u\bigr)=0.
\]
By definition of \(\boldsymbol{\sigma}\) and \(\mathbf S\), this is equivalent to
\[
\langle \mathbf u,(x-\tau)p(-x^2)\rangle=0.
\]
Replacing \(p(x)\) by \(p(-x)\), we obtain
\[
\langle \mathbf u,(x-\tau)p(x^2)\rangle=0
\]
for every \(p\in\mathcal P\). Since, in the normalised alternating case, one has
\(
S(\mathcal P)=\sigma(\mathcal P),
\)
the last identity is equivalent to
\[
\mathbf S\bigl((x-\tau)\mathbf u\bigr)=0
\]
in \(\mathcal P'\). This proves the conclusion.
\end{proof}

\begin{corollary}\label{cor:NU-eigenvalue-equation}
Retain only the notation \(U,X,D,S,\mathbf D,\mathbf S\) from
Theorem~\ref{thm:NU-formal-symmetry}. Let \(\mathbf u\in\mathcal P'\) be
classical, and fix a pair \((\phi,\psi)\), with \(\deg\phi\le2\) and
\(\deg\psi\le1\), not both identically zero, such that
\[
\mathbf D(\phi\,\mathbf u)=\mathbf S(\psi\,\mathbf u).
\]
Define
\[
L=\phi D^2+\psi SD.
\]
Let
\((P_n)_{n\in I}\) be the monic orthogonal polynomial sequence with respect to
\(\mathbf u\), where either \(I=\mathbb N\) or \(I=\{0,1,\dots,N\}\) for some
\(N\geq 2\). Then, for every \(n\in I\), there exists
\(\lambda_n\in\mathbb C\) such that
\[
LP_n=\lambda_nP_n.
\]
\end{corollary}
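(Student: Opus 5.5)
The plan is to combine the symmetry statement of Theorem~\ref{thm:NU-formal-symmetry}\emph{(i)} with a degree count. By that theorem, the structural equation gives
\[
\langle \mathbf u,(Lp)\,r\rangle=\langle \mathbf u,p\,(Lr)\rangle,\qquad p,r\in\mathcal P.
\]
This uses neither regularity nor any surjectivity of \(D\), so it holds in every admissible regime.

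The first step is to check that \(L\) does not raise degree. Admissibility gives \(D(\mathcal P_n)\subseteq\mathcal P_{n-1}\) and \(S(\mathcal P_n)\subseteq\mathcal P_n\). Hence \(D^2\) lowers degree by two and \(SD\) by one. Since \(\deg\phi\le2\) and \(\deg\psi\le1\), both \(\phi D^2p\) and \(\psi SDp\) have degree at most \(\deg p\). Thus \(L(\mathcal P_n)\subseteq\mathcal P_n\) for every \(n\).

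Next, fix \(n\in I\) and expand \(LP_n\) in the basis \(P_0,\dots,P_n\) of \(\mathcal P_n\):
\[
LP_n=\sum_{k=0}^{n}c_{n,k}P_k .
\]
Then \(c_{n,k}\langle\mathbf u,P_k^2\rangle=\langle\mathbf u,(LP_n)P_k\rangle\), by orthogonality among \(P_0,\dots,P_n\). This is legitimate because the squared norms are non-zero for every index in \(I\).

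For \(k<n\), symmetry gives \(\langle\mathbf u,(LP_n)P_k\rangle=\langle\mathbf u,P_n(LP_k)\rangle\). Here \(LP_k\in\mathcal P_k\) with \(k<n\), so this pairing vanishes by the orthogonality of \(P_n\) to \(P_0,\dots,P_{n-1}\). Therefore \(c_{n,k}=0\) for \(k<n\), and \(LP_n=c_{n,n}P_n\). Since \(P_n\) is monic, \(\lambda_n=c_{n,n}\) is simply the coefficient of \(x^n\) in \(L(x^n)\).

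The only delicate point is the finite case \(I=\{0,\dots,N\}\): every pairing used must involve indices in \(I\). This holds, because the argument pairs \(P_n\) only against \(P_k\) with \(k\le n\le N\), and the expansion uses only \(P_0,\dots,P_n\). No polynomial beyond the given segment, and no terminal norm, is needed.

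The hypothesis \(N\ge2\) comes from classicality (regularity of order at least \(3\)) and is not otherwise used. The argument is also uniform across the quadratic, \(q\)-exponential, torsion and alternating regimes, since part \emph{(i)} was proved without any branch-specific assumption.
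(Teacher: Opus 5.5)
Your proof is correct and follows essentially the paper's route: formal symmetry from Theorem~\ref{thm:NU-formal-symmetry}\emph{(i)}, the degree bound \(L(\mathcal P_n)\subseteq\mathcal P_n\), and the vanishing of the pairings of \(LP_n\) against lower-degree polynomials. The only cosmetic difference is in the last step. You read off the coefficients \(c_{n,k}\) of \(LP_n\) in the basis \(P_0,\dots,P_n\), whereas the paper tests \(LP_n\) against all \(r\in\mathcal P_{n-1}\) and invokes non-degeneracy of the form on \(\mathcal P_{n-1}\), treating \(n=0\) separately via \(L1=0\).
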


\begin{proof}
By Theorem~\ref{thm:NU-formal-symmetry}\emph{(i)}, the operator \(L\)
is formally symmetric with respect to \(\mathbf u\), that is,
\[
\langle \mathbf u,(Lp)\,r\rangle
=
\langle \mathbf u,p\,(Lr)\rangle
\]
for every \(p,r\in\mathcal P\). Since \(\phi\) and \(\psi\) have degrees at most \(2\) and \(1\), respectively, one has
\[
\phi\,D^2(\mathcal P_n)\subseteq\mathcal P_n,
\quad
\psi\,SD(\mathcal P_n)\subseteq\mathcal P_n,
\]
and therefore
\[
L(\mathcal P_n)\subseteq\mathcal P_n
\]
for every \(n\in\mathbb N\). In particular,
\[
LP_n\in\mathcal P_n.
\]
If \(n=0\), then \(P_0=1\) and \(L1=0\). Hence
\(
LP_0=0,
\)
so one may take
\(
\lambda_0=0.
\)
Assume now that \(n\in I^\times\), and let \(r\in\mathcal P_{n-1}\). Then, by formal
symmetry,
\[
\langle \mathbf u,LP_n\,r\rangle
=
\langle \mathbf u,P_n\,(Lr)\rangle.
\]
Since \(Lr\in\mathcal P_{n-1}\) and \(P_n\) is orthogonal to
\(\mathcal P_{n-1}\), it follows that
\[
\langle \mathbf u,LP_n\,r\rangle=0.
\]
Thus \(LP_n\in\mathcal P_n\) is orthogonal to \(\mathcal P_{n-1}\). Since the
bilinear form induced by \(\mathbf u\) is non-degenerate on
\(\mathcal P_{n-1}\), and \(P_n\) is the monic orthogonal polynomial of degree
\(n\), there exists \(\lambda_n\in\mathbb C\) such that
\[
LP_n=\lambda_nP_n,
\]
which is precisely the desired conclusion.
\end{proof}

\begin{remark}\label{rem:NU-eigenvalue-nonvanishing}
Let \(\lambda_n\) be as in Corollary~\ref{cor:NU-eigenvalue-equation}. Then
\[
\lambda_0=0.
\]
Moreover, for each \(n\in I^\times\), \(\lambda_n\) is the leading coefficient of
\(L(x^n)\). Indeed, since \(P_n\) is monic, one has
\[
P_n-x^n\in\mathcal P_{n-1},
\]
and since \(L(\mathcal P_{n-1})\subseteq\mathcal P_{n-1}\), it follows that
\[
LP_n-L(x^n)\in\mathcal P_{n-1}.
\]
Hence the coefficient of \(x^n\) in \(LP_n\) coincides with that in \(L(x^n)\).
Since
\[
LP_n=\lambda_n P_n
\]
and \(P_n\) is monic, this coefficient is precisely \(\lambda_n\). Consequently,
\[
\lambda_n\neq0
\]
if and only if \(L(x^n)\) is of degree \(n\). In the ordinary quadratic
branch, since
\begin{align*}
D(x^n)&=n x^{n-1}+\cdots,
\\[7pt]
S(x^m)&=x^m+\cdots,
\end{align*}
one obtains
\[
L(x^n)
=
n\bigl(\phi_2(n-1)+\psi_1\bigr)x^n+\cdots.
\]
Thus
\[
\lambda_n=n\,d_{n-1}.
\]
In the reflected quadratic branch, by contrast,
\begin{align*}
D^2(x^n)&=-n(n-1)x^{n-2}+\cdots,
\\[7pt]
SD(x^n)&=n x^{n-1}+\cdots,
\end{align*}
and hence
\[
\lambda_n
=
n\bigl(\psi_1-\phi_2(n-1)\bigr).
\]
In the \(q\)-exponential case, since
\begin{align*}
D(x^n)&=\gamma_n x^{n-1}+\cdots,
\\[7pt]
S(x^m)&=\alpha_m x^m+\cdots,
\end{align*}
one obtains
\[
L(x^n)
=
\gamma_n\bigl(\phi_2\gamma_{n-1}+\psi_1\alpha_{n-1}\bigr)x^n+\cdots.
\]
Thus
\[
\lambda_n=\gamma_n d_{n-1}.
\]
Finally, in the normalised alternating case,
\[
D^2=0,
\quad
D(x^{2m})=0,
\quad
SD(x^{2m+1})=x^{2m}.
\]
Therefore, for every admissible index,
\[
\lambda_{2m}=0,
\quad
\lambda_{2m+1}=\psi_1.
\]
The associated Nikiforov--Uvarov operator thus detects the parity splitting
itself, rather than an ordinary degree-by-degree spectrum.
Consequently, for the ordinary quadratic and \(q\)-exponential branches, in
both the infinite and finite non-resonant sectors, the possible vanishing of
\(\lambda_n\) is
governed by the same factors that occur in the regularity criteria established
above. In a finite resonant system, however,
the vanishing of such a leading factor need not destroy regularity: it may
instead mark a compatible moment equation with a degree of freedom, precisely
as in Theorem~\ref{thm:finite-resonant-regularity}.
\end{remark}


As a final point, we record one structural feature of particular significance:
the point at which the distinction between ordinary quadratic and non-torsion
\(q\)-exponential maps, on the one hand, and the normalised alternating map, on the other, becomes
intrinsic and ineliminable, while at the same time marking a decisive departure
from the ordinary theory, where \(D\) is ordinary differentiation and
\(S=\mathrm{id}_{\mathcal P}\). This is the forward half of the
Sonine--Hahn phenomenon. Its ordinary differential form goes back to Sonine's
1887 memoir \cite{Son87}; Hahn returned to the question in \cite{H35}.
Higher derivatives were subsequently considered by Krall \cite{K36} and by
Hahn \cite{H38}. In the ordinary differential setting, the
converse characterisation asks whether an orthogonal polynomial sequence whose
derivative sequence is again orthogonal must belong to the classical families.
That converse is not part of the statement below.

\begin{proposition}[Stability under the first divided difference]\label{thm:normalised-Hahn-property}
Fix \(h\in\mathbb C^\times\), let \(U\subseteq\mathbb C\) be half-step-invariant, and let
\(X:U\to\mathbb C\) be an admissible map.
Let
\(
D,S:\mathcal P\to\mathcal P
\)
be the associated divided-difference and averaging operators, with transposes
\(
\mathbf D,\mathbf S:\mathcal P'\to\mathcal P'.
\)
Let \(I\subseteq\mathbb N\) be either \(I=\mathbb N\), or
\(
I=\{0,1,\dots,N+1\}
\)
for some \(N\geq 2\). Let \(\mathbf u\in\mathcal P'\) have the monic
orthogonal polynomial sequence
\(
(P_n)_{n\in I}.
\)
In the finite case, set
\(
I_1=\{0,1,\dots,N\},
\)
whereas in the infinite case set
\(
I_1=\mathbb N.
\)
Assume that \(X\) belongs to the ordinary quadratic half-step branch or to the
\(q\)-exponential branch with \(q\) not a root of unity, in the sense of
Theorem~\ref{thm:admissible-lattice-classification-U} and
Corollary~\ref{cor:admissible-half-step-branches}. Fix polynomials \(\phi\) and
\(\psi\), where \(\deg\phi\le2\) and \(\deg\psi\le1\), not both identically
zero, such that
\[
\mathbf D(\phi\,\mathbf u)=\mathbf S(\psi\,\mathbf u).
\]
In the finite case, assume that this pair lies in the non-resonant sector of
the corresponding regularity theorem; equivalently,
\[
d_j\neq0,
\quad
j=0,1,\dots,2N+1,
\]
with \(d_j\) defined as in that theorem. Set
\[
U_2=S(x^2)-(Sx)^2,
\quad
\mathbf u^{[1]}
=
\mathbf D(U_2\psi\mathbf u)-\mathbf S(\phi\mathbf u),
\]
and
\[
Q_n^{[1]}=\gamma_{n+1}^{-1}DP_{n+1},
\]
where, in the \(q\)-exponential case, the numbers \(\gamma_n\) are those introduced
in Theorem~\ref{thm:regularity-admissible-orbit}, whereas in the quadratic case
one has \(\gamma_n=n\). Then the first derived functional
\(\mathbf u^{[1]}\in\mathcal P'\)
is classical, and the family
\(
(Q_n^{[1]})_{n\in I_1}
\)
is its monic orthogonal polynomial sequence.
\end{proposition}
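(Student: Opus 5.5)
The plan is to extract everything from the machinery already set up in the proofs of Theorem~\ref{thm:regularity-admissible-orbit} and Theorem~\ref{thm:regularity-admissible-orbit-quadratic}. Two things must be shown: first, that \(\mathbf u^{[1]}\) satisfies a structural equation \(\mathbf D(\phi^{[1]}\mathbf u^{[1]})=\mathbf S(\psi^{[1]}\mathbf u^{[1]})\) with \(\deg\phi^{[1]}\le2\), \(\deg\psi^{[1]}\le1\), not both zero; second, that \((Q_n^{[1]})_{n\in I_1}\) is a monic orthogonal polynomial sequence for \(\mathbf u^{[1]}\), regular of order at least \(3\). Since \(N\ge2\), the index set \(I_1\) contains \(0,1,2\), so the second point supplies the required regularity.

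\emph{The structural equation.} Here I would invoke the inductive step already established. In the \(q\)-exponential branch this is Step~2 of the proof of Theorem~\ref{thm:regularity-admissible-orbit}, via \cite[Lemma~9.1]{CP25}. In the quadratic branch it is identity \eqref{eq:quadratic-shifted-functional-equation} with \(n=1\). The definition of \(\mathbf u^{[1]}\) in the statement coincides with the recursive one there. The point to check is that \(U_2=S(x^2)-(Sx)^2\) is exactly the polynomial \(U_2=(Y-Z)^2/4\) used in those proofs, namely \((\alpha^2-1)((x-c)^2-4\Pi)\), respectively \(ax+\Delta/4\). This follows from \(S(x^2)=(Y^2+Z^2)/2\) and \(Sx=(Y+Z)/2\). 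Non-triviality of \((\phi^{[1]},\psi^{[1]})\) comes from the leading coefficient of \(\psi^{[1]}\), which is \(d_2\). In the finite case \(d_2\neq0\) by the non-resonance hypothesis, since \(2\le2N+1\). In the infinite case, necessity in the regularity theorems gives \(d_j\neq0\) for all \(j\).

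\emph{Orthogonality and non-vanishing norms.} First, \(Q_n^{[1]}\) is monic because \(D(x^{n+1})=\gamma_{n+1}x^n+\cdots\), and \(\gamma_{n+1}\neq0\) since \(q\) is not a root of unity; in the quadratic case \(\gamma_n=n\). Next, apply the derived-norm identity: \eqref{eq:q-global-derived-norm} in the \(q\)-exponential case, or \eqref{eq:quadratic-derived-norm} in the quadratic case. Both rest on the local integration formula \eqref{eq:q-local-integration-by-parts} (respectively \eqref{eq:quadratic-integration-by-parts}), used with \(f=Q_m^{[1]}\) and \(g=P_{n+1}\). This gives
\[
\langle\mathbf u^{[1]},Q_n^{[1]}Q_m^{[1]}\rangle=\kappa\,\frac{d_n}{\gamma_{n+1}}\,h_{n+1}\,\delta_{nm},
\]
with \(\kappa=\alpha\) in the \(q\)-exponential case and \(\kappa=1\) in the quadratic case. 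The identity holds for \(0\le m\le n\) and \(n+1\in I\), so for all \(n\in I_1\); the off-diagonal case needs only degree counting and the orthogonality of \(P_{n+1}\). The right-hand side is non-zero by three facts: \(h_{n+1}\neq0\) by hypothesis, \(\alpha\gamma_{n+1}\neq0\), and \(d_n\neq0\). The last holds by non-resonance for \(n\le N\le2N+1\) in the finite case, and by the infinite necessity argument (Step~11, respectively its quadratic analogue) in the infinite case.

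\emph{Main obstacle.} The delicate point is the infinite case. There, \(d_n\neq0\) is not assumed; it must be derived from the regularity of \(\mathbf u\) through the global argument with \(P_{n+2}\), as in \eqref{eq:q-global-d-induction}. I would quote it rather than reprove it. A second care point is that the integration formula must be applied with the actual pair \((\phi,\psi)\), and its leading coefficient at degree \(n+1\) must be identified as \(d_n\). With that done, the hypothesis that \(\mathbf u\) is classical follows because it has an orthogonal sequence of length at least \(3\), and the conclusion for \(\mathbf u^{[1]}\) follows.
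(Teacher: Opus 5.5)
Your proposal is correct and follows the paper's own proof essentially step for step. Orthogonality comes from the derived-norm identity \eqref{eq:quadratic-derived-norm} or \eqref{eq:q-global-derived-norm} together with \(d_j\neq0\), which is the non-resonance hypothesis in the finite case and the global necessity argument in the infinite case; classicality comes from the first-level shifted functional equation, whose affine member has leading coefficient \(d_2\neq0\). Your explicit check that \(S(x^2)-(Sx)^2=(Y-Z)^2/4\) is the \(U_2\) of the regularity proofs, and your remark that the derived-norm identity is degree-local and so holds on the finite segment, are details the paper leaves implicit.
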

\begin{proof}
Write
\[
h_n=\langle\mathbf u,P_n^2\rangle.
\]
In the ordinary quadratic branch,
\eqref{eq:quadratic-derived-norm} gives
\[
\bigl\langle\mathbf u^{[1]},Q_j^{[1]}Q_k^{[1]}\bigr\rangle
=
\frac{d_j}{j+1}h_{j+1}\delta_{j,k}.
\]
In the \(q\)-exponential branch, the corresponding identity is
\eqref{eq:q-global-derived-norm}, namely
\[
\bigl\langle\mathbf u^{[1]},Q_j^{[1]}Q_k^{[1]}\bigr\rangle
=
\alpha\,\frac{d_j}{\gamma_{j+1}}h_{j+1}\delta_{j,k}.
\]
In the infinite case, the necessity parts of the two regularity theorems show
that every \(d_j\) is non-zero. In the finite case this is precisely the
non-resonance hypothesis imposed above. Since \(h_{j+1}\neq0\), and since
\(\alpha\) and \(\gamma_{j+1}\) do not vanish in the non-torsion
\(q\)-exponential branch, all the displayed norms are non-zero. The polynomials
\(Q_j^{[1]}\) are monic; hence they form the monic orthogonal polynomial
sequence of \(\mathbf u^{[1]}\). Thus \(\mathbf u^{[1]}\) is regular in the
infinite case and regular of order \(N+1\ge3\) in the finite case.

It remains to verify, rather than assume, the classicality of
\(\mathbf u^{[1]}\). In the ordinary quadratic branch it follows, at the first level,
from \eqref{eq:quadratic-shifted-functional-equation}; in the
\(q\)-exponential branch it follows from
\eqref{eq:shifted-relation-present-proof-solid2}. In either case the
transformed pair has the required degrees and is not identically zero, since
its affine member has non-zero leading coefficient \(d_2\). Thus
\(\mathbf u^{[1]}\) is classical, as asserted.
\end{proof}

\begin{remark}
In the \(q\)-exponential case, the exclusion of roots of unity ensures that the
normalising factors \(\gamma_{n+1}\) in the first derived family do not vanish.
At a root of unity the same first-step construction may survive on a finite
segment, provided that the required normalising factors are non-zero and the
corresponding non-resonance conditions hold. The proposition is deliberately
first-order; no all-orders conclusion is asserted.
\end{remark}

The reader should not underestimate the difficulty concealed by the reverse
direction. No complete Sonine--Hahn converse is asserted here; indeed, we do
not believe it to be valid at the present level of generality without further
hypotheses. Finite, torsion, reflected, and alternating phenomena must be
examined on their own terms: the ease of a few familiar cases is no licence to
promote the converse to a general theorem.
Proposition~\ref{thm:normalised-Hahn-property} records only the forward
implication actually established by the regularity theory. In the ordinary
differential setting, for a regular functional and under the standard
admissibility hypotheses on the defining pair, the functional equation and
Hahn's property are equivalent characterisations; see
\cite{M91a,M91b}. In the present framework, the analogous role is played by the
Nikiforov--Uvarov equation formulated in the same functional-analytic
language.

\section{Conclusions}

At the first international conference on orthogonal polynomials, held in
Bar-le-Duc in 1984, Andrews and Askey wrote, in a particularly interesting
article \cite[Section~4]{AA85}, under the deliberately resonant title \emph{The final sets of
classical orthogonal polynomials}, the following words: ``This section is
titled in a very strong way, and we hope that someone will come along and
prove we are wrong, just as has happened to everyone else who has tried to
characterize the classical polynomials.'' The notion of classicality proposed
here is not intended to suggest that there was any error in that legacy, nor
in any of the developments that followed it. Rather, it seeks to embrace that
legacy, together with many other results that have remained apparently outside
that framework owing to the technical restrictions imposed by earlier
definitions. Its aim is to gather them and to place them within a setting
sufficiently broad for their internal unity to become visible, and from which
Bochner's result may once again be viewed in continuity with the historical
tradition to which this problem belongs. This viewpoint, moreover, cannot be
separated from half-step-invariant sets or from the form of admissible maps,
for it is precisely through them that one gains access to what, in our view,
constitutes the true rigidity associated with classicality: a rigidity often
neither visible nor necessary at first sight, until one attempts to dig deeply
into a particular problem and suddenly comes up against it with full force.
What is perhaps most striking, and what the preceding pages have sought to lay
fully bare, is that the path may now appear almost disarmingly simple. It had
been there all along; what was required was to follow it. Nor was it ours to
trace unaided. The way had already been opened, from different but convergent
directions, by Nikiforov and Uvarov and by Maroni; we have had the good fortune
to stand on the shoulders of giants whose most structural ideas,
though decisive, have too often remained outside the principal narrative
through which the field has come to understand itself. That narrative has too
often been dominated by ever-expanding catalogues of separately named families
and by scattered manifestations of a few underlying mechanisms, recorded
under different names as though their separation belonged to the mathematics
itself. Yet a profusion of names is not a profusion of structures.

The space in which those lines of thought meet is constructed here. The theory of
half-step-invariant sets and admissible maps, placed within the continuous dual
of \(\mathcal P\), is not an ornamental language laid over familiar formulae.
It is the global setting in which the structural insight of Nikiforov and
Uvarov and the functional insight of Maroni can operate together, and
therefore the setting in which global compatibility, finite resonance,
torsion, and the alternating mechanism become visible as parts of a single
theory. The essential step was to take those ideas at their full strength: to
proceed from the general to the particular, and to resist the recurrent
temptation to reconstruct the structure from its proliferating special
appearances. The novelty of the present paper lies neither in the manufacture
of another family nor in the invention \emph{ex nihilo} of another formalism.
It lies in constructing that setting, in recognising how far the inherited
ideas can be carried within it, and in carrying them far enough for the unity,
rigidity, and unsuspected reach of the classical theory to become visible.

\section*{Acknowledgements}
The author acknowledges financial support from the Centre for Mathematics of the University of Coimbra (CMUC), funded by the Portuguese Foundation for Science and Technology (FCT), under the projects UID/00324/2025 (\url{https://doi.org/10.54499/UID/00324/2025}) and UID/PRR/00324/2025.
The author also acknowledges financial support from the FCT under the grant \url{https://doi.org/10.54499/2022.00143.CEECIND/CP1714/CT0002}.

\bibliographystyle{amsplain}

\bibliography{bib}

\end{document}